\newcommand{\lb}{\linebreak[1]}
\newcommand{\la}{\lambda}
\newcommand{\IR}{\mathbb{R}}
\newcommand{\NN}{\mathcal{N}}
\newcommand{\WW}{\mathcal{W}}
\newcommand{\eps}{\varepsilon}
\newcommand{\ov}[1]{\overline{#1}}
\newcommand{\td}[1]{\widetilde{#1}}
\DeclareMathOperator{\sign}{sign}
\DeclareMathOperator{\Bry}{Bry}
\DeclareMathOperator*{\Var}{Var}
\DeclareMathOperator*{\osc}{osc}
\DeclareMathOperator{\Ric}{Ric}
\DeclareMathOperator{\Rm}{Rm}
\DeclareMathOperator{\tr}{tr}
\DeclareMathOperator{\supp}{supp}
\DeclareMathOperator{\newtanh}{th}
\newcommand{\rrm}{r_{\Rm}}
\renewcommand{\tanh}{\newtanh}
\newcommand{\EMPTY}[1]{}
\newtheorem{Theorem}[equation]{Theorem}
\newtheorem{Lemma}[equation]{Lemma}
\newtheorem{Corollary}[equation]{Corollary}
\newtheorem{Proposition}[equation]{Proposition}
\newtheorem{Claim}[equation]{Claim}
\theoremstyle{definition}
\newtheorem{Definition}[equation]{Definition}
\theoremstyle{remark}
\numberwithin{equation}{section}
\title{Entropy and Heat Kernel bounds on a Ricci flow background}
\author{Richard H Bamler}
\address{Department of Mathematics, UC Berkeley, CA 94720, USA}
\email{rbamler@berkeley.edu}
\thanks{This work was supported by NSF grant DMS-1906500.}
\date{\today}
\begin{document}

\begin{abstract}
In this paper we establish new geometric and analytic bounds for Ricci flows, which will form the basis of a compactness, partial regularity and structure theory for Ricci flows in \cite{Bamler_RF_compactness, Bamler_HK_RF_partial_regularity}.

The bounds are optimal up to a constant that only depends on the dimension and possibly a lower scalar curvature bound.
In the special case in which the flow consists of Einstein metrics, these bounds agree with the optimal bounds for spaces with Ricci curvature bounded from below.
Moreover, our bounds are local in the sense that if a bound depends on the collapsedness of the underlying flow, then we are able to quantify this dependence using the pointed Nash entropy based only at the point in question.

Among other things, we will show the following bounds:
Upper and lower volume bounds for distance balls, dependence of the pointed Nash entropy on its basepoint in space and time, pointwise upper Gaussian bound on the heat kernel and a bound on its derivative and an $L^1$-Poincar\'e inequality.
The proofs of these bounds will, in part, rely on a monotonicity formula for a notion, called variance of conjugate heat kernels.

We will also derive estimates concerning the dependence of the pointed Nash entropy on its basepoint, which are asymptotically optimal.
These will allow us to show that points in spacetime that are nearby in a certain sense have comparable pointed Nash entropy.
Hence the pointed Nash entropy is a good quantity to measure  local collapsedness of a Ricci flow

Our results imply a local $\eps$-regularity theorem, improving a result of Hein and Naber.
Some of our results also hold for super Ricci flows.
\end{abstract}

\maketitle

\tableofcontents

\section{Introduction}
\subsection{Introduction}
In this paper we consider Ricci flows \cite{Hamilton_3_manifolds} and super Ricci flows \cite{Topping-McCann}, i.e. families of metrics $(g_t)_{t \in I}$ on an $n$-dimensional manifold $M$ satisfying the equation
\begin{equation} \label{eq_RF_super_RF}
 \partial_t g_t = - 2\Ric_{g_t} \qquad \text{or} \qquad \partial_t g_t  + 2\Ric_{g_t} \geq 0, 
\end{equation}
where in the latter inequality the symbol ``$\geq$'' denotes non-negative definiteness of symmetric 2-tensors.
Special solutions of (\ref{eq_RF_super_RF}), against which we will frequently test our results, are the trivial solutions $g_t =( 1- 2 \lambda t ) \ov{g}$ for an Einstein metric $\ov{g}$ with $\Ric_{\ov{g}} = \lambda \ov{g}$ (for Ricci flows) or for a metric $\ov{g}$ with $\Ric_{\ov{g}} \geq \lambda \ov{g}$ (for super Ricci flows).
Our goal will be to derive geometric and analytic bounds for these flows, which when restricted to these special cases, replicate a large number of the familiar bounds for spaces with lower Ricci curvature bounds.
For example, we will prove a pointwise Gaussian bound for the conjugate heat kernel, which generalizes a famous bound of Li and Yau \cite{Li_Yau_parabolic_kernel}, bounds on the variation of the pointed Nash-entropy, several bounds on volumes of balls and and $\eps$-regularity theorem for general Ricci flows.
We will also provide a heuristic reason why most of the remaining bounds that hold in the Einstein setting, such as lower heat kernel bounds and distance expansion bounds, are expected to fail in the Ricci flow setting.

On a more philosophical level, our paper provides a new perspective on the space-time geometry of Ricci flows. 
For example, we demonstrate that the intuitive strategy of relating points in different time-slices via worldlines is unnatural in the setting of general Ricci flows.
We provide an alternative via the concept of $H_n$-centers and discuss several estimates and applications.
For example, we will show how we can bound the variation of the pointed Nash-entropy in space \emph{and} time without resorting to wordlines.


The bounds and this new philosophy will be used in \cite{Bamler_RF_compactness, Bamler_HK_RF_partial_regularity} to derive a compactness, partial regularity and structure theory for Ricci flows.

There has been some interesting recent activity aimed at deriving geometric and analytic bounds for Ricci flows, motivated by Perelman's groundbreaking work \cite{Perelman1}.
In \cite{Hein-Naber-14} Hein and Naber established an integral Gaussian bound and a Poincar\'e and log-Sobolev inequality for the conjugate heat kernel on a Ricci flows.
They also studied the pointed Nash-entropy, which is related to Perelman's $\mu$-entropy.
As an application, they proved an $\eps$-regularity theorem for Ricci flows assuming an additional \emph{global} non-collapsing condition.
We will improve these bounds and use them to derive further geometric bounds on the underlying flow.
For example, we will obtain \emph{pointwise} Gaussian bounds on the conjugate heat kernel with the optimal dependence on the pointed Nash entropy.
We will also characterize the dependence of the pointed Nash-entropy on its basepoint in an optimal sense and derive various volume bounds on distance balls involving the pointed Nash entropy.
One of our applications includes an improved $\eps$-regularity theorem, which only depends on the pointed Nash-entropy at the point in question.

In the special setting of Ricci flows with upper scalar curvature bounds, bounds were derived by Chen, Wang, Zhang and the author in \cite{Zhang-noninflating, ChenWang-2013, Bamler-Zhang-1, Bamler-Zhang-2}.
We will show that most of these bounds persist in a certain form if the scalar curvature bound is removed.
Note that the setting of general Ricci flows demands significantly different techniques.
This has to do with the fact that the concept of worldlines was, in fact, natural in the setting of bounded scalar curvature.
In addition, the scalar curvature bound guaranteed distance-distortion and lower heat kernel bounds estimates, as well as the existence of cutoff functions that allowed the localization of several analytic estimates.
Unfortunately, these helpful tools are not available in the general setting and we are forced to derive different estimates that rely on the new perspective on the spacetime geometry of Ricci flows mentioned above.


We also refer to further related work in \cite{Chow_book_series_Part_III, Cao_Zhang_2011, Chau_Tam_Yu_2011, Mantegazza_Mueller_2015, Zhu_Meng_2016, Wu_Gaussian_2020, Buzano_Yudowitz_2020,Cao_Hamilton_2009, Zhang_Yoongjia_2020, Hallgren_2020}.

\subsection{Measuring the local collapsedness of the flow}
Before describing the results of this paper, let us first review some basic concepts.
Consider a complete Riemannian manifold $(M, g)$ with $\Ric \geq - (n-1)g$.
It has become customary to measure the degree of (local) collapsedness of $(M,g)$ at some point $x \in M$ and scale $r > 0$ by the normalized volume of a distance ball:
\begin{equation} \label{eq_normalized_volume}
 r^{-n} |B(x,r)| .
\end{equation}
Various important bounds, such as the upper Gaussian heat kernel bounds in \cite{Li_Yau_parabolic_kernel}, involve terms of the form (\ref{eq_normalized_volume}), which compensate for a possible collapse.
A useful property of the quantity (\ref{eq_normalized_volume}) is that it depends on $x$ and $r$ in a controlled way.
More specifically, by Bishop-Gromov volume comparison for any $x_1, x_2 \in M$, $0 < r_1, r_2 \leq A$
\begin{equation} \label{eq_norm_volume_variation}
 r_2^{-n} |B(x_2,r_2)| \geq c \bigg(A,  \frac{r_2}{r_1}, \frac{d(x_1, x_2)}{r_1} \bigg) r_1^{-n} |B(x_1,r_1)|. 
\end{equation}
In other words, the degrees of collapsedness at neighboring points and/or similar scales is comparable.

In this paper, the local collapsedness of a Ricci flow will be characterized by the pointed Nash entropy.
The pointed Nash entropy is a quantity that naturally arises from Perelman's work \cite{Perelman1}, generalizing the more commonly used $\mu$-functional.
Its importance was first highlighted in the work of Hein and Naber \cite{Hein-Naber-14}.\footnote{We remark that in \cite{BWang_local_entropy} a different quantity of measuring the local collapsedness was introduced.
We will, however, not use this quantity in this paper.}
The pointed Nash entropy is a quantity of the form $\NN_{x,t} (r^2)$, where $(x,t) \in M \times I$ should be viewed as a point in spacetime and $r > 0$ should be viewed as a scale at which we wish to measure the collapse (see Section~\ref{sec_Nash_entropy} for more details).
The quantity (\ref{eq_normalized_volume}) will turn out to be comparable to $\exp ( \NN_{x,t} (r^2))$ in many ways; in particular, if the flow consists of Einstein metrics, then both quantities are bounded by each other up to a multiplicative constant.
One of the main results in this paper will be a bound on the dependence of $\NN_{x,t} (r^2)$ on $(x,t)$ and $r > 0$ that is similar to (\ref{eq_norm_volume_variation}).
A number of geometric and analytic bounds in this paper will contain a term involving the pointed Nash entropy, usually via a factor of the form $\exp ( \NN_{x,t} (r^2) )$ or $\exp (- \NN_{x,t} (r^2) )$.
This dependence will always be optimal, so for example the corresponding result would be false after replacing $\NN_{x,t} (r^2)$ with $a\NN_{x,t} (r^2)$ for some $a > 1$ or $a <1$, depending on the situation.
Our bounds will also be local in the sense that, they will only involve bounds on the pointed Nash-entropy at the points in question, plus a possible global lower scalar curvature bound, which is natural.
So for example, they may include the pointed Nash entropy at a specific point, but  no term depending on Perelman's $\mu$-functional.

\subsection{Outline}
Given the nature of this paper, it would be uneconomical to list the precise statements of all results in this introduction.
Instead, we will only provide a brief outline of each section and a rough description of its results.
Each section is organized in a way that allows the reader to gain a quick overview of its content.
It starts with a subsection listing all of its main results, including some further explanations.
The proofs of these results can be found in the following subsections.

In Section~\ref{sec_variance}, we introduce a new notion, called \emph{variance}, which can be viewed as a form of $L^2$-concentration or $L^2$-distance between one or two probability measures.
A key property of the variance will be a new monotonicity formula under the Ricci flow or super Ricci flow, which will have two applications.
When applied to a single conjugate heat kernel, we obtain a variance bound that can be viewed as a concentration bound.
When applied to two conjugate heat kernels based at different points, but the same time, we obtain a variance bound that resembles a lower distance distortion bound.
The concentration bound will motivate the definition of ``centers'' (called \emph{$H_n$-centers}) of conjugate heat kernels.
Moreover, it implies a Gaussian integral bound on the conjugate heat kernel, which improves a result of Hein and Naber \cite{Hein-Naber-14}.

In Section~\ref{sec_gradient_estimate}, we derive a gradient estimate for solution to the heat equation on a Ricci flow or super Ricci flow background.
This bound is optimal in the sense that equality is attained for 1-dimensional solutions to the heat equation starting from the step function $\chi_{[0, \infty)}$.
The bound will imply an important integral bound on the gradient of the heat kernel, which will be important later.

In Section~\ref{sec_Nash_entropy} we first recall the definition of the pointed Nash entropy and its basic properties.
Next, we derive new bounds on the pointed Nash entropy that characterize its dependence on the basepoint and scale.
These bounds imply a bound that is comparable to (\ref{eq_norm_volume_variation}).

Our goal in Section~\ref{sec_lower_vol} is to prove lower volume bounds on distance balls of the form
\begin{equation} \label{eq_lower_vol_bound_form}
 |B(x,t, r)|_t \geq c \exp (\NN_{x,t} (r^2)) r^n.  
\end{equation}
In general such a bound is false, as one may observe on a round shrinking cylinder or Bryant soliton.
We will, however, show that a bound of the form (\ref{eq_lower_vol_bound_form}) does, in fact, hold in two cases: first if we assume that $x$ is close to an $H_n$-center of a conjugate heat kernel and second if we assume an upper scalar curvature bound near $x$.

In Section~\ref{sec_upper_HK_bounds} we establish upper bounds on the heat kernel and its gradient.
We start out by proving an $L^\infty$-bound on the heat kernel of the form
\[ K(x,t;y,s) \leq \frac{C \exp (-\NN_{x,t}(t-s))}{(t-s)^{n/2}}. \]
Next, we obtain a stronger, pointwise Gaussian estimate of the form
\[ K(x,t;y,s) \leq  \frac{C(\eps) \exp (-\NN_{x,t}(t-s))}{(t-s)^{n/2}} \exp \bigg( {- \frac{d^2_s (z,y)}{(8+\eps) (t-s)} }\bigg), \]
where $(z,s)$ is an $H_n$-center of $(x,t)$.
Lastly, we derive a pointwise bound on the gradient of the heat kernel in terms of its value, which is asymptotically similar to the gradient bound in Section~\ref{sec_gradient_estimate}:
\[ \frac{|\nabla_x K|(x,t; y,s)}{K(x,t;y,s)} \leq \frac{C}{(t-s)^{1/2}} \sqrt{ \log \bigg( \frac{C_0 \exp (- \NN_{x,t}(t-s))}{(t-s)^{n/2} K(x,t;y,s)} \bigg) }. \]

In Section~\ref{sec_upper_vol} we prove an upper volume bounds on distance balls of the form
\[ |B(x,t, r)|_t \leq C \exp (\NN_{x,t} (r^2)) r^n.  \]
This bound is an improvement of a result due to Zhang, Chen and Wang \cite{Zhang-noninflating, ChenWang-2013}, as it does not require an upper scalar curvature bound and only depends on the pointed Nash entropy.

In Section~\ref{sec_variance_parab_nbhd} we introduce the concept of $P^*$-parabolic neighborhoods, which are defined using the $W_1$-Wasserstein distance of conjugate heat kernel measures.
$P^*$-parabolic neighborhoods are similar to conventional parabolic neighborhoods, but are better behaved and in some sense more natural in the absence of curvature bounds.
We will see that $P^*$-parabolic neighborhoods and conventional parabolic neighborhoods share many useful properties.
Moreover, given local curvature bounds, both are comparable to one another.
We will also prove a volume bound and a covering lemma for $P^*$-parabolic neighborhoods.

In Section~\ref{sec_eps_reg} we combine several of our results obtained so far to prove an $\eps$-regularity theorem of the form
\[ \NN_{x,t} (r^2) \geq - \eps_n \qquad \Longrightarrow \qquad |{\Rm}| \leq r^{-2} \quad \text{on} \quad P(x,t;r), \]
where the latter denotes the two-sided parabolic ball around $(x,t)$ of radius $r$.
This is an improvement of a result of Hein and Naber \cite{Hein-Naber-14}, as it does not require any global scalar curvature or entropy bound.

In Section~\ref{sec_Poincare} we extend an $L^2$-Poincar\'e inequality due to Hein and Naber \cite{Hein-Naber-14} to any exponent $p \geq 1$.
This $L^p$-Poincar\'e inequality will be of the form
\[ \int_M h  \, d\nu_{t_0-\tau} = 0 \qquad \Longrightarrow \qquad \int_M |h|^p  d\nu_{t_0-\tau} \leq C(p) \tau^{p/2} \int_M |\nabla h|^p d\nu_{t_0-\tau}.  \]

In Section~\ref{sec_hypercontractive}, we prove a hypercontractivity estimate for solutions to the heat equation with on a Ricci flow background that is equipped with a conjugate heat kernel measure.
\[ \frac{\tau_2}{\tau_1} \geq \frac{p-1}{q-1} \qquad \Longrightarrow \qquad
 \bigg( \int_M |u|^p d\nu_{t_0 - \tau_1} \bigg)^{1/p} \leq \bigg( \int_M |u|^q d\nu_{t_0 - \tau_2} \bigg)^{1/q}.  \]
This bound is the Ricci flow analog of \cite{Gross_1975}; we mention it for completeness.

\subsection{Limitations and outlook}
We find it interesting to discuss some of the limitations of our techniques.
Most strikingly, our results lack the following two types of bounds:
\begin{itemize}
\item A pointwise lower bound on the conjugate heat kernel that depends on the distance to an $H_n$-center.
\item A bound that resembles an upper distance distortion bound.
\end{itemize}
We briefly argue why our techniques are unlikely to provide these kinds of bounds and why we believe that such bounds are unnatural if no further assumptions are made.
To see this, we first remark that all our techniques could be generalized to singular (3-dimensional) Ricci flows, as introduced in \cite{Kleiner_Lott_singular}; see also \cite{bamler_kleiner_uniqueness_stability}.
However, the following examples show that in this setting bounds of the form above are (likely) false.

Consider first a singular Ricci flow on $S^3$ that forms a single non-degenerate neckpinch with bounded diameter, for example as in \cite{Angenent_Knopf_precise_asymptotics}.
A conjugate heat kernel that is based in one component of the flow at a time past the neckpinch vanishes on the other component and is arbitrarily small near this component slightly before the neckpinch occurs.
So a lower bound on the conjugate heat kernel that only depends on the distance is false if the diameter remains bounded close to the singular time.
Second, consider a singular Ricci flow on $S^2 \times S^1$ that forms a single non-degenerate neckpinch, after which the flow becomes diffeomorphic to $S^3$.
Such a flow can be constructed using similar techniques as in \cite{Angenent_Knopf_precise_asymptotics}.
If the $S^1$-factor is chosen large enough, then distances between nearby points on either side of the neckpinch become very large after the neckpinch occurs.
So an upper distance distortion bound does not hold in this case.

One of the main goals in the study of Ricci flows in higher dimensions is the construction of a ``Ricci flow through singularities'' that generalizes the concept of a singular Ricci flow in dimension~3.
So we regard any bound that does not generalize to such a flow as unnatural and indicative of potential issues, which we may prefer to sidestep in future research.

\subsection{Acknowledgements}
I am grateful to Hans-Joachim Hein for teaching a wonderful and very inspiring course on heat equations at the summer school \emph{Advanced School on PDEs in Geometry and Physics} at USTC, Hefei in 2014.

I thank Gang Tian, Guofang Wei and Bennett Chow for inspiring conversations.
I also thank Sigurd Angenent and Dan Knopf for useful advice on their work \cite{Angenent_Knopf_precise_asymptotics}.
Lastly, I am indebted to Bennett Chow for helping to improve the paper and to Wangjian Jian for pointing out many typos in an earlier version of the manuscript.

\section{Conventions and basic definitions}
\subsection{Constants}
Unless stated otherwise, capital roman or greek letters will denote large constants (larger than $1$), while small roman or greek letters will denote small constants (smaller than $1$).
The letter $C$ (respectively $c$) will mainly denote a large (respectively small) generic constant.
We will express the dependence of constants in parentheses; i.e. $A(B, \beta)$ means that $A$ depends only on $B$ and $\beta$ in a continuous fashion.
A condition of the form ``if $\eps \leq \ov\eps$'' or ``if $A \geq \underline{A} (\eps)$'' will mean ``there is a universal constant $\ov\eps > 0$ such that if $\eps \leq \ov\eps$, then \ldots'' or ``there is a universal continuous function $\underline{A} : (0,1) \to (1, \infty)$ such that if $A \geq \underline{A} (\eps)$, then \ldots''.

\subsection{Ricci flows and super Ricci flows}
Unless specified differently, $M$ will always denote a compact, smooth manifold of dimension $n$.
Throughout the entire paper constants may depend on the dimension $n$ and this dependence will usually be omitted.
We will often analyze Ricci flows or super Ricci flows $(g_t)_{t \in I}$ on $M$ as in (\ref{eq_RF_super_RF}); it will be understood that $I \subset \IR$ is an interval, called the {\bf time-interval.}
We will often switch between the conventional picture (in which $x \in M$ are points and $t \in I$ are times) and the spacetime picture (in which $(x,t) \in M \times I$ are points), as long as this does not create any confusion.

\subsection{Heat operators and heat kernels}
Let $(M, (g_t)_{t \in I})$ be a super Ricci flow on a compact manifold.
We will frequently consider the {\bf heat operator}
\[ \square := \partial_t - \triangle_{g_t}, \]
which is coupled to the Ricci flow and can be applied to functions of the form $u \in C^2 (M \times I')$ for any non-trivial subinterval $I' \subset I$.
A solution to the equation $\square u = 0$ is called a solution to the {\bf heat equation (coupled with the Ricci flow).}
Correspondingly, the operator 
\[ \square^* := -\partial_t - \triangle_{g_t} - \tfrac12 \tr (\partial_t g_t) \]
is called the {\bf conjugate heat operator}.
Note that if $(M, (g_t)_{t \in I})$ is a Ricci flow, then
\[ \square^* := -\partial_t - \triangle_{g_t} + R, \]
where $R$ denotes the scalar curvature at time $t$.

For any $u, v \in C^2 (M \times I')$ we have
\begin{equation} \label{eq_duality_dt}
    \int_M ( \square u) v \, dg_t -  \int_M u (\square^* v) \, dg_t = \frac{d}{dt} \int_M uv \, dg_t . 
 \end{equation}
So if $I' = [t_1, t_2]$, then this gives
\begin{equation} \label{eq_duality_int}
 \int_{t_1}^{t_2} \int_M ( \square u) v \, dg_t dt - \int_{t_1}^{t_2} \int_M u (\square^* v) \, dg_t dt = \int_M u v \, dg_t \bigg|_{t=t_1}^{t=t_2}.  
\end{equation}
We also recall that for any solution to the {\bf conjugate heat equation} $\square^* v = 0$ we have
\begin{equation} \label{eq_ddt_intv_0}
 \frac{d}{dt} \int_M v \, dg_t = 0. 
\end{equation}

For any $x, y \in M$ and $s,t \in I$, $s < t$ we denote by $K(x,t;y,s)$ the heat kernel of $\square$.
That is, for fixed $(y,s)$, the function $K(\cdot, \cdot; y,s)$ is a {\bf heat kernel based at $(y,s)$}, i.e.
\[ \square K(\cdot, \cdot; y,s) = 0, \qquad \lim_{t \searrow s} K(\cdot, t; y,s) = \delta_{y}. \]
By duality, for fixed $(x,t)$, the function $K(x,t; \cdot, \cdot)$ is a {\bf conjugate heat kernel based at $(x,t)$}, i.e.
\[ \square^* K(x,t; \cdot, \cdot) = 0, \qquad \lim_{s \nearrow t} K(x,t; \cdot,s) = \delta_{x}. \]
Note that $K(x,t;y,s) > 0$ and due to (\ref{eq_ddt_intv_0}) we have
\[ \int_M K(x,t; \cdot, s) \, dg_s = 1. \]
Hence we will frequently use the following abbreviated notion.

\begin{Definition}
For $(x,t) \in M \times I$ and $s \in I$, $s \leq t$, we denote by $\nu_{x,t;s} = \nu_{x,t}(s)$ the {\bf conjugate heat kernel measure,} i.e. the probability measure on $M$ defined by
\[ d\nu_{x,t;s} := K(x,t; \cdot, s) \, dg_s, \qquad \nu_{x,t;t} := \delta_x. \]
\end{Definition}

We will often omit the index $s$ and view $\nu_{x,t} = (\nu_{x,t})_{s \in I, s \leq t}$ as a family of probability measures.
Moreover, we will often write
\[ d\nu_{x,t;s} = (4\pi \tau)^{-n/2} e^{-f} dg_s, \]
where $\tau(s) = t - s$ and $f \in C^\infty ( M \times ( I \cap (-\infty,t)))$ is called the {\bf potential}.
Note that on Euclidean space we have $f = \frac1{4\tau} d^2(x, \cdot)$.
If $u \in C^0 (M \times I')$ is a function defined on a time-slab of $M \times I$, $(x_0, t_0) \in M \times I$ and $s \in I'$, $s < t_0$, then we will often use the abbreviations
\[ \int_M u \, d\nu_{x_0, t_0; s} = \int_M u \, d\nu_{x_0, t_0} (s)  = \int_M u(\cdot, s) d\nu_{x_0, t_0; s}. \]
By (\ref{eq_duality_dt}), (\ref{eq_duality_int}), we obtain that for any $u \in C^2 (M \times [t_1, t_2])$, $[t_1, t_2] \subset I$,
\[ \frac{d}{dt} \int_M u \, d\nu_{x_0,t_0; t} = \int_M \square u \,  d\nu_{x_0,t_0; t} , \qquad
\int_M u \, d\nu_{x_0, t_0; t} \bigg|_{t= t_1}^{t=t_2} = \int_{t_1}^{t_2} \int_M \square u \, d\nu_{x_0, t_0; t} dt. \]
Moreover, if $u \in C^\infty (M \times I')$ is a solution to the heat equation $\square u =0$ and $(x_0, t_0) \in M \times I'$, then for all $t \in I'$, $t < t_0$
\[ \int_M u \, d\nu_{x_0,t_0; t}  = u(x_0, t_0). \]

We will frequently use the fact that due to the Bochner formula for any solution $u \in C^3(M \times I')$ to the heat equation $\square u = 0$ we have
\[ \square |\nabla u|^2
 =  2 \nabla \triangle u \cdot \nabla u  - 2 (\partial_t g_t) (\nabla u, \nabla u) - \triangle |\nabla u|^2 
 \leq 2 \triangle \nabla u \cdot \nabla u - \triangle |\nabla u|^2 
 =  -2 |\nabla^2 u|^2. \]
 By Kato's inequality, this implies
 \[ \square |\nabla u| \leq 0. \]
 So by the maximum principle, we obtain:
 
\begin{Lemma} \label{Lem_Lipschitz_preserved}
If $u(\cdot, t_1)$ is $L$-Lipschitz for some $t_1 \in I'$ and $L \geq 0$, then so is $u(\cdot, t)$ for all $t \geq t_1$, $t \in I'$.
\end{Lemma}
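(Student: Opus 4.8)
The plan is to translate the Lipschitz hypothesis into a pointwise gradient bound, propagate that bound forward in time using the differential inequality $\square|\nabla u|\le 0$ recorded above together with the maximum principle on the compact manifold $M$, and then translate back. The dictionary between the two formulations is standard: if $v\in C^1(M)$ and $|\nabla v|\le L$ everywhere, then any two points $x,y$ in the same component of $M$ can be joined by a minimizing geodesic $\gamma$, whence $|v(x)-v(y)|\le\int_\gamma|\nabla v|\le L\,d(x,y)$ (and $d(x,y)=\infty$ otherwise), so $v$ is $L$-Lipschitz; conversely, if $v$ is $L$-Lipschitz then differentiating along geodesics emanating from a point $p$ and taking the direction of $\nabla v(p)$ gives $|\nabla v(p)|\le L$. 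Applying the second implication at time $t_1$ and the first at a later time $t$, it therefore suffices to show that $|\nabla u|(\cdot,t_1)\le L$ on $M$ implies $|\nabla u|(\cdot,t)\le L$ on $M$ for all $t\ge t_1$ with $t\in I'$.

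The one genuine subtlety is that $|\nabla u|$ need not be $C^2$ on the zero set of $\nabla u$, so the inequality $\square|\nabla u|\le 0$ obtained from Kato's inequality is a priori only valid in a barrier sense, and the maximum principle should not be invoked naively. I would make this rigorous by regularizing: set $w_\eps:=(|\nabla u|^2+\eps^2)^{1/2}$, which is $C^2$ since $u\in C^3$. Using the identity $\square(\phi\circ F)=\phi'(F)\,\square F-\phi''(F)\,|\nabla F|^2$ with $F=|\nabla u|^2$ and $\phi(s)=(s+\eps^2)^{1/2}$, together with the inequality $\square|\nabla u|^2\le-2|\nabla^2 u|^2$ established above (via Bochner and the super Ricci flow condition) and the Kato-type bound $|\nabla|\nabla u|^2|^2=4|\nabla^2u(\nabla u,\cdot)|^2\le 4|\nabla^2u|^2\,(|\nabla u|^2+\eps^2)$, a short computation yields the honest pointwise inequality $\square w_\eps\le 0$ for the smooth function $w_\eps$.

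Now the maximum principle for subsolutions of the heat equation on the closed manifold $M$ (no boundary terms) applies directly and shows that $t\mapsto\max_M w_\eps(\cdot,t)$ is non-increasing on $[t_1,t]\cap I'$. Hence $|\nabla u|(\cdot,t)\le w_\eps(\cdot,t)\le\max_M w_\eps(\cdot,t_1)=\big(\max_M|\nabla u|^2(\cdot,t_1)+\eps^2\big)^{1/2}\le(L^2+\eps^2)^{1/2}$, and letting $\eps\to0$ gives $|\nabla u|(\cdot,t)\le L$; by the dictionary above, $u(\cdot,t)$ is then $L$-Lipschitz with respect to $d_{g_t}$, as claimed. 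I expect the regularization in the middle paragraph to be the only place that requires any care; alternatively one could argue directly at a space-time maximum of $|\nabla u|$ over $M\times[t_1,t]$, observing that either the maximum value is $0$ (so the bound is trivial) or $|\nabla u|>0$ in a neighborhood of that point, where $|\nabla u|$ is smooth and $\square|\nabla u|\le 0$ holds classically.
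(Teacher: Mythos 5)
Your proof is correct and follows the same route the paper takes: the Bochner computation giving $\square|\nabla u|^2\le -2|\nabla^2 u|^2$, Kato's inequality, and the maximum principle on the compact manifold. The regularization $w_\eps=(|\nabla u|^2+\eps^2)^{1/2}$ and the Lipschitz--gradient dictionary are details the paper leaves implicit, and your computation showing $\square w_\eps\le 0$ checks out.
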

\medskip

\subsection{Monotonicity of the \texorpdfstring{$W_1$}{W\_1}-Wasserstein distance}
If $\mu_1, \mu_2$ denote two probability measures on a complete manifold $M$ and $g$ is a Riemannian metric on $M$, then we define the $W_1$-distance between $\mu_1, \mu_2$ by
\begin{equation} \label{eq_d_W1_def}
 d_{W_1}^g (\mu_1, \mu_2) := \sup_f  \bigg( \int_M f \, d\mu_1  - \int_M f \, d\mu_2 \bigg), 
\end{equation}
where the supremum is taken over all bounded, $1$-Lipschitz functions $f : M \to \IR$.
Equivalently, we may also take the supremum over all bounded $f \in C^\infty (M)$ with $|\nabla f|_g \leq 1$.
We remark that $d_{W_1}^g$ defines a complete metric on the space of probability measures on $M$ if we allow infinite distances \cite[Theorem~7.3]{Villani_topics_in_OT}.
Moreover, it is more common to define $d_{W_1}^g$ using couplings \cite[Definition~7.1.1]{Villani_topics_in_OT}; the characterization (\ref{eq_d_W1_def}) holds due to the Kantorovich-Rubinstein Theorem \cite[Theorem~1.14]{Villani_topics_in_OT}.
For the purpose of this paper, however, the characterization (\ref{eq_d_W1_def}) is sufficient and we will only use the fact that $d_{W_1}^g$ defines a metric on the space of probability measures on $M$, which can be checked easily.

The following monotonicity result will be used frequently (see, for example, \cite[Theorem~3.1]{CR_Topping_2012}):

\begin{Lemma} \label{Lem_monotonicity_W1}
Let $(M, (g_t)_{t \in I})$ be a super Ricci flow on a compact manifold and denote by $v_1, v_2 \in C^\infty(M \times I')$, $I' \subset I$, two non-negative solutions to the conjugate heat equation $\square^* v_1 = \square^* v_2 = 0$ such that $\int_M v_i (\cdot, t) dg_t = 1$ for all $t \in I'$, $i=1,2$.
Denote by $\mu_{1,t}, \mu_{2,t}$ the associated probability measures with $d\mu_{i,t} = v_i(\cdot, t) dg_t$, $i=1,2$.
Then
\[ I' \longrightarrow [0, \infty], \qquad t \longmapsto d^{g_t}_{W_1} ( \mu_{1,t}, \mu_{2,t} ) \]
is non-decreasing.
Moreover, for any two points $x_1, x_2 \in M$ and $t_0 \in I$ we have for all $t \leq t_0$, $t \in I$
\[ d^{g_t}_{W_1} (\nu_{x_1,t_0; t}, \nu_{x_2, t_0; t} ) \leq d_{t_0} (x_1, x_2). \]
\end{Lemma}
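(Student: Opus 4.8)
The plan is to verify monotonicity of $t \mapsto d^{g_t}_{W_1}(\mu_{1,t},\mu_{2,t})$ directly from the definition (\ref{eq_d_W1_def}) by differentiating, using the super Ricci flow condition and Lemma~\ref{Lem_Lipschitz_preserved}. Fix times $t_1 < t_2$ in $I'$; I want to show $d^{g_{t_1}}_{W_1}(\mu_{1,t_1},\mu_{2,t_1}) \le d^{g_{t_2}}_{W_1}(\mu_{1,t_2},\mu_{2,t_2})$. The key observation is that the supremum at the \emph{earlier} time $t_1$ should be tested against functions obtained by flowing a competitor \emph{forward} under the heat equation. Concretely, given a bounded $1$-Lipschitz function $f_2$ on $M$ (a competitor for $d^{g_{t_2}}_{W_1}$), solve the heat equation $\square u = 0$ on $M \times [t_1,t_2]$ with terminal data $u(\cdot,t_2) = f_2$; equivalently run the heat flow backward from $t_2$ to $t_1$. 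By Lemma~\ref{Lem_Lipschitz_preserved} applied in the forward direction, since $u(\cdot,t_2)$ is $1$-Lipschitz and solutions of the heat equation cannot increase the Lipschitz constant as time increases, $u(\cdot,t)$ is $1$-Lipschitz for all $t \in [t_1,t_2]$; in particular $u(\cdot,t_1)$ is a valid (bounded, $1$-Lipschitz) competitor for $d^{g_{t_1}}_{W_1}$. Then I compute, using the identity $\frac{d}{dt}\int_M u\, d\mu_{i,t} = \int_M (\square u)\, d\mu_{i,t}$ from (\ref{eq_duality_dt}) (valid since $\mu_{i,t}$ solves the conjugate heat equation and has total mass $1$), that
\[
\frac{d}{dt}\left( \int_M u\, d\mu_{1,t} - \int_M u\, d\mu_{2,t}\right) = \int_M (\square u)\, d\mu_{1,t} - \int_M (\square u)\, d\mu_{2,t} = 0,
\]
so $\int_M u(\cdot,t_1)\, d\mu_{1,t_1} - \int_M u(\cdot,t_1)\, d\mu_{2,t_1} = \int_M f_2\, d\mu_{1,t_2} - \int_M f_2\, d\mu_{2,t_2}$. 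Taking the supremum over all such $f_2$ gives $d^{g_{t_2}}_{W_1}(\mu_{1,t_2},\mu_{2,t_2}) \le d^{g_{t_1}}_{W_1}(\mu_{1,t_1},\mu_{2,t_1})$ — wait, this is the reverse. Let me reconsider: the point is that the flowed-down function $u(\cdot,t_1)$ is only \emph{one particular} competitor for the earlier supremum, so this chain actually shows the earlier $W_1$ is $\ge$ the quantity, i.e. $d^{g_{t_1}}_{W_1} \ge d^{g_{t_2}}_{W_1}$, the wrong direction; so instead I must flow a competitor for the \emph{earlier} time \emph{forward}: given $f_1$ bounded $1$-Lipschitz at time $t_1$, solve $\square u = 0$ on $[t_1,t_2]$ with initial data $u(\cdot,t_1) = f_1$, note $u(\cdot,t_2)$ is $1$-Lipschitz by Lemma~\ref{Lem_Lipschitz_preserved}, and the same constancy computation yields $\int_M f_1\, d\mu_{1,t_1} - \int_M f_1\, d\mu_{2,t_1} = \int_M u(\cdot,t_2)\, d(\mu_{1,t_2}-\mu_{2,t_2}) \le d^{g_{t_2}}_{W_1}(\mu_{1,t_2},\mu_{2,t_2})$; supremizing over $f_1$ gives the claimed monotonicity $d^{g_{t_1}}_{W_1} \le d^{g_{t_2}}_{W_1}$.

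For the second statement, I apply the first part to the conjugate heat kernels $v_i = K(x_i,t_0;\cdot,\cdot)$, whose total mass is $1$, so $t \mapsto d^{g_t}_{W_1}(\nu_{x_1,t_0;t},\nu_{x_2,t_0;t})$ is non-decreasing on $I \cap (-\infty,t_0)$. I then take the limit $t \nearrow t_0$: since $\nu_{x_i,t_0;t} \to \delta_{x_i}$ weakly and one checks (using boundedness of the test functions together with a standard concentration estimate for the conjugate heat kernel near its basepoint, or simply the definition of $\nu_{x_i,t_0;t_0} := \delta_{x_i}$ and continuity of the family) that $d^{g_t}_{W_1}(\nu_{x_1,t_0;t},\nu_{x_2,t_0;t}) \to d^{g_{t_0}}_{W_1}(\delta_{x_1},\delta_{x_2}) = d_{t_0}(x_1,x_2)$, monotonicity forces $d^{g_t}_{W_1}(\nu_{x_1,t_0;t},\nu_{x_2,t_0;t}) \le d_{t_0}(x_1,x_2)$ for all $t \le t_0$. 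The identity $d^{g_{t_0}}_{W_1}(\delta_{x_1},\delta_{x_2}) = d_{t_0}(x_1,x_2)$ is immediate from (\ref{eq_d_W1_def}): the difference $f(x_1) - f(x_2)$ over $1$-Lipschitz $f$ is at most $d_{t_0}(x_1,x_2)$, with equality for (a bounded truncation of) $f = d_{t_0}(x_1,\cdot)$.

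The main obstacle is the justification of the limit $t \nearrow t_0$ in the second part: the naive continuity argument requires controlling $d^{g_t}_{W_1}(\nu_{x_i,t_0;t},\delta_{x_i})$ as $t \nearrow t_0$, and because $W_1$ on an unbounded-diameter manifold is not metrized by weak convergence alone, one needs a genuine tail bound on the conjugate heat kernel to know it does not leak mass to large distances. Here I would invoke the Gaussian concentration bound for conjugate heat kernels established earlier in the paper (the variance/concentration estimate from Section~\ref{sec_variance}), which gives $\int_M d^2_t(x_i,\cdot)\, d\nu_{x_i,t_0;t} \le C(t_0 - t)$ and hence $d^{g_t}_{W_1}(\nu_{x_i,t_0;t},\delta_{x_i}) \le C(t_0-t)^{1/2} \to 0$; alternatively, since only an upper bound is needed, one can avoid the limit entirely by observing that for $t < t_0$ the measure $\nu_{x_i,t_0;t}$ has a smooth density and arguing by a short-time expansion, but the cleanest route is the variance bound. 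Everything else — the constancy of $\int_M u\, d(\mu_{1,t}-\mu_{2,t})$, the preservation of the Lipschitz constant, and the formula for $W_1$ between Dirac masses — is routine given the tools already set up in the excerpt.
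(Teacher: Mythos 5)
Your proposal is correct and is essentially the paper's proof: flow a bounded $1$-Lipschitz competitor from the earlier time $t_1$ forward under the heat equation, use Lemma~\ref{Lem_Lipschitz_preserved} together with the constancy of $\int_M u \, d(\mu_{1,t}-\mu_{2,t})$ coming from (\ref{eq_duality_dt}), and obtain the second statement by letting $t \nearrow t_0$. The only difference is your extra care with that limit: since $M$ is compact (bounded diameter, $g_t \to g_{t_0}$ smoothly), the convergence $d^{g_t}_{W_1}(\nu_{x_i,t_0;t},\delta_{x_i}) \to 0$ is routine and the paper simply asserts it, while your alternative via the variance bound of Section~\ref{sec_variance} also works and is not circular, since those estimates do not use this lemma.
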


We remark that McCann and Topping \cite{Topping-McCann} have shown a similar monotonicity result for the $W_2$-distance.

\begin{proof}
Let $t_1 \leq t_2$, $t_1, t_2 \in I'$ and consider a function $f \in C^\infty(M)$ with $|\nabla f|_{g_{t_1}} \leq 1$.
Let $u \in C^\infty (M \times [t_1, t_2])$ be the solution to the heat equation $\square u = 0$ with initial condition $u(\cdot, t_1) = f$.
By Lemma~\ref{Lem_Lipschitz_preserved} we have $|\nabla u| \leq 1$ at all times.
So
\begin{align*}
  \int_M f \, d\mu_{1,t_1} - \int_M f \, d\mu_{2,t_1}
&= \int_M u(\cdot, t_1) v_1(\cdot, t_1) \, dg_{t_1} - \int_M u(\cdot, t_1) v_2(\cdot, t_1) \, dg_{t_1} \\
&= \int_M u(\cdot, t_2) v_1(\cdot, t_2) \, dg_{t_2} - \int_M u(\cdot, t_2) v_2(\cdot, t_2) \, dg_{t_2} \\
&=  \int_M u(\cdot, t_2) \, d\mu_{1,t_2} - \int_M u(\cdot, t_2) \, d\mu_{2,t_2}
\leq d^{g_{t_2}}_{W_1} (\mu_{1,t_2}, \mu_{2,t_2}). 
\end{align*}
Taking the supremum over all such $f$ implies
\[ d^{g_{t_1}}_{W_1} (\mu_{1,t_1}, \mu_{2,t_1}) \leq d^{g_{t_2}}_{W_1} (\mu_{1,t_2}, \mu_{2,t_2}), \]
which finishes the proof of the first statement.
The second statement follows from the first since $\lim_{t \nearrow t_0} d^{g_t}_{W_1} (\nu_{x_1,t_0; t}, \nu_{x_2, t_0; t} ) = d^{g_{t_0}}_{W_1} (\delta_{x_1}, \delta_{x_2} ) = d_{t_0} (x_1, x_2)$.
\end{proof}

\subsection{Lower bounds on the scalar curvature}
Let $(M, (g_t)_{t \in I})$ be a Ricci flow on a compact manifold.
Some of our following results will depend on a lower scalar curvature bound.
We recall why such a bound is frequently available.

The evolution equation for the scalar curvature reads
\[ \partial_t R = \triangle R + 2|{\Ric}|^2 \geq \triangle R  + \tfrac{2}n R^2. \]
An application of the maximum principle implies:

\begin{Lemma} \label{Lem_lower_scal}
If $R (\cdot, t_0) \geq R_{\min}$, then for all $t \geq t_0$, $t \in I$, we have
\[ R (\cdot, t) \geq \frac{n}2 \frac{R_{\min}}{\frac{n}2 - R_{\min} (t - t_0)}. \]
Moreover, if $t_0 := \inf I \in [-\infty, \infty)$, then
\[ R(\cdot, t) \geq \begin{cases} - \frac{n}{2(t - t_0)} &\text{if $t_0 > -\infty$} \\ 0 & \text{if  $t_0 = -\infty$} \end{cases}. \]
\end{Lemma}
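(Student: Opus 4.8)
The plan is to apply the maximum principle to the scalar curvature evolution inequality $\partial_t R \geq \triangle R + \tfrac{2}{n} R^2$ stated just above the lemma. First I would reduce to an ODE comparison: if $\phi(t)$ solves the ODE $\phi'(t) = \tfrac{2}{n}\phi(t)^2$ with $\phi(t_0) = R_{\min}$, then a standard parabolic maximum principle argument (valid since $M$ is compact, so $R$ attains its spatial minimum at each time) shows $R(\cdot, t) \geq \phi(t)$ for all $t \geq t_0$, $t \in I$. Here one uses that $\square R \geq \tfrac{2}{n} R^2$ together with the fact that at a spatial minimum of $R$ one has $\triangle R \geq 0$, so the minimum of $R$ over $M$ is a supersolution of the ODE; a short barrier/Gronwall argument handles the case where $R_{\min}$ is negative and $\phi$ blows down. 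Solving the ODE explicitly by separation of variables gives
\[
\phi(t) = \frac{R_{\min}}{1 - \tfrac{2}{n} R_{\min}(t - t_0)} = \frac{n}{2}\,\frac{R_{\min}}{\tfrac{n}{2} - R_{\min}(t-t_0)},
\]
which is exactly the claimed bound. One should note the denominator stays positive for all $t \geq t_0$ in the range of interest: if $R_{\min} \geq 0$ this is automatic, and if $R_{\min} < 0$ the denominator only increases, so there is no issue with the formula being defined.

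For the second statement, suppose $t_0 := \inf I$ is finite. The first part does not directly apply because $R(\cdot, t_0)$ need not be bounded below (indeed $t_0$ may not even lie in $I$ if $I$ is open on the left). Instead I would fix any $t_* \in I$ with $t_* > t_0$ and apply the first statement on the interval $[t_*, \infty) \cap I$ with $R_{\min} := \min_M R(\cdot, t_*)$, then take $t_* \searrow t_0$. One checks that for fixed $t$, the bound
\[
R(\cdot, t) \geq \frac{n}{2}\,\frac{R_{\min}(t_*)}{\tfrac{n}{2} - R_{\min}(t_*)(t - t_*)}
\]
is monotone in an appropriate sense: as $t_* \searrow t_0$, if $R_{\min}(t_*) \to -\infty$ the right-hand side tends to $-\tfrac{n}{2(t - t_*)} \to -\tfrac{n}{2(t-t_0)}$, and if $R_{\min}(t_*)$ stays bounded below one gets an even better bound. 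More cleanly, one can observe that the function $t \mapsto R(\cdot, t) + \tfrac{n}{2(t - t_0)}$ is, by the same maximum principle computation, a supersolution whose minimum is non-decreasing, because $-\tfrac{n}{2(t-t_0)}$ is the exact solution of $\psi' = \tfrac{2}{n}\psi^2$ with a pole at $t_0$; combined with the fact that $R(\cdot, t) + \tfrac{n}{2(t-t_*)} \to +\infty$ as $t \searrow t_*$ for the shifted comparison, one concludes $R(\cdot, t) \geq -\tfrac{n}{2(t-t_0)}$. When $t_0 = -\infty$, take $t_* \to -\infty$ in the bound $R(\cdot, t) \geq -\tfrac{n}{2(t-t_*)}$ to get $R(\cdot, t) \geq 0$.

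The main obstacle is the limiting argument in the second statement: one must be careful that the flow need not extend to or past $t_0$, and that $R$ may genuinely be unbounded below as $t \searrow t_0$, so the comparison at an interior time $t_*$ with its own (finite) lower bound $R_{\min}(t_*)$ must be taken and the limit $t_* \searrow t_0$ performed while tracking that the resulting lower bound converges to $-\tfrac{n}{2(t-t_0)}$ from above. The computation is routine once the comparison function with a pole at $t_0$ is identified, but it requires noting that shifting the pole can only weaken the bound, which is what legitimizes passing to the limit.
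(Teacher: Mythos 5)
Your proof is correct and is exactly the argument the paper intends (the paper offers no details beyond ``an application of the maximum principle implies,'' so the ODE comparison with $\phi'=\tfrac2n\phi^2$ and the limit $t_*\searrow t_0$ for the second statement is the expected route). Two small points of precision: for $R_{\min}>0$ the positivity of the denominator $\tfrac n2-R_{\min}(t-t_0)$ on $I$ is not automatic but follows because the comparison forces $R\to\infty$ as $t\to t_0+\tfrac{n}{2R_{\min}}$, so the flow cannot exist up to that time; and the approximating bounds $-\tfrac{n}{2(t-t_*)}$ increase to $-\tfrac{n}{2(t-t_0)}$ \emph{from below} (each is a weaker, i.e.\ more negative, lower bound), which is precisely what legitimizes taking the supremum in the limit.
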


\section{The variance of conjugate heat kernels and its monotonicity} \label{sec_variance}
\subsection{Definition of the variance and statement of the results}
In this section we introduce a new notion called \emph{variance} between one or two probability measures on a Riemannian manifold.
We will then show that the variance satisfies a certain monotonicity property on a super-Ricci flow if these probability measures evolve by the conjugate heat equation.
This monotonicity will imply concentration and integral Gaussian bounds of a single conjugate heat kernel measure, as well as proximity bound of two conjugate heat kernel measures.
Most results in the remainder of this paper will rely on these bounds.

We mention that McCann and Topping \cite{Topping-McCann} have obtained a monotonicity property for the Wasserstein distance between two probability measures evolving by the conjugate heat equation.
Their result resembles the monotonicity of the variance, but is of different spirit, as it does not imply any concentration bound.

We begin with the definition of the variance.

\begin{Definition}[Variance] \label{Def_Variance}
The {\bf variance} between two probability measures $\mu_1, \mu_2$ on a Riemannian manifold $(M,g)$ is defined as
\[ \Var (\mu_1, \mu_2) := \int_M \int_M d^2 (x_1, x_2) d\mu_1 (x_1) d\mu_2 (x_2). \]
In the case $\mu_1 = \mu_2 = \mu$, we also write
\[ \Var (\mu) = \Var (\mu, \mu) = \int_M \int_M d^2 (x_1, x_2) d\mu (x_1) d\mu (x_2). \]
If $(M, (g_t)_{t \in I})$ is a Ricci flow and if there is chance of confusion, then we will also sometimes write $\Var_t$ for the variance with respect to the metric $g_t$.
\end{Definition}

Note that $\Var (\mu_1, \mu_2)$ is linear in each argument and $\Var (\delta_x, \delta_y) = d^2(x,y)$.
Moreover, we have the following triangle inequality and relation to the $W_1$-Wasserstein distance:

\begin{Lemma} \label{Lem_Var_triangle_inequ_W_1_vs_Var}
For any three probability measures $\mu_1, \mu_2, \mu_3$ on a Riemannian manifold $(M, g)$ we have
\begin{equation} \label{eq_Var_triangle}
 \sqrt{ \Var (\mu_1, \mu_3) } \leq \sqrt{ \Var (\mu_1, \mu_2) } + \sqrt{ \Var (\mu_2, \mu_3) }, 
\end{equation}
\begin{equation} \label{eq_Var_W1_relation}
 d_{W_1}^g (\mu_1, \mu_2) \leq \sqrt{\Var(\mu_1, \mu_2)} 
\leq d_{W_1}^g (\mu_1, \mu_2) + \sqrt{\Var(\mu_1)} + \sqrt{\Var(\mu_2)}. 
\end{equation}
\end{Lemma}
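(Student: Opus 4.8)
The plan is to prove both (\ref{eq_Var_triangle}) and (\ref{eq_Var_W1_relation}) by viewing $\sqrt{\Var(\mu_1,\mu_2)}$ as an $L^2$-type quantity and exploiting the triangle inequality for the Riemannian distance $d$ pointwise, followed by Minkowski's inequality. Concretely, I would introduce on the probability space $(M\times M\times M, \mu_1\otimes\mu_2\otimes\mu_3)$ the three functions $D_{12}(x_1,x_2,x_3)=d(x_1,x_2)$, $D_{23}=d(x_2,x_3)$, $D_{13}=d(x_1,x_3)$. Then $\sqrt{\Var(\mu_i,\mu_j)}$ is exactly the $L^2$-norm of $D_{ij}$ with respect to this product measure (the extra integration over the third variable is harmless since each measure is a probability measure). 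The pointwise triangle inequality $D_{13}\le D_{12}+D_{23}$ together with the triangle inequality for the $L^2$-norm gives
\[
\sqrt{\Var(\mu_1,\mu_3)}=\|D_{13}\|_{L^2}\le\|D_{12}\|_{L^2}+\|D_{23}\|_{L^2}=\sqrt{\Var(\mu_1,\mu_2)}+\sqrt{\Var(\mu_2,\mu_3)},
\]
which is (\ref{eq_Var_triangle}).

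For the left inequality in (\ref{eq_Var_W1_relation}), I would use the coupling characterization of $d_{W_1}^g$: for any coupling $\pi$ of $\mu_1,\mu_2$ one has $d_{W_1}^g(\mu_1,\mu_2)\le\int_{M\times M}d(x_1,x_2)\,d\pi$, and choosing $\pi=\mu_1\otimes\mu_2$ gives $d_{W_1}^g(\mu_1,\mu_2)\le\int\int d(x_1,x_2)\,d\mu_1\,d\mu_2\le\sqrt{\Var(\mu_1,\mu_2)}$ by Jensen's inequality (or Cauchy--Schwarz). Alternatively, one can argue directly from the definition (\ref{eq_d_W1_def}): for a bounded $1$-Lipschitz $f$, $\int f\,d\mu_1-\int f\,d\mu_2=\int\int\bigl(f(x_1)-f(x_2)\bigr)d\mu_1 d\mu_2\le\int\int d(x_1,x_2)\,d\mu_1 d\mu_2\le\sqrt{\Var(\mu_1,\mu_2)}$, and take the supremum over $f$.

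For the right inequality in (\ref{eq_Var_W1_relation}), I would combine the triangle inequality (\ref{eq_Var_triangle}) just proved with the left inequality. Pick an $H$-center-like intermediate point; more precisely, apply (\ref{eq_Var_triangle}) twice: inserting a delta mass $\delta_z$ and using $\Var(\delta_z,\mu_i)=\int d^2(z,\cdot)\,d\mu_i$ is not directly what is needed. Instead, the cleanest route is: by (\ref{eq_Var_triangle}) applied with the chain $\mu_1,\delta_z,\mu_2$ for a well-chosen $z$ one gets $\sqrt{\Var(\mu_1,\mu_2)}\le\sqrt{\Var(\mu_1,\delta_z)}+\sqrt{\Var(\delta_z,\mu_2)}$, but this does not obviously produce $d_{W_1}$. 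The more robust argument is to bound $\Var(\mu_1,\mu_2)=\int\int d^2(x_1,x_2)d\mu_1 d\mu_2$ by choosing an \emph{optimal} coupling $\pi$ realizing $d_{W_1}(\mu_1,\mu_2)$: write $d(x_1,x_2)\le d(x_1,y_1)+d(y_1,y_2)+d(y_2,x_2)$, integrate against $d\mu_1(x_1)\,d\mu_2(x_2)\,d\pi(y_1,y_2)$, and apply Minkowski in $L^2$ of the product measure to get $\sqrt{\Var(\mu_1,\mu_2)}\le\sqrt{\Var(\mu_1)}+\|d(y_1,y_2)\|_{L^2(\pi)}+\sqrt{\Var(\mu_2)}$; but $\|d\|_{L^2(\pi)}$ is only the $W_2$-distance, not $W_1$. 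To land on $d_{W_1}$, I instead split off the diagonal coupling: bound $d(x_1,x_2)\le d(x_1,x_1')+d(x_1',x_2')+d(x_2',x_2)$ and integrate the \emph{first and third} terms over product measures (giving $\sqrt{\Var(\mu_1)}$ and $\sqrt{\Var(\mu_2)}$ after Minkowski) while the middle term, after choosing $x_1'=x_2'$ on the diagonal, reduces to a bound involving only one integration — the point is that one may take $x_1'$ distributed as $\mu_1$ and $x_2'=x_1'$, so the middle term contributes $0$ and the outer ones give the bound with $d_{W_1}$ replaced by... this still does not yield $d_{W_1}$. The correct and honest argument: use $\sqrt{\Var(\mu_1,\mu_2)}\le\sqrt{\Var(\mu_1,\delta_p)}+\sqrt{\Var(\delta_p,\mu_2)}$ is too lossy; so I will prove the right inequality via (\ref{eq_Var_triangle}) with $\mu_2$ replaced by $\delta_z$ and $\delta_z$ replaced by $\mu_1$ only in the degenerate case, and in general reduce to the one-point case $\Var(\mu_1,\mu_2)\le d_{W_1}(\mu_1,\mu_2)\cdot\diam+\cdots$ —

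The main obstacle, as the above deliberation shows, is the right-hand inequality in (\ref{eq_Var_W1_relation}): controlling an $L^2$-type quantity ($\sqrt{\Var}$) by an $L^1$-type quantity ($d_{W_1}$) genuinely requires absorbing the "spread" of each measure into the $\sqrt{\Var(\mu_i)}$ terms. The clean way to do it is: let $f$ be $1$-Lipschitz and note $\int\int d^2(x_1,x_2)\,d\mu_1\,d\mu_2 \le \int\int\bigl(|f(x_1)-f(x_2)| + \osc\bigr)$-type estimates fail; instead, write $d(x_1,x_2)\le d(x_1,\bar x_1)+d(\bar x_1,\bar x_2)+d(\bar x_2,x_2)$ where $(\bar x_1,\bar x_2)$ runs over an optimal $W_1$-coupling $\pi$, integrate $d^2(x_1,x_2)$ against $d\mu_1 d\mu_2 d\pi$, expand the square, and bound the cross terms: the $d(\bar x_1,\bar x_2)$-only term gives $\int d^2(\bar x_1,\bar x_2)d\pi$, which for $W_1$ must be controlled — here I would use that for the \emph{deterministic}/degenerate direction it suffices to also invoke that $d(\bar x_1,\bar x_2)\le d(\bar x_1, x_1') + d(x_1',x_2') + \cdots$; recognizing this regress, the actual proof in the paper surely takes the minimalist route: apply (\ref{eq_Var_triangle}) with the triple $(\mu_1,\mu_2,\mu_2)$ is trivial, so instead with $(\mu_1, \delta_{x}, \mu_2)$ where $x$ is \emph{averaged}, i.e. bound $\sqrt{\Var(\mu_1,\mu_2)}$ using that $\Var$ is bilinear: $\Var(\mu_1,\mu_2)=\int\Var(\delta_{x_1},\mu_2)\,d\mu_1(x_1)$ and $\sqrt{\Var(\delta_{x_1},\mu_2)}\le\sqrt{\Var(\delta_{x_1},\delta_{x_2^0})}+\sqrt{\Var(\delta_{x_2^0},\mu_2)}=d(x_1,x_2^0)+\sqrt{\Var(\mu_2)}$ for any fixed $x_2^0$; then $\sqrt{\Var(\mu_1,\mu_2)}=\|\sqrt{\Var(\delta_{x_1},\mu_2)}\|_{L^2(\mu_1)}\le\|d(\cdot,x_2^0)\|_{L^2(\mu_1)}+\sqrt{\Var(\mu_2)}$, and then $\|d(\cdot,x_2^0)\|_{L^2(\mu_1)}\le\sqrt{\Var(\mu_1)}+d(x_1^0,x_2^0)$-type — finally choosing $x_1^0,x_2^0$ to nearly realize $d_{W_1}$ via the coupling. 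I expect the paper's proof to be two or three lines invoking exactly this bilinearity-plus-Minkowski mechanism, and I would write it that way, with the delicate point being the honest identification of the middle term with $d_{W_1}$ rather than $W_2$ — resolved by noting $\Var(\delta_{x_1},\mu_2)\le (\int d(x_1,\cdot)d\mu_2)^2 + \Var(\mu_2)$ is \emph{false} in general but $\Var(\delta_{x_1},\mu_2) = (\text{mean of }d)^2 + \text{variance of }d$ only in dimension considerations — so the safe bound is $\sqrt{\Var(\delta_{x_1},\mu_2)}\le \int d(x_1,y)\,d\mu_2(y) + \sqrt{\Var(\mu_2)}$, which holds by Minkowski applied to $d(x_1,y)=(d(x_1,y)-\int d(x_1,\cdot)) + \int d(x_1,\cdot)$ — and this is the key computational lemma I would isolate.
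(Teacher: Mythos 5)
Your proofs of the triangle inequality \eqref{eq_Var_triangle} and of the left-hand bound in \eqref{eq_Var_W1_relation} are correct and essentially identical to the paper's (product measure plus pointwise triangle inequality plus Minkowski for the former; the $1$-Lipschitz test function computation with Cauchy--Schwarz for the latter).

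The right-hand bound in \eqref{eq_Var_W1_relation} is where your proposal has a genuine gap: you cycle through several routes, correctly diagnose why each one fails (most importantly, that any argument running Minkowski through an optimal coupling produces the $L^2$-cost of that coupling, i.e.\ a $W_2$-type quantity, not $d_{W_1}$), and never close the argument. The ``key computational lemma'' you finally isolate, $\sqrt{\Var(\delta_{x_1},\mu_2)}\leq \int_M d(x_1,y)\,d\mu_2(y)+\sqrt{\Var(\mu_2)}$, is true, but after taking the $L^2(\mu_1)$-norm it leaves you with $\int_M\int_M d(x_1,y)\,d\mu_1(x_1)\,d\mu_2(y)$ --- the $L^1$-cost of the \emph{product} coupling --- which is not $d_{W_1}(\mu_1,\mu_2)$ and requires yet another argument (and worse constants) to control. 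The paper's actual mechanism is different and you never quite reach it: it works entirely on the dual side of \eqref{eq_d_W1_def} by choosing the specific test function $f(x):=\sqrt{\Var(\delta_x,\mu_2)}$. By \eqref{eq_Var_triangle} this $f$ is $1$-Lipschitz (since $|f(x)-f(x')|\leq\sqrt{\Var(\delta_x,\delta_{x'})}=d(x,x')$) and satisfies the pointwise lower bound $f(x)\geq\sqrt{\Var(\mu_1,\mu_2)}-\sqrt{\Var(\delta_x,\mu_1)}$; plugging $f$ into $d_{W_1}^g(\mu_1,\mu_2)\geq\int_M f\,d\mu_1-\int_M f\,d\mu_2$ and bounding $\int_M\sqrt{\Var(\delta_x,\mu_i)}\,d\mu_i(x)\leq\sqrt{\Var(\mu_i)}$ by Cauchy--Schwarz gives exactly the claimed inequality with constant $1$ (modulo a truncation $f_A=\min\{f,A\}$ to keep the test function bounded). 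The insight you were missing is that the duality formula lets you \emph{choose} the Lipschitz function adapted to the quantity you want to bound from below, rather than forcing you to transport mass through a coupling.
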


Let from now on $(M, (g_t)_{t \in I})$ be a super-Ricci flow on a compact manifold and denote by $d_t : M \times M \to [0, \infty)$ the distance function at time $t$.
The results in this section rely on the following theorem.

\begin{Theorem} \label{Thm_HE_dist}
The following bound holds in the barrier and viscosity sense:
\begin{equation} \label{eq_HE_dist}
 (\partial_t - \triangle_x - \triangle_y) d^2_t (x,y) \geq -  \frac{(n-1)\pi^2}{2} - 4 . 
\end{equation}
\end{Theorem}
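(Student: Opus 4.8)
The plan is to prove the differential inequality \eqref{eq_HE_dist} pointwise away from the cut locus, where $d_t^2$ is smooth, and then invoke the standard barrier/viscosity machinery (Calabi's trick) to extend it across the cut locus. So fix a time $t_0$ and points $x_0 \neq y_0$ that are not in each other's cut locus, let $\gamma : [0,\ell] \to M$ be the unique unit-speed minimizing geodesic from $x_0$ to $y_0$ with respect to $g_{t_0}$, where $\ell = d_{t_0}(x_0, y_0)$. I will compute each of the three terms $\partial_t d_t^2$, $\triangle_x d_t^2$, $\triangle_y d_t^2$ at $(x_0, y_0, t_0)$ and combine them.

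\textbf{The time derivative.} Writing $d_t^2 = 2d_t \cdot \tfrac12 d_t$ and using that $\partial_t d_t(x_0,y_0) = -\tfrac12 \int_0^\ell (\partial_t g_t)(\gamma', \gamma')\, ds$ (first variation of arclength along the fixed curve $\gamma$, valid since $\gamma$ is minimizing at time $t_0$), the super Ricci flow hypothesis $\partial_t g_t \geq -2\Ric_{g_t}$ gives $\partial_t d_{t_0}(x_0, y_0) \geq -\int_0^\ell \Ric_{g_{t_0}}(\gamma', \gamma')\, ds$ is the wrong sign — rather $\partial_t g_t + 2\Ric \geq 0$ yields $-\tfrac12\int_0^\ell(\partial_t g_t)(\gamma',\gamma') \geq$ well, let me be careful: it gives $\partial_t d_{t_0} \leq \int_0^\ell \Ric(\gamma',\gamma')\,ds$. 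Combined with $\partial_t d_t^2 = 2 d_t \partial_t d_t$, this is the term I must bound from below, so the Ricci integral enters with a favorable sign once it cancels against the Laplacian terms.

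\textbf{The spatial Laplacians.} Here I use the second variation / Laplacian comparison. The key identity is the Bochner-type computation: at points off the cut locus, $\triangle_x d_t(x,y) \leq$ the trace of the index form, and more precisely there is the classical formula relating $\triangle_x d$ to $\int_0^\ell \Ric(\gamma',\gamma')$ plus Jacobi-field boundary terms. Summing the contributions from the $x$ and $y$ Laplacians and using $\triangle_x d^2 = 2d\,\triangle_x d + 2|\nabla_x d|^2 = 2d\,\triangle_x d + 2$, the Ricci integrals from $\triangle_x d$ and $\triangle_y d$ together produce exactly $-2\ell \int_0^\ell \Ric(\gamma',\gamma')\,ds$ (each Laplacian sees the full integral along $\gamma$), which cancels the $2\ell\int_0^\ell \Ric(\gamma',\gamma')$ coming from $\partial_t d_t^2$. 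What remains after cancellation is a purely "Jacobi field'' term of the form $2\ell$ times (trace of second fundamental forms of distance spheres at the two endpoints) plus the $+2|\nabla_x d|^2 + 2|\nabla_y d|^2 = 4$ contribution. The Jacobi-field term is estimated by choosing the comparison vector fields $E_i(s) \sin(\pi s/\ell)$ (the standard trick for two-point functions, cf.\ the bound on $\triangle d + \triangle d$ used in Perelman's work and Hamilton's Harnack-type estimates): plugging $W_i = \sin(\pi s /\ell) E_i(s)$ into the index form for the $(n-1)$ orthonormal parallel fields $E_i$ along $\gamma$, the $|W_i'|^2$ terms integrate to $(n-1)\pi^2/(2\ell)$ and the curvature terms again reassemble into the Ricci integral that we have already arranged to cancel. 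After multiplying by the outer $2\ell$ (wait — the scaling is $2\ell \cdot \tfrac{(n-1)\pi^2}{2\ell^2} \cdot \ell$? I will track this carefully) one obtains precisely $-\tfrac{(n-1)\pi^2}{2}$, and together with the $-4$ from the gradient terms we arrive at \eqref{eq_HE_dist}.

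\textbf{Across the cut locus.} When $x_0$ or $y_0$ lies in the cut locus of the other, or $x_0 = y_0$, $d_t^2$ is only Lipschitz, but it is well known (Calabi's argument) that one can construct smooth upper barriers: replace $\gamma$ by a geodesic emanating from a slightly perturbed point, producing a smooth function $\geq d_t^2$ near the point with equality at the point, for which the same pointwise computation applies with an error tending to $0$. This gives the inequality in the barrier sense, and the barrier sense implies the viscosity sense by a standard argument; at $x_0 = y_0$ the function $d_t^2$ has a smooth ($\equiv 0$) lower bound and the inequality is trivial there since the left side, interpreted appropriately, is $\geq 0 > -\tfrac{(n-1)\pi^2}{2} - 4$. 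The main obstacle I anticipate is purely bookkeeping: getting the constant exactly right requires carefully tracking the factors of $\ell$ through $d^2 = \ell^2$, through $\triangle d^2 = 2d\triangle d + 2|\nabla d|^2$, and through the $1/\ell$ and $1/\ell^2$ factors produced by the $\sin(\pi s/\ell)$ test fields, and making sure the curvature terms from the three sources ($\partial_t$, $\triangle_x$, $\triangle_y$) cancel identically so that only the dimensional constant and the gradient constant survive — this is where an error would most easily creep in, and the super Ricci flow inequality must be applied with exactly the right sign at exactly the right place.
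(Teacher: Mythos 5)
Your overall architecture matches the paper's (pointwise computation off the cut locus, first variation in $t$, index-form estimates for the two spatial Laplacians, cancellation of the Ricci integrals, then Calabi's barrier trick), but two of the key steps are carried out incorrectly, and the second is fatal to the constant. First, the time derivative: for a unit-speed minimizer $\gamma$ at time $t_0$ one has $\partial_t d_{t_0}(x_0,y_0) = +\tfrac12\int_0^\ell(\partial_t g_t)(\gamma',\gamma')\,ds$ (no minus sign), so the super Ricci flow inequality $\partial_t g_t \geq -2\Ric$ gives exactly the bound you first wrote down and then rejected, namely $\partial_t d_{t_0}(x_0,y_0) \geq -\int_0^\ell \Ric(\gamma',\gamma')\,ds$. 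Your final version $\partial_t d_{t_0} \leq \int_0^\ell\Ric(\gamma',\gamma')\,ds$ is an upper bound, which is both false in general under a super Ricci flow and useless here: the whole point is to bound $\partial_t d_t^2$ from \emph{below} by $-2\ell\int_0^\ell\Ric(\gamma',\gamma')\,ds$ so that it can absorb the curvature terms produced by the Laplacians.

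Second, and more seriously, the test fields. The field $W_i = \sin(\pi s/\ell)E_i(s)$ vanishes at \emph{both} endpoints, so it generates no variation of either endpoint and cannot be used to estimate $\triangle_x d$ or $\triangle_y d$: the admissible profile for $\triangle_y d$ must satisfy $\phi(0)=0$, $\phi(\ell)=1$, and the one for $\triangle_x d$ must satisfy $\psi(0)=1$, $\psi(\ell)=0$. The correct choice is $\phi(s)=\sin\big(\tfrac{\pi s}{2\ell}\big)$ and $\psi(s)=\cos\big(\tfrac{\pi s}{2\ell}\big)$; the identity $\phi^2+\psi^2\equiv 1$ is precisely what makes the curvature contributions $-\int_0^\ell\phi^2\,\Ric(\gamma',\gamma')\,ds$ and $-\int_0^\ell\psi^2\,\Ric(\gamma',\gamma')\,ds$ from the two Laplacians reassemble into a \emph{single} full integral that cancels the time-derivative term (your parenthetical ``each Laplacian sees the full integral'' would give twice that and destroy the cancellation), while the derivative terms give $(n-1)\int_0^\ell\big((\phi')^2+(\psi')^2\big)\,ds = \tfrac{(n-1)\pi^2}{4\ell}$, which after the factor $2\ell$ from $\triangle d^2 = 2d\,\triangle d + 2|\nabla d|^2$ yields exactly $\tfrac{(n-1)\pi^2}{2}$. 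With $\sin(\pi s/\ell)$ the half-period is off by a factor of $2$, so even ignoring admissibility you would land at $(n-1)\pi^2$ per Laplacian rather than $\tfrac{(n-1)\pi^2}{2}$ in total --- this is exactly the bookkeeping you flagged but never carried out. (A minor further point: at $x=y$ the left-hand side is not $\geq 0$; near the diagonal $(\triangle_x+\triangle_y)d^2\approx 4n$, so the left-hand side is about $-4n$, and the inequality holds there only because $\pi^2>8$ gives $-4n\geq -\tfrac{(n-1)\pi^2}{2}-4$.)
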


Here $\triangle_x$, $\triangle_y$ denote the Laplacians taken with respect to the $x$ and $y$ variable, respectively.
So $\triangle_x + \triangle_y$ equals the Laplacian on the Cartesian product $(M \times M, g_t \oplus g_t)$.
For the remainder of this paper, we will fix the constant
\[ H_n :=    \frac{(n-1)\pi^2}{2} + 4. \]
We will now present two important corollaries of Theorem~\ref{Thm_HE_dist}, which we will mainly use in this paper.
The first corollary expresses a monotonicity property of the variance if the probability measures evolve by the conjugate heat equation.

\begin{Corollary} \label{Cor_conj_HK_monotone}
Consider two non-negative solutions $v_1, v_2 \in C^\infty(M)$ to the conjugate heat equation $\square^* v_i =0$ with $\int_M v_i dg_t = 1$,  $i =1,2$, and let $\mu_{i,t} := v_i dg_t$ be the corresponding probability measures.
Then $t \mapsto {\Var}_t (\mu_{1,t},\mu_{2,t}) + H_n t$ is non-decreasing.
\end{Corollary}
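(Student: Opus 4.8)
The plan is to reduce the corollary to Theorem~\ref{Thm_HE_dist} by passing to the Cartesian product $M \times M$, and then to invoke a weak maximum principle that converts the pointwise inequality (\ref{eq_HE_dist}) into a monotonicity statement after integration against the evolving product of the two conjugate heat kernel measures. Concretely, set $N := M \times M$ and $h_t := g_t \oplus g_t$. Since $\Ric_{h_t} = \Ric_{g_t} \oplus \Ric_{g_t}$, the pair $(N, (h_t)_t)$ is again a super Ricci flow on a compact manifold, its Laplacian is $\triangle_x + \triangle_y$, and the trace term of $\square^*_N$ splits as a sum over the two factors, so a direct computation shows that $w(x,y,t) := v_1(x,t)\, v_2(y,t)$ is a non-negative solution of $\square^*_N w = 0$ with $\int_N w\, dh_t = 1$. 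On the other hand $u(x,y,t) := d^2_t(x,y)$ is bounded (by $(\diam_{g_t} M)^2$) and locally Lipschitz on $N$, and by Theorem~\ref{Thm_HE_dist} — together with the remark identifying $\triangle_x + \triangle_y$ with the Laplacian of $h_t$ — it satisfies $\square_N u \geq -H_n$ in the barrier and viscosity sense. Since $\int_N u(\cdot,\cdot,t)\, w(\cdot,\cdot,t)\, dh_t = \Var_t(\mu_{1,t}, \mu_{2,t})$, it suffices to prove that $t \mapsto \int_N u\, w\, dh_t + H_n t$ is non-decreasing for such $u$ and $w$.

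For this, fix $t_1 < t_2$ in the time-interval and let $\phi$ solve the heat equation $\square_N \phi = 0$ on $N \times [t_1, t_2]$ with $\phi(\cdot,\cdot,t_1) = u(\cdot,\cdot,t_1)$; by standard parabolic theory $\phi$ is smooth for $t > t_1$ and continuous up to $t = t_1$. The key step is the comparison
\[ u(\cdot,\cdot,t) \;\geq\; \phi(\cdot,\cdot,t) - H_n(t - t_1) \qquad \text{for all } t \in [t_1, t_2], \]
which I would prove by a first-contact-point argument. Fix $\beta > 0$; if $a := \phi - (H_n+\beta)(t-t_1) - u$ were positive somewhere, choose a maximum point $(z^*, t^*)$ of $a$ over the compact set $N \times [t_1,t_2]$. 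Since $a \equiv 0$ at $t_1$ we have $t^* > t_1$, so $\phi$ is smooth near $(z^*,t^*)$. Pick $\eps \in (0,\beta)$ and, using the barrier formulation of Theorem~\ref{Thm_HE_dist} (Calabi's trick furnishes the required upper barriers for $d^2_t$ at cut points), an upper barrier $\psi \geq u$ of $u$ at $(z^*,t^*)$ with $\square_N\psi(z^*,t^*) \geq -H_n - \eps$. Then $\phi - (H_n+\beta)(t-t_1) - \psi$ attains an interior space-time maximum at $(z^*,t^*)$ over a backward neighborhood, so $\square_N\big(\phi - (H_n+\beta)(t-t_1) - \psi\big)(z^*,t^*) \geq 0$; but this equals $-(H_n+\beta) - \square_N\psi(z^*,t^*) \leq \eps - \beta < 0$, a contradiction. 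Hence $u \geq \phi - (H_n+\beta)(t-t_1)$ for every $\beta > 0$, and $\beta \downarrow 0$ gives the comparison.

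To conclude, apply the duality identity (\ref{eq_duality_int}) on $N$ to the pair $(\phi, w)$ over $[t_1+\eps, t_2]$ and let $\eps \downarrow 0$ — using $\square_N\phi = 0$, $\square^*_N w = 0$, and the continuity of $\phi$ up to $t_1$ — to obtain
\[ \int_N \phi(\cdot,\cdot,t_2)\, w(\cdot,\cdot,t_2)\, dh_{t_2} \;=\; \int_N u(\cdot,\cdot,t_1)\, w(\cdot,\cdot,t_1)\, dh_{t_1} \;=\; \Var_{t_1}(\mu_{1,t_1},\mu_{2,t_1}). \]
By the comparison, $\phi(\cdot,\cdot,t_2) \leq u(\cdot,\cdot,t_2) + H_n(t_2-t_1)$, and since $w \geq 0$ with $\int_N w\, dh_{t_2} = 1$ the left-hand side is at most $\Var_{t_2}(\mu_{1,t_2},\mu_{2,t_2}) + H_n(t_2-t_1)$; therefore $\Var_{t_1}(\mu_{1,t_1},\mu_{2,t_1}) + H_n t_1 \leq \Var_{t_2}(\mu_{1,t_2},\mu_{2,t_2}) + H_n t_2$. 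I expect the main obstacle to be precisely the comparison step: upgrading the barrier-sense inequality (\ref{eq_HE_dist}) for the non-smooth (but Lipschitz) function $d^2_t$ to a global statement that may be integrated against a conjugate heat kernel measure — handled by the first-contact-point argument with the auxiliary $\beta$-term forcing the contradiction to be strict, or, alternatively, by recording the general weak maximum principle for continuous barrier-supersolutions $\square(\,\cdot\,) \geq -H_n$ integrated against unit-mass conjugate heat kernel measures. The remaining ingredients — that $(N, h_t)$ is a super Ricci flow, that $v_1 \otimes v_2$ solves the conjugate heat equation there, the existence and regularity of $\phi$, and the duality formula — are routine.
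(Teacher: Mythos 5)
Your proposal is correct and follows essentially the same route as the paper: pass to the product super Ricci flow on $M\times M$, compare $d_t^2$ from below with the solution of the heat equation started at $d_{t_1}^2$ shifted by $-H_n(t-t_1)$ (via Theorem~\ref{Thm_HE_dist} and the maximum principle), and integrate against $v_1\otimes v_2$ using the duality between $\square$ and $\square^*$. The only difference is that you spell out the barrier-sense maximum principle via a first-contact argument, which the paper leaves implicit.
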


If $(x_i, t_i) \in M$, $i=1,2$, and $\nu_{x_i, t_i;t}$ denote the corresponding conjugate heat kernel measures, then Corollary~\ref{Cor_conj_HK_monotone} states that
\[ t \longmapsto {\Var}_t (\nu_{x_1,t_1;t} ,\nu_{x_2,t_2;t}) + H_n t \]
is non-decreasing for $t \leq \min \{ t_1, t_2 \}$.
If $t_0 = t_1$, then we obtain.

\begin{Corollary} \label{Cor_HK_monotone_Hn}
For any two points $x_1, x_2 \in M$ and any time $t_0 \in I$ we have for $t \leq t_0$
\begin{equation} \label{eq_Var_bound_HK_same_time}
 {\Var}_t( \nu_{x_1,t_0;t} ,\nu_{x_2,t_0;t}) \leq d^2_{t_0} (x_1, x_2) + H_n (t_0 - t), \qquad
{\Var}_t ( \nu_{x_1,t_0}(t) ) \leq H_n (t_0 - t). 
\end{equation}
\end{Corollary}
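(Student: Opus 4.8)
The plan is to obtain Corollary~\ref{Cor_HK_monotone_Hn} as a direct consequence of the monotonicity in Corollary~\ref{Cor_conj_HK_monotone}, applied to conjugate heat kernels, followed by a limiting argument as $t \nearrow t_0$. Since $M$ is compact, $d_t$ is bounded on $M \times M$, so all variances appearing below are finite and the manipulations are legitimate.

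First I would fix $t_0 \in I$ and $x_1, x_2 \in M$ and set $v_i := K(x_i, t_0; \cdot, \cdot)$ on $M \times (I \cap (-\infty, t_0))$. These are non-negative solutions of $\square^* v_i = 0$ with $\int_M v_i(\cdot, t)\,dg_t = 1$, and the associated probability measures are precisely $\mu_{i,t} = \nu_{x_i,t_0;t}$. By Corollary~\ref{Cor_conj_HK_monotone}, the function
\[ \phi(t) := {\Var}_t(\nu_{x_1,t_0;t}, \nu_{x_2,t_0;t}) + H_n t \]
is non-decreasing on $I \cap (-\infty, t_0)$; being monotone, it has a limit $L := \lim_{t \nearrow t_0} \phi(t) = \sup_{t < t_0} \phi(t) \in (-\infty, \infty]$.

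Next I would identify this limit. Since $\nu_{x_i,t_0;t} \to \delta_{x_i}$ weakly as $t \nearrow t_0$ (built into the definition of the heat kernel), the product measures $\nu_{x_1,t_0;t} \otimes \nu_{x_2,t_0;t}$ converge weakly to $\delta_{x_1} \otimes \delta_{x_2}$ on $M \times M$; and since $(g_t)$ is smooth on the compact manifold $M$, the functions $d^2_t$ converge uniformly on $M \times M$ to $d^2_{t_0}$ as $t \nearrow t_0$. Combining uniform convergence of the integrand with weak convergence of the measures gives
\[ \lim_{t \nearrow t_0} {\Var}_t(\nu_{x_1,t_0;t}, \nu_{x_2,t_0;t}) = d^2_{t_0}(x_1, x_2), \]
hence $L = d^2_{t_0}(x_1,x_2) + H_n t_0 < \infty$. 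Since $\phi(t) \le L$ for all $t < t_0$ in $I$, this rearranges to
\[ {\Var}_t(\nu_{x_1,t_0;t}, \nu_{x_2,t_0;t}) \le d^2_{t_0}(x_1, x_2) + H_n(t_0 - t), \]
and the case $t = t_0$ is trivial. The second bound in (\ref{eq_Var_bound_HK_same_time}) is then the special case $x_1 = x_2 = x$, using $d_{t_0}(x,x) = 0$.

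The only step requiring any care — hence the main (though mild) obstacle — is the continuity of the variance at the endpoint, i.e.\ justifying $\lim_{t \nearrow t_0} {\Var}_t(\nu_{x_1,t_0;t}, \nu_{x_2,t_0;t}) = d^2_{t_0}(x_1,x_2)$. This is exactly where compactness of $M$ enters: it supplies both the uniform convergence $d^2_t \to d^2_{t_0}$ on $M \times M$ and a uniform diameter bound near $t_0$, which together with the weak convergence $\nu_{x_i,t_0;t} \to \delta_{x_i}$ allow passage to the limit inside the double integral defining $\Var_t$.
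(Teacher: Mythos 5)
Your proposal is correct and follows exactly the paper's route: apply the monotonicity of Corollary~\ref{Cor_conj_HK_monotone} to the two conjugate heat kernels based at $(x_1,t_0)$ and $(x_2,t_0)$, and identify the limit $\lim_{t \nearrow t_0} {\Var}_t(\nu_{x_1,t_0;t}, \nu_{x_2,t_0;t}) = d^2_{t_0}(x_1,x_2)$, which the paper states without elaboration and which you justify via weak convergence of the kernels to Dirac measures together with uniform convergence of $d^2_t$ on the compact $M \times M$. The specialization $x_1 = x_2$ for the second bound also matches.
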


The last bound in (\ref{eq_Var_bound_HK_same_time}) can be viewed as a concentration inequality.
Motivated by this, we define:

\begin{Definition}[$H_n$-center]
A point $(z,t) \in M \times I$ is called an {\bf $H_n$-center} of a point $(x_0, t_0) \in M \times I$ if $t \leq t_0$ and
\begin{equation} \label{eq_def_Hn_center}
 {\Var}_t( \delta_z , \nu_{x_0,t_0;t} ) \leq H_n (t_0 - t). 
\end{equation}
\end{Definition}

Note that by Lemma~\ref{Lem_Var_triangle_inequ_W_1_vs_Var} the bound (\ref{eq_def_Hn_center}) implies
\[ d^{g_t}_{W_1}( \delta_z , \nu_{x_0,t_0;t} )  \leq \sqrt{{\Var}_t ( \delta_z , \nu_{x_0,t_0;t} ) } \leq \sqrt{H_n (t_0 - t)}. \]
The following proposition ensures the existence of $H_n$-centers.

\begin{Proposition} \label{Prop_exist_H-center}
Given $(x_0, t_0) \in M$ and $t \leq t_0$ there is (at least) one point $z \in M$ such that $(z,t)$ is an $H_n$-center of $(x_0, t_0)$ and for any two such points $z_1, z_2 \in M$ we have $d_t (z_1, z_2) \leq 2 \sqrt{H_n (t_0 -t)}$.
\end{Proposition}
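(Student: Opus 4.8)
The plan is to obtain existence from a straightforward averaging/compactness argument and uniqueness-up-to-$2\sqrt{H_n(t_0-t)}$ from the triangle inequality for $\sqrt{\Var}$ together with the concentration bound in Corollary~\ref{Cor_HK_monotone_Hn}. First I would dispose of the trivial case $t = t_0$, where $\nu_{x_0,t_0;t_0} = \delta_{x_0}$ and $z = x_0$ works, with any two admissible points equal to $x_0$. So assume $t < t_0$.

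For existence, consider the function $z \mapsto \Var_t(\delta_z, \nu_{x_0,t_0;t}) = \int_M d_t^2(z, x)\, d\nu_{x_0,t_0;t}(x)$. This is continuous on the compact manifold $M$, hence attains its minimum at some $z \in M$. I claim this minimum value is at most $H_n(t_0-t)$. Indeed, averaging over $z$ against $\nu_{x_0,t_0;t}$ itself gives
\[
 \min_{z \in M} \Var_t(\delta_z, \nu_{x_0,t_0;t}) \leq \int_M \Var_t(\delta_z, \nu_{x_0,t_0;t})\, d\nu_{x_0,t_0;t}(z) = \Var_t(\nu_{x_0,t_0;t}) \leq H_n(t_0-t),
\]
where the last inequality is exactly the second bound in \eqref{eq_Var_bound_HK_same_time}. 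Hence $(z,t)$ is an $H_n$-center of $(x_0,t_0)$.

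For the distance bound between two $H_n$-centers $(z_1,t)$ and $(z_2,t)$, I would apply the triangle inequality \eqref{eq_Var_triangle} with $\mu_1 = \delta_{z_1}$, $\mu_2 = \nu_{x_0,t_0;t}$, $\mu_3 = \delta_{z_2}$, using $\Var_t(\delta_{z_1},\delta_{z_2}) = d_t^2(z_1,z_2)$:
\[
 d_t(z_1,z_2) = \sqrt{\Var_t(\delta_{z_1},\delta_{z_2})} \leq \sqrt{\Var_t(\delta_{z_1}, \nu_{x_0,t_0;t})} + \sqrt{\Var_t(\nu_{x_0,t_0;t}, \delta_{z_2})} \leq 2\sqrt{H_n(t_0-t)},
\]
where each term on the right is bounded by $\sqrt{H_n(t_0-t)}$ by the defining inequality \eqref{eq_def_Hn_center} of an $H_n$-center. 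This completes the proof.

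I do not anticipate a serious obstacle here: the only mild subtlety is making sure the averaging inequality is set up correctly (the diagonal average of $\Var_t(\delta_z,\mu)$ against $\mu$ literally reproduces $\Var_t(\mu)$ by Fubini), and that continuity of $z \mapsto \Var_t(\delta_z,\mu)$ plus compactness of $M$ legitimately yields a minimizer; both are routine. Everything else is an immediate consequence of Lemma~\ref{Lem_Var_triangle_inequ_W_1_vs_Var} and Corollary~\ref{Cor_HK_monotone_Hn}.
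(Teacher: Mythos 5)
Your proof is correct and follows essentially the same route as the paper: existence via averaging $\Var_t(\delta_z,\nu_{x_0,t_0;t})$ against $\nu_{x_0,t_0;t}$ itself (which the paper compresses into ``follows from Corollary~\ref{Cor_HK_monotone_Hn} and the definition of the variance''), and the distance bound via the triangle inequality \eqref{eq_Var_triangle} exactly as in the paper. No issues.
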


Next, we discuss integral distribution inequalities for the conjugate heat kernel.
The first result is a direct consequence of the definition of the variance.

\begin{Proposition} \label{Prop_nu_ball_bound}
If $(z,t)$ is an $H_n$-center of $(x_0, t_0)$, then for $A > 0$
\[ \nu_{x_0,t_0;t} \big(  B(z, t, \sqrt{A H_n (t_0 - t)} ) \big) \geq 1- \frac1{A}. \]
\end{Proposition}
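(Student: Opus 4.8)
The plan is to apply Chebyshev's (Markov's) inequality to the nonnegative random variable $d_t^2(z, \cdot)$ on $M$, distributed according to the probability measure $\nu_{x_0,t_0;t}$. First I would recall that by definition of the variance and the fact that $\delta_z$ is a point mass,
\[
\Var_t(\delta_z, \nu_{x_0,t_0;t}) = \int_M d_t^2(z, y)\, d\nu_{x_0,t_0;t}(y),
\]
and that since $(z,t)$ is an $H_n$-center of $(x_0,t_0)$, this integral is bounded above by $H_n(t_0 - t)$.

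Next I would complement the ball $B(z,t,r)$ with $r = \sqrt{A H_n(t_0-t)}$ and estimate
\[
H_n(t_0 - t) \geq \int_M d_t^2(z,y)\, d\nu_{x_0,t_0;t}(y)
\geq \int_{M \setminus B(z,t,r)} d_t^2(z,y)\, d\nu_{x_0,t_0;t}(y)
\geq r^2 \,\nu_{x_0,t_0;t}\big(M \setminus B(z,t,r)\big),
\]
using that $d_t(z,y) \geq r$ on the complement of the ball. Substituting $r^2 = A H_n(t_0-t)$ and dividing gives $\nu_{x_0,t_0;t}(M \setminus B(z,t,r)) \leq 1/A$, and since $\nu_{x_0,t_0;t}$ is a probability measure, the claimed lower bound on $\nu_{x_0,t_0;t}(B(z,t,r))$ follows immediately. (One should note the trivial degenerate case $t = t_0$, where $\nu_{x_0,t_0;t} = \delta_{x_0}$, $z = x_0$, and the ball has radius $0$ but the inequality $1 \geq 1 - 1/A$ still holds.)

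There is essentially no obstacle here: the statement is a one-line consequence of the $H_n$-center property combined with Markov's inequality, which is exactly why the excerpt labels it "a direct consequence of the definition of the variance." The only thing worth being careful about is the measurability and finiteness of the integral $\int_M d_t^2(z,\cdot)\,d\nu_{x_0,t_0;t}$, which are guaranteed because $M$ is compact and $\nu_{x_0,t_0;t}$ has a smooth density; everything is finite and the manipulations above are rigorous.
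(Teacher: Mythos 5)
Your proof is correct and is essentially identical to the paper's: both are a one-line Chebyshev/Markov argument bounding $\nu_{x_0,t_0;t}(M\setminus B)$ by $r^{-2}\int_{M} d_t^2(z,\cdot)\,d\nu_{x_0,t_0;t} = r^{-2}\Var_t(\delta_z,\nu_{x_0,t_0;t}) \leq r^{-2}H_n(t_0-t)$, using the defining property of an $H_n$-center. No issues.
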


In combination with \cite{Hein-Naber-14}, we obtain the following stronger integral Gaussian bound:

\begin{Theorem} \label{Thm_Gaussian_integral_bound}
If $(z,t)$ is an $H_n$-center of $(x_0, t_0)$, then for all $r \geq 0$ and $\eps >0$
\begin{multline*} 
 \nu_{x_0,t_0;t} \big( M \setminus B(z, t, r) \big)
=  \int_{M \setminus B(z, t,r)} K(x_0,t_0; \cdot, t) dg_t \\
 \leq 2 \exp \bigg( {- \frac{\big(r - \sqrt{2H_n (t_0-t)} \big)_+^2}{8(t_0-t)}  }\bigg)
 \leq C(\eps) \exp \bigg( - \frac{r^2}{(8+\eps)(t_0 - t)} \bigg). 
\end{multline*}
\end{Theorem}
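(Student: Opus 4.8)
The plan is to combine the crude concentration estimate from Proposition~\ref{Prop_nu_ball_bound} with an iteration/bootstrapping argument in the spirit of Hein--Naber \cite{Hein-Naber-14} to upgrade the polynomial tail decay to a Gaussian tail. The first inequality is the substantive one; the second is a routine elementary estimate, so I would dispose of it at the end by noting that for any $\eps>0$ there is $C(\eps)$ with $2\exp(-(r-\sqrt{2H_n(t_0-t)})_+^2/(8(t_0-t))) \leq C(\eps)\exp(-r^2/((8+\eps)(t_0-t)))$, which follows by comparing the quadratics after completing the square (the cross term and constant term are absorbed into $C(\eps)$ once $r$ is large, and for bounded $r/\sqrt{t_0-t}$ one just takes $C(\eps)$ large).

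For the first inequality, I would set $\tau := t_0 - t$ and, for a parameter $\rho>0$ to be chosen, consider the function on the outer annulus built from the cutoff against the ball $B(z,t,\rho)$. The key analytic input is the following: if $v = K(x_0,t_0;\cdot,t)$ solves the conjugate heat equation and $(z,t)$ is an $H_n$-center, then for times slightly before $t$ the measure $\nu_{x_0,t_0}$ is still concentrated near $z$ at scale $\sqrt{H_n \cdot(\text{elapsed time})}$ by Corollary~\ref{Cor_HK_monotone_Hn}. Concretely, I would run the Hein--Naber argument: define $\phi(s)$ to be (a smoothed version of) $\nu_{x_0,t_0;s}(M\setminus B(z,t,r_s))$ for a family of radii $r_s$ decreasing from $r$ down to $\sqrt{2H_n\tau}$ as $s$ runs from $t$ back to an earlier time, and use the evolution inequality for $d_t(z,\cdot)$ (one half of Theorem~\ref{Thm_HE_dist}, i.e. $\square_s^*\, d_t^2(z,\cdot) \leq$ controlled, more precisely that $s\mapsto \int_M \eta(d_t(z,\cdot))\,d\nu_{x_0,t_0;s}$ satisfies a differential inequality) together with the heat-kernel reproduction formula to derive a differential inequality of the form $\phi'(s) \leq -\frac{(\text{something})}{s}\,\phi(s)$ after an optimization over the rate at which $r_s$ shrinks. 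Integrating this differential inequality from the early time (where Proposition~\ref{Prop_nu_ball_bound} gives $\phi \leq 1/A \leq 1/2$, say, once the ball has radius $\sqrt{2H_n\tau}$) up to time $t$ produces the exponential factor $\exp(-(r-\sqrt{2H_n\tau})_+^2/(8\tau))$, with the constant $8$ in the denominator arising from the sharp constant in the underlying gradient/distance estimate and the optimization in the choice of $\{r_s\}$.

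The main obstacle will be carrying out the optimization that produces precisely the constant $8$ in the exponent rather than some larger universal constant — this is where one must be careful about how the distance evolution inequality interacts with the shrinking-ball family and where a naive estimate would lose the sharp constant. A secondary technical point is justifying the differential-inequality manipulations in the barrier/viscosity sense, since $d_t(z,\cdot)$ is only Lipschitz and Theorem~\ref{Thm_HE_dist} holds in the barrier and viscosity sense; I would handle this by working with smooth functions of the distance (a fixed smooth nondecreasing $\eta$ with $\eta(\sigma)=0$ for $\sigma$ small and $\eta$ increasing past the relevant threshold), applying the evolution inequality to $\eta \circ d_t$, and only at the end passing to indicator-type quantities via a limiting argument. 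Since the entire Gaussian-tail mechanism and the constant $8$ are already present in \cite{Hein-Naber-14}, the new content here is purely that the "centering" is done at an $H_n$-center — supplied by our Corollary~\ref{Cor_HK_monotone_Hn} and Proposition~\ref{Prop_nu_ball_bound} — rather than at a point whose proximity is controlled by a global entropy bound; so I would present the argument as an adaptation, citing \cite{Hein-Naber-14} for the iteration and emphasizing the substitution of inputs.
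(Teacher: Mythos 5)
You have the right two ingredients --- Proposition~\ref{Prop_nu_ball_bound} to anchor at least half the measure on $B(z,t,\sqrt{2H_n(t_0-t)})$, and the Hein--Naber Gaussian concentration estimate to convert this into a tail bound --- and that is exactly the paper's proof. But the mechanism you sketch for the second ingredient is not the one that works, and the actual argument is far shorter than your plan suggests. Theorem~1.13 of Hein--Naber is already stated in two-set product form: for measurable $A_1,A_2\subset M$ one has $\nu_{x_0,t_0;t}(A_1)\,\nu_{x_0,t_0;t}(A_2)\leq \exp\bigl(-d_t^2(A_1,A_2)/(8(t_0-t))\bigr)$. Apply this with $A_1=B(z,t,\sqrt{2H_n(t_0-t)})$, which has measure at least $\tfrac12$ by Proposition~\ref{Prop_nu_ball_bound} with $A=2$, and $A_2=M\setminus B(z,t,r)$, whose distance to $A_1$ is at least $(r-\sqrt{2H_n(t_0-t)})_+$; dividing by $\nu(A_1)\geq\tfrac12$ gives the first inequality in one line. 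There is no iteration, no family of shrinking radii, no differential inequality in $s$, and the $8$ is not produced by an optimization: it is built into the product form of the two-set inequality (each factor contributes a $4$). Your treatment of the second displayed inequality by completing the square is fine.

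The shrinking-ball scheme would, moreover, hit a genuine obstruction if executed rather than cited. A Davies/Grigor'yan-type weighted argument needs an upper barrier for the Laplacian of the weight (a supersolution property of a Gaussian weight along the conjugate heat flow), which in this setting would require a lower Ricci bound; Theorem~\ref{Thm_HE_dist} supplies a \emph{lower} bound on $(\partial_t-\triangle_x-\triangle_y)d_t^2$, which is the inequality needed for the variance monotonicity and points the wrong way for your purpose. In addition, the fixed-time distance $d_t(z,\cdot)$ viewed at earlier times $s<t$ is not a quantity whose evolution under $\square^*$ is controlled in the absence of curvature bounds. The Hein--Naber concentration inequality is instead derived from the Lipschitz preservation of the heat semigroup (Lemma~\ref{Lem_Lipschitz_preserved}) via a Herbst-type exponential-moment argument applied to $1$-Lipschitz functions, which bypasses the time dependence of the distance function entirely; if you intend to reproduce the Gaussian step rather than quote it, that is the argument to adapt.
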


Lastly, we remark that the $H_n$-center $(z, t)$ of a point $(x_0, t_0) \in M \times I$, $t < t_0$, may lie far from the point $x_0$.
To see this, consider the Bryant soliton \cite{Bryant2005}, which is a rotationally symmetric steady gradient soliton $(M_{\Bry}, (g_{\Bry, t})_{t \in \IR})$ on $\IR^n$ that is asymptotic to a metric of the form $dr^2 + r g_{S^{n-1}}$ and satisfies $|{\Rm}| \sim r^{-1}$ as $r \to \infty$, up to a multiplicative constant.
Denote by $x_{\Bry} \in M_{\Bry}$ its center of rotation.
It will follow from Theorem~\ref{Thm_lower_volume_H_center}, and Perelman's Pseudolocality Theorem \cite[10.3]{Perelman1} that for $t \ll 0$ and any $H_n$-center $(z,t)$ of $(x_{\Bry}, 0)$ we have $|{\Rm}| (z,t) \sim |t|^{-1}$ and therefore $d_t (z,x_{\Bry}) \sim |t|$, up to a multiplicative constant.
So the use of $z$ in Proposition~\ref{Prop_nu_ball_bound} and Theorem~\ref{Thm_Gaussian_integral_bound} as the center of distance balls is essential.

\subsection{Proofs}
\begin{proof}[Proof of Lemma~\ref{Lem_Var_triangle_inequ_W_1_vs_Var}.]
To see (\ref{eq_Var_triangle}), we estimate
\begin{align*}
\sqrt{ \Var (\mu_1, \mu_3) }
&= \bigg( \int_M \int_M \int_M d^2 (x_1, x_3) d\mu_1(x_1) d\mu_2 (x_2) d\mu_3 (x_3) \bigg)^{1/2} \displaybreak[1] \\
&\leq \bigg( \int_M \int_M \int_M \big( d (x_1, x_2) + d (x_2, x_3) \big)^2 d\mu_1(x_1) d\mu_2 (x_2) d\mu_3 (x_3) \bigg)^{1/2} \displaybreak[1] \\
&\leq \bigg( \int_M \int_M \int_M d^2 (x_1, x_2) d\mu_1(x_1) d\mu_2 (x_2) d\mu_3 (x_3) \bigg)^{1/2} \\
&\qquad + \bigg( \int_M \int_M \int_M d^2 (x_2, x_3) d\mu_1(x_1) d\mu_2 (x_2) d\mu_3 (x_3) \bigg)^{1/2} \\
&= \sqrt{ \Var (\mu_1, \mu_2) } + \sqrt{ \Var (\mu_2, \mu_3) }. 
\end{align*}
Let us now show (\ref{eq_Var_W1_relation}).
The following proof uses the definition (\ref{eq_d_W1_def}) of $d_{W_1}^g$.
For a proof using the definition of $d_{W_1}^g$ using couplings, see \cite{Bamler_RF_compactness}.
For the first bound in (\ref{eq_Var_W1_relation}), consider a bounded $1$-Lipschitz function $f : M \to \IR$ and observe that
\begin{multline*}
 \int_M f \, d\mu_1 - \int_M f \, d\mu_2
= \int_M \int_M (f(x_1) - f(x_2)) d\mu_1 (x_1) d\mu_2 (x_2) \\
\leq \int_M \int_M d(x_1, x_2) d\mu_1 (x_1) d\mu_2 (x_2) 
\leq \bigg( \int_M \int_M d^2(x_1, x_2) d\mu_1 (x_1) d\mu_2 (x_2) \bigg)^{1/2}
= \sqrt{\Var (\mu_1, \mu_2)}. 
\end{multline*}
For the second bound, define
\[ f(x) := \sqrt{\Var(\delta_{x}, \mu_2)} \geq   \sqrt{\Var(\mu_1, \mu_2)} - \sqrt{\Var(\delta_{x}, \mu_1)}. \]
By (\ref{eq_Var_triangle}) the function $f$ is $1$-Lipschitz, since
\[ |f(x) - f(x')| 
= \big| \sqrt{\Var(\delta_{x}, \mu_2)} - \sqrt{\Var(\delta_{x'}, \mu_2)} \big|
\leq \sqrt{\Var(\delta_{x}, \delta_{x'})} = d(x, x'). \]
Next observe that if we set $f_A := \min \{ f, A \} \geq 0$, then
\[ \int_M f_A \, d\mu_2 \leq \int_M f \, d\mu_2 \leq \int_M \sqrt{\Var(\delta_{x}, \mu_2)} d\mu_2(x) 
\leq \bigg( \int_M \Var(\delta_{x}, \mu_2) d\mu_2 (x) \bigg)^{1/2}
= \sqrt{\Var (\mu_2)}. \]
So letting $A \to \infty$ implies that if $\Var (\mu_2) < \infty$
\begin{align*}
 d_{W_1}^g (\mu_1, \mu_2) 
&\geq \int_M f \, d\mu_1 - \int_M f \, d\mu_2 \\
&\geq  \sqrt{\Var(\mu_1, \mu_2)} - \int_M \sqrt{\Var(\delta_{x}, \mu_1)} d\mu_1(x)  - \int_M \sqrt{\Var(\delta_{x}, \mu_2)} d\mu_2(x) \\
&\geq  \sqrt{\Var(\mu_1, \mu_2)} - \bigg( \int_M \Var(\delta_{x}, \mu_1) d\mu_1(x) \bigg)^{1/2}  - \bigg( \int_M \Var(\delta_{x}, \mu_2) d\mu_2(x) \bigg)^{1/2}  \\
&= \sqrt{\Var(\mu_1, \mu_2)} - \sqrt{\Var(\mu_1)} -\sqrt{\Var(\mu_2)}. 
\end{align*}
This finishes the proof.
\end{proof}
\bigskip

\begin{proof}[Proof of Theorem~\ref{Thm_HE_dist}.]
The case $x=y$ is clear since the right-hand side is smaller than $-4n$.
So assume that $x \neq y$.
Set $d := d_t (x,y)$ and let $\gamma : [0,d] \to M$ be an arclength minimizing geodesic between $x,y$.
It suffices to prove the inequality in the barrier sense, which implies the viscosity sense.

We first reduce the lemma to the case in which $x,y$ do not lie in each other's cut locus.
Assume that the lemma is already known in this case.
Let $\eps > 0$ be a small constant.
Consider two points $x' = \exp_x (u)$, $y' = \exp_y(v)$ near $x, y$, respectively.
Denote by $u_\eps, v_\eps$ the parallel transports of $u,v$ to $x_\eps := \gamma(\eps)$, $y_\eps := \gamma(d-\eps)$, respectively and set $x'_\eps := \exp_{x_\eps}(u_\eps)$, $y'_\eps := \exp_{y_\eps} (v_\eps)$.
Then for $t' \in I$ near $t$ we have
\begin{multline*}
 b_\eps (x',y',t') :=  b^{(1)}_\eps (x',y',t') +  b^{(2)}_\eps (x',y',t') +  b^{(3)}_\eps (x',y',t') \\
 := d_{t'} (x', x'_\eps) + d_{t'} (x'_\eps, y'_\eps) + d_{t'} (y'_\eps, y') \geq d_{t'} (x',y')
\end{multline*}
and equality holds for $x' = x$, $y' = y$.
So $b_\eps$ is an upper barrier for the distance function  at $(x,y, t)$.
Note that $b_\eps$ is smooth near $(x,y,t)$, because $x_\eps, y_\eps$ do not lie in each other's cut locus.
For the same reason we have
\[ \liminf_{\eps \to 0} (\partial_t - \triangle_x - \triangle_y)  b^2_\eps (x,y,t) =  \liminf_{\eps \to 0} (\partial_t - \triangle_x - \triangle_y)  (b^{(2)}_\eps)^2 (x,y,t) \geq -  \frac{(n-1)\pi^2}{2} - 4. \]
This shows that (\ref{eq_HE_dist}) holds in the barrier sense.

Assume from now on that $x,y$ are not located in each other's cut locus.
Unless stated otherwise, all geometric quantities will be taken at time $t$.
Then $(x',y',t') \mapsto d^2_{t'}(x',y')$ is smooth near $(x,y,t)$ and
\begin{equation} \label{eq_dtd2}
 \partial_t d^2_t (x,y) \geq - 2d \int_0^d \Ric (\gamma'(s),\gamma'(s)) ds. 
\end{equation}
Let $e_1 (s) = \gamma' (s), e_2(s), \ldots, e_n(s) \in T_{\gamma(s)}M$ be a parallel orthonormal frame along $\gamma$.
For $i = 2, \ldots, n$ let $\gamma^i_u(s)$ be a variation of $\gamma$ such that \[ \partial_u \big|_{u=0} \gamma^i_u (s) = v_i (s) := \sin \Big(\frac{\pi}{2d} s \Big) e_i (s) \]
and $(D/\partial u)\partial_u \gamma^i_u = 0$.
The corresponding energy function (at time $t$)
\[ E_i (u) := \frac12 \int_0^d |\gamma^{i,\prime}_u(s)|^2_t ds \]
satisfies $E'_i (0) = \langle \gamma'(s) , v_i(s) \rangle_t |_{s=0}^{s=d}  = 0$ and
\begin{multline*}
E_i''(0) = \int_0^d \bigg( \bigg| \frac{D v_i}{\partial s} \bigg|^2 - R (\gamma' (s), v_i(s), v_i(s), \gamma'(s) \bigg) ds \\
= \int_0^d \bigg( \Big( \frac{\pi}{2d} \Big)^2  \cos^2 \Big(\frac{\pi}{2d} s \Big) - \sin^2 \Big( \frac{\pi}{2d}s \Big) R (\gamma' (s), e_i(s), e_i(s), \gamma'(s))  \bigg) ds.
\end{multline*}
Since $d^2_t (x, \gamma^i_u(d))\leq L_t^2(\gamma^i_u) \leq 2d E_i(u)$, where $L_t$ denotes the length at time $t$, with equality for $u = 0$, we obtain
\[ \frac{d^2}{du^2} \bigg|_{u= 0} d_t^2 (x,\gamma^i_u(d)) \leq 2dE''_i (0). \]
Since $v_i (d) = e_i(d)$ and $(D/\partial u)\partial_u \gamma^i_u = 0$, we can sum over $i = 2, \ldots, n$ and obtain
\[ \triangle_y d^2_t (x,y) \leq 2+  \int_0^d  \bigg( 2 (n-1) d \Big(  \frac{\pi}{2d} \Big)^2   \cos^2 \Big(\frac{\pi}{2d} s \Big) - 2d \sin^2 \Big( \frac{\pi}{2d} s \Big) \Ric (\gamma' (s), \gamma'(s))  \bigg) ds. \]
By reversing the roles of $x, y$ we obtain similarly that
\[ \triangle_x d^2_t (x,y) \leq 2+  \int_0^d   \bigg( 2 (n-1)d \Big(  \frac{\pi}{2d} \Big)^2 \sin^2 \Big( \frac{\pi}{2d} s \Big) - 2d \cos^2 \Big( \frac{\pi}{2d} s \Big) \Ric (\gamma' (s), \gamma'(s))  \bigg) ds. \]
Adding both inequalities and combining the result with (\ref{eq_dtd2}) implies (\ref{eq_HE_dist}).
\end{proof}
\bigskip

\begin{proof}[Proof of Corollaries~\ref{Cor_conj_HK_monotone}, \ref{Cor_HK_monotone_Hn}.]
Fix some $t_1 \in I$ and consider the solution $u \in C^0 (M \times M \times ([t_1, \infty) \cap I)) \cap C^\infty (M \times M \times ((t_1, \infty) \cap I))$ to the heat equation
\[ (\partial_t - \triangle_x - \triangle_y) u = - H_n, \qquad u (\cdot, t_1) = d^2_{t_1}, \]
where we use the evolving background metric $g_t \oplus g_t$ on $M \times M$.
By Theorem~\ref{Thm_HE_dist} and the maximum principle, we have $u \leq d^2$.

Next, we have for $t \in I$, $t > t_1$
\begin{align*}
 \frac{d}{dt} \int_M \int_M & u (x,y, t) v_1(x,t) v_2(y,t) dg_t(x) dg_t(y) \\
&= \int_M \int_M \Big(  \partial_t u (x,y,t)  v_1(x,t) v_2(y,t)  \\
&\qquad\qquad\qquad + u(x,y,t)  \partial_t v_1 (x,t)   v_2(y,t) + u (x,y,t) v_1(x,t) \partial_t v_2(y,t) \\ 
&\qquad\qquad\qquad + \frac12 u(x,y,t) v_1(x,t) v_2(y,t) (\tr (\partial_t g_t))(x,t)   \\
&\qquad\qquad\qquad + \frac12 u(x,y,t) v_1(x,t) v_2(y,t) (\tr (\partial_t g_t))(y,t)  \Big) dg_t(x) dg_t(y) \\
&= \int_M \int_M \big( ((\triangle_x + \triangle_y) u) (x,y, t) v_1(x,t) v_2(y,t) - H_n v_1(x,t) v_2(y,t) \\
&\qquad\qquad - \triangle v_1 (x,t) v_2(y,t) - v_1(x,t) \triangle v_2(y,t) \big) dg_t(x) dg_t(y) = -H_n.
\end{align*}
So for any $t > t_1$
\begin{align*}
 \int_M \int_M  & d^2_t (x,y) v_1(x,t) v_2(y,t) dg_t(x) dg_t(y)  + H_n t \\
&\geq \int_M \int_M  u (x,y,t) v_1(x,t) v_2(y,t) dg_t(x) dg_t(y) + H_n t \\
&= \int_M \int_M  u (x,y,t_1) v_1(x,t_1) v_2(y,t_1) dg_{t_1} (x) dg_{t_1} (y) + H_n t_1 \\
&= \int_M \int_M  d_{t_1}^2 (x,y,t_1) v_1(x,t_1) v_2(y,t_1) dg_{t_1} (x) dg_{t_1} (y) + H_n t_1, 
\end{align*}
which proves Corollary~\ref{Cor_conj_HK_monotone}.

For Corollary~\ref{Cor_HK_monotone_Hn}, observe that $\lim_{t \nearrow t_0} {\Var}_t( \nu_{x_1,t_0;t} ,\nu_{x_2,t_0;t}) = {\Var}_{t_0} (\delta_{x_1}, \delta_{x_3}) = d^2_{t_0} (x_1, x_2)$.
\end{proof}
\bigskip

\begin{proof}[Proof of Proposition~\ref{Prop_exist_H-center}.]
The existence of $z$ follows from Corollary~\ref{Cor_HK_monotone_Hn} and the definition of the variance.
The last statement can be seen as follows, using Lemma~\ref{Lem_Var_triangle_inequ_W_1_vs_Var}:
 \[ d_t(z_1, z_2) = \sqrt{ {\Var}_t (\delta_{z_1}, \delta_{z_2}) }
\leq \sqrt{ {\Var}_t (\delta_{z_1},\nu_{x_0,t_0;t}  ) } + \sqrt{ {\Var}_t (\nu_{x_0,t_0;t}, \delta_{z_2}) }
\leq 2 \sqrt{H_n (t_0 -t)}. \qedhere \]
\end{proof}
\bigskip

\begin{proof}[Proof of Proposition~\ref{Prop_nu_ball_bound}.]
Let $B := B(z, t, \sqrt{AH_n (t_0 - t)})$.
Using Corollary~\ref{Cor_HK_monotone_Hn}, we obtain that
\[ \nu_{x_0, t_0; t} (M \setminus B) \leq \frac1{AH_n (t_0 - t)} \int_{M \setminus B} d^2_t (z, \cdot) d\nu_{x_0, t_0; t}
\leq \frac1{AH_n (t_0 - t)} {\Var}_t ( \nu_{x_0, t_0; t} ) \leq \frac1A, \]
which implies the proposition.
\end{proof}
\bigskip

\begin{proof}[Proof of Theorem~\ref{Thm_Gaussian_integral_bound}.]
The first bound follows from \cite[Theorem~1.13]{Hein-Naber-14} applied to the subsets $B(z, t, \sqrt{2H_n (t_0 - t)})$, $M \setminus B(z,t,r)$ using Proposition~\ref{Prop_nu_ball_bound}.
Note that this theorem also holds for super-Ricci flows, as one may verify easily.
The second bound is a direct consequence of the first.
\end{proof}

\section{An improved Gradient estimate and its consequences} \label{sec_gradient_estimate}
\subsection{Statement of the results}
The main result of this section is a slight, but ---  as we will soon see --- important improvement of a gradient bound due to Zhang \cite[Theorem 3.2]{Zhang-gradient} and Cao-Hamilton \cite[Theorem 5.1]{Cao-Hamilton-2009}.

Let $\Phi : \IR \to (0,1)$ be the following antiderivative
\[ \Phi' (x) = (4\pi)^{-1/2} e^{-x^2/4}, \qquad \lim_{x \to -\infty} \Phi (x) = 0, \qquad  \lim_{x \to \infty} \Phi (x) = 1. \]
Then $\Phi_t (x) := \Phi (t^{-1/2} x)$ is a solution to the 1-dimensional heat equation $\partial_t \Phi_t = \Phi''_t$ with initial condition $\chi_{[0, \infty)}$.
For any $t > 0$ denote by $\Phi^{-1}_t : (0,1) \to \IR$ the inverse function of $\Phi_t : \IR \to (0,1)$.

The following theorem states that any solution $(u_t)_{t > 0}$ to the scalar heat equation on a super Ricci flow background  that only takes values in $(0,1)$ has a gradient that is bounded by the corresponding derivative of $\Phi_t$.
In other words, we have the gradient bound $|\nabla u|(x,t) \leq \Phi'_t(s)$, if $s \in \IR$ is chosen such that $u(x,t) = \Phi_t (s)$

For the remainder of this section let $(M, (g_t)_{t \in I})$ be a super Ricci flow on a compact manifold.

\begin{Theorem} \label{Thm_gradient_estimate}
Consider a solution $u \in C^\infty (M \times  [t_0, t_1])$, $[t_0, t_1] \subset I$, to the heat equation $\partial_t u = \triangle_{g_t} u$ coupled with the super Ricci flow $(M, (g_t)_{t \in I})$ and let $T \geq 0$.
Suppose that $u$ only takes values in $(0,1)$ and suppose that $| \nabla (\Phi_T^{-1} ( u (\cdot , t_0) ))| \leq 1$ if $T > 0$.

Then $| \nabla (\Phi^{-1}_{T+t - t_0} ( u(\cdot, t) ))| \leq 1$ for all $t \in [t_0, t_1]$.
\end{Theorem}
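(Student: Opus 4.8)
The plan is to reduce the statement to a maximum-principle argument for a suitably normalized auxiliary function, following the strategy behind the Zhang and Cao--Hamilton gradient estimates but keeping track of the sharp profile $\Phi$ rather than a weaker comparison function. Write $\phi := \Phi^{-1}_{T + t - t_0}$ and set $w(x,t) := \phi(u(x,t))$, so that $u = \Phi_{T+t-t_0}(w)$; the claim is exactly $|\nabla w| \leq 1$ on $M \times [t_0, t_1]$, given that it holds at $t = t_0$. The first step is to compute the evolution equation satisfied by $w$. Since $\Phi_\tau$ solves the $1$-dimensional heat equation $\partial_\tau \Phi_\tau = \Phi_\tau''$ and $u$ solves $\partial_t u = \triangle_{g_t} u$, a direct chain-rule computation (using $\partial_t = \partial_\tau$ along $\tau = T + t - t_0$) gives an equation of the schematic form
\[
 \square w = \partial_t w - \triangle_{g_t} w = \frac{\Phi_\tau''(w)}{\Phi_\tau'(w)}\big( |\nabla w|^2 - 1 \big),
\]
where $\Phi_\tau''(w)/\Phi_\tau'(w)$ is a smooth coefficient. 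This identity is the crux: it says $w$ satisfies an equation that "wants" $|\nabla w|^2 - 1$ to vanish, and in particular $\{|\nabla w|^2 \leq 1\}$ should be preserved.

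The second step is to differentiate. Let $Q := |\nabla w|^2$. Using the Bochner formula together with the super Ricci flow inequality $\partial_t g_t \geq -2\Ric_{g_t}$ (which, exactly as in the computation preceding Lemma~\ref{Lem_Lipschitz_preserved}, makes the contribution of $\partial_t g_t$ work in our favor), and differentiating the equation for $w$, one obtains a differential inequality for $Q$ of the form
\[
 \square Q \leq -2|\nabla^2 w|^2 + (\text{coefficient}) \cdot \nabla Q \cdot \nabla w + (\text{coefficient}) \cdot (Q-1) Q,
\]
where all coefficients are smooth on compact time subintervals bounded away from the endpoints where $\Phi_\tau'$ could degenerate. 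The essential structural point is that the zeroth-order term is a multiple of $(Q-1)$ times a nonnegative factor, so on the set where $Q$ attains a new maximum exceeding $1$ this term is $\leq 0$, while the gradient term vanishes at a spatial maximum and $-2|\nabla^2 w|^2 \leq 0$. Hence the maximum principle forbids $\max_M Q(\cdot, t)$ from crossing the threshold $1$: if $\max_M Q(\cdot, t_0) \leq 1$ then $\max_M Q(\cdot, t) \leq 1$ for all later $t$.

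The main obstacle, and the reason some care is needed, is the degeneracy of $\Phi_\tau'$ as $w \to \pm\infty$ (equivalently as $u \to 0$ or $1$): the coefficient $\Phi_\tau''/\Phi_\tau'$ blows up there, so $w$ itself need not be bounded and the equation is only uniformly parabolic on regions where $u$ is bounded away from $0$ and $1$. To handle this I would run the argument on the slightly shrunk range: for small $\eta > 0$ consider $u_\eta := (1-2\eta) u + \eta$, which takes values in $(\eta, 1-\eta)$ and still solves the heat equation (since the equation is affine-linear, $u_\eta$ is again a solution — one checks this directly), apply the preserved-threshold argument to $u_\eta$ where everything is smooth and the maximum principle applies on the compact manifold $M$ with no boundary-at-infinity issues, and then let $\eta \to 0$, using that $\Phi_{T+t-t_0}^{-1}$ depends continuously on its argument on compact subsets of $(0,1)$ and that $\nabla(\Phi^{-1}_{T}(u_\eta(\cdot,t_0))) \to \nabla(\Phi^{-1}_T(u(\cdot,t_0)))$ with the appropriate one-sided bound surviving in the limit. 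A second, minor technical point is the behavior at $t = t_0$ when $T = 0$: then $\tau \searrow 0$ and $\Phi_\tau^{-1}$ degenerates toward the step function, so the hypothesis is stated only for $T > 0$ and for $T = 0$ one instead uses a short-time argument or approximates $T$ by small positive values; I would note that running the $T > 0$ case and then passing $T \searrow 0$ recovers the borderline statement by continuity of $(\tau, a) \mapsto \Phi_\tau^{-1}(a)$ away from $\tau = 0$ applied at times $t > t_0$.
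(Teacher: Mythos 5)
Your proposal is correct and follows essentially the same route as the paper: write $u = \Phi_{T+t-t_0}(w)$, derive $\square w = \tfrac{\Phi''_\tau}{\Phi'_\tau}(w)\,(|\nabla w|^2-1)$ (which, since $\Phi''_\tau/\Phi'_\tau(x) = -x/2\tau$, is exactly the paper's equation for $h$), apply Bochner together with $\partial_t g_t \ge -2\Ric$ to get a differential inequality for $|\nabla w|^2$ whose zeroth-order term has the right sign at the threshold $1$, and conclude by the maximum principle, reducing $T=0$ to $T>0$ by letting $T \searrow 0$. The only superfluous step is the $\eta$-regularization: since $M\times[t_0,t_1]$ is compact and $u$ is continuous with values in the open interval $(0,1)$, $u$ is automatically confined to $[\eps,1-\eps]$, so no degeneracy of $\Phi'_\tau$ ever occurs and the maximum principle applies directly.
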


Note that the theorem is sharp, because equality is attained if we consider the trivial super Ricci flow on $\IR$ and let $u_t := \Phi_t$.
Taking the Cartesian product with any other super Ricci flow produces similar examples in all dimensions.

Theorem~\ref{Thm_gradient_estimate} will imply the following integral bounds on the gradient of the heat kernel, which will become important later.

\begin{Proposition} \label{Prop_nab_K_bounds}
Let $[s,t] \subset I$ and write $d\nu := d\nu_{x,t;s} = K(x,t;\cdot, s) dg_s$.
Then for any $1 \leq p < \infty$
\begin{equation} \label{eq_Prop_nab_K_bounds_1}
 (t-s)^{p/2} \int_M \bigg( \frac{|\nabla_x K(x,t; \cdot, s)|}{K(x,t; \cdot, s)} \bigg)^p d\nu \leq C(n,p). 
\end{equation}
Moreover, for any measurable subset $X \subset M$ we have
\[ (t-s)^{p/2} \int_X \bigg( \frac{|\nabla_x K(x,t; \cdot, s)|}{K(x,t; \cdot, s)} \bigg)^p d\nu \leq C(n,p) \nu(X) \big({- \log  (\nu(X)/2)} \big)^{p/2}. \]
If $p = 2$, then we can take $C(n,p) = \frac{n}2$ in (\ref{eq_Prop_nab_K_bounds_1}) and we even have for any $v \in T_x M$ with $|v|_t = 1$
\[  (t-s) \int_M \bigg( \frac{\partial_v K(x,t; \cdot, s)}{K(x,t; \cdot, s)} \bigg)^2 d\nu \leq \frac12.  \]
\end{Proposition}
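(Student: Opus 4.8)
The plan is to deduce Proposition~\ref{Prop_nab_K_bounds} from the sharp gradient estimate in Theorem~\ref{Thm_gradient_estimate} by a truncation-and-limiting argument, reducing the integral bound on $|\nabla_x K|/K$ to a pointwise estimate that is then integrated against $\nu$. The starting observation is the standard reproducing identity: for fixed $(y,s)$ with $s<t$, the function $(x,t')\mapsto K(x,t';y,s)$ solves the \emph{forward} heat equation in $(x,t')$, so for a fixed direction $v\in T_xM$, $|v|_t=1$, the quantity $\partial_v K(x,t;\cdot,s)$ can be controlled by applying a gradient estimate to an auxiliary solution. Concretely, for a bounded $1$-Lipschitz test function $\phi$ on $M$, let $w$ solve $\square w=0$ on $M\times[s,t]$ with $w(\cdot,s)=\phi$; then $w(x,t)=\int_M \phi\, d\nu_{x,t;s}$ and $\partial_v w(x,t)=\int_M \phi\,\partial_v K(x,t;\cdot,s)\,dg_s$, while Lemma~\ref{Lem_Lipschitz_preserved} gives $|\nabla w|\le 1$, hence $|\partial_v w(x,t)|\le 1$. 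Taking the supremum over such $\phi$ shows that the signed measure $\partial_v K(x,t;\cdot,s)\,dg_s$ has total variation (as a functional on $1$-Lipschitz functions) at most $1$; but this only gives the $L^1$-version and not the weighted $L^p$-bounds, so one needs the finer, $\Phi$-based estimate.

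The key step is to feed \emph{affine} functions of the form $a+b\,\partial_v(\cdot)$ — or rather the rescaled CDF $\Phi$ of the Gaussian — into Theorem~\ref{Thm_gradient_estimate}. Here is the mechanism for $p=2$, which is where the sharp constant $\tfrac12$ comes from. Fix $v\in T_xM$ with $|v|_t=1$ and consider, for small $\eta>0$, the one-parameter family of solutions: let $u^\eta$ be the solution to $\square u^\eta=0$ with $u^\eta(\cdot,s)=\Phi(\eta^{-1}\,\ell(\cdot))$ for a suitable function $\ell$; differentiating in $\eta$ at $\eta\to 0$ converts the pointwise bound $|\nabla(\Phi^{-1}_{t-s}(u^\eta(\cdot,t)))|\le 1$ into an integral inequality for $K$. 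More cleanly: for a $1$-Lipschitz $\phi$, apply Theorem~\ref{Thm_gradient_estimate} with $T=0$ to $u_\lambda:=\Phi_{t-s}\big(\lambda^{-1}\,(\text{heat flow of }\phi)\big)$? — the robust route is to note that $u(x,t)=\Phi_{t-s}(\lambda^{-1} w_\phi(x,t))$ satisfies $|\nabla_x(\Phi^{-1}_{t-s}(u))|=\lambda^{-1}|\nabla w_\phi|\le\lambda^{-1}$, so after normalizing we learn nothing new; instead one should run the argument \emph{at the level of the heat kernel itself} by choosing $\phi$ to approximate $\sqrt{t-s}\cdot\Phi^{-1}_{t-s}$ composed with the conjugate-heat distribution function of a halfspace. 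The upshot, which I would state as the core lemma, is:
\begin{equation*}
\int_M \Big(\Phi^{-1}_{t-s}\big(F_v(\cdot)\big)\Big)'\, \frac{|\partial_v K|^2}{K}\,dg_s \;\le\; \frac{1}{t-s}\int_M \Big(\Phi^{-1}_{t-s}(F_v)\Big)\, \partial_v K\,dg_s,
\end{equation*}
where $F_v(y):=\nu_{x,t;s}\big(\{y'\colon \langle \exp^{-1}(y'),v\rangle\le \cdot\}\big)$ — after which a change of variables $r=\Phi^{-1}$ and the Gaussian identity $\int r^2\,\Phi'(r)^2/\Phi'(r)\,dr$... collapse everything to the explicit one-dimensional computation on $(\IR,\Phi_t)$, yielding exactly $\tfrac12$. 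The general $1\le p<\infty$ case replaces the quadratic by the $p$-th power and picks up $C(n,p)$ from the moments of the Gaussian.

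For the localized statement over a measurable $X\subset M$, the plan is to combine the pointwise gradient bound $|\partial_v K|/K \le (t-s)^{-1/2}\cdot|{\Phi^{-1}_{t-s}}'|\circ(\text{something})$ coming from Theorem~\ref{Thm_gradient_estimate} applied to the conjugate-heat \emph{distribution function} $s'\mapsto \nu(\{K(x,t;\cdot,s)<c\})$ (a monotone quantity in $(0,1)$, hence amenable to the theorem), giving the envelope $|\nabla_x K|/K \lesssim (t-s)^{-1/2}\sqrt{-\log(K\cdot(t-s)^{n/2}/C_0)}$ as in the displayed heat-kernel gradient bound; then one integrates this envelope over $X$ using $\int_X(-\log p)^{p/2}\,d\nu \le \nu(X)(-\log(\nu(X)/2))^{p/2}$, which follows from Jensen applied to the concave-after-a-point function $t\mapsto t(-\log(t/2))^{p/2}$ on $(0,1)$. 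I expect the \textbf{main obstacle} to be making the passage from the sharp \emph{pointwise} statement of Theorem~\ref{Thm_gradient_estimate} — which concerns solutions valued in $(0,1)$ — to a statement about the \emph{heat kernel}, which is unbounded; the right device is to apply the theorem to $u_c := $ (heat flow of $\chi_{\{K(x,t;\cdot,s)<c\}}$) or to $\Phi$ of a rescaled Lipschitz potential and then optimize/limit in the parameter, and getting the constants to line up (especially the sharp $\tfrac12$ and the factor $8+\eps$ appearing downstream) will require care with the rescaling $\Phi_t(x)=\Phi(t^{-1/2}x)$ and with boundary terms in the integration by parts $\int_M (\Phi^{-1}_{t-s}(u))\,\square(\cdots)$.
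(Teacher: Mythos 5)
You have correctly located the engine of the proof --- Theorem~\ref{Thm_gradient_estimate} applied to the heat flow of indicator functions $\chi_X$ --- but you never actually extract the inequality that makes everything work, and the argument you sketch in its place does not close. The paper's proof rests on the following one-line consequence of Theorem~\ref{Thm_gradient_estimate}: writing $q:=\partial_v K(x,t;\cdot,s)/K(x,t;\cdot,s)$ and letting $u$ be the heat flow of $\chi_X$ (regularized to $(1-2\eps)u+\eps$ so that it takes values in $(0,1)$, then $\eps\to 0$), one has $u(x,t)=\nu(X)$ and $\partial_v u(x,t)=\int_X q\,d\nu$, so the gradient bound $|\nabla u|(x,t)\le \Phi'_{t-s}(\Phi^{-1}_{t-s}(u(x,t)))$ gives
\[
\int_X q\,d\nu \;\le\; F(\nu(X)), \qquad F(a):=\Phi'\big(\Phi^{-1}(a)\big),
\]
for \emph{every} measurable $X$ (after normalizing $t-s=1$). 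Your ``core lemma'' involving $F_v$ and $\Phi^{-1}_{t-s}$ of a half-space distribution function is neither precisely stated nor derived, and the passage you describe (``collapse everything to the explicit one-dimensional computation'') is not an argument. What is genuinely missing is the second half of the proof: one must convert the family of constraints $\int_X q\,d\nu\le F(\nu(X))$, together with $\int_M q\,d\nu=0$, into $L^p$ control of $q$. The paper does this via the decreasing rearrangement $h$ of $q$ with respect to $\nu$, proving $a\,h(a)\le\int_0^a h\le F(a)$, $(1-a)h(a)\ge -F(a)$, $\int_0^1 h=0$, then $|h(a)|\le C(-\log a)^{1/2}$ from $F(a)\le C a(-\log a)^{1/2}$; the sharp constant $\tfrac12$ for $p=2$ comes from an integration by parts against the concave function $F$ ($F''\le 0$, $H:=\int_0^{\cdot}h\le F$, $H(0)=H(1)=0$) reducing $\int_0^1 h^2$ to the explicit Gaussian integral $\int_0^1 (F')^2\,da=\int_{\IR}(\Phi'')^2/\Phi'=\tfrac12$. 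None of this structure appears in your proposal.

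Two further problems. First, your route to the localized estimate over $X$ invokes the pointwise envelope $|\nabla_x K|/K\lesssim (t-s)^{-1/2}\sqrt{-\log(K(t-s)^{n/2}/C_0)}$; that is Theorem~\ref{Thm_HK_gradient_bound}, whose proof in the paper \emph{uses} the present proposition, so this is circular (and the constant $C_0$ there involves $\exp(-\NN_{x,t}(t-s))$, which the present statement must not depend on). The correct localized bound again comes from the rearrangement: $\int_X q_+^p\,d\nu\le\int_0^{\nu(X)}h_+^p$ together with $\int_0^a(-\log\td a)^{p/2}\,d\td a\le 2a(-\log a)^{p/2}$ for small $a$. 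Second, your preliminary observation that $1$-Lipschitz test functions give total-variation control of $\partial_v K\,dg_s$ is correct but, as you note, insufficient; the affine/CDF variants you then try do not recover the needed level-set information, which is exactly why the indicator-function input and the rearrangement formalism are indispensable.
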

\bigskip

\subsection{Proof of the gradient estimate}
\begin{proof}[Proof of Theorem~\ref{Thm_gradient_estimate}.]
Let us first reduce the theorem to the case $T > 0$.
Since $M$ is compact, $u_{t_0}$ takes values in $(\eps, 1-\eps)$ for some $\eps > 0$.
Thus $|\nabla (\Phi_T^{-1} (u(\cdot, t_0)))| \to 0$ uniformly as $T \searrow 0$.
So we may prove the theorem for small $T$ and then let $T \searrow 0$.

Assume that $T > 0$.
By shifting the flow in time, we may additionally assume that $T = t_0$.
Write $u_t =: \Phi_t \circ h_t$ for some smooth family $(h_t)_{t \in [t_0, t_1]}$.
If we abbreviate $h = h_t$, $u = u_t$ and $\Phi'_t = \Phi'_t \circ h_t$, $\partial_t \Phi_t = \partial_t  \Phi_t \circ h_t$ etc., then we obtain
\[   \partial_t h \, \Phi'_t + \Phi''_t=  \partial_t h \, \Phi'_t + \partial_t \Phi_t = \partial_t u_t=  \triangle u = \triangle h \, \Phi'_t +  |\nabla h|^2 \, \Phi''_t. \]
Since $\Phi'_t (x) = (4\pi t)^{-1/2} e^{-x^2/4t}$ and $\Phi''_t(x) = - \frac{x}{2t}  (4\pi t)^{-1/2} e^{-x^2/4t} = - \frac{x}{2t} \Phi'_t$, we obtain
\[ \partial_t h - \frac{1}{2t} h = \triangle h - \frac{1}{2t} |\nabla h|^2 h. \]
Therefore, by Bochner's identity
\begin{multline*}
 \nabla \partial_t h \cdot \nabla h = \nabla \triangle h \cdot \nabla h  + \frac1{2t} \nabla \big( ( 1- |\nabla h|^2) h \big) \cdot \nabla h \\
= \frac12 \triangle |\nabla h |^2 - |\nabla^2 h|^2 - \Ric (\nabla h, \nabla h)  - \frac1{2t} \nabla h \cdot h \nabla |\nabla h|^2 +  \frac1{2t}    (1- |\nabla h|^2)  |\nabla h|^2 . 
\end{multline*}
It follows that
\[ \frac12 \partial_t |\nabla h|^2 
= \nabla \partial_t h \cdot \nabla h - \frac12 (\partial_t g_t)(\nabla h, \nabla h)
\leq \frac12 \triangle |\nabla h|^2 - \frac1{2t} \nabla h \cdot h \nabla |\nabla h|^2  + \frac1{2t}   (1- |\nabla h|^2)  |\nabla h|^2 . \]
By the maximum principle, the bound $|\nabla h|^2 \leq 1$ remains preserved, which finishes the proof.
\end{proof}
\bigskip

\subsection{Proof of Proposition~\ref{Prop_nab_K_bounds}}

\begin{proof}[Proof of Proposition~\ref{Prop_nab_K_bounds}.]
By parabolic rescaling and application of a time-shift, we may assume without loss of generality that $[s,t] = [0,1]$.
Fix a vector $v \in T_x M$ with $|v|_1 = 1$ and write
\begin{equation} \label{eq_def_q}
 q := \frac{\partial_v K(x,1; \cdot, 0)}{K(x,1; \cdot, 0)}, 
\end{equation}
where the $\partial_v$-derivative is taken with respect to the first entry.
Let $X \subset M$ be a measurable set and consider the solution $(u_t)_{t \in [0, 1]}$ to the heat equation with initial condition $\chi_X$.
Since
\[ \nu(X) = \int K(x,1;\cdot, 0) \chi_X \, dg_0 = u(x,1), \qquad \int_X q \, d\nu = \int_M \partial_v K(x,1; \cdot, 0)\chi_X \, dg_0 = \partial_v u(x,1). \]
Applying Theorem~\ref{Thm_gradient_estimate} to $(1-2\eps) u + \eps$ on $[t_0,1]$ for $t_0$ and then letting $\eps, t_0 \to 0$ implies that
\begin{equation} \label{eq_der_bound_int_form}
 \int_X q \, d\nu  \leq \Phi' \big( \Phi^{-1} \big( \nu(X) \big) \big) =: F \big( \nu(X) \big). 
\end{equation}

The idea of the proof is to exploit this bound.
For this purpose, define the function $h : (0,1) \to \IR$ by
\[ h(a) := \sup \big\{ h' \in \IR \;\; : \;\; \nu (\{ q \geq h' \}) \geq a \big\}. \]
The following claim will allow us to reduce the proposition to a problem concerning $h$ only.

\begin{Claim}
$h$ is non-increasing, uniformly bounded and for any measurable subset $X \subset M$ and $1 \leq p < \infty$ we have
\begin{equation} 
 \int_M |q|^p d\nu 
=  \int_0^{1} |h|^p(\td a) d\td a, \label{eq_int_q_p_h_p} 
\end{equation}
\begin{equation}
\int_X q_+^p d\nu 
\leq  \int_0^{\nu(X)} h_+^p(\td a) d\td a, \label{eq_int_q_p_h_p_X}
\end{equation}
\begin{equation} \label{eq_int_haa_0}
\int_0^1 h(\td a) d\td a = 0,
\end{equation}
\begin{alignat}{2} \label{eq_haa_F_a_1}
 a \, h(a)  &\leq \int_0^{a} h(\td a) d \td a 
&&\leq F(a), \\ 
(1-a)  \, h(a) &\geq \int_a^{1} h(\td a) d \td a 
&&\geq - F(a). \label{eq_haa_F_a_2}
\end{alignat}
\end{Claim}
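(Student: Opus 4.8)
The plan is to establish the Claim by viewing $h$ as (essentially) the decreasing rearrangement of the function $q$ with respect to the probability measure $\nu$, and then transcribing the integral bound \eqref{eq_der_bound_int_form} into the statements \eqref{eq_haa_F_a_1}--\eqref{eq_haa_F_a_2}. First I would record that $q \in L^\infty(\nu)$: indeed $|\nabla_x K|/K$ is bounded on $M$ for fixed $(x,1)$ since $K(x,1;\cdot,0)$ is a smooth positive function on the compact manifold $M$, hence $|q| \leq C$ and $h$ is uniformly bounded; monotonicity of $h$ in $a$ is immediate from the definition (the sets $\{q \geq h'\}$ shrink as $h'$ increases, so the threshold defining $h(a)$ decreases as $a$ increases). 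The layer-cake / rearrangement identity \eqref{eq_int_q_p_h_p} then follows from the standard fact that for the distribution function $a \mapsto \nu(\{q \geq h'\})$ one has $\int_M \psi(q)\, d\nu = \int_0^1 \psi(h(\td a))\, d\td a$ for any Borel $\psi$ — applied with $\psi(r) = |r|^p$; here one must be slightly careful at jump points of the distribution function but this is routine measure theory.

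Next I would handle the ``one-sided'' statements. For \eqref{eq_int_q_p_h_p_X}: on the set $X$ we have $\int_X q_+^p\, d\nu \leq \int_{\{q \geq 0\}} q_+^p \chi_X\, d\nu$, and since $\chi_X$ has $\nu$-measure $\nu(X)$, the integral of any nonnegative function over $X$ is at most its integral over the ``top slice'' of $\nu$-mass $\nu(X)$ on which $q$ is largest — i.e. over $\{q \geq h(\nu(X))\}$ up to a null adjustment — which gives $\int_0^{\nu(X)} h_+^p(\td a)\, d\td a$. This is the rearrangement inequality $\int_X \phi \, d\nu \leq \int_0^{\nu(X)} \phi^*(\td a)\, d\td a$ specialized to $\phi = q_+^p$, whose rearrangement is $(h_+)^p$ since $p$-th power is monotone. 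For \eqref{eq_int_haa_0}: apply \eqref{eq_der_bound_int_form} with $X = M$, giving $\int_M q\, d\nu \leq \Phi'(\Phi^{-1}(1))$; but $\Phi^{-1}(1) = +\infty$ and $\Phi'(+\infty) = 0$, so $\int_M q\, d\nu \leq 0$; reversing the sign of $v$ replaces $q$ by $-q$ and gives $\int_M q\, d\nu \geq 0$, hence $\int_M q\, d\nu = 0$, which by \eqref{eq_int_q_p_h_p} with $p=1$ rewritten without absolute value (same rearrangement argument) is exactly $\int_0^1 h(\td a)\, d\td a = 0$.

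The heart of the matter is \eqref{eq_haa_F_a_1}--\eqref{eq_haa_F_a_2}. For the upper bound in \eqref{eq_haa_F_a_1}, take $X = \{q \geq h(a)\}$ (or a slight shrinkage of it so that $\nu(X) = a$ exactly, which is possible by adjusting on a level set), so that $\nu(X) = a$ and $\int_X q\, d\nu = \int_0^a h(\td a)\, d\td a$ by the rearrangement identity restricted to the top slice; then \eqref{eq_der_bound_int_form} gives $\int_0^a h(\td a)\, d\td a \leq F(a)$. The left inequality $a\, h(a) \leq \int_0^a h(\td a)\, d\td a$ is just the statement that the average of $h$ over $[0,a]$ is at least $h(a)$, which holds because $h$ is non-increasing. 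For \eqref{eq_haa_F_a_2}, I would combine \eqref{eq_int_haa_0} with \eqref{eq_haa_F_a_1}: $\int_a^1 h = \int_0^1 h - \int_0^a h = -\int_0^a h \geq -F(a)$, and similarly $(1-a)h(a) \geq \int_a^1 h$ because $h$ is non-increasing so its average over $[a,1]$ is at most $h(a)$. The main obstacle I anticipate is purely technical: making the ``adjust $X$ on a level set of $q$ so that $\nu(X)$ equals exactly $a$'' step rigorous when the distribution function of $q$ has jumps (atoms of $\nu \circ q^{-1}$), and correspondingly pinning down the precise relationship between $h$ defined via the $\sup$ and the usual decreasing rearrangement at points of discontinuity; this is standard but needs care, and one should note that \eqref{eq_der_bound_int_form} only requires an \emph{inequality}, which gives some slack to absorb these boundary issues.
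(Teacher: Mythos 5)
Your proposal is correct and follows essentially the same route as the paper: the decreasing-rearrangement identity for \eqref{eq_int_q_p_h_p}, the bathtub principle for \eqref{eq_int_q_p_h_p_X}, monotonicity of $h$ for the first inequalities in \eqref{eq_haa_F_a_1}--\eqref{eq_haa_F_a_2}, and an application of \eqref{eq_der_bound_int_form} to a set sandwiched between $\{q > h(a)\}$ and $\{q \geq h(a)\}$ of measure exactly $a$ for the second. The only cosmetic difference is at \eqref{eq_int_haa_0}, where the paper computes $\int_M q\, d\nu = \partial_v \int_M K(x,1;\cdot,0)\, dg_0 = \partial_v 1 = 0$ directly rather than letting $F(a)\to 0$ for $\pm q$; both are valid.
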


\begin{proof}
By definition, $h$ is non-increasing, uniformly bounded from above and below,    upper semi\-con\-tin\-u\-ous.
We claim that for any $b \in \IR$
\begin{equation} \label{eq_open_closed_super_levels}
  (0, \nu ( \{ q \geq b \} )] = h^{-1} ( [b, \infty ) ), \qquad (0, \nu ( \{ q > b \} )) \subset h^{-1} ( (b, \infty ) ) \subset (0, \nu ( \{ q > b \} )]. 
\end{equation}
To see the first identity, observe that if $a \leq \nu (\{ q \geq b \})$, then by the definition of $h$ we have $h(a) \geq b$.
The second identity follows from the first using $\lim_{b' \searrow b} \nu (\{ q \geq b' \}) = \nu (\{ q > b \})$ and
\[ h^{-1} ( (b, \infty)) = \bigcup_{b' > b} h^{-1} ((b', \infty)) = \bigcup_{b' > b} (0, \nu (\{ q \geq b' \})]. \]
We obtain from (\ref{eq_open_closed_super_levels}) that
\[  
 \nu ( \{ q \geq b \}) = |h^{-1}([b, \infty))|
\qquad \nu ( \{ q > b \}) = |h^{-1}((b, \infty))|,
\qquad  \nu ( \{ q = b \}) = |h^{-1}(\{ b \})|. \]

Next, we claim that for any $b \in \IR$ and any measurable subset $X \subset M$ with
\[ \{ q > b \} \subset X \subset \{ q \geq b \} \]
and any continuous function $f : \IR \to \IR$ we have
\begin{equation} \label{eq_int_q_h_a_int_fh}
 \int_{X} f( q ) d\nu = \int_0^{\nu(X)} f(h(\td a)) d\td a. 
\end{equation}
To see this, fix some $\eps > 0$ and choose $b = b_0 < b_1 < \ldots < b_m$ such that $b_m > \max_M q$ and $\osc_{[b_{i-1}, b_i]} f  \leq \eps$ for all  $i \geq 1, \ldots, m$.
Then by the previous paragraph we have for $i = 1, \ldots, m$
\[  \nu (X \cap \{ q = b \}) 
= \nu ( X ) - \nu ( \{ q > b \} ) 
= | [0, \nu(X) ] \cap \{ h = b \} |, \]
\[ \nu ( \{ b_{i-1} < q \leq b_i \} ) = | h^{-1} ((b_{i-1}, b_i]) |. \]
It follows that
\begin{multline*}
 \bigg| \int_{X} f( q ) d\nu - \int_0^{\nu(X)} f(h(\td a)) d\td a \bigg| \\
\leq \bigg| \int_{X \cap \{  q= b \}} f(q) d\nu - \int_{[0, \nu(X)] \cap \{  h = b \} } f (h) \bigg|  +
 \sum_{i=1}^m \bigg| \int_{\{ b_{i-1} < q \leq b_i \}} f(q) d\nu - \int_{ h^{-1} (( b_{i-1} , b_i ])} f (h) \bigg|  \\
 \leq 
  \sum_{i=1}^m \eps \nu ( \{ b_{i-1} < q \leq b_i \} ) \leq \eps. 
\end{multline*}
Letting $\eps \to 0$ implies (\ref{eq_int_q_h_a_int_fh}), which implies (\ref{eq_int_q_p_h_p}).

To see (\ref{eq_int_q_p_h_p_X}) let $a := \nu(X)$ and choose
\[  \{ q > h(a) \} \subset X' \subset \{ q \geq h(a) \} \]
such that $\nu (X') = a = \nu(X)$.
Then
\begin{multline} \label{eq_diff_int_X_int_Xp}
 \int_{X'} q_+^p d\nu
- \int_X q_+^p d\nu
= \int_{X' \setminus X} q_+^p d\nu - \int_{X \setminus X'} q_+^p d\nu \\
\geq  (h(a))_+^p \nu ( X' \setminus X )
- (h(a))_+^p \nu (X \setminus X' )
=  0.
\end{multline}
So setting $f(x) = x_+^p$ in (\ref{eq_int_q_h_a_int_fh}) yields
\begin{equation*}
 \int_X q_+^p d\nu 
\leq \int_{X'} q_+^p d\nu
= \int_0^{a} (h(\td a))_+^p d\td a. 
\end{equation*}

For (\ref{eq_int_haa_0}), observe that by (\ref{eq_int_q_h_a_int_fh}) we have
\[ \int_0^1 h(\td a)d\td a = \int_M q \, d\nu 
= \int_M \partial_v K(x, 1; \cdot, 0) dg_0 
= \partial_v \int_M  K(x, 1; \cdot, 0) dg_0 
= \partial_v 1
= 0.  \]
The first bound in (\ref{eq_haa_F_a_1}), (\ref{eq_haa_F_a_2}) follows from the monotonicity of $h$.
For the second bound, choose $b \in \IR$ and $\{ q > b \} \subset X \subset \{ q \geq b \}$ such that $\nu(X) = a$.
Then by (\ref{eq_int_haa_0}), (\ref{eq_int_q_h_a_int_fh}), (\ref{eq_der_bound_int_form})
\begin{equation*}
 -\int_a^1 h(\td a) d \td a = \int_0^{a} h(\td a) d \td a = \int_{X} q \, d\nu \leq F( \nu ( X)) = F(a). \qedhere
\end{equation*}
\end{proof}
\medskip

We claim that for $a \leq \frac14$
\begin{equation} \label{eq_F_a_log_a_a}
 F(a) \leq C (- \log a)^{1/2} a. 
\end{equation}
To see this, choose $s := \Phi^{-1} (a) \leq \Phi^{-1} ( \frac14 ) < 0$.
We obtain that
\[ \Phi (s) = \int_{-\infty}^s (4\pi)^{-1/2}e^{-\td s^2/4} d\td s
\leq C \int_{-\infty}^s e^{-s^2/8 - (s - \td s)^2/8} d\td s \leq C e^{-s^2/8} \]
and therefore, since $\td{s}^2 \leq s^2 + C$ for all $\td{s} \in [s + s^{-1},s ]$, we have
\begin{multline*}
 F(a) = \Phi'(s) = (4\pi)^{-1/2} e^{-s^2/4} \leq C (-s) \int_{s+s^{-1}}^s  (4\pi)^{-n/2} e^{-\td {s}^2/4} d\td{s} \\ \leq C (-s) \Phi(s) = C (C- \log a)^{1/2} a \leq C (- \log a)^{1/2} a. 
\end{multline*}

By (\ref{eq_haa_F_a_1}), (\ref{eq_haa_F_a_2}) we have
\[ - \frac{F(a)}{1-a} \leq h(a) \leq \frac{F(a)}{a} \leq C (-\log a)^{1/2}. \]
So the  proposition, except for the statement about $p=2$, follows from (\ref{eq_int_q_p_h_p}), (\ref{eq_int_q_p_h_p_X}), (\ref{eq_F_a_log_a_a}) and the fact that for $a \leq c(p)$
\[ \int_0^a (- \log \td a)^{p/2} d \td a 
\leq  2 \int_0^a \Big( (- \log \td a)^{p/2} - \frac{p}2  (- \log \td a)^{p/2-1} \Big)d \td a 
= 2 a (-\log a)^{p/2}. \]

Lastly, we prove the statement about $p=2$.
By (\ref{eq_def_q}), (\ref{eq_int_q_p_h_p}) it suffices to show that
\begin{equation} \label{eq_int01_h2_12}
  \int_0^1 h^2(a) da \leq \frac12. 
\end{equation}
To see this, we first compute that
\[ F' (a) = \frac{\Phi''}{\Phi'} ( \Phi^{-1} (a)), \qquad
F''(a) = \frac{\Phi''' \Phi' - (\Phi'')^2}{(\Phi')^3}  ( \Phi^{-1} (a)) . \]
Since for $s := \Phi^{-1}(a)$ we have
\[ \Phi'''(s) \Phi'(s) - (\Phi''(s))^2
= \Big( -\frac12 + \frac{s^2}{4} \Big) ( \Phi'(s))^2 -  \frac{s^2}{4} ( \Phi'(s))^2 \leq 0, \]
we obtain $F''(a) \leq 0$.
Next define $H : [0,1] \to \IR$ by
\[ H(a) :=  \int_0^{a} h(\td a) d \td a. \]
Then $H'' \leq 0$ in the weak sense and by (\ref{eq_haa_F_a_1}), (\ref{eq_int_haa_0}) we have 
\[  H \leq F, \qquad H(0) = H(1) = 0. \]
Thus
\begin{align*}
 \int_0^1 h^2(a) da
&= \lim_{\eps \to 0} \int_{\eps}^{1-\eps} (H'(a))^2 da
= \lim_{\eps \to 0} \bigg( H H' \bigg|_{\eps}^{1-\eps} - \int_{\eps}^{1-\eps} H(a) H''(a) da \bigg) \\
&\leq  - \int_{\eps}^{1-\eps} F(a) H''(a) da  
=  \lim_{\eps \to 0} \bigg( (- F H' + F' H) \bigg|_{\eps}^{1-\eps} - \int_{\eps}^{1-\eps} F''(a) H(a) da \bigg) \\
&\leq - \int_{0}^{1} F''(a) F(a) da =  \lim_{\eps \to 0} \bigg( - F F'  \bigg|_{\eps}^{1-\eps} + \int_{\eps}^{1-\eps} (F'(a) )^2 da \bigg) \\
&= \int_0^1 (F'(a) )^2 da
= \int_0^1 \bigg( \frac{\Phi''}{\Phi'} \bigg)^2 (\Phi^{-1}(a)) da \\
&= \int_{-\infty}^\infty \frac{(\Phi'')^2}{\Phi'} (s) ds
= \int_{-\infty}^\infty \frac{s^2}4 (4\pi)^{-1/2} e^{-s^2/4} ds \\
&=  \frac{d}{du} \bigg|_{u=1} \int_{-\infty}^\infty  (4\pi)^{-1/2} e^{-s^2/4u} ds
=  \frac{d}{du} \bigg|_{u=1} u^{1/2} = \frac12.
\end{align*}
This proves (\ref{eq_int01_h2_12}), finishing the proof.
\end{proof}

\section{Bounds on the pointed Nash entropy} \label{sec_Nash_entropy}
\subsection{Statement of the results}
In this section we first recall the definition of the Nash entropy based at a given point from \cite{Hein-Naber-14}.
Next, we prove new results concerning the dependence of the Nash entropy  on the basepoint and the scale parameter.

If $(M,g)$ is a Riemannian manifold, $\tau > 0$ and $d\nu = (4\pi \tau)^{-n/2} e^{-f} dg$ is a probability measure on $M$, then we define
\[ \NN [g,f,\tau] = \int_M f \, d\nu - \frac{n}2, \qquad
\WW[g,f,\tau] = \int_M \big( \tau (|\nabla f|^2 + R) + f - n \big) d\nu. \]
Let now $(M, (g_t)_{t \in I})$ be a Ricci flow on a compact manifold and consider a conjugate heat kernel measure $d\nu_{x_0,t_0} = (4\pi \tau)^{-n/2} e^{-f} dg = K(x_0, t_0; \cdot, \cdot)dg$ based at some point $(x_0, t_0) \in M \times I$, where $\tau = t_0 - t$.

\begin{Definition}
The {\bf pointed Nash entropy at $(x_0, t_0)$} is defined as
\[ \NN_{x_0, t_0} (\tau) := \NN [g_{t_0 - \tau}, f_{t_0-\tau},\tau]. \]
We set $\NN_{x_0, t_0} (0) := 0$.
For $s < t_0$, $s \in I$, we also write
\[ \NN^*_s (x_0, t_0) := \NN_{x_0, t_0} (t_0 - s). \]
\end{Definition}

The following proposition summarizes the basic (mostly well known) properties of the pointed Nash entropy (see also \cite{Hein-Naber-14}).

\begin{Proposition} \label{Prop_NN_basic_properties}
The expression $\NN_{x_0, t_0} (\tau)$ is continuous for $\tau \geq 0$ and if $R(\cdot, t_0 - \tau) \geq R_{\min}$, then for $\tau > 0$
\begin{equation} \label{eq_NN_basic_1}
  \NN_{x_0, t_0} (0)  = 0,  
\end{equation}
\begin{equation} \label{eq_NN_basic_2}
 \frac{d}{d\tau} \big(\tau \NN_{x_0, t_0} (\tau)\big) = \WW[g_{t_0-\tau}, f_{t_0 - \tau}, \tau] \leq 0, 
\end{equation}
\begin{equation} \label{eq_NN_basic_22}
\frac{d^2}{d\tau^2} \big( \tau \NN_{x_0, t_0} (\tau) \big) = - 2\tau \int_M \Big| \Ric_{t_0 - \tau} + \nabla^2 f_{t_0-\tau} - \frac1{2\tau} g_{t_0 - \tau} \Big|^2 d\nu_{t_0 - \tau} \leq 0,
\end{equation}
\begin{equation} \label{eq_NN_basic_3}
 - \frac{n}{2\tau} + R_{\min} \leq  \frac{d}{d\tau} \NN_{x_0, t_0} (\tau) \leq 0. 
\end{equation}
If $\tau_1 \leq \tau_2$ and $R \geq R_{\min}$ on $M \times [t_0 - \tau_2, t_0 - \tau_1]$, then
\begin{equation} \label{eq_NN_doubling}
 \NN_{x_0, t_0} (\tau_1) - \frac{n}2 \log \Big( \frac{\tau_2}{\tau_1} \Big( 1 - \frac2n R_{\min} (\tau_2 - \tau_1) \Big) \Big) \leq \NN_{x_0, t_0} (\tau_2) \leq \NN_{x_0, t_0} (\tau_1). 
\end{equation}
\end{Proposition}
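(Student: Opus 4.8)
The plan is to establish the proposition by assembling the several assertions from standard facts about the pointed Nash entropy together with the monotonicity properties of Perelman's $\WW$-functional, which I will treat as the analytic heart of the argument. First I would write $d\nu_{x_0,t_0;t} = (4\pi\tau)^{-n/2}e^{-f}dg_t$ with $\tau = t_0 - t$, so that $f$ solves the conjugate heat equation in the form $\square^* \big((4\pi\tau)^{-n/2}e^{-f}\big) = 0$, i.e. $\partial_\tau f = \triangle f - |\nabla f|^2 + R - \tfrac{n}{2\tau}$. The continuity at $\tau \geq 0$ and the normalization $\NN_{x_0,t_0}(0) = 0$ in \eqref{eq_NN_basic_1} follow from the fact that $\nu_{x_0,t_0;t} \to \delta_{x_0}$ as $\tau \searrow 0$ together with the short-time asymptotics of the heat kernel (so that $\int_M f\,d\nu \to \tfrac n2$); this is essentially the content of the definition and I would cite \cite{Hein-Naber-14} or \cite{Perelman1} for the precise estimate.

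Next I would turn to \eqref{eq_NN_basic_2} and \eqref{eq_NN_basic_22}, which are the classical differentiated forms of Perelman's entropy monotonicity. The key identity is Perelman's computation that for a conjugate heat kernel measure,
\[
\frac{d}{d\tau}\big(\tau\NN_{x_0,t_0}(\tau)\big) = \WW[g_{t_0-\tau}, f_{t_0-\tau},\tau],
\]
and that
\[
\frac{d}{d\tau}\WW[g_{t_0-\tau},f_{t_0-\tau},\tau] = -2\tau\int_M\Big|\Ric_{t_0-\tau} + \nabla^2 f_{t_0-\tau} - \tfrac{1}{2\tau}g_{t_0-\tau}\Big|^2\,d\nu_{t_0-\tau},
\]
both of which come from integrating the pointwise evolution equations for the density and the potential and doing the Bochner-type integration by parts that appears in \cite{Perelman1}. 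The inequality $\WW \leq 0$ then follows by noting $\WW[g_{t_0-\tau},f_{t_0-\tau},\tau] \to 0$ as $\tau \searrow 0$ (again from the heat kernel asymptotics) and that it is non-increasing by the displayed second derivative formula; this gives \eqref{eq_NN_basic_2} and \eqref{eq_NN_basic_22} simultaneously.

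For \eqref{eq_NN_basic_3} I would expand $\tfrac{d}{d\tau}\NN_{x_0,t_0}(\tau) = \tfrac1\tau\big(\WW[g_{t_0-\tau},f_{t_0-\tau},\tau] - \NN_{x_0,t_0}(\tau)\big)$ and observe that $\WW - \NN = \int_M\big(\tau(|\nabla f|^2 + R) - \tfrac n2\big)d\nu$; the upper bound $\leq 0$ is immediate from \eqref{eq_NN_basic_2}, while for the lower bound I would use $R \geq R_{\min}$ and the sharp logarithmic Sobolev / Nash-type inequality that controls $\int_M\tau|\nabla f|^2 d\nu$ from below by $\tfrac n2 - \int_M f\,d\nu = -\NN_{x_0,t_0}(\tau)$... more precisely one uses $\WW - \NN \geq \tau R_{\min} - \tfrac n2 - \NN$ combined with $\NN \leq 0$ to get $\tfrac{d}{d\tau}\NN \geq R_{\min} - \tfrac{n}{2\tau}$; I would double-check the exact bookkeeping here since it is the one place where the lower scalar bound enters and where an off-by-sign error is easy. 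Finally, \eqref{eq_NN_doubling} follows by integrating \eqref{eq_NN_basic_3} from $\tau_1$ to $\tau_2$: the upper bound is just monotonicity of $\NN$ in $\tau$, and for the lower bound I would integrate $\tfrac{d}{d\tau}\NN \geq -\tfrac{n}{2\tau} + R_{\min}$, using Lemma~\ref{Lem_lower_scal} to propagate the bound $R \geq R_{\min}$ forward on $[t_0-\tau_2, t_0-\tau_1]$, and recognize the resulting integral $\int_{\tau_1}^{\tau_2}\big(-\tfrac{n}{2\tau} + R_{\min}\big)d\tau$ as exactly $-\tfrac n2\log\big(\tfrac{\tau_2}{\tau_1}(1 - \tfrac2n R_{\min}(\tau_2-\tau_1))\big)$ up to checking that $\log(1-\tfrac2n R_{\min}(\tau_2-\tau_1)) \leq -\tfrac{2}{n}R_{\min}\cdot(-(\tau_2-\tau_1))\cdot(\text{sign adjustments})$; concavity of $\log$ gives the needed inequality in the right direction.

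The main obstacle I anticipate is not any single step but rather the careful justification of the $\tau\searrow 0$ boundary behavior — namely that $\int_M f\,d\nu_{t_0-\tau}\to \tfrac n2$ and $\WW\to 0$ — which requires the precise Gaussian-type asymptotics of the heat kernel on an evolving background near the diagonal; everything else is a sequence of integration-by-parts identities and elementary calculus. I would either invoke these asymptotics as known from \cite{Perelman1, Hein-Naber-14} or sketch the parabolic-rescaling argument that reduces them to the Euclidean case, and then the rest of the proof is a clean assembly of monotonicity and integration.
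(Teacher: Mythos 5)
Your treatment of \eqref{eq_NN_basic_1}--\eqref{eq_NN_basic_3} follows essentially the same route as the paper: the identities \eqref{eq_NN_basic_2}, \eqref{eq_NN_basic_22} are Perelman's computations, $\WW \le 0$ follows from $\WW \to 0$ as $\tau \searrow 0$ together with monotonicity, and the lower bound in \eqref{eq_NN_basic_3} comes from $\WW - \NN = \int_M \big(\tau(|\nabla f|^2 + R) - \tfrac n2\big)\,d\nu \ge \tau R_{\min} - \tfrac n2$ (the ``$\NN \le 0$'' you invoke is not needed; the $\NN$ terms cancel). One small imprecision: the upper bound $\tfrac{d}{d\tau}\NN \le 0$ is \emph{not} immediate from \eqref{eq_NN_basic_2} alone. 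You need $\WW(\tau) \le \NN(\tau) = \tfrac1\tau\int_0^\tau \WW(\tilde\tau)\,d\tilde\tau$, i.e.\ that $\WW$ is bounded above by its own average over $[0,\tau]$, and this requires the monotonicity of $\WW$ in $\tau$, i.e.\ \eqref{eq_NN_basic_22}; knowing only $\WW \le 0$ does not suffice since $\NN$ may itself be negative.

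The genuine gap is in \eqref{eq_NN_doubling}. You integrate the constant bound $\tfrac{d}{d\tau}\NN \ge -\tfrac{n}{2\tau} + R_{\min}$ and obtain
\[
\NN_{x_0,t_0}(\tau_2) - \NN_{x_0,t_0}(\tau_1) \;\ge\; -\frac n2 \log\Big(\frac{\tau_2}{\tau_1}\Big) + R_{\min}(\tau_2 - \tau_1),
\]
and you hope that concavity of $\log$ converts the linear term into $-\tfrac n2\log\big(1 - \tfrac2n R_{\min}(\tau_2-\tau_1)\big)$. But $\log(1+x) \le x$ gives $-\tfrac n2 \log\big(1 - \tfrac2n R_{\min}(\tau_2-\tau_1)\big) \ge R_{\min}(\tau_2-\tau_1)$, so your lower bound is \emph{strictly weaker} than the claimed one whenever $R_{\min} \neq 0$: the concavity inequality points in the wrong direction for your purposes. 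The correct argument does not integrate the constant $R_{\min}$; it first uses Lemma~\ref{Lem_lower_scal}, starting from the hypothesis $R(\cdot, t_0-\tau_2) \ge R_{\min}$, to upgrade the scalar curvature bound at the intermediate time $t_0 - \tau$ to
\[
R(\cdot, t_0 - \tau) \;\ge\; \frac n2\,\frac{R_{\min}}{\frac n2 - R_{\min}(\tau_2 - \tau)},
\]
and then integrates $\tfrac{d}{d\tau}\NN \ge -\tfrac{n}{2\tau} + \tfrac n2 \tfrac{R_{\min}}{\frac n2 - R_{\min}(\tau_2-\tau)}$ over $[\tau_1,\tau_2]$. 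The second summand integrates exactly to $-\tfrac n2 \log\big(\tfrac{\frac n2 - R_{\min}(\tau_2-\tau_1)}{n/2}\big)$, which is precisely the logarithmic correction in \eqref{eq_NN_doubling}. So the missing idea is that the time-dependent, improved scalar curvature lower bound must be fed into \eqref{eq_NN_basic_3} before integrating.
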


Note that (\ref{eq_NN_basic_2}) implies that
\[ \NN_{x_0, t_0} (\tau) \geq \frac1{\tau} \int_0^{\tau} \mu[g_{t_0- \td\tau}, \td\tau] d\td\tau \geq  \mu[g_{t_0- \tau}, \tau] , \]
where the latter denotes Perelman's $\mu$-functional.
So a lower bound on the pointed Nash entropy is common in Ricci flows with non-degenerate initial data.

As mentioned in the introduction, $\NN_{x_0, t_0} (\tau)$ is comparable to $V(x_0, \sqrt{\tau})$ for spaces with lower Ricci curvature bounds, where
\begin{equation} \label{eq_def_VV}
 V(x_0,r) := \log \big( r^{-n} |B(x, r)| \big). 
\end{equation}
In fact, in the case of the static, Ricci flat Ricci flow, the quantity $\exp (\NN_{x_0, t_0} (\tau))$ can be bounded from above and below by $\tau^{-n/2} |B(x_0, t_0, \sqrt{\tau})|_{t_0}$ up to a uniform factor (see Theorems~\ref{Thm_NLC}, \ref{Thm_upper_volume_bound}).
So the bound (\ref{eq_NN_doubling}) is similar to a doubling property.

The main new result of this section is a result concerning the dependence of $\NN_{x_0, t_0} (\tau)$ on $(x_0, t_0)$.
For the purpose of clarity, we will consider the expression $\NN_s^* (x, t)$ instead, where $s \in I$ denotes a fixed time, and we view $x \in M, t > s$ as free variables.

\begin{Theorem} \label{Thm_NN_dependence}
If $R (\cdot, s) \geq R_{\min}$ for some fixed $s \in I$, then on $M \times (I \cap (s, \infty))$ we have
\begin{equation} \label{eq_Thm_NN_dependence}
 |\nabla \NN^*_s |\leq \Big( \frac{n}{2(t-s)}  -  R_{\min} \Big)^{1/2}, \qquad - \frac{n}{2(t-s)} \leq  \square \NN^*_s \leq 0. 
\end{equation}
\end{Theorem}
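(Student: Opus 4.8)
The plan is to treat $\NN^*_s(x,t)$ as a function of the spacetime point $(x,t)$ and exploit the fact that, by definition, $\NN^*_s(x,t) = \NN_{x,t}(t-s)$ is an integral of the potential $f$ against the conjugate heat kernel measure $\nu_{x,t;s}$ based at $(x,t)$. The first step is to record the evolution equations of the heat kernel in its \emph{first} pair of variables: for fixed $(y,s)$, the function $(x,t)\mapsto K(x,t;y,s)$ is a solution to the heat equation $\square K = 0$, while $\square^*$ annihilates it in the second pair. I would then differentiate the identity $\NN^*_s(x,t) = \int_M f\, d\nu_{x,t;s} - \frac{n}{2}$ in $x$ and in $t$, using the representation of $\NN^*_s$ in terms of $K(x,t;\cdot,s)$ and $\log K(x,t;\cdot,s)$, to obtain formulas for $\nabla_x \NN^*_s$ and $\square \NN^*_s$ as integrals over $M$ against $d\nu_{x,t;s}$. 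A clean way to organize this is to write $\NN^*_s(x,t)$ via the ``entropy'' formula $-\int_M K(x,t;\cdot,s)\log\!\big((4\pi\tau)^{n/2}K(x,t;\cdot,s)\big)\,dg_s - \tfrac n2$ with $\tau = t-s$, and then differentiate under the integral sign, where the heat equation in $(x,t)$ lets all the boundary/derivative terms collapse.

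The gradient bound is the place where Proposition~\ref{Prop_nab_K_bounds} enters, and this is the heart of the argument. Differentiating $\NN^*_s$ in a unit direction $v\in T_xM$ produces (after the dust settles) an expression of the form $\partial_v\NN^*_s = -\int_M \big(\partial_v K / K\big)\log\!\big((4\pi\tau)^{n/2}K\big)\,d\nu_{x,t;s}$ up to terms that integrate to zero because $\int_M (\partial_v K/K)\,d\nu = \partial_v\!\int_M K\,dg_s = 0$. By Cauchy--Schwarz this is bounded by $\big(\int_M (\partial_v K/K)^2 d\nu\big)^{1/2}$ times $\big(\int_M (\log(\cdots))^2 d\nu - (\int_M \log(\cdots) d\nu)^2\big)^{1/2}$; the first factor is $\le \tfrac{1}{\sqrt{2\tau}}$ by the sharp $p=2$ case of Proposition~\ref{Prop_nab_K_bounds}, and the second factor is the square root of the \emph{variance} of the potential $f$ under $\nu_{x,t;s}$, which one controls by $\int_M (\tau|\nabla f|^2)\,d\nu$-type quantities and ultimately by $\tau\big(\tfrac{n}{2\tau} - R_{\min}\big) = \tfrac n2 - \tau R_{\min}$ using the $\WW$-functional and $R\ge R_{\min}$ propagated forward by Lemma~\ref{Lem_lower_scal}. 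Multiplying the two bounds gives exactly $|\nabla\NN^*_s| \le \big(\tfrac{n}{2(t-s)} - R_{\min}\big)^{1/2}$.

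For the $\square$-bound I would compute $\square_{(x,t)}\NN^*_s$ directly. Since $\square$ kills $K(x,t;\cdot,s)$, the only contributions come from the $\log K$ factor and the explicit $\tau$-dependence in $(4\pi\tau)^{n/2}$; carrying out the Bochner-type computation, the second derivative in $\tau$ of $\tau\NN_{x,t}(\tau)$ from \eqref{eq_NN_basic_22} and the monotonicity \eqref{eq_NN_basic_3} should package the answer into $-\tfrac{n}{2(t-s)}\le \square\NN^*_s\le 0$, with the upper bound $\le 0$ coming from the sign in \eqref{eq_NN_basic_2}--\eqref{eq_NN_basic_22} and the lower bound from \eqref{eq_NN_basic_3} after translating the $\tau$-derivative into the heat operator via $\square\NN^*_s = -\partial_s\NN^*_s + (\text{correction})$; alternatively one observes $\square\NN^*_s$ equals $\int_M \square(\cdots)\,d\nu$ and differentiates the closed formula for $\tfrac{d}{d\tau}\NN_{x_0,t_0}(\tau)$ in the base variables. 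The main obstacle I anticipate is the bookkeeping in the first step: getting the exact integral formulas for $\nabla_x\NN^*_s$ and $\square\NN^*_s$ requires differentiating a conjugate-heat-kernel integral in the \emph{base} point, which involves justifying differentiation under the integral and correctly tracking the several terms (the $\log$ factor, the volume-form evolution, the normalization), and then recognizing that the ``variance of $f$'' factor is controlled by the already-known $\WW$-bound. Once the formulas are in hand, the estimates are a clean combination of Cauchy--Schwarz, the sharp $p=2$ heat-kernel gradient bound, and Proposition~\ref{Prop_NN_basic_properties}.
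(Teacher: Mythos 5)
Your treatment of the gradient bound is essentially the paper's proof: write $\NN^*_s(x,t)=-\int_M K\log K\,dg_s-\tfrac n2\log(4\pi(t-s))-\tfrac n2$, differentiate in a unit direction $v$, use $\int_M\partial_vK\,dg_s=0$ to replace $-\log((4\pi\tau)^{n/2}K)=f$ by $f-\NN^*_s-\tfrac n2$, apply Cauchy--Schwarz, and feed in the sharp $p=2$ directional bound $\int_M(\partial_vK/K)^2d\nu\le\tfrac1{2(t-s)}$ from Proposition~\ref{Prop_nab_K_bounds} together with the variance bound on $f$ (Proposition~\ref{Prop_NN_more_basic}). One bookkeeping point: the $L^2$-Poincar\'e inequality has constant $2\tau$, so the variance of $f$ is bounded by $2\tau\int|\nabla f|^2d\nu\le n-2R_{\min}\tau$, not by $\tfrac n2-\tau R_{\min}$ as you wrote; with the correct constant the product is $\tfrac1{2\tau}(n-2R_{\min}\tau)=\tfrac n{2\tau}-R_{\min}$, exactly the stated bound, so your ``multiplying gives exactly'' only works after fixing this factor of $2$.

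The $\square$-bound is where your proposed route has a genuine gap. You suggest deriving $-\tfrac n{2(t-s)}\le\square\NN^*_s\le0$ from \eqref{eq_NN_basic_2}--\eqref{eq_NN_basic_3}, i.e.\ from the monotonicity and concavity of $\tau\mapsto\tau\NN_{x_0,t_0}(\tau)$ at a \emph{fixed} basepoint. But $\square=\partial_t-\triangle_x$ acting on $\NN^*_s(x,t)=\NN_{x,t}(t-s)$ differentiates the \emph{basepoint} of the conjugate heat kernel (the $t$-derivative simultaneously moves the basepoint time and the scale), so those formulas do not translate into a bound on $\square\NN^*_s$; there is no clean identity of the form $\square\NN^*_s=-\partial_\tau\NN+(\text{correction})$ that would let you import them. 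What actually happens is that, since $\square_{(x,t)}K(x,t;\cdot,s)=0$, one has $\square(K\log K)=-|\nabla_xK|^2/K$, and the only other contribution is $\partial_t\big(\tfrac n2\log(4\pi(t-s))\big)=\tfrac n{2(t-s)}$, giving the exact identity
\begin{equation*}
\square\NN^*_s(x,t)=\int_M\Big(\frac{|\nabla_xK|(x,t;\cdot,s)}{K(x,t;\cdot,s)}\Big)^2d\nu_{x,t;s}-\frac n{2(t-s)}.
\end{equation*}
The lower bound is then immediate, and the upper bound $\square\NN^*_s\le0$ --- the part the paper singles out as surprising --- is precisely the statement that this integral is at most $\tfrac n{2(t-s)}$, which is the sharp constant $C(n,2)=\tfrac n2$ in \eqref{eq_Prop_nab_K_bounds_1}. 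So the same sharp $p=2$ estimate of Proposition~\ref{Prop_nab_K_bounds} that drives the gradient bound is also the engine of the $\square$-bound; your proposal never identifies this, and without it the upper bound does not follow.
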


Let us digest the statement of this theorem.
First, note that the lower scalar curvature bound is often available on a Ricci flow due to Lemma~\ref{Lem_lower_scal}.
Apart from this bound, the statement of Theorem~\ref{Thm_NN_dependence} is local in the sense that it does not depend on any global geometric bound.
In this respect, the gradient bound in (\ref{eq_Thm_NN_dependence}) is an improvement of \cite[Theorem~1.17]{Hein-Naber-14}.
Next, consider the second bound (\ref{eq_Thm_NN_dependence}).
The lower bound on $\square \NN^*_s$ is a consequence of the convexity of $\NN$.
The upper bound on $\square \NN^*_s$ is somewhat surprising and is new to the author's knowledge\footnote{The bound even seems new in the static case. Please email me if that's not the case.}.
Note that both bounds in (\ref{eq_Thm_NN_dependence}) are optimal (possibly up to a dimensional constant) and collapsing-independent.
To see this, let us entertain one more time the analogy with the steady case, in which $\NN^*_s (x,t) \approx V (x, \sqrt{t-s})$ from (\ref{eq_def_VV}).
Under a collapse, we have $V(x,r) \to -\infty$.
However, due to volume comparison (see also (\ref{eq_norm_volume_variation})), differences of the form $V (x_1, r_1) - V(x_2, r_2)$ remain uniformly bounded, in terms of $d(x_1, x_2), r_1, r_2$.

The bounds from Theorem~\ref{Thm_NN_dependence} allow us to compare $\NN^*_s$ based at different points in spacetime:

\begin{Corollary} \label{Cor_NN_variation}
If $R (\cdot, t^*) \geq R_{\min}$ and $s < t^* \leq t_1, t_2$, $s, t^*, t_1, t_2 \in I$, then for $x_1, x_2 \in M$
\begin{equation} \label{eq_NNx1t1_NNx2t2}
 \NN^*_s(x_1, t_1) - \NN^*_s(x_2, t_2)  \leq \Big( \frac{n}{2(t^*-s)} -  R_{\min} \Big)^{1/2}   d_{W_1}^{g_{t^*}} (\nu_{x_1, t_1}(t^*), \nu_{x_2, t_2}(t^*))    + \frac{n}2 \log \Big( \frac{t_2-s}{t^*-s} \Big) . 
\end{equation}
\end{Corollary}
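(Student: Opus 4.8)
The plan is to combine the gradient and heat-operator bounds from Theorem~\ref{Thm_NN_dependence} with the monotonicity of the $W_1$-Wasserstein distance from Lemma~\ref{Lem_monotonicity_W1}. Fix the time $t^\ast$ and work with the function $(x,t)\mapsto \NN^\ast_s(x,t)$ on $M\times (I\cap(s,\infty))$. I would split the difference $\NN^\ast_s(x_1,t_1)-\NN^\ast_s(x_2,t_2)$ along a path in spacetime: first move in time from $(x_2,t_2)$ back to $(x_2,t^\ast)$, then move in space from $x_2$ to $x_1$ at time $t^\ast$.

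For the time-direction step, write
\[
 \NN^\ast_s(x_2,t^\ast) - \NN^\ast_s(x_2,t_2) = \NN_{x_2,t^\ast}(t^\ast-s) - \NN_{x_2,t_2}(t_2-s),
\]
but this is not quite a single application of the one-variable monotonicity (\ref{eq_NN_doubling}) because the basepoint also changes. Instead I would integrate the upper bound $\square\NN^\ast_s\le 0$: since $\NN^\ast_s$ is a subsolution of the heat equation, for any $(x_2,t_2)$ and $t^\ast\le t_2$ we have $\NN^\ast_s(x_2,t_2)\ge \int_M \NN^\ast_s(\cdot,t^\ast)\,d\nu_{x_2,t_2}(t^\ast)$. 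Hmm — this controls $\NN^\ast_s(x_2,t_2)$ from below by an average at time $t^\ast$, which is the wrong direction for isolating $\NN^\ast_s(x_2,t^\ast)$ pointwise. The cleaner route is: use $\square\NN^\ast_s\ge -\tfrac{n}{2(t-s)}$ together with the representation formula to get
\[
 \NN^\ast_s(x_2,t_2) - \int_M \NN^\ast_s(\cdot,t^\ast)\,d\nu_{x_2,t_2}(t^\ast) = \int_{t^\ast}^{t_2}\!\!\int_M \square\NN^\ast_s \,d\nu_{x_2,t_2}(t)\,dt \ge -\int_{t^\ast}^{t_2}\frac{n}{2(t-s)}\,dt = -\frac{n}{2}\log\Big(\frac{t_2-s}{t^\ast-s}\Big),
\]
and then bound $\int_M \NN^\ast_s(\cdot,t^\ast)\,d\nu_{x_2,t_2}(t^\ast)$ from above using the spatial Lipschitz bound and the fact that $\nu_{x_2,t_2}(t^\ast)$ has an $H_n$-center close to $x_2$ — or, more directly, absorb this into the space-step by comparing $\NN^\ast_s(x_1,t^\ast)$ directly to the average $\int_M \NN^\ast_s(\cdot,t^\ast)\,d\nu_{x_2,t_2}(t^\ast)$.

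For the space-direction step, the gradient bound $|\nabla\NN^\ast_s|\le (\tfrac{n}{2(t-s)}-R_{\min})^{1/2}$ at time $t^\ast$ says $\NN^\ast_s(\cdot,t^\ast)$ is $L$-Lipschitz with $L=(\tfrac{n}{2(t^\ast-s)}-R_{\min})^{1/2}$ (using $R(\cdot,t^\ast)\ge R_{\min}$, which follows from $R(\cdot,t^\ast)\ge R_{\min}$ as hypothesized, or from Lemma~\ref{Lem_lower_scal}). An $L$-Lipschitz function $\varphi$ satisfies, for any two probability measures $\mu_1,\mu_2$, the bound $\int \varphi\,d\mu_1 - \int\varphi\,d\mu_2 \le L\, d^{g_{t^\ast}}_{W_1}(\mu_1,\mu_2)$ straight from the definition (\ref{eq_d_W1_def}). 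Applying this with $\varphi=\NN^\ast_s(\cdot,t^\ast)$, $\mu_1=\nu_{x_1,t_1}(t^\ast)$, $\mu_2=\nu_{x_2,t_2}(t^\ast)$, and using the representation $\NN^\ast_s(x_i,t_i)=\int_M\NN^\ast_s(\cdot,t^\ast)\,d\nu_{x_i,t_i}(t^\ast) - (\text{nonneg. time error for }i)$, gives exactly the $W_1$ term in (\ref{eq_NNx1t1_NNx2t2}); the time error is zero for $i=1$ if $t_1=t^\ast$ and otherwise contributes with a favorable sign (since $\square\NN^\ast_s\le 0$ makes $\NN^\ast_s(x_1,t_1)\le \int_M\NN^\ast_s(\cdot,t^\ast)\,d\nu_{x_1,t_1}(t^\ast)$... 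I should double-check the sign here), while the $i=2$ error produces the $\tfrac{n}{2}\log\tfrac{t_2-s}{t^\ast-s}$ term via the computation above.

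The main obstacle I anticipate is bookkeeping the two one-sided errors so that they land with the correct signs: the space-step needs the \emph{equality} $\NN^\ast_s(x_i,t_i)=\int_M\NN^\ast_s\,d\nu_{x_i,t_i}(t^\ast)$ to hold only up to errors of the right sign, and this relies on knowing precisely which direction $\square\NN^\ast_s\le 0$ pushes the parabolic average. One must be careful that the representation formula $\int_M u\,d\nu_{x_0,t_0}(t)|_{t=t_1}^{t=t_2}=\int_{t_1}^{t_2}\int_M \square u\,d\nu_{x_0,t_0}(t)\,dt$ from the preliminaries, applied to $u=\NN^\ast_s$ (which is smooth on $M\times(I\cap(s,\infty))$ — one should note why, e.g. smooth dependence of the heat kernel on its parameters), gives that $t\mapsto \int_M\NN^\ast_s\,d\nu_{x_i,t_i}(t)$ is \emph{nonincreasing} in $t$ because $\square\NN^\ast_s\le0$, hence $\NN^\ast_s(x_i,t_i)=\lim_{t\nearrow t_i}\int_M\NN^\ast_s\,d\nu_{x_i,t_i}(t) \le \int_M\NN^\ast_s\,d\nu_{x_i,t_i}(t^\ast)$ for $t^\ast\le t_i$. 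Then $\NN^\ast_s(x_1,t_1)\le\int\NN^\ast_s\,d\nu_{x_1,t_1}(t^\ast)$, and for the lower bound on $\NN^\ast_s(x_2,t_2)$ we use the \emph{other} inequality $\square\NN^\ast_s\ge-\tfrac{n}{2(t-s)}$ to get $\int\NN^\ast_s\,d\nu_{x_2,t_2}(t^\ast)\le \NN^\ast_s(x_2,t_2)+\tfrac n2\log\tfrac{t_2-s}{t^\ast-s}$. Chaining these with the Lipschitz/$W_1$ estimate at time $t^\ast$ yields (\ref{eq_NNx1t1_NNx2t2}). A secondary technical point is justifying that all these integrals converge and the differentiation under the integral is valid near $t=t_i$, which follows from the standard short-time heat kernel asymptotics together with the continuity of $\NN_{x_0,t_0}(\tau)$ at $\tau=0$ recorded in Proposition~\ref{Prop_NN_basic_properties}.
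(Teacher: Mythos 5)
Your final argument is correct and is essentially the paper's proof: bound $\NN^*_s(x_1,t_1)$ above by $\int_M\NN^*_s(\cdot,t^*)\,d\nu_{x_1,t_1}(t^*)$ using $\square\NN^*_s\le 0$, bound $\int_M\NN^*_s(\cdot,t^*)\,d\nu_{x_2,t_2}(t^*)$ above by $\NN^*_s(x_2,t_2)+\tfrac n2\log\tfrac{t_2-s}{t^*-s}$ using $\square\NN^*_s\ge -\tfrac{n}{2(t-s)}$, and compare the two averages at time $t^*$ via the Lipschitz bound and the definition of $d_{W_1}$. The sign confusion in your first attempt at the time-direction step is resolved by your final paragraph, which gets the directions of both one-sided inequalities right.
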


Note, again that the right-hand side of (\ref{eq_NNx1t1_NNx2t2}) is independent of any pointed Nash entropy.
So the estimate does not deteriorate under a collapse.
We remark that in Section~\ref{sec_variance_parab_nbhd} we will define of the notion of a $P^*$-parabolic neighborhood $P := P^* (x,t; A, - T^-, T^+)$.
Using this notion, Corollary~\ref{Cor_NN_variation} can be viewed as a bound on the oscillation of $\NN^*_s$ over a $P^*$-parabolic neighborhood of the following form:
If $T^- < t-s$, then
\[ \osc_{P}  \NN^*_s \leq 2 \Big( \frac{n}{2(t-s - T^-)} -  R_{\min} \Big)^{1/2}   A   + \frac{n}2 \log \Big( \frac{t-s + T^+}{t-s - T^-} \Big). \]

Lastly, we record some further interesting bounds, which may have been known before.

\begin{Proposition} \label{Prop_NN_more_basic}
If $d\nu = (4\pi \tau)^{-n/2} e^{-f}dg$ denotes the conjugate heat kernel based at $(x_0, t_0)$ and $R(\cdot, t_0 - \tau) \geq R_{\min}$, then
\begin{align}
 \int_M \tau ( |\nabla f|^2 + R) d\nu_{t_0-\tau} &\leq \frac{n}2, \label{eq_nabf_R} \\
\int_M  \Big( f - \NN_{x_0, t_0} (\tau) - \frac{n}2 \Big)^2 d\nu_{t_0-\tau} &\leq n- 2 R_{\min}\tau .\label{eq_fNNn_sq}
\end{align}
\end{Proposition}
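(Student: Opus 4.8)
The plan is to derive both inequalities from the first-variation formula \eqref{eq_NN_basic_2} together with the monotonicity/convexity facts in Proposition~\ref{Prop_NN_basic_properties}. For \eqref{eq_nabf_R}, I would start from the identity
\[ \tau \NN_{x_0,t_0}(\tau) = \int_0^\tau \WW[g_{t_0-\td\tau}, f_{t_0-\td\tau}, \td\tau]\, d\td\tau \]
obtained by integrating \eqref{eq_NN_basic_2} from $0$ and using $\NN_{x_0,t_0}(0)=0$. Subtracting $\NN_{x_0,t_0}(\tau)$, which by \eqref{eq_NN_basic_2}--\eqref{eq_NN_basic_22} equals $\WW[g_{t_0-\tau},f_{t_0-\tau},\tau]$ plus a nonpositive correction (since $\tau\NN$ is concave, $\WW = (\tau\NN)' \le (\tau\NN)/\tau = \NN$), one gets
\[ \int_M \tau(|\nabla f|^2 + R)\, d\nu_{t_0-\tau} = \WW[g_{t_0-\tau}, f_{t_0-\tau},\tau] - \int_M f\, d\nu_{t_0-\tau} + n = \WW + \tfrac n2 - \NN_{x_0,t_0}(\tau) \le \tfrac n2, \]
where the last step uses $\WW \le \NN$. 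So \eqref{eq_nabf_R} is essentially a repackaging of the concavity of $\tau \mapsto \tau\NN_{x_0,t_0}(\tau)$.

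For \eqref{eq_fNNn_sq}, the quantity to control is the variance of $f$ under $\nu_{t_0-\tau}$, since $\int_M(f - \NN - \tfrac n2)^2\,d\nu = \int_M (f - \int f\, d\nu)^2 d\nu$ by the definition of $\NN$. The natural tool is a logarithmic Sobolev / entropy inequality for the conjugate heat kernel measure: on a Ricci flow background one has an $L^2$-type bound relating $\int (f-\bar f)^2 d\nu$ to $\int |\nabla f|^2 d\nu$ (this is the content of the $L^2$-Poincaré inequality of Hein--Naber, restated in Section~\ref{sec_Poincare}, applied with $h = f - \bar f$). Applying it gives $\int_M (f-\NN-\tfrac n2)^2 d\nu \le C\tau \int_M |\nabla f|^2 d\nu$; then \eqref{eq_nabf_R} together with the lower scalar curvature bound $\tau R \ge \tau R_{\min}$ (which is negative when $R_{\min}<0$, forcing $\int \tau|\nabla f|^2 d\nu \le \tfrac n2 - \tau R_{\min}$) closes the estimate up to tracking the sharp constant. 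Alternatively, and more likely the intended route, one differentiates $\int_M (f - \bar f)^2\, d\nu_{t_0-\tau}$ in $\tau$ directly using $\square^*\nu = 0$ and the evolution equation $\square f = |\nabla f|^2 - R - \tfrac n{2\tau}$ for the potential, obtaining a differential inequality of the form $\tfrac{d}{d\tau}\big(\tfrac1\tau \int (f-\bar f)^2 d\nu\big) \le$ (something controlled by $\int\tau(|\nabla f|^2+R)d\nu$ and $R_{\min}$), and integrates from $\tau = 0$ where the variance vanishes.

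The main obstacle is the second inequality: one must choose the right monotone quantity and carry out the $\square$-derivative of the variance cleanly, handling the $\tau\to 0$ boundary behavior (where $\nu_{t_0-\tau}\to\delta_{x_0}$ and $f\to\infty$ pointwise, but the variance $\to 0$) and the curvature terms so that exactly the constant $n - 2R_{\min}\tau$ emerges rather than a lossy multiple. The first inequality, by contrast, is a short consequence of concavity and should present no real difficulty. I would also need to confirm that all the integrations by parts and interchanges of $\tfrac{d}{d\tau}$ with $\int_M$ are justified, which on a compact manifold with smooth heat kernel away from $\tau=0$ is routine, with the $\tau\to 0$ limit handled by the estimates already available from Proposition~\ref{Prop_NN_basic_properties} and Section~\ref{sec_variance}.
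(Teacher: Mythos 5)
Your proposal matches the paper's proof: \eqref{eq_nabf_R} is exactly the concavity identity $\int_M\tau(|\nabla f|^2+R)\,d\nu_{t_0-\tau} = \tfrac n2 + \WW - \NN = \tfrac n2 + \tau\frac{d}{d\tau}\NN_{x_0,t_0}(\tau) \le \tfrac n2$, and \eqref{eq_fNNn_sq} is your first suggested route, namely the Hein--Naber $L^2$-Poincar\'e inequality applied to $h = f - \NN_{x_0,t_0}(\tau) - \tfrac n2$ (which has $\int_M h\,d\nu_{t_0-\tau}=0$ and $\nabla h = \nabla f$). Your worry about tracking the sharp constant is unfounded: Theorem~\ref{Thm_Poincare} gives $C(2)=2$, so $\int_M h^2\,d\nu_{t_0-\tau} \le 2\tau\int_M|\nabla f|^2\,d\nu_{t_0-\tau} \le n - 2R_{\min}\tau$ follows immediately from \eqref{eq_nabf_R} and $R\ge R_{\min}$, and the alternative direct-differentiation argument you sketch is unnecessary.
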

\medskip

\subsection{Proofs of Propositions~\ref{Prop_NN_basic_properties} and \ref{Prop_NN_more_basic}.}

\begin{proof}[Proof of Proposition~\ref{Prop_NN_basic_properties}.]
The bound (\ref{eq_NN_basic_1}) holds by definition and the bounds (\ref{eq_NN_basic_2}), (\ref{eq_NN_basic_22}) are known from \cite{Perelman1} or follow from a direct computation (see also \cite{Topping-book}).
The first bound in (\ref{eq_NN_basic_3}) follows from
\begin{multline*}
 \frac{d}{d\tau} \big(\tau \NN_{x_0, t_0}(\tau) \big) = \WW [g_{t_0-\tau}, f_{t_0 - \tau}, \tau] 
= \int_M \tau \big( |\nabla f|^2 + R  \big)d\nu_{t_0 - \tau} + \NN_{x_0, t_0}(\tau) - \frac{n}2 \\
\geq \tau R_{\min} + \NN_{x_0, t_0}(\tau) - \frac{n}2.
\end{multline*}
The second bound in (\ref{eq_NN_basic_3}) follows using (\ref{eq_NN_basic_22}):
\begin{multline*}
\tau \frac{d}{d\tau} \NN_{x_0, t_0} (\tau) 
= \frac{d}{d\tau} \big( \tau \NN_{x_0, t_0} (\tau) \big) - \NN_{x_0, t_0} (\tau)
= \frac{d}{d\tau} \big( \tau \NN_{x_0, t_0} (\tau) \big) - \frac{1}\tau \int_0^\tau \frac{d}{d\td\tau} \big( \td\tau \NN_{x_0, t_0} (\td\tau) \big) d\td\tau \\
\leq \frac{d}{d\tau} \big( \tau \NN_{x_0, t_0} (\tau) \big) - \frac{1}\tau \int_0^\tau \frac{d}{d\tau} \big( \tau \NN_{x_0, t_0} (\tau) \big) d\td\tau
= 0.
\end{multline*}
The continuity at $\tau = 0$ follows from an asymptotic expansion of the heat kernel or from the fact that $\lim_{\tau \to 0} \WW[g_{t_0-\tau}, f_{t_0 - \tau}, \tau] = 0$ combined with (\ref{eq_NN_basic_2}), (\ref{eq_NN_basic_3}).

For (\ref{eq_NN_doubling}) recall that by Lemma~\ref{Lem_lower_scal}
\[ R \geq  \frac{n}2 \frac{R_{\min}}{\frac{n}2 - R_{\min} (t - t_0 +\tau_2) }. \]
Thus, using (\ref{eq_NN_basic_3}),
\begin{multline*}
 \NN_{x_0, t_0} (\tau_2) - \NN_{x_0, t_0}(\tau_1)
\geq  \int_{\tau_1}^{\tau_2} \frac{n}2 \Big( - \frac{1}{\tau} + \frac{R_{\min}}{\frac{n}2 - R_{\min} (\tau_2 - \tau)  } \Big) d\tau \\
\geq - \frac{n}2 \Big( \log \Big( \frac{\tau_2}{\tau_1} \Big) + \log \Big( \frac{\frac{n}2 - R_{\min} (\tau_2 - \tau_1 ) }{ \frac{n}2 } \Big) \Big) 
\geq - \frac{n}2 \log \Big( \frac{\tau_2}{\tau_1} \Big(1 - \frac2n R_{\min} (\tau_2 - \tau_1) \Big) \Big). \qedhere
\end{multline*}
\end{proof}
\bigskip

\begin{proof}[Proof of Proposition~\ref{Prop_NN_more_basic}.]
For (\ref{eq_nabf_R}) we compute using (\ref{eq_NN_basic_2}), (\ref{eq_NN_basic_3}) that
\begin{multline*}
 \int_M \tau ( |\nabla f|^2 + R ) d\nu_{t_0 - \tau}
=  \frac{n}2 + \WW[g_{t_0 - \tau}, f_{t_0 -\tau}, \tau] - \NN_{x_0, t_0} (\tau) \\
= \frac{n}2 +  \frac{d}{d\tau} \big( \tau \NN_{x_0, t_0} (\tau) \big)  - \NN_{x_0, t_0} (\tau) 
= \frac{n}2 + \tau  \frac{d}{d\tau} \NN_{x_0, t_0} (\tau) \leq \frac{n}2  . 
\end{multline*}
To see (\ref{eq_fNNn_sq}), we apply the $L^2$-Poincar\'e inquality from \cite[Theorem~1.10]{Hein-Naber-14} (see also Theorem~\ref{Thm_Poincare}) to
\[ \int_M  \Big( f - \NN_{x_0, t_0} (\tau) - \frac{n}2 \Big) d\nu_{t_0-\tau} = 0 \]
and
\[ 2\tau \int_M |\nabla f|^2 d\nu_{t_0 - \tau} \leq n - 2\tau \int_{M} R \, d\nu_{t_0 - \tau}   \leq  n - 2R_{\min} \tau. \qedhere \]
\end{proof}
\medskip

\subsection{Proof of Theorem~\ref{Thm_NN_dependence}}

\begin{proof}[Proof of Theorem~\ref{Thm_NN_dependence}.]
After application of a time-shift, we may assume without loss of generality that $s = 0$.

Let us first express the pointed Nash entropy in terms of the heat kernel.
For this purpose, consider a point $(x,t) \in M$ and write $K(x,t;y, 0) = (4\pi t)^{-n/2} e^{-f(y,0)}$ and $d\nu_0 = (4\pi t)^{-n/2} e^{-f(\cdot, 0)} dg$.
Then
\[ f(y,0) = -\frac{n}2 \log (4\pi t) - \log K(x,t;y, 0). \]
Therefore
\begin{multline*}
 \NN^*_0 (x,t) = \int_M f(y,0) K(x,t;y,0) dg_0 (y) - \frac{n}2 \\
= - \int_M  K(x,t;y,0)  \log K(x,t;y, 0) dg_0 (y)   -\frac{n}2 \log (4\pi t) - \frac{n}2.
\end{multline*}
Next, for any vector $v \in T_x M$ with $|v|_t = 1$ we have
\[ \int_M \partial_v K(x,t;y,0) dg_0(y)  = \partial_v \int_M K(x,t;y,0) dg_0(y) = \partial_v 1 = 0. \]
Thus
\begin{align*}
 \partial_v \NN^*_0 (x,t) &= - \int_M \big( \partial_v K(x,t;y,0) \log K(x,t;y,0)   + \partial_v K(x,t;y,0) \big) dg_0 (y) \\
&= \int_M  \partial_v K(x,t;y,0) \Big( f (y,0) + \frac{n}2 \log(4\pi t) - 1 \Big) dg_0 (y)  \\
&= \int_M  \partial_v K(x,t;y,0) \Big( f (y,0) - \NN^*_0 (x,t) - \frac{n}2 \Big) dg_0 (y) \\
&= \int_M  \frac{\partial_v K(x,t;y,0)}{K(x,t;y,0)} \Big( f (y,0) - \NN^*_0 (x,t) - \frac{n}2 \Big) d\nu_0 (y) \\
&\leq \bigg( \int_M \Big(  \frac{\partial_v K(x,t;y,0)}{K(x,t;y,0)} \Big)^2 d\nu_0 \bigg)^{1/2} \bigg( \int_M  \Big( f - \NN^*_0 (x,t) - \frac{n}2 \Big)^2 d\nu_0 \bigg)^{1/2} .
\end{align*}
So by Propositions~\ref{Prop_nab_K_bounds}, \ref{Prop_NN_more_basic} we have
\[ |\nabla \NN^*_0|^2(x,t) \leq \frac{1}{2t} (n - 2R_{\min} t). \]

Lastly, we compute
\begin{align*}
 \square \NN^*_0 (x,t) 
&= - \int_M \square_x \big( K(x,t;y,0) \log K(x,t;y,0) \big) dg_0 (y) - \frac{n}{2t} \\
&= - \int_M \Big( \partial_t K(x,t;y,0) \log K(x,t;y,0) + \partial_t K(x,t;y,0) \\
&\qquad - \triangle_x K(x,t;y,0) \log K(x,t;y,0) - \triangle_x K(x,t;y,0) - \frac{|\nabla_x K(x,t;y,0)|^2}{K(x,t;y,0)} \Big)dg_0(y) - \frac{n}{2t} \\
&= \int_M \bigg( \frac{|\nabla_x K(x,t;y,0)|}{K(x,t;y,0)}  \bigg)^2 d\nu_0 - \frac{n}{2t}. 
\end{align*}
The desired bounds now follow using Proposition~\ref{Prop_nab_K_bounds}.
\end{proof}

\subsection{Proof of Corollary~\ref{Cor_NN_variation}}

\begin{proof}[Proof of Corollary~\ref{Cor_NN_variation}.]
Without loss of generality, we may assume that $s = 0$.
Denote by $d\nu^i = (4\pi \tau_i)^{-n/2} e^{-f_i}dg$ the conjugate heat kernels based at $(x_i, t_i)$, $i = 1,2$.
By Theorem~\ref{Thm_NN_dependence} we have
\begin{equation} \label{eq_NN_xiti_int}
 \NN^*_0 (x_i, t_i) \leq \int_M \NN^*_0 (\cdot, t^*) d\nu^i_{t^*} \leq \NN^*_0 (x_i, t_i) + \frac{n}2 \int_{t^*}^{t_i} \frac{dt}{t} = \NN^*_0 (x_i, t_i) + \frac{n}2  \log \Big( \frac{t_i}{t^*} \Big). 
\end{equation}
Furthermore, due to the gradient bound in (\ref{eq_Thm_NN_dependence}) we have
\begin{equation}
 \bigg| \int_M \NN^*_0 (\cdot, t^*) d\nu^1_{t^*} - \int_M \NN^*_0 (\cdot, t^*) d\nu^2_{t^*} \bigg| \leq \Big( \frac{n}{2t^*} - 2 R_{\min} \Big)^{1/2}
d_{W_1}^{g_{t^*}} ( \nu^1_{t^*}, \nu^2_{t^*} ) . \label{eq_intNN_intNN}
\end{equation}
The desired bound now follows by combining (\ref{eq_NN_xiti_int}) and (\ref{eq_intNN_intNN}).
\end{proof}
\bigskip

\section{Lower volume bounds on distance balls} \label{sec_lower_vol}
\subsection{Statement of the results}
In this section we discuss how entropy bounds guarantee lower volume bounds, aka conventional non-collapsing bounds, of distance balls.
These bounds will be of the form $c \exp ( \NN_{x,t}(r^2)) r^n$, which is optimal, since we will prove a reverse bound in Theorem~\ref{Thm_upper_volume_bound}.

Let $(M, (g_t)_{t \in I})$ be a Ricci flow on a compact manifold.

Our first main result is a slight generalization of Perelman's No Local Collapsing Theorem \cite{Perelman1}.
It states that, given a local upper scalar curvature bound, we obtain a lower volume bound on a distance ball.

\begin{Theorem} \label{Thm_NLC}
If $R \leq r^{-2}$ on $B(x,t,r)$ and $[t-r^2, t] \subset I$, then
\[ |B(x,t, r)|_t \geq c \exp ( \NN_{x,t}(r^2)) r^n. \]
\end{Theorem}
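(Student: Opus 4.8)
The plan is to follow the classical strategy of Perelman's No Local Collapsing theorem, in the form that deduces a volume lower bound from an upper bound on the pointed Nash entropy (equivalently, from a lower bound would run the other way --- here we want a \emph{lower} volume bound from a \emph{lower} entropy bound, so we need the implication ``small volume $\Rightarrow$ very negative entropy''). Concretely, I would argue by contradiction: suppose $|B(x,t,r)|_t = \delta r^n$ with $\delta$ very small relative to $\exp(\NN_{x,t}(r^2))$, and construct a competitor test function $u$ supported in $B(x,t,r)$ so that the $\WW$-functional (or directly the Nash entropy at scale $\approx r^2$) evaluated on $u$ is forced to be very negative, contradicting the monotonicity-derived lower bound $\NN_{x,t}(r^2) \geq \frac1{r^2}\int_0^{r^2}\mu[g_{t-\td\tau},\td\tau]\,d\td\tau$ combined with the hypothesis. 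By parabolic rescaling we may assume $r=1$, so $R \leq 1$ on $B(x,t,1)$ and $[t-1,t]\subset I$.

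The key steps, in order: (1) Normalize $r=1$ and set $V := |B(x,t,1)|_t$. (2) Choose the standard logarithmic cutoff: let $w = e^{-cf/2}$ with $f$ a function of the distance $d_t(x,\cdot)$, or more simply Perelman's choice $w(y) = e^{-L} \phi(d_t(x,y))$ where $\phi$ is a fixed smooth cutoff equal to $1$ on $B(x,t,1/2)$ and supported in $B(x,t,1)$, and $L$ is a normalizing constant chosen so that $\int_M (4\pi)^{-n/2} w^2 \, dg_t = 1$; this forces $e^{2L} \asymp V^{-1}$ up to dimensional constants. (3) Estimate the $\WW$-functional of $(g_t, w^2)$ at scale $\tau = 1$: using $R\le 1$ on the support, $|\nabla \phi|$ bounded, and $\phi$ supported in a unit ball, one gets $\WW[g_t, -2\log w - \tfrac n2\log(4\pi), 1] \leq -2L + C_n = \log V + C_n$ (this is the content of Perelman's computation; the scalar curvature term contributes at most $C_n$ because $R\le 1$ on the ball, the gradient term contributes $C_n$, and the $-2L$ term is the dominant one). (4) Invoke the variational characterization: $\mu[g_t,1] \leq \WW[g_t, w^2, 1] \leq \log V + C_n$. (5) Use the monotonicity chain $\NN_{x,t}(1) \geq \mu[g_{t-\tau},\tau]$ --- wait, we need $\mu$ at the \emph{right} time-slice. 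Here I would instead use that $\mu$ is monotone along the flow (Perelman), so $\mu[g_t,1] \geq \mu[g_{t-1},2] \geq \ldots$; more carefully, since $[t-1,t]\subset I$, we have $\mu[g_{t-\td\tau},\td\tau + \text{shift}]$ controlled, and the inequality $\NN_{x,t}(1)\le \ldots$ needs to be turned around. The clean route: $\NN_{x,t}(1) \leq \WW[g_t, f_t, 1]$ fails directionally too, so the correct tool is that the Nash entropy at scale $1$ is bounded \emph{below} by $\mu$, hence $\log V + C_n \geq \mu[g_t,1] \geq$ (something) --- instead I want: since $\NN_{x,t}$ at a small scale $\tau \searrow 0$ tends to $0$ and is bounded below by $\mu$, and the test function built at scale $1$ shows $\mu[g_t,1]$ is small, I combine with the doubling-type bound (\ref{eq_NN_doubling}) to propagate to the scale where entropy is evaluated. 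The cleanest formulation: $\exp(\NN_{x,t}(1)) \geq \exp(\mu[g_t,1])$ only after a short-time argument; I would instead directly compare the Gaussian-weighted and ball-weighted measures.

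I think the honest main obstacle --- and the step I would spend the most care on --- is \textbf{matching scales and time-slices}: the pointed Nash entropy $\NN_{x,t}(r^2)$ is defined via the conjugate heat kernel based at $(x,t)$ evaluated back at time $t-r^2$, whereas the natural test-function construction lives at time $t$ on the ball $B(x,t,r)$. Bridging these requires either (a) running the conjugate heat kernel from $(x,t)$ back to time $t-r^2$ and using the concentration estimate (Corollary~\ref{Cor_HK_monotone_Hn}, Proposition~\ref{Prop_nu_ball_bound}) together with a heat-kernel lower bound on $B(x,t,r)$ --- which needs the upper scalar curvature bound to get an on-diagonal lower bound $K(x,t;x,t-r^2) \gtrsim r^{-n}$ via a local parabolic argument (Li--Yau / Perelman-type) --- or (b) invoking Perelman's differential-Harnack / monotonicity of $\WW$ directly. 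I would take route (a): use $R\le r^{-2}$ on $B(x,t,r)$ and $[t-r^2,t]\subset I$ to get, by a standard local heat-kernel lower bound, $K(x,t;y,t-r^2) \geq c\, r^{-n}$ for $y$ in a definite sub-ball, integrate against $dg_{t-r^2}$ to see $\nu_{x,t;t-r^2}$ puts definite mass on a ball of controlled volume, then feed this into the entropy formula $\NN_{x,t}(r^2) = \int f\,d\nu - \tfrac n2$ with $f = -\tfrac n2\log(4\pi r^2) - \log K$: the lower bound on $K$ on a set of definite $\nu$-measure, combined with the Gaussian concentration controlling the tail, yields $\NN_{x,t}(r^2) \leq \log(|B|_t / r^n) + C_n$, i.e. exactly $|B(x,t,r)|_t \geq c\exp(\NN_{x,t}(r^2)) r^n$. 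The technical heart is therefore the local lower heat-kernel bound under a one-sided scalar curvature bound, which is where the hypothesis $R\le r^{-2}$ is genuinely used and where I expect the real work to be.
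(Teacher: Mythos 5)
You correctly identify the central difficulty: the classical test-function computation naturally produces an upper bound on $\mu[g_t,r^2]$, while the quantity to be controlled is the pointed Nash entropy $\NN_{x,t}(r^2)$, and the inequality $\NN_{x,t}(\tau)\ge\mu[g_{t-\tau},\tau]$ points the wrong way. But the resolution you settle on (your route (a)) does not work under the stated hypotheses. Its ``technical heart'' is a local lower bound $K(x,t;y,t-r^2)\ge c\,r^{-n}$ for $y$ in a definite sub-ball; any such bound (Li--Yau type, or Perelman's $K\ge(4\pi\tau)^{-n/2}e^{-\ell}$ via an upper bound on the reduced distance) requires curvature control on a \emph{spacetime} region such as $P(x,t;\alpha r,-r^2)$, whereas the theorem only assumes $R\le r^{-2}$ on the single time-slice ball $B(x,t,r)$ --- the geometry at times in $(t-r^2,t)$ is completely unconstrained. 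Moreover, a one-sided scalar curvature bound is in general too weak for lower heat kernel bounds even on a parabolic region, and the paper explicitly lists pointwise lower conjugate-heat-kernel bounds among the estimates its methods do not (and, in view of neckpinch examples, probably cannot) provide. Two further steps you elide are also nontrivial: relating $|B(x,t,r)|_t$ to volumes at time $t-r^2$ (this needs Ricci control), and bounding the tail $-\int_{M\setminus S}K\log K\,dg_{t-r^2}$ from above (this needs Gaussian upper bounds and upper volume bounds which in this paper are proved \emph{after}, and partly from, Theorem~\ref{Thm_NLC}).

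The missing idea in the paper's proof is to keep your test function $a^{-1}h^2$ at time $t$ but to evolve it \emph{backward by the conjugate heat equation} to time $t-r^2$. The resulting solution at time $t-r^2$ is the superposition $\int_M K(y,t;\cdot,t-r^2)\,a^{-1}h^2(y)\,dg_t(y)$ of conjugate heat kernels based at points $y\in B(x,t,r)$, so Jensen's inequality bounds $\NN[g_{t-r^2},f_{t-r^2},2r^2]$ from below by the average of $\NN_{y,t}(r^2)$ over the ball, and the gradient bound on $\NN^*_s$ (Theorem~\ref{Thm_NN_dependence}) shows each of these is $\ge\NN_{x,t}(r^2)-C$. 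Monotonicity of the $\WW$-functional then transfers this lower bound forward to $\lim_{\eps\searrow 0}\WW[g_{t-\eps},f_{t-\eps},r^2+\eps]$, which is computed explicitly in terms of $h$ exactly as in Perelman's argument; the hypothesis $R\le r^{-2}$ on $B(x,t,r)$ enters only here, through the term $\tau R$ in $\WW$ on the support of $h$ at time $t$, which is precisely why a single-time-slice bound suffices. (A doubling reduction $|B(x,t,r)|_t\le 3^n|B(x,t,\tfrac12 r)|_t$, arranged by an iteration over scales, is also needed to close the final elementary inequality.)
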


Note that the upper scalar curvature bound is indispensable, as one can for example see on the round shrinking sphere, cylinder or Bryant soliton.
However, in the absence of an upper curvature bound, we can still find a similar volume bound if $(x,t)$ is an $H_n$-center of a conjugate heat kernel.

\begin{Theorem} \label{Thm_lower_volume_H_center}
Suppose that $[t-r^2, t] \subset I$ and that $(z,t-r^2) \in M \times I$ is an $H_n$-center of some point $(x,t) \in M \times I$.
If $R(\cdot, t-r^2) \geq R_{\min}$, then we have
\[ \big| B(z,t-r^2, \sqrt{2H_n} r)\big|_{t-r^2} \geq c(R_{\min} r^2) \exp ( \NN_{x,t}(r^2)) r^n,\]
where $c(R_{\min}r^2) =  c \exp ( - 2(n-2R_{\min}r^2)^{1/2}) $.
\end{Theorem}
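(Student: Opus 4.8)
The plan is to exploit two concentration phenomena for the conjugate heat kernel measure $\nu := \nu_{x,t;t-r^2}$. Writing $d\nu = (4\pi r^2)^{-n/2} e^{-f}\,dg_{t-r^2}$, we have $K(x,t;\cdot,t-r^2) = (4\pi r^2)^{-n/2} e^{-f}$ and $\NN_{x,t}(r^2) = \int_M f\,d\nu - \tfrac n2$. On one hand, since $(z,t-r^2)$ is an $H_n$-center of $(x,t)$, Proposition~\ref{Prop_nu_ball_bound} applied with $A = 2$ gives $\nu(B) \geq \tfrac12$ for the ball $B := B(z,t-r^2,\sqrt{2H_n}\,r)$ appearing in the statement. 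On the other hand, the $L^2$-bound~(\ref{eq_fNNn_sq}) from Proposition~\ref{Prop_NN_more_basic} says that $f$ concentrates in $L^2(\nu)$ around $\NN_{x,t}(r^2)+\tfrac n2$ with variance at most $n - 2R_{\min}r^2$; in particular $n - 2R_{\min}r^2 \geq 0$, so its square root makes sense.

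First I would run Chebyshev's inequality on~(\ref{eq_fNNn_sq}): the set $\{\, f \geq \NN_{x,t}(r^2) + \tfrac n2 - 2(n-2R_{\min}r^2)^{1/2} \,\}$ has $\nu$-measure at least $\tfrac34$. Intersecting with $B$, the set
\[ D := B \cap \big\{\, f \geq \NN_{x,t}(r^2) + \tfrac n2 - 2(n-2R_{\min}r^2)^{1/2} \,\big\} \]
satisfies $\nu(D) \geq \tfrac12 + \tfrac34 - 1 = \tfrac14$. On $D$ the lower bound on $f$ translates into the pointwise upper bound $K(x,t;\cdot,t-r^2) \leq (4\pi r^2)^{-n/2} \exp\!\big(-\NN_{x,t}(r^2) - \tfrac n2 + 2(n-2R_{\min}r^2)^{1/2}\big)$.

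The conclusion is then immediate: integrating this pointwise bound,
\[ \tfrac14 \leq \nu(D) = \int_D K(x,t;\cdot,t-r^2)\,dg_{t-r^2} \leq (4\pi r^2)^{-n/2} e^{-\NN_{x,t}(r^2) - n/2 + 2(n-2R_{\min}r^2)^{1/2}} \, |B|_{t-r^2}, \]
and rearranging yields $|B|_{t-r^2} \geq \tfrac14 (4\pi)^{n/2} e^{n/2} \exp\!\big(-2(n-2R_{\min}r^2)^{1/2}\big) \exp(\NN_{x,t}(r^2))\, r^n$, which is the asserted inequality with $c = \tfrac14(4\pi)^{n/2}e^{n/2}$.

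I do not expect a genuine obstacle. The one point requiring a little care is that the spatial concentration of $\nu$ near $z$ and the $L^2$-concentration of the potential $f$ are a priori independent statements about $\nu$, so one must pass to the intersection $D$ of the two favorable sets and accept the resulting (universally bounded below, hence harmless) loss of mass — this is the origin of the explicit dimensional constant. I would also note that no parabolic rescaling is needed, since $\NN_{x,t}(r^2)$, $R_{\min}r^2$ and $r^{-n}|B|_{t-r^2}$ are already the scale-invariant combinations in play, so keeping $r$ explicit throughout is cleanest.
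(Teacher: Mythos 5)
Your proof is correct, and it reaches the stated constant $c(R_{\min}r^2)=c\exp(-2(n-2R_{\min}r^2)^{1/2})$ with an explicit dimensional $c$. It rests on exactly the same two inputs as the paper's proof — the mass concentration $\nu(B)\geq\tfrac12$ from Proposition~\ref{Prop_nu_ball_bound} and the $L^2$-concentration of $f$ around $\NN_{x,t}(r^2)+\tfrac n2$ from (\ref{eq_fNNn_sq}) — but the way you combine them is genuinely different. The paper normalizes $\nu$ restricted to $B$, bounds the \emph{average} of $f$ over $B$ from below using the $L^1$ consequence of (\ref{eq_fNNn_sq}) together with $\nu(B)\geq\tfrac12$, and then applies Jensen's inequality to $x\mapsto x\log x$ to convert this average bound into $-\log|B| \leq -\NN_{x,t}(r^2)+C+2(n-2R_{\min}r^2)^{1/2}$. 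You instead use Chebyshev to extract a subset $D\subset B$ with $\nu(D)\geq\tfrac14$ on which $f$ is \emph{pointwise} bounded below, hence $K$ pointwise bounded above, and conclude by $\nu(D)\leq \sup_D K\cdot|B|$. Your route is more elementary (no entropy/Jensen step), at the cost of the inclusion–exclusion loss of mass, which as you say is harmless; the paper's route avoids passing to a subset and only ever uses the $L^1$ deviation bound. A small bonus of your version: applying Proposition~\ref{Prop_nu_ball_bound} with $A=2$ produces precisely the ball $B(z,t-r^2,\sqrt{2H_n}\,r)$ of the statement, whereas the paper's proof works with the slightly larger ball of radius $2\sqrt{H_n}\,r$ (i.e.\ $A=4$), so your argument matches the statement as written more tightly. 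Your remark that no parabolic rescaling is needed is also accurate, since every quantity you manipulate is already in scale-invariant form.
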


Note that by a simple containment relationship, Theorem~\ref{Thm_lower_volume_H_center} also implies a lower volume bound for a ball of radius $(\sqrt{2H_n} +D)r$ if $(z, t-r^2)$ is $Dr$-far away from an $H_n$-center.
Furthermore, using Proposition~\ref{Prop_NN_basic_properties} and Corollary~\ref{Cor_NN_variation} it is not hard to see that $\NN_{x,t}(r^2)$ can be replaced by $\NN_{z, t-r^2} (a r^2)$, if $[t- (1+a)r^2, t] \subset I$ for some $a > 0$, at the expense of a smaller constant $c$, which may depend on $a$.
Observe also that the example of the Bryant soliton $(M_{\Bry}, (g_{\Bry, t})_{t \in \IR})$ does not contradict Theorem~\ref{Thm_lower_volume_H_center}, because for large $r$ its center of rotation $x_{\Bry} \in M_{\Bry}$ at time $t-r^2$ has distance $\gtrsim r^2$ from any $H_n$-center of any point at time $t$.

Lastly, we remark that Theorems~\ref{Thm_NN_dependence}, \ref{Thm_NLC} combined with Theorem~\ref{Thm_upper_volume_bound} towards the end of this paper imply an alternative proof of \cite[8.2]{Perelman1} and \cite[Theorem~1.1]{BWang_local_entropy}.

\subsection{Proof of Theorem~\ref{Thm_NLC}}

\begin{proof}[Proof of Theorem~\ref{Thm_NLC}.]
The proof is similar to that of \cite[4.1]{Perelman1}, see also \cite[Remark~13.13]{Kleiner_Lott_notes}.
Without loss of generality, we may assume that $t=0$ and after replacing $r$ with $\frac12 r$, we may assume that $[-4r^2,0] \subset I$, which implies that $R \geq - \frac{n}2 r^{-2}$ on $M \times[-4r^2,0]$ via Lemma~\ref{Lem_lower_scal}.
Next we argue that we may impose the additional assumption $|B(x,0,r)|_0 \leq 3^n |B(x,0,\frac12 r)|_0$.
To see this, assume that we can show the lemma for some fixed $c > 0$ under this additional assumption.
Thus if the lemma fails for some $r$, then the additional assumption must be violated.
This would imply that the lemma also fails for $\frac12 r$.
Repeating this process implies that the lemma fails for $r, \frac12 r, (\frac12)^2 r, \ldots$, in contradiction to the fact that it has to hold for small enough $r$ if $c$ is chosen appropriately.
So assume from now on that $|B(x,0,r)|_0 \leq 3^n |B(x,0,\frac12 r)|_0$ and by parabolic rescaling that $r =1$.

Let $h := \varphi(d_0 (x, \cdot)) \in C^\infty_c (B(x,0,1))$, where $\varphi \equiv 1$ on $[0, \frac12]$, $0 \leq \varphi \leq 1$ and $|\varphi'| \leq 10$.
Let
\[ a := \int_{B(x,0,1)} h^2 dg_0 , \qquad  |B(x, 0, \tfrac12)|_0 \leq a \leq |B(x, 0, 1)|_0 . \]
Write $\tau := 1 - t$ and let $v = (4\pi \tau)^{-n/2} e^{-f} \in C^\infty (M \times [-1, 0])$ be the solution to the conjugate heat equation $\square^* v = 0$ with initial condition $v(\cdot, 0) = a^{-1} h^2(\cdot, 0)$.
Then $\int_M v (\cdot, -1) dg_{-1} = \int_M a^{-1} h^2(\cdot, 0) dg_0 = 1$ and
\[ v_{-1} = \int_M K(y,0; \cdot, -1) a^{-1} h^2 (y)  dg_0 (y). \]
So since 
\begin{align*}
 f_{-1} &= - \log v_{-1} -\frac{n}2  \log  (8\pi) ,  \\
 \NN_{y,0}(1) &= - \int_M K(y, 0; y', -1) \log K(y, 0; y', -1) dg_{-1} (y')  - \frac{n}2 \log (4\pi) - \frac{n}2,
\end{align*}
we obtain by Jensen's inequality and Theorem~\ref{Thm_NN_dependence} that
\begin{align*}
 \NN[g_{-1}, f_{-1}, 2] &= \int_M f_{-1} v_{-1} dg_{-1} - \frac{n}2 
\geq - \int_M  v_{-1} \log v_{-1} dg_{-1} - C \\
&\geq - \int _M  \int_M K(y, 0; y', -1) \log K(y, 0; y', -1) dg_{-1} (y') a^{-1} h^2(y) dg_0 (y) - C \\
&\geq \int_{B(x,0,1)}   \NN_{y, 0}(1) a^{-1} h^2(y,0) dg_0 (y) - C
\geq \NN_{x, 0}(1) - C.
\end{align*}
By the monotonicity of the $\WW$-functional \cite{Perelman1}, we have
\begin{multline} \label{eq_WW_geq_NN_1_C}
\lim_{\eps \searrow 0} \WW[g_{-\eps}, f_{-\eps}, 1+\eps] \geq \int_{0}^1 \WW[g_{-\tau}, f_{-\tau}, 1+\tau] d\tau 
= \int_0^1 \frac{d}{d\tau} \big((1+\tau) \NN[g_{-\tau}, f_{-\tau}, 1+\tau] \big) d\tau \\ 
= 2\NN[g_{-1}, f_{-1}, 2] - \NN[g_{0}, f_{0}, 1] \geq 2\NN_{x, 0}(1) - C - \NN[g_{0}, f_{0}, 1]. 
\end{multline}

The remainder of the proof is similar to that of \cite[4.1]{Perelman1}.
We first express (\ref{eq_WW_geq_NN_1_C}) in terms of $a, h$ and use the bound $R \leq 1$ on $B(x, 0, 1)$:
\[ \int_{B(x, 0, 1)} \bigg( \frac{|\nabla a^{-1} h^2|^2}{a^{-1} h^2} + (-n+1) a^{-1} h^2 - 2a^{-1} h^2 \log (a^{-1} h^2) \bigg) dg_0 \geq 2\NN_{x, 0}(1) - C.  \]
This implies
\[ a^{-1} \int_{B(x, 0, 1)} ( 4 |\nabla h|^2 - h^2 \log (h^2) ) dg_0 + 2 \log a \geq 2\NN_{x, 0}(1) - C.  \]
Using the additional assumption, it follows that
\[ \tfrac12 \, 3^n (400+ e^{-1} ) + \log |B(x, 0, 1)|_0 \geq \tfrac12 (400 + e^{-1} ) \frac{|B(x,0, 1)|_0}a + \log a \geq \NN_{x, 0}(1) - C,  \]
which implies the desired bound.
\end{proof}
\bigskip

\subsection{Proof of Theorem~\ref{Thm_lower_volume_H_center}}

\begin{proof}[Proof of Theorem~\ref{Thm_lower_volume_H_center}.]
After parabolic rescaling and application of a time-shift, we may assume that $r = 1$, $t = 1$.
Denote by $d\nu = (4\pi \tau)^{-n/2} e^{-f} dg$ the measure of the conjugate heat kernel based at $(x,0)$ and set $B := B(z,0, 2 \sqrt{H_n} )$.
Then by Proposition~\ref{Prop_nu_ball_bound}
\begin{equation} \label{eq_nu_0_B_12}
 \nu_{0} ( B)  \geq \frac12. 
\end{equation}

By Proposition~\ref{Prop_NN_more_basic} we have
\[ \int_M \Big| f - \NN_{x,1}(1) - \frac{n}2 \Big| d\nu_{0}
\leq
\bigg( \int_M \Big( f - \NN_{x,1}(1) - \frac{n}2 \Big)^2 d\nu_{0} \bigg)^{1/2}  \leq (n- 2 R_{\min})^{1/2}. \]
So
\[ \frac1{\nu_{0} (B)} \int_B f \, d\nu_{0} 
\geq \NN_{x,1}(1) + \frac{n}2 - \frac1{\nu_{0} (B)} \int_B \Big| f - \NN_{x,1}(1) - \frac{n}2 \Big| d\nu_{0} 
\geq  \NN_{x,1}(1) + \frac{n}2 - 2(n- 2 R_{\min})^{1/2}. \]
Let $u := (4\pi )^{n/2} e^{-f} / \nu_{0} (B)$.
It follows using (\ref{eq_nu_0_B_12}) that $\int_B u \, dg_{0} = 1$ and
\begin{multline*}
 \int_B (\log u) u \, dg_{0}
 = - \frac1{\nu_0 (B)} \int_B f \, d\nu_0 - \log (\nu_0 (B)) + \frac{n}2 \log (4\pi) \\
  \leq  -\NN_{x,1}(1) -  \frac{n}2 + 2(n- 2 R_{\min})^{1/2} + \log 2 + \frac{n}2 \log (4\pi). 
\end{multline*}
By Jensen's inequality applied to the convex function $x \mapsto x\log x$ we get
\[ \log \bigg( \frac1{|B|_{0}} \int_B  u \, dg_{0} \bigg) \frac1{|B|_{0}} \int_B  u \, dg_{0} \leq \frac1{|B|_{0}} \int_B  (\log u) u \, dg_{0},   \]
which implies
\[ - \log  |B|_{0} \leq  \int_B  (\log u) u \, dg_{0} \leq - \NN_{x,1}(1) + C + 2 (n- 2 R_{\min})^{1/2}.  \]
This finishes the proof of the theorem.
\end{proof}
\bigskip

\section{Upper bounds on the heat kernel and its gradient} \label{sec_upper_HK_bounds}
\subsection{Statement of the results}
The results in this section concern the heat kernel $K(x,t;y,s)$.
Let again $(M,(g_t)_{t \in I})$ be a Ricci flow on a compact $n$-dimensional manifold.

We will first prove an $L^\infty$-bound of $K(x,t;y,s)$ in terms of the pointed Nash entropy at $(x,t)$.
This bound is optimal up to a factor, which is independent of the degree of collapsedness.

\begin{Theorem} \label{Thm_HK_Linfty_bound}
If $[s,t] \subset I$ and $R \geq R_{\min}$ on $M \times [s,t]$, then
\[ K(x,t;y,s) \leq \frac{C(R_{\min} (t-s))}{(t-s)^{n/2}} \exp ( - \NN_{x,t}(t-s) ), \]
where we may choose $C(R_{\min} (t-s)) = C_0 \cdot (n -R_{\min}(t-s))^{n/2}$ for some dimensional constant $C_0 < \infty$.
\end{Theorem}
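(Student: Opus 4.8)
\emph{Plan.} By a parabolic rescaling and a time-shift I would reduce to the case $s=-1$, $t=0$, so that the claim becomes $\sup_{y\in M}K(x,0;y,-1)\le C_0(n-R_{\min})^{n/2}\exp(-\NN_{x,0}(1))$ (the quantity $R_{\min}(t-s)$ being scale invariant). Writing $d\nu_{x,0;-1}=(4\pi)^{-n/2}e^{-f}dg_{-1}$, the computation already carried out in the proof of Theorem~\ref{Thm_NN_dependence} gives
\[ \NN_{x,0}(1)=-\int_M K(x,0;y,-1)\log K(x,0;y,-1)\,dg_{-1}(y)-\tfrac n2\log(4\pi)-\tfrac n2, \]
so that $-\NN_{x,0}(1)$ is, up to the universal constant $\tfrac n2\log(4\pi)+\tfrac n2$, the Boltzmann entropy $\int_M v\log v\,dg_{-1}$ of the probability density $v:=K(x,0;\cdot,-1)$. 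Hence the asserted estimate is precisely an \emph{ultracontractivity-type} bound, $\|v\|_{L^\infty(g_{-1})}\le C(n,R_{\min})\exp\big(\int_M v\log v\,dg_{-1}\big)$, equivalently the statement that the potential does not dip below its $\nu$-mean by more than a controlled amount, $\inf_M f\ge\int_M f\,d\nu_{x,0;-1}-\tfrac n2-\log C(n,R_{\min})$. (Such a bound is patently false for general probability densities; it is parabolic smoothing that makes it true here.)

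The mechanism quantifying this smoothing is a \emph{family of local} logarithmic Sobolev inequalities for the conjugate heat kernel measures: for each $s'\in(-1,0)$ the measure $\nu_{x,0;s'}$ satisfies a log-Sobolev inequality whose log-Sobolev constant is comparable to $|s'|$, corrected by the factor $1-R_{\min}|s'|$ coming from the lower scalar curvature bound of Lemma~\ref{Lem_lower_scal}. This is the pointed log-Sobolev inequality of Hein--Naber \cite{Hein-Naber-14}, itself a consequence of Perelman's $\WW$-monotonicity \cite{Perelman1}; the essential point is that it involves \emph{no} global quantity such as Perelman's $\mu$-functional. It is exactly this locality that forces the final bound to depend only on $\NN_{x,0}$ and on $R_{\min}$, and not on the collapsedness of the flow: replacing these sharp inequalities by the global log-Sobolev inequality associated to $\mu[g_{-1},1]\le\NN_{x,0}(1)$ would only produce the weaker exponential weight $e^{-\mu}$.

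With this family in hand I would run the standard Davies--Gross argument: differentiate $p\mapsto\big(\int_M(\text{conjugate heat evolution of }\delta_x\text{ from time }0\text{ to }s')^{p}\,dg_{s'}\big)^{1/p}$, letting the exponent $p=p(s')\to\infty$ as $s'\nearrow0$ and invoking at each scale the log-Sobolev inequality for $\nu_{x,0;s'}$; integrating the resulting differential inequality from $0$ down to $-1$ and telescoping the correction terms via the monotonicity (\ref{eq_NN_basic_2}) of $\tau\mapsto\tau\NN_{x,0}(\tau)$ yields the on-diagonal bound at $s'=-1$ with the exponential weight $\exp(-\NN_{x,0}(1))$, the expected power $\tau^{-n/2}$, and a constant of the form $C_0(n-R_{\min})^{n/2}$, the exponent $n/2$ being the "Sobolev dimension" entering the iteration. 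Alternatively one may run a Moser iteration with respect to the measures $\nu_{x,0;s'}$ rather than the volume measures, bootstrapping from the elementary inputs $\int_M v\,dg_{-1}=1$ and $\int_M\frac{|\nabla v|^2}{v}\,dg_{-1}=\int_M|\nabla f|^2\,d\nu_{x,0;-1}\le\tfrac12(n-2R_{\min})$, the latter being (\ref{eq_nabf_R}). I expect the main obstacle to be the first step: pinning down the sharp local log-Sobolev inequality with the correct constant and with no global term, and then tracking the constants carefully enough through the iteration to land on the clean dependence $C_0(n-R_{\min}(t-s))^{n/2}$; the reduction, the entropy identity, and the mechanics of the iteration are by comparison routine.
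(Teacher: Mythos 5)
Your reduction, the entropy identity $-\NN_{x,0}(1)=\int_M v\log v\,dg_{-1}+\tfrac n2\log(4\pi)+\tfrac n2$ for $v=K(x,0;\cdot,-1)$, and the reformulation as a lower bound $f\ge\int_M f\,d\nu-C$ on the potential are all correct, and your elementary inputs ($\int_M v\,dg_{-1}=1$ and (\ref{eq_nabf_R})) do play a role in the paper. The gap is in the engine of the argument: the ``pointed log-Sobolev inequality'' for the measures $\nu_{x,0;s'}$ with constant comparable to $|s'|$ is of \emph{Gaussian} type (dimension-free), and the Davies--Gross iteration driven by it yields precisely the hypercontractivity estimate of Section~\ref{sec_hypercontractive} --- control of $L^p(\nu_{\tau_1})$ by $L^q(\nu_{\tau_2})$ subject to $\tau_2/\tau_1\ge(p-1)/(q-1)$ --- and nothing more. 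To send $p\to\infty$ at a fixed time ratio is impossible under that constraint; and a Gaussian-type LSI can never force boundedness of a density: for the standard Gaussian $\gamma$ on $\IR^n$, the unbounded function $e^{c|x|}$ has $\|e^{c|x|}\|_{L^p(\gamma)}\sim e^{Cp}$, which is exactly the growth the iteration permits. The $\tau^{-n/2}$ factor and the passage to $L^\infty$ require a \emph{tight, dimensional} log-Sobolev (or Nash/Sobolev) inequality with respect to the volume measure $dg_{s'}$, whose constant on a Ricci flow background is Perelman's $\mu$-functional --- the global quantity you rightly want to avoid; a local substitute depending only on $\NN_{x,t}$ is essentially equivalent to the theorem itself and is not supplied by \cite{Hein-Naber-14}. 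The same objection defeats the Moser-iteration variant (Gaussian-type measures satisfy no dimensional Sobolev inequality), and there is the further wrinkle that $K(x,0;\cdot,s')$ solves the \emph{conjugate} heat equation in its second argument, so the sign structure of the Gross differentiation is not the one you quote.

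The paper's proof is of a different nature: a bootstrap. Starting from an a priori bound $u(x,t)\le Z\,t^{-n/2}\exp(-\NN^*_0(x,t))$ for $u=K(\cdot,\cdot;y,0)$ (valid for some flow-dependent $Z$), one derives a gradient bound $|\nabla u|\le CZ\exp(-\NN^*_0)$ by a Bochner argument, then writes $u(x,1)=\int_M u\,d\nu_{x,1;t_1}$ with $t_1=1-\rho^2$ and splits the integral around an $H_n$-center: the near part is controlled by the gradient bound, the unconditional estimate $\int_M u\,dg_t\le C$, and the lower volume bound of Theorem~\ref{Thm_lower_volume_H_center} --- this is where the dimensional factor $\tau^{-n/2}$ genuinely enters --- while the far part is controlled by the Gaussian concentration of Theorem~\ref{Thm_Gaussian_integral_bound}. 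Choosing $\rho$ small improves $Z$ to $Z/2$, and induction removes the flow dependence. The precise constant $C_0(n-R_{\min}(t-s))^{n/2}$, which you defer to ``careful tracking,'' is handled separately by a maximum-principle argument for the conjugate heat equation on $[s,\tfrac{s+t}2]$.
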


A similar bound was shown in \cite{Cao_Zhang_conj_h_e, Zhang-noninflating} using different methods.
However this bound depended on a {\it global} bound on the $\mu$-functional.

Theorem~\ref{Thm_HK_Linfty_bound} will imply the following stronger upper pointwise Gaussian bounds.

\begin{Theorem} \label{Thm_HK_pointwise_Gaussian}
Suppose that $[s,t] \subset I$ and $R \geq R_{\min}$ on $M \times [s,t]$.
Let $(z,s) \in M \times I$ be an $H_n$-center of a point $(x,t) \in M \times I$.
Then for any $\eps > 0$ and $y, y' \in M$
\begin{align}
 K(x,t;y,s) & \leq  \frac{C(R_{\min} (t-s), \eps) \exp ( - \NN_{x,t}(t-s) )}{(t-s)^{n/2}} \exp \bigg({- \frac{d^2_s(z, y)}{(8+\eps) (t-s)}   }\bigg) , \label{eq_Gaussian_HK_bound_1} \\
 K(x,t;y,s) K(x,t;y',s) & \leq  \frac{C(R_{\min} (t-s), \eps) \exp ( -2 \NN_{x,t}(t-s) )}{(t-s)^{n}} \exp \bigg({- \frac{d^2_s(y, y')}{(8+\eps) (t-s)}   }\bigg) .\label{eq_Gaussian_HK_bound_2} 
\end{align}
Moreover, if $[s - r^2, t] \subset I$ for some $r^2 \geq \eps (t-s)$, then we may replace $\NN_{x,t}(t-s)$ by $\NN_{y,s} (r^2)$ in (\ref{eq_Gaussian_HK_bound_1}), (\ref{eq_Gaussian_HK_bound_2}).
\end{Theorem}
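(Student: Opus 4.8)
The plan is to prove the single‑kernel bound (\ref{eq_Gaussian_HK_bound_1}) first and then obtain (\ref{eq_Gaussian_HK_bound_2}) and the improved entropy factor from it. It suffices to prove (\ref{eq_Gaussian_HK_bound_1}) when $d_s(z,y)\ge N(\eps)\sqrt{t-s}$, since for $d_s(z,y)<N(\eps)\sqrt{t-s}$ the Gaussian factor is bounded below by $c(\eps)>0$ and the bound is just Theorem~\ref{Thm_HK_Linfty_bound}. For (\ref{eq_Gaussian_HK_bound_2}) I would use the semigroup identity at the midpoint $s_m:=\tfrac12(s+t)$,
\[ K(x,t;y,s)\,K(x,t;y',s)=\int_M\int_M K(w,s_m;y,s)\,K(w',s_m;y',s)\,d\nu_{x,t;s_m}(w)\,d\nu_{x,t;s_m}(w'), \]
and apply (\ref{eq_Gaussian_HK_bound_1}) at the \emph{halved} scale $t-s_m=\tfrac12(t-s)$ to each inner kernel $K(w,s_m;\cdot,s)$, with $(\zeta_w,s)$ an $H_n$‑center of $(w,s_m)$; this produces exponents $-2\,d_s^2(\zeta_w,y)/((8+\eps)(t-s))$ and $-2\,d_s^2(\zeta_{w'},y')/((8+\eps)(t-s))$. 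For $\nu_{x,t;s_m}$‑most $w,w'$ the centers $\zeta_w,\zeta_{w'}$ lie within $O((t-s)^{1/2})$ of the $H_n$‑center $z$ of $(x,t)$: this follows from the semigroup identity $\nu_{x,t;s}=\int\nu_{w,s_m;s}\,d\nu_{x,t;s_m}(w)$ together with $\int_M \Var_s(\nu_{w,s_m;s},\delta_z)\,d\nu_{x,t;s_m}(w)=\Var_s(\nu_{x,t;s},\delta_z)\le H_n(t-s)$ and Corollary~\ref{Cor_HK_monotone_Hn}. Since $d_s^2(z,y)+d_s^2(z,y')\ge\tfrac12 d_s^2(y,y')$, the factor $2$ from the halved scale compensates the loss $\tfrac12$, giving exponent $-d_s^2(y,y')/((8+\eps')(t-s))$; the contribution of the small exceptional set (where the inner $H_n$‑centers stray from $z$) is estimated away using Theorem~\ref{Thm_HK_Linfty_bound} and the exponential moment bound of Theorem~\ref{Thm_Gaussian_integral_bound}. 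This is the one place where the bookkeeping is delicate, since the exceptional mass has to be traded against the Gaussian gain; renaming $\eps$ at the end yields (\ref{eq_Gaussian_HK_bound_2}).

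\textbf{The improved entropy factor.} When $[s-r^2,t]\subset I$ and $r^2\ge\eps(t-s)$, I would compare $\NN_{y,s}(r^2)$ with $\NN_{x,t}(t-s)$ by applying Corollary~\ref{Cor_NN_variation} with base time $s-r^2$ and comparison time $t^*=s$, which gives
\[ \NN_{y,s}(r^2)-\NN_{x,t}(t-s+r^2)\le\Big(\tfrac{n}{2r^2}-R_{\min}\Big)^{1/2} d_{W_1}^{g_s}(\delta_y,\nu_{x,t;s})+\tfrac n2\log\Big(\tfrac{t-s+r^2}{r^2}\Big), \]
and then using the monotonicity $\NN_{x,t}(t-s+r^2)\le\NN_{x,t}(t-s)$ from (\ref{eq_NN_basic_3}). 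Because $r^2\ge\eps(t-s)$ the logarithmic term is at most $\tfrac n2\log(1+\eps^{-1})$, and $d_{W_1}^{g_s}(\delta_y,\nu_{x,t;s})\le d_s(z,y)+\sqrt{H_n(t-s)}$ by the triangle inequality for $d_{W_1}^{g_s}$ and Lemma~\ref{Lem_Var_triangle_inequ_W_1_vs_Var}; moreover $(\tfrac{n}{2r^2}-R_{\min})^{1/2}\le(\tfrac{n}{2\eps(t-s)}-R_{\min})^{1/2}$. Hence $\exp(\NN_{y,s}(r^2)-\NN_{x,t}(t-s))\le C(\eps,R_{\min}(t-s))\exp\big(C(\eps,R_{\min}(t-s))\,d_s(z,y)/(t-s)^{1/2}\big)$, and completing the square absorbs this linear term into the Gaussian factor $\exp(-d_s^2(z,y)/((8+\eps)(t-s)))$ at the expense of replacing $8+\eps$ by $8+2\eps$ and enlarging the constant; renaming $\eps$ gives the stated replacement in (\ref{eq_Gaussian_HK_bound_1}), and then in (\ref{eq_Gaussian_HK_bound_2}) by the same argument applied to each kernel.

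\textbf{The main bound (\ref{eq_Gaussian_HK_bound_1}).} Fix $(x,t)$, $y$, an $H_n$‑center $(z,s)$ of $(x,t)$, and put $D:=d_s(z,y)$, which we may assume is $\ge N(\eps)\sqrt{t-s}$. I would run a Davies‑type argument off the semigroup identity
\[ K(x,t;y,s)=\int_M K(x,t;w,s')\,K(w,s';y,s)\,dg_{s'}(w) \]
for a well‑chosen $s'=s+\theta(t-s)$, $\theta=\theta(\eps)$ small. Split $M$ into the ball $B$ around the $H_n$‑center $z'$ of $(x,t)$ at time $s'$ of radius $\asymp\sqrt{t-s'}$ and its complement. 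On $M\setminus B$ the conjugate‑kernel factor $K(x,t;\cdot,s')$ is integrated against a set of $\nu_{x,t;s'}$‑mass that is super‑exponentially small by Theorem~\ref{Thm_Gaussian_integral_bound} (whose rate $8(t-s')$, together with $t-s'=(1-\theta)(t-s)$, is the source of the sharp constant), which beats the at‑most‑moderate growth of $K(w,s';y,s)$ bounded by Theorem~\ref{Thm_HK_Linfty_bound}. On $B$, the point $y$ lies at distance $\gtrsim D$ from the $H_n$‑center of every $(w,s')$ occurring (again by the variance identity of the first paragraph), and $K(w,s';y,s)$ is itself the density of the conjugate heat kernel $\nu_{w,s';s}$ at $y$, so it is controlled by Theorem~\ref{Thm_Gaussian_integral_bound} and Theorem~\ref{Thm_HK_Linfty_bound}; its $\NN_{w,s'}$‑factor is tied back to $\NN_{x,t}(t-s)$ via Theorem~\ref{Thm_NN_dependence} and the convexity bound $\square\NN^*\le0$. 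Optimizing $\theta$ and the splitting radius yields (\ref{eq_Gaussian_HK_bound_1}); the limit $t\to s$ and the $R_{\min}$‑dependence of the constants are handled exactly as in the proof of Theorem~\ref{Thm_HK_Linfty_bound}.

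\textbf{Main obstacle.} The delicate step is the last one: the estimate invoked for the inner factor $K(w,s';y,s)$ is essentially of the same type as the conclusion, so the argument must be organized either as a bootstrap in the length $t-s$ of the time interval — starting from a crude pointwise Gaussian bound with a large constant and contracting that constant toward $8+\eps$ under repeated use of the semigroup identity — or by supplying the off‑diagonal control of the short‑time factor directly through an integrated (Agmon‑type) maximum principle in the spirit of \cite{Hein-Naber-14}, using Theorem~\ref{Thm_HE_dist} to absorb the evolution of the distance function and the lower scalar bound to control the zeroth‑order term. Making the exponential weight precise enough that no loss beyond the prescribed $\eps$ accrues in the constant $8$, and balancing the exceptional sets in the derivation of (\ref{eq_Gaussian_HK_bound_2}), are the points requiring care; the remaining ingredients — concentration of conjugate heat kernels, the variance monotonicity, and the entropy monotonicity and dependence estimates — are already in hand.
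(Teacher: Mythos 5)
Your toolkit (bootstrap on the Gaussian constant starting from a crude flow-dependent bound, semigroup splitting at an intermediate time, concentration via Theorem~\ref{Thm_Gaussian_integral_bound}, the $L^\infty$ bound of Theorem~\ref{Thm_HK_Linfty_bound}, and entropy comparison via Theorem~\ref{Thm_NN_dependence} and Corollary~\ref{Cor_NN_variation}) is the right one, your sketch of (\ref{eq_Gaussian_HK_bound_1}) is viable, and your treatment of the ``Moreover'' clause matches the paper's almost verbatim. However, your derivation of (\ref{eq_Gaussian_HK_bound_2}) from (\ref{eq_Gaussian_HK_bound_1}) has a genuine quantitative gap, precisely at the step you flag as ``delicate bookkeeping.'' Write $\tau=t-s$, $D_y=d_s(z,y)$, $D_{y'}=d_s(z,y')$. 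For $w$ with $d_{s_m}(w,z')\approx r$ (where $(z',s_m)$ is an $H_n$-center of $(x,t)$), the mass of the shell decays like $\exp(-r^2/(8(t-s_m)))=\exp(-r^2/(4\tau))$, while the only available bound on $K(w,s_m;y,s)$ is $\exp(-2(D_y-r)_+^2/((8+\eps)\tau))$ up to harmless factors, since $\zeta_w$ is only known to lie within $\approx r$ of $z$ and may sit anywhere in that ball, possibly right at $y$. The infimal convolution $\min_r\big[\tfrac{r^2}{4}+\tfrac{2(D_y-r)^2}{8+\eps}\big]=\tfrac{D_y^2}{8+\eps/2}$ shows that after summing over shells each factor decays only at rate $1/((8+\eps/2)\tau)$ in its own $D_y$; the product therefore decays at rate $(D_y^2+D_{y'}^2)/((8+\eps/2)\tau)\ge d_s^2(y,y')/((16+\eps)\tau)$, and no choice of intermediate time improves this. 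Any argument that estimates the two factors separately against a common center is capped at $1/16$, because $D_y^2+D_{y'}^2$ can equal $\tfrac12 d_s^2(y,y')$; this is also why (\ref{eq_Gaussian_HK_bound_2}) is strictly stronger than what (\ref{eq_Gaussian_HK_bound_1}) implies, as the paper notes when remarking that $8+\eps$ is almost optimal in (\ref{eq_Gaussian_HK_bound_2}) but not in (\ref{eq_Gaussian_HK_bound_1}).

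The paper runs the logic in the opposite direction, and the reversal is essential. It first proves the \emph{product} bound (for a single basepoint and two target points $y_1,y_2$) by contradiction: writing $a_i=K(x,1;y_i,0)=\int_M K(\cdot,\theta;y_i,0)\,d\nu_\theta$ and introducing the super-level sets $V_i=\{K(\cdot,\theta;y_i,0)\ge a_i/2\}$, it applies the two-set concentration inequality of Hein--Naber (\cite[Theorem~1.13]{Hein-Naber-14}), $\nu_\theta(V_1)\,\nu_\theta(V_2)\le C\exp\big(-d_\theta^2(V_1,V_2)/(8(1-\theta))\big)$, which controls the \emph{product} of the masses by the distance \emph{between} the sets rather than by their separate distances to a common center. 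This is exactly the joint (correlated) decay your factor-by-factor estimate lacks; the distance $d_\theta(V_1,V_2)$ is then tied back to $d_0(y_1,y_2)$ through the $H_n$-centers of points $v_i\in V_i$ and the inductively assumed short-time Gaussian bound with constant $Q$, with the arithmetic--quadratic mean inequality showing the loss is $O(\beta)+O(\theta)$. The single-point bound (\ref{eq_Gaussian_HK_bound_1}) is then deduced \emph{from} the product bound by choosing $y_2$ near the $H_n$-center where Propositions~\ref{Prop_nu_ball_bound} and~\ref{Prop_NN_more_basic} give the lower bound $K(x,1;y_2,0)\ge c\exp(-\NN_0^*(x,1))$. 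To repair your argument you would need to import \cite[Theorem~1.13]{Hein-Naber-14} (or an equivalent two-set/log-Sobolev-type concentration statement) and restructure the proof around the pair $(y,y')$ jointly; the rest of your outline can then stand.
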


Note that the factor $8+\eps$ is not optimal in (\ref{eq_Gaussian_HK_bound_1}), but it is almost optimal in (\ref{eq_Gaussian_HK_bound_2}).

Lastly, we will establish a gradient bound on the heat kernel.

\begin{Theorem} \label{Thm_HK_gradient_bound}
If $[s,t] \subset I$ and $R \geq R_{\min}$ on $M \times [s,t]$, then there are constants $C( R_{\min}(t-s)), C_0( R_{\min}(t-s)) < \infty$ such that
\[ \frac{|\nabla_x K|(x,t; y,s)}{K(x,t;y,s)} \leq \frac{C}{(t-s)^{1/2}} \sqrt{ \log \bigg( \frac{C_0 \exp (- \NN_{x,t}(t-s))}{(t-s)^{n/2} K(x,t;y,s)} \bigg) }. \]
\end{Theorem}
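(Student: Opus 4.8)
The plan is to reduce by parabolic rescaling and a time-shift to the case $[s,t]=[0,1]$ — under which $R_{\min}$ is replaced by $R_{\min}(t-s)$, the quotient $|\nabla_x K|/K$ acquires the announced factor $(t-s)^{-1/2}$, and both $\NN_{x,t}(t-s)$ and the dimensionless argument of the logarithm are left unchanged — and then to estimate $|\nabla_x K|/K$ at $(x,1)$. Fixing $(y,0)$ and writing $\Psi(w):=|\nabla_x K(x,1;w,\tfrac12)|/K(x,1;w,\tfrac12)$, the reproduction identity $K(x,1;y,0)=\int_M K(x,1;w,\tfrac12)\,K(w,\tfrac12;y,0)\,dg_{1/2}(w)$, differentiated in $x$ and divided through, yields
\[
 \frac{|\nabla_x K|}{K}(x,1;y,0)\ \le\ \int_M \Psi\,d\widetilde\mu ,\qquad
 d\widetilde\mu := \frac{K(\cdot,\tfrac12;y,0)}{K(x,1;y,0)}\,d\nu_{x,1;1/2},
\]
a probability measure. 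It then suffices to estimate $\int_M\Psi\,d\widetilde\mu$ using, on the one hand, the integral gradient bounds of Proposition~\ref{Prop_nab_K_bounds} for $\Psi$ with respect to $\nu:=\nu_{x,1;1/2}$ (whose proof shows one may take $C(n,p)\le(C(n)p)^{p/2}$), and on the other hand pointwise control of the density $\rho:=d\widetilde\mu/d\nu = K(\cdot,\tfrac12;y,0)/K(x,1;y,0)$.

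For the density, put $N:=C_0\,e^{-\NN_{x,1}(1)}/K(x,1;y,0)$ where $C_0$ is twice the constant of Theorem~\ref{Thm_HK_Linfty_bound}, so that $N\ge 2$ and $\log N$ is exactly the quantity under the square root in rescaled variables. By Theorem~\ref{Thm_HK_Linfty_bound} at the intermediate time, $K(w,\tfrac12;y,0)\le C(R_{\min})\,2^{n/2}\,e^{-\NN^*_0(w,1/2)}$. By Theorem~\ref{Thm_NN_dependence}, $\NN^*_0(\cdot,\tfrac12)$ is $L$-Lipschitz with $L:=(n-R_{\min})^{1/2}$, and since $\square\NN^*_0\le0$ the map $t'\mapsto\int_M\NN^*_0(\cdot,t')\,d\nu_{x,1;t'}$ is nonincreasing, hence $\int_M\NN^*_0(\cdot,\tfrac12)\,d\nu\ge\NN^*_0(x,1)=\NN_{x,1}(1)$; combining this with $\int d_{1/2}(z,\cdot)\,d\nu\le\sqrt{H_n/2}$ for an $H_n$-center $(z,\tfrac12)$ of $(x,1)$ (Cauchy--Schwarz and the definition of $H_n$-center) gives $\NN^*_0(z,\tfrac12)\ge\NN_{x,1}(1)-L\sqrt{H_n/2}$, whence $\rho(w)\le C(R_{\min})\,N\,e^{L\,d_{1/2}(z,w)}$ everywhere and, on the ball $B_D:=B(z,\tfrac12,D)$, $\rho\le\rho_D:=C(R_{\min})\,N\,e^{LD}$.

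Then I would split $\int_M\Psi\,d\widetilde\mu=\int_{B_D}+\int_{M\setminus B_D}$ with a cutoff radius $D$ of the order $\sqrt{\log N}$ plus a constant depending on $R_{\min}$. On $B_D$, splitting further into $\{\Psi\le\Lambda\}$ and $\{\Psi>\Lambda\}$, using $\widetilde\mu(B_D)\le1$, Chebyshev, and the $L^p(\nu)$-bound gives $\int_{B_D}\Psi\,d\widetilde\mu\le\Lambda+\rho_D\,\Lambda^{1-p}\,C(n,p)\,2^{p/2}$; optimizing in $\Lambda$ and then choosing $p\approx2\log\rho_D$ yields $\int_{B_D}\Psi\,d\widetilde\mu\le C\sqrt{\log\rho_D+e}\le C(R_{\min})\sqrt{\log N}$, since $LD\le C(R_{\min})+\tfrac12\log N$ and $\log N\ge\log2$. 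On $M\setminus B_D$, Cauchy--Schwarz together with the $L^2(\nu)$-bound $\int_M\Psi^2\,d\nu\le n$ (Proposition~\ref{Prop_nab_K_bounds}), the estimate $\rho(w)\le C(R_{\min})\,N\,e^{L\,d_{1/2}(z,w)}$, and the super-Gaussian tail bound $\nu(M\setminus B(z,\tfrac12,r))\le 2e^{-(r-\sqrt{H_n})_+^2/4}$ from Theorem~\ref{Thm_Gaussian_integral_bound} reduce matters to $\int_D^\infty e^{2Lr}e^{-(r-\sqrt{H_n})^2/4}\,dr$; completing the square shows this contribution is $\le C(R_{\min})\,N\,e^{-(D-\sqrt{H_n}-4L)_+^2/16}$, which is $\le1\le C\sqrt{\log N}$ for the above choice of $D$. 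Adding the two pieces and undoing the rescaling gives the theorem. The main obstacle is precisely this far-field estimate: the cutoff $D\sim\sqrt{\log N}$ must be tuned so that the exponential growth $e^{L\,d_{1/2}(z,\cdot)}$ of the density — inherited from the Lipschitz bound on $\NN^*_0$ — is defeated by the sharp Gaussian decay of $\nu_{x,1;1/2}$, while the main term over $B_D$ remains of size only $C(R_{\min})\sqrt{\log N}$; a minor point is tracking the growth $C(n,p)\le(C(n)p)^{p/2}$ in Proposition~\ref{Prop_nab_K_bounds}, which is what makes the optimization over $p$ produce a square root rather than a power of $\log N$.
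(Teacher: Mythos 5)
Your argument is correct, and it reaches the theorem by a genuinely different organization of the splitting than the paper's. Both proofs start from the same reproduction identity, bounding $|\nabla_x K|/K$ at $(x,t)$ by the average of $\Psi=|\nabla_x K|(x,t;\cdot,\cdot)/K(x,t;\cdot,\cdot)$ at the intermediate time against the tilted probability measure $\rho\,d\nu$, and both control the density $\rho$ by combining Theorem~\ref{Thm_HK_Linfty_bound} at the intermediate time with the Lipschitz bound on $\NN^*_s$ from Theorem~\ref{Thm_NN_dependence}. The paper then avoids any spatial cutoff and any optimization over $p$: it chooses a superlevel set $X_h$ of $\Psi$ with $\nu(X_h)=a$, where $a$ is the square of the (normalized) ratio $K/(C_0e^{-\NN})$, applies Cauchy--Schwarz on $X_h$ against the $L^2(\nu)$-norm of the density (computed by the same Gaussian-tail-versus-exponential-growth integral that you perform in your far-field step), and on $M\setminus X_h$ uses the pointwise bound $\Psi\le h\le C(-\log a)^{1/2}$, which comes from the \emph{subset} version of Proposition~\ref{Prop_nab_K_bounds} with $p=1$. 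Your route instead cuts at a ball $B_D$ with $D\sim\sqrt{\log N}$ around the $H_n$-center, runs a moment-method optimization in $p\sim\log\rho_D$ on the near region, and beats the exponential growth of $\rho$ by the Gaussian tail on the far region; this is a perfectly sound and more ``standard'' implementation. The one point you must make explicit is the growth $C(n,p)\le (C(n)\,p)^{p/2}$ in Proposition~\ref{Prop_nab_K_bounds}: it is not part of that proposition's statement, but it does follow from its proof, since $|h(a)|\le C\big((-\log a)^{1/2}+(-\log(1-a))^{1/2}+1\big)$ and $\int_0^1(-\log a)^{p/2}\,da=\Gamma(p/2+1)\le (Cp)^{p/2}$; with that verified, your optimization $\Lambda\sim\sqrt{p}$, $p\sim\log\rho_D$ does yield the square root rather than a higher power of the logarithm. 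The paper's use of the subset bound is what lets it sidestep this bookkeeping entirely, at the cost of the slightly less transparent level-set construction.
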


Note that this bound is asymptotically similar to the bound in Theorem~\ref{Thm_gradient_estimate}.
In fact, given a {\it global} bound on the Nash entropy, it is not hard to deduce a similar bound by combining Theorems~\ref{Thm_gradient_estimate}, \ref{Thm_HK_Linfty_bound}.
The difficulty in the proof of Theorem~\ref{Thm_HK_gradient_bound} is, however, that the bound only depends on the pointed Nash entropy at $(x,t)$.

\subsection{Proof of Theorem~\ref{Thm_HK_Linfty_bound}}

\begin{proof}[Proof of Theorem~\ref{Thm_HK_Linfty_bound}.]
We first show:

\begin{Claim}
If the first part of the theorem is true, then the additional statement on the constant $C$ is also true.
\end{Claim}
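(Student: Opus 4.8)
The plan is to bootstrap the explicit constant out of the qualitative statement (the bound with \emph{some} continuous function $C(R_{\min}(t-s))$) by splitting the parabolic interval in half and absorbing the low-curvature part into a polynomial mass factor, using Lemma~\ref{Lem_lower_scal} twice. By parabolic scaling we may assume $t-s=1$; write $R_{\min}:=R_{\min}(t-s)$, and recall $R_{\min}\le n/2$, which follows from $(\ref{eq_nabf_R})$ since $(t-s)\int_M R\,d\nu_{x,t;s}\le n/2$ and $\int_M R\,d\nu_{x,t;s}\ge R_{\min}$. If $R_{\min}\ge 0$ then $R_{\min}$ ranges over the compact set $[0,n/2]$, so applying the qualitative bound directly on $[s,t]$ gives $K(x,t;y,s)\le(\sup_{[0,n/2]}C)\,e^{-\NN_{x,t}(1)}\le(\sup_{[0,n/2]}C)(n/2)^{-n/2}(n-R_{\min})^{n/2}e^{-\NN_{x,t}(1)}$, which already has the required form. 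So assume from now on that $R_{\min}<0$.

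Set $\tilde s:=s+\tfrac12$ and use the reproduction formula
\[ K(x,t;y,s)=\int_M K(x,t;w,\tilde s)\,K(w,\tilde s;y,s)\,dg_{\tilde s}(w), \]
which follows from $\int_M u\,d\nu_{x,t;\tilde s}=u(x,t)$ applied to the heat equation solution $u:=K(\cdot,\cdot;y,s)$ together with $d\nu_{x,t;\tilde s}=K(x,t;\cdot,\tilde s)\,dg_{\tilde s}$. By Lemma~\ref{Lem_lower_scal} applied from time $s$, on $M\times[\tilde s,t]$ we have $R\ge R^*_{\min}:=nR_{\min}/(n+|R_{\min}|)\in(-n,0)$, so the argument $R^*_{\min}(t-\tilde s)=\tfrac12 R^*_{\min}$ of the constant lies in the compact set $[-\tfrac n2,0]$. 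Applying the qualitative bound on $[\tilde s,t]$ and using the monotonicity of $\tau\mapsto\NN_{x,t}(\tau)$ from $(\ref{eq_NN_basic_3})$, we obtain, uniformly in $w$,
\[ K(x,t;w,\tilde s)\le\frac{C\big(R^*_{\min}(t-\tilde s)\big)}{(t-\tilde s)^{n/2}}\,e^{-\NN_{x,t}(t-\tilde s)}\le 2^{n/2}\bar C\,e^{-\NN_{x,t}(1)},\qquad \bar C:=\sup_{[-n/2,0]}C<\infty. \]

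It remains to control the mass $Q(\sigma):=\int_M K(w,\sigma;y,s)\,dg_\sigma(w)$. We have $Q(s)=1$, and since $\partial_\sigma dg_\sigma=-R\,dg_\sigma$ and $\int_M\triangle_w K\,dg_\sigma=0$,
\[ \frac{dQ}{d\sigma}=-\int_M R\,K\,dg_\sigma\le-\Big(\min_M R(\cdot,\sigma)\Big)Q(\sigma). \]
Plugging in the bound $\min_M R(\cdot,\sigma)\ge nR_{\min}/(n-2R_{\min}(\sigma-s))$ from Lemma~\ref{Lem_lower_scal} and integrating over $[s,\tilde s]$ yields
\[ Q(\tilde s)\le\exp\!\bigg(\int_0^{1/2}\frac{n|R_{\min}|}{n+2|R_{\min}|\,u}\,du\bigg)=\Big(\frac{n+|R_{\min}|}{n}\Big)^{n/2}=\Big(\frac{n-R_{\min}}{n}\Big)^{n/2}. \]
Combining the last three displays gives $K(x,t;y,s)\le 2^{n/2}\bar C\,n^{-n/2}(n-R_{\min})^{n/2}e^{-\NN_{x,t}(1)}$, i.e. the asserted bound with a dimensional constant $C_0:=2^{n/2}\bar C\,n^{-n/2}$, enlarged to absorb the $R_{\min}\ge 0$ case; as every quantity above is parabolically scale-invariant, undoing the rescaling completes the proof. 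The only delicate point — and really the whole content of the claim — is the estimate for $Q(\tilde s)$: the naive bound $Q(\tilde s)\le e^{-R_{\min}/2}=e^{|R_{\min}|/2}$ is exponentially too weak, and one must instead exploit that Lemma~\ref{Lem_lower_scal} forces $\sigma\mapsto\min_M R(\cdot,\sigma)$ to relax like $O((\sigma-s)^{-1})$, so that $\int_s^{\tilde s}(-\min_M R)\,d\sigma$ is only logarithmic in $|R_{\min}|$ and the resulting factor is exactly the polynomial $(n-R_{\min}(t-s))^{n/2}$ appearing in the statement.
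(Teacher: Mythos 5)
Your proof is correct and follows essentially the same strategy as the paper's: split the interval at the midpoint, apply the qualitative bound on the later half where Lemma~\ref{Lem_lower_scal} has improved the scalar curvature lower bound to a dimensional quantity, and extract the polynomial factor $\big(\tfrac{n-R_{\min}(t-s)}{n}\big)^{n/2}$ from the logarithmic time-integral of $-\min_M R$ on the earlier half. The only (cosmetic) difference is that the paper executes the last step by applying the maximum principle to the conjugate heat equation for $K(x,t;\cdot,\cdot)$ on $[s,s+\tfrac12(t-s)]$, whereas you use the semigroup identity and track the $L^1$-mass of the forward kernel $K(\cdot,\cdot;y,s)$ — dual formulations of the same estimate.
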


\begin{proof}
After application of a time-shift, we may assume that $s = 0$.
By Lemma~\ref{Lem_lower_scal} we have
\begin{equation} \label{eq_lower_R}
 R (\cdot, t) \geq \frac{n}2 \frac{R_{\min}}{\frac{n}2 - R_{\min} t }. 
\end{equation}
Since the flow exists up to time $t$, this implies that $R_{\min} \leq \frac{n}{2t}$.
By applying the theorem to $[\frac12 t, t]$, we obtain that
\begin{equation} \label{eq_upper_K}
 K(x,t; \cdot, \tfrac12 t) \leq \frac{C}{(\frac12 t)^{n/2}} \exp (- \NN_{x,t} (\tfrac12 t) )  \leq \frac{C}{t^{n/2}} \exp (- \NN_{x,t} ( t) ), 
\end{equation}
where $C < \infty$ is a dimensional constant.
Since $K(x,t; \cdot, \cdot)$ satisfies the conjugate heat equation $\square^* K(x,t; \cdot, \cdot) = 0$, we obtain from the maximum principle and (\ref{eq_lower_R}), (\ref{eq_upper_K}) that
\begin{align*}
 \max_M K(x,t; \cdot, 0)
&\leq \exp \bigg( - \int_0^{t/2} \min_M R(\cdot, t') dt'  \bigg) \max_M K(x,t; \cdot, \tfrac12 t) \\
&\leq  \exp \bigg( - \int_0^{t/2}  \frac{n}2 \frac{R_{\min}}{\frac{n}2 - R_{\min} t' } dt' \bigg) \max_M K(x,t; \cdot, \tfrac12 t) \\
&=\bigg( \frac{n}2 - R_{\min} \frac{t}2 \bigg)^{n/2} \max_M K(x,t; \cdot, \tfrac12 t) \\
&\leq \frac{C (n - R_{\min} t)^{n/2} }{t^{n/2}}  \exp (- \NN_{x,t} ( t) ). \qedhere 
\end{align*}
\end{proof}
\medskip

After parabolic rescaling and application of a time-shift, we may assume that $s = 0$ and $t = 1$.
By the Claim we may disregard the dependence of $C$ on $R_{\min}$.
Therefore, from now on all generic constants may depend on $n, R_{\min}$, without further specification.

Fix $y \in M$ and write
\[ u := K(\cdot, \cdot; y,0). \]
So $\square u = 0$.
In the following we will show that
\begin{equation} \label{eq_K_bound_W}
 u(x,t) \leq \frac{Z}{t^{n/2}} \exp ( - \NN^*_0 (x,t)) \qquad \text{for all} \quad (x,t) \in M \times (0,1], 
\end{equation}
where $Z$ is a constant whose value will only depend on $n$ and $R_{\min}$.
Note that such a bound holds for \emph{some} large $Z$, which may depend on the underlying Ricci flow; see for instance \cite[Theorem~26.25]{Chow_book_series_Part_III}.
So by induction, it is enough to show that if (\ref{eq_K_bound_W}) holds for some $Z$, then it also holds for $Z$ replaced with $Z/2$ if $Z \geq \underline{Z}$.
We can therefore assume that (\ref{eq_K_bound_W}) is true for some $Z$.
Again by parabolic rescaling, it suffices to show that (\ref{eq_K_bound_W}) holds at time $t = 1$ for $Z$ replaced with $Z/2$ if $Z \geq \underline{Z}$.

Since
\[ \frac{d}{dt} \int_M u \, dg_t = - \int_M R u\, dg_t \leq - R_{\min} \int_M u \, dg_t, \]
we have
\begin{equation} \label{eq_int_u_L_infty_HK}
 \int_M u \, dg_t \leq e^{- R_{\min}} \leq C \qquad \text{for all} \quad t \in (0,1]. 
\end{equation}
Note that this bound is independent of $Z$.

Next, we will derive a gradient bound on $u$.
For this purpose set
\[ v := \big( t - \tfrac12 \big) |\nabla u|^2 + u^2, \]
and observe that for $t \in [\frac12 ,1]$ we have
\[ \square v = |\nabla u|^2 - 2\big( t - \tfrac12 \big) |\nabla^2 u|^2 - 2 |\nabla u|^2 \leq 0. \]
So for any $(x, t) \in M \times (\frac12, 1]$ we have using (\ref{eq_K_bound_W})
\begin{multline} \label{eq_t_m_12_int_gradient}
 \big( t - \tfrac12 \big) |\nabla u|^2 (x,t) 
\leq v(x,t)
\leq \int_M K(x,t; \cdot, \tfrac12) v(\cdot, \tfrac12 ) dg_{\frac12} \\
= \int_M K(x,t; \cdot, \tfrac12) u^2(\cdot , \tfrac12 ) dg_{\frac12}
\leq 2^{n}Z^2 \int_M K(x,t ; \cdot, \tfrac12 ) \exp \big({ - 2 \NN^*_0 (\cdot, \tfrac12 ) } \big) dg_{\frac12} .
\end{multline}
Let $(z,\frac12)$ be an $H_n$-center of $(x,t)$ and recall that $d_{W_1}^{g_{\frac12}} ( \delta_z, \nu_{x,t; \frac12} ) \leq \sqrt{\frac12 H_n}$.
By Theorem~\ref{Thm_NN_dependence}, Corollary~\ref{Cor_NN_variation}  we have for any $y' \in M$
\begin{equation} \label{eq_z12xtNN}
- \NN^*_0 (y',\tfrac12) \leq - \NN^*_0 (z, \tfrac12) + C d_{\frac12} (z, y')
\leq -\NN^*_0 (x,t) + C + C d_{\frac12} (z, y').
\end{equation}
So by Theorem~\ref{Thm_Gaussian_integral_bound}
\begin{align*} 
 \int_M K(x,t ; \cdot, \tfrac12 ) & \exp \big({ - 2 \NN^*_0 (\cdot, \tfrac12 ) } \big) dg_{\frac12}
\leq C \exp ({ - 2 \NN^*_0 (x, t ) } )  \int_M K(x,t ; \cdot, \tfrac12 ) \exp (C d_{\frac12} (z, \cdot)) dg_{\frac12} \displaybreak[1] \\
&\leq C \exp ({ - 2 \NN^*_0 (x, t ) } ) \bigg( \int_0^\infty e^{Cr} \int_{M \setminus B(z,\frac12,r)} K(x,t ; \cdot, \tfrac12 )  dg_{\frac12} dr + 1 \bigg)\displaybreak[1] \\
&\leq C \exp ({ - 2 \NN^*_0 (x, t ) } )\bigg( \int_0^\infty e^{Cr} \exp \bigg({ - \frac{\big(r - \sqrt{2H_n (t-\frac12)} \big)_+^2}{8 (t - \frac12)}}\bigg) dr +1 \bigg) \displaybreak[1] \\
&\leq C \exp ({ - 2 \NN^*_0 (x, t ) } ) \bigg( \sqrt{t- \tfrac12} \int_0^\infty e^{C\sqrt{t- \frac12} \, r} \exp \bigg({ - \frac{\big(r - \sqrt{2H_n} \big)_+^2}{8}}\bigg) dr +1 \bigg)\\
&\leq C \exp ({ - 2 \NN^*_0 (x, t ) } ).
\end{align*}
Combining this with (\ref{eq_t_m_12_int_gradient}) implies that for all $(x,t) \in M \times [\frac34, 1]$
\begin{equation} \label{eq_nabu_W}
 |\nabla u|(x,t) \leq C 
  Z \exp \big({ -  \NN^*_0 (x, t ) } \big). 
\end{equation}

We will now combine (\ref{eq_int_u_L_infty_HK}) and (\ref{eq_nabu_W}) to show that (\ref{eq_K_bound_W}) holds at time $1$ for $Z$ replaced with $Z/2$ if $Z \geq \underline{Z}$.
Fix $(x,1) \in M \times I$ and write $d\nu = K(x,1; \cdot, \cdot) dg$.
 Let $\rho \in (0, \frac12]$ be a constant whose value we will determine later, set $t_1 := 1- \rho^2 \in [\frac34,1]$ and let $(z_1, t_1)$ be an $H_n$-center of $(x,1)$.
By Corollary~\ref{Cor_NN_variation} we find that if $\rho \leq \ov\rho$, then
\begin{multline*}
  - \NN^*_0 (z_1, t_1) \leq  - \NN^*_0 (x,1) + C d^{g_{t_1}}_{W_1} (\delta_{z_1}, \nu_{x,1;t_1})
\leq - \NN^*_0 (x,1) + C\sqrt{\Var (\delta_{z_1}, \nu_{x,1;t_1})} \\
\leq - \NN^*_0 (x,1) + C\sqrt{H_n \rho^2}
\leq - \NN^*_0 (x,1) + \log 2 
\end{multline*}
and therefore, by Theorem~\ref{Thm_NN_dependence}
\begin{equation} \label{eq_NN_NN_log2}
 - \NN^*_0 (\cdot, t_1) \leq - \NN^*_0 (x,1) + \log 2 +  C d_{t_1} (z_1, \cdot) . 
\end{equation}
 Set
 \[ B := B(z_1, t_1, \sqrt{100H_n} \,\rho)  \]
 and recall that
\begin{equation} \label{eq_ux1_int_int}
  u(x,1) = \int_B u \,d\nu_{t_1} + \int_{M \setminus B} u  \,d\nu_{t_1}. 
\end{equation}

We will first bound the first integral in (\ref{eq_ux1_int_int}).
By Theorem~\ref{Thm_lower_volume_H_center} we have
\begin{equation} \label{eq_B_vol_HK_L_infty}
 |B|_{t_1} \geq c \exp (\NN_{x,1} ( \rho^2 ) ) \rho^n
 \geq c \exp ( \NN^*_0 (x,1) ) \rho^n. 
\end{equation}
Combining (\ref{eq_nabu_W}) and (\ref{eq_NN_NN_log2}) yields
\[ |\nabla u| (\cdot, t_1) \leq C Z \exp ( - \NN^*_0(x,1) ) \qquad \text{on} \quad B. \]
Thus for any $x', x'' \in B$
\[  u (x', t_1)   \leq u(x'', t_1) +  C  Z  \exp ( - \NN^*_0 (x,1) ) \rho. \]
Integrating this over $B$ and using (\ref{eq_int_u_L_infty_HK}), (\ref{eq_B_vol_HK_L_infty}) implies
\begin{equation*}
 u(x', t_1)  \leq  \frac1{ |B|_{t_1}} \int_B u \, dg_{t_1} + C Z \exp ( - \NN^*_0 (x,1) ) \rho 
\leq C( C \rho^{-n} + Z \rho ) \exp ( - \NN^*_0 (x,1)). 
\end{equation*}
It follows that
\begin{equation} \label{eq_int_B_u}
 \int_B u \, d\nu_{t_1} \leq C( C \rho^{-n} + Z \rho ) \exp ( - \NN^*_0 (x,1)). 
\end{equation}

To bound the second term in (\ref{eq_ux1_int_int}), first observe that by Proposition~\ref{Prop_nu_ball_bound}
\[ \nu_{t_1} (M \setminus B)
\leq \frac1{100}. \]
So if $\rho \leq \ov\rho$, then we obtain using (\ref{eq_K_bound_W}), (\ref{eq_NN_NN_log2}), the bound
\[ e^s \leq 1 + \rho e^{s/\rho} \]
and Theorem~\ref{Thm_Gaussian_integral_bound} that
\begin{align}
 \int_{M \setminus B} u \, d\nu_{t_1} 
&\leq 2 Z \int_{M \setminus B} \exp ( - \NN^*_0 (\cdot, t_1) ) d\nu_{t_1}  \notag \\
&\leq 4 Z \exp ( - \NN^*_0 (x, 1) ) \int_{M \setminus B} \exp \big( C d_{t_1} (z_1, \cdot) \big) d\nu_{t_1} \notag \displaybreak[1] \\
&\leq 4 Z \exp ( - \NN^*_0 (x, 1) )  \int_{M \setminus B} \big( 1 + \rho \exp \big( C \rho^{-1} d_{t_1} (z_1, \cdot) \big) \big) d\nu_{t_1} \notag \displaybreak[1]  \\
&\leq \bigg( \frac{4}{100} + 4 \rho  \int_{M}\exp \big( C  \rho^{-1} d_{t_1} (z_1, \cdot) \big) d\nu_{t_1} \bigg) Z \exp ( - \NN^*_0 (x, 1) ) \notag \displaybreak[1]  \\
&\leq \bigg( \frac{4}{100} + 4 \rho + C   \int_0^\infty e^{C r/\rho}  \int_{M \setminus B(z_1, t_1, r)}  d\nu_{t_1} dr \bigg) Z \exp ( - \NN^*_0 (x, 1) ) \notag \displaybreak[1]  \\
&\leq \bigg( \frac{5}{100} + C   \int_0^\infty e^{C r/\rho}  \exp \bigg({- \frac{ ( r - \sqrt{2H_n} \rho)_+^2}{8 \rho^2} }\bigg)  dr \bigg) Z \exp ( - \NN^*_0 (x, 1) ) \notag \displaybreak[1]  \\
&\leq \bigg( \frac{5}{100} + C  \rho  \int_0^\infty e^{C r}  \exp \bigg({- \frac{ ( r - \sqrt{2H_n} )_+^2}{8 } }\bigg)  dr \bigg) Z \exp ( - \NN^*_0 (x, 1) ) \notag \\
&\leq \bigg( \frac1{10} + C \rho \bigg) Z \exp ( - \NN^*_0 (x, 1) ). \label{eq_int_MB_u}
\end{align}
Combining (\ref{eq_ux1_int_int}), (\ref{eq_int_B_u}) and (\ref{eq_int_MB_u}) yields that if $\rho \leq \ov\rho$, then
\[ u(x,1) \leq \bigg(C  \rho^{-n} + \frac{Z}{10} + (Z+C) \rho  \bigg)  \exp ( - \NN^*_0 (x, 1) ). \]
So if we fix some $\rho \leq \frac1{10}$ and assume $Z \geq \underline{Z}$, then the expression in the parentheses is $\leq Z/2$, as desired.
\end{proof}
\bigskip

\subsection{Proof of Theorem~\ref{Thm_HK_pointwise_Gaussian}}
Theorem~\ref{Thm_HK_pointwise_Gaussian} will be a consequence of the following lemma:

\begin{Lemma} \label{Lem_Gaussian_estimate}
If $Q < \infty$, $Z \geq \underline{Z}(R_{\min}, \eps, Q)$, then the following holds.
Suppose that $[0,1] \subset I$ and $R \geq R_{\min}$ on $M \times [0,1]$. Assume that for any $(x,t) \in (0, \frac12] \times M$ and any $H_n$-center $(z, 0)$ of $(x,t)$ we have
\begin{equation} \label{eq_Z_Gaussian_condition_asspt}
  K(x,t; y,0)  \leq \frac{2Z \exp ( - \NN^*_0 (x,t))}{t^{n/2}} \exp \bigg({ - \frac{d^2_0(z, y)}{Q t} }\bigg).  
\end{equation}
Then for any $(x,t) \in M \times (0,1]$, $y_1, y_2 \in M$ and any $H_n$-center $(z,0)$ of $(x,t)$ we have
\begin{align}
 K(x,t; y,0)  &\leq \frac{Z \exp ( - \NN^*_0 (x,t))}{t^{n/2}} \exp \bigg({ - \frac{d^2_0(z, y)}{(8+\eps) t} }\bigg),  \label{eq_Z_Gaussian_condition_1} \\
 K(x,t; y_1,0) K(x,t; y_2, 0) &\leq \frac{Z \exp ( -2 \NN^*_0 (x,t))}{t^{n}} \exp \bigg({ - \frac{d^2_0(y_1, y_2)}{(8+\eps) t} }\bigg).\label{eq_Z_Gaussian_condition_2}
\end{align}
\end{Lemma}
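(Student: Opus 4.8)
The plan is to first normalize, by parabolic rescaling together with a time--shift, to the case $t=1$: if the two conclusions hold at $t=1$ for every Ricci flow meeting the hypotheses, then rescaling $[0,t_0]$ to $[0,1]$ turns the hypothesis (\ref{eq_Z_Gaussian_condition_asspt}) on $(0,\tfrac12]$ into a hypothesis on a subinterval, only weakens the lower scalar curvature bound, and thereby yields the conclusions at $t=t_0$ for all $t_0\le 1$; here one must also note that $\underline Z$ may be taken monotone in $R_{\min}$. After this reduction the whole argument lives on the fixed time interval $[0,1]$.

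\textbf{The reproduction step.} The engine is the reproduction formula $K(x,1;y,0)=\int_M K(w,\tfrac12;y,0)\,d\nu_{x,1;\frac12}(w)$ (and its analogue for the product). The inner kernels $K(w,\tfrac12;y,0)$ have source time $0$ and target time $\tfrac12\in(0,\tfrac12]$, so the hypothesis (\ref{eq_Z_Gaussian_condition_asspt}) applies and bounds them by $C(n)Z\exp(-\NN^*_0(w,\tfrac12))\exp\!\big(-2\,d_0^2(z_w,y)/Q\big)$, where $(z_w,0)$ is an $H_n$--center of $(w,\tfrac12)$. Three geometric inputs then enter. (i) Variance monotonicity (Corollary~\ref{Cor_conj_HK_monotone}) together with the triangle inequality (\ref{eq_Var_triangle}) shows $d_0(z_w,\zeta)\le d_{1/2}(w,z')+C\sqrt{H_n}$, where $(z',\tfrac12)$ is an $H_n$--center of $(x,1)$ at time $\tfrac12$ and $\zeta$ is a fixed $H_n$--center of $(x,1)$ at time $0$; thus the Gaussian weight can be rewritten in terms of $d_0(\zeta,y)$ at the cost of a Lipschitz error $C\,d_{1/2}(w,z')$. (ii) Theorem~\ref{Thm_NN_dependence} and Corollary~\ref{Cor_NN_variation} give $-\NN^*_0(w,\tfrac12)\le -\NN^*_0(x,1)+C+C\,d_{1/2}(w,z')$. (iii) Substituting and integrating against $\nu_{x,1;\frac12}$, the Lipschitz factors of the form $e^{C d_{1/2}(w,z')}$ and the cross--terms produced by the shift in (i) are absorbed using the Gaussian concentration of $\nu_{x,1;\frac12}$ near $(z',\tfrac12)$ from Theorem~\ref{Thm_Gaussian_integral_bound}, while for $w$ far from $z'$ one instead inserts the $L^\infty$--bound of Theorem~\ref{Thm_HK_Linfty_bound} on the inner kernel. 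Optimizing the intermediate time and the splitting radius and then choosing $Z$ large, this replaces $Q$ by a strictly smaller constant $Q_1=Q_1(Q)$, and iterating the step finitely many times (each iteration enlarging $Z$, which is why $\underline Z$ depends on $\eps$ and $Q$) drives the exponent down to $8+\eps$, giving (\ref{eq_Z_Gaussian_condition_1}). For the product bound (\ref{eq_Z_Gaussian_condition_2}) one runs the same scheme on $K(x,1;y_1,0)K(x,1;y_2,0)$, with the one extra ingredient that after applying the hypothesis to both inner kernels one uses the elementary convexity inequality $d_0^2(\zeta,y_1)+d_0^2(\zeta,y_2)\ge\tfrac12 d_0^2(y_1,y_2)$ to convert the two Gaussian factors into decay in $d_0(y_1,y_2)$; this is precisely the step that produces the constant $8$ rather than $16$ and is responsible for its near--optimality in (\ref{eq_Z_Gaussian_condition_2}).

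\textbf{Alternative for (\ref{eq_Z_Gaussian_condition_1}).} One may also extract (\ref{eq_Z_Gaussian_condition_1}) from (\ref{eq_Z_Gaussian_condition_2}) once an upper volume bound on $B(z,0,\sqrt{2H_n})$ at scale $1$ is available: by Proposition~\ref{Prop_nu_ball_bound} one has $\nu_{x,1;0}(B(z,0,\sqrt{2H_n}))\ge\tfrac12$, so integrating (\ref{eq_Z_Gaussian_condition_2}) with $y_1=y$ against $d\nu_{x,1;0}(y_2)$ over this ball, using $d_0(y,y_2)\ge(d_0(z,y)-\sqrt{2H_n})_+$ there and the volume bound $|B(z,0,\sqrt{2H_n})|_0\le C\exp(\NN^*_0(x,1))$, gives $K(x,1;y,0)\le CZ\exp(-\NN^*_0(x,1))\exp\!\big(-(d_0(z,y)-\sqrt{2H_n})_+^2/(8+\eps)\big)$, and the harmless shift by $\sqrt{2H_n}$ is absorbed at the price of slightly enlarging $\eps$ and $Z$.

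\textbf{Expected main obstacle.} The difficulty is entirely in the bookkeeping. First, the displacement estimates for the $H_n$--centers and for the pointed Nash entropy under the reproduction formula must be kept as sharp as possible, because any loss there is amplified when one iterates toward the fixed point $8$; this forces one to use Corollary~\ref{Cor_conj_HK_monotone}, Theorem~\ref{Thm_NN_dependence} and Corollary~\ref{Cor_NN_variation} with care, and to exploit the Gaussian (not merely polynomial) nature of the concentration in Theorem~\ref{Thm_Gaussian_integral_bound} when absorbing the Lipschitz weights. Second, the many constants (the intermediate reproduction time, the splitting radii, $Z$, and finally the number of iterations) must be fixed in the correct order so that the improvement $Q\mapsto Q_1$ is genuine and the exponent truly converges to $8$. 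One should also verify that the upper volume bound invoked in the last step is legitimately available at this point, or else use the direct reproduction argument for (\ref{eq_Z_Gaussian_condition_1}) to sidestep it.
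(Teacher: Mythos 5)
Your overall architecture for the pointwise bound (\ref{eq_Z_Gaussian_condition_1}) --- reproduce at a small intermediate time $\theta$, bound the inner kernel by the hypothesis, transport the $H_n$-center and the entropy via Corollary~\ref{Cor_NN_variation} and Theorem~\ref{Thm_NN_dependence}, and absorb the Lipschitz weights using the Gaussian concentration of $\nu_{x,1;\theta}$ --- is essentially the engine the paper uses, and the resulting inf-convolution of exponents $Q\theta+8(1-\theta)$ does converge to $8$ (the paper does it in a single step with $\theta$ small, inside a contradiction argument with the $(1+\beta)$-power trick to absorb the $e^{C\theta^{-1/2}d}$ errors coming from the $\theta^{-1/2}$-Lipschitz constant of $\NN^*_0(\cdot,\theta)$; your ``optimize and iterate'' variant is heavier but not wrong in spirit). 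The genuine gap is in your treatment of the product bound (\ref{eq_Z_Gaussian_condition_2}): you propose to multiply two single-point Gaussian bounds and use $d_0^2(\zeta,y_1)+d_0^2(\zeta,y_2)\ge\tfrac12 d_0^2(y_1,y_2)$. That inequality converts exponent $Q$ per factor into exponent $2Q$ for the product, so from two copies of (\ref{eq_Z_Gaussian_condition_1}) with constant $8+\eps$ you can only reach $16+2\eps$ in (\ref{eq_Z_Gaussian_condition_2}), not $8+\eps$; your parenthetical claim that this step ``produces $8$ rather than $16$'' is exactly backwards. Since the single-point constant cannot be pushed below $4$ even on Euclidean space, no amount of iteration rescues this route to the stated product bound.

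The paper's logical order is the reverse of yours and this is where the missing idea lies: it first proves the product bound, with the constant $8$ supplied by the two-set Gaussian concentration estimate of Hein--Naber (\cite[Theorem~1.13]{Hein-Naber-14}, the same input behind Theorem~\ref{Thm_Gaussian_integral_bound}) applied to the super-level sets $V_i=\{K(\cdot,\theta;y_i,0)\ge a_i/2\}$ at the intermediate time, which yields $\nu_\theta(V_1)\nu_\theta(V_2)\le C\exp(-d_\theta^2(V_1,V_2)/(8(1-\theta)))$; the hypothesis (\ref{eq_Z_Gaussian_condition_asspt}) is then only used to show that the time-$0$ corrections $d_0(z_i,y_i)$ are negligible relative to $d_\theta(v_1,v_2)$ when $\theta$ is small. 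The pointwise bound (\ref{eq_Z_Gaussian_condition_1}) is afterwards deduced from (\ref{eq_Z_Gaussian_condition_2}) by exhibiting a point $y_2\in B(z,0,\sqrt{2H_n})$ with $K(x,1;y_2,0)\ge c\exp(-\NN^*_0(x,1))$ via Propositions~\ref{Prop_nu_ball_bound} and~\ref{Prop_NN_more_basic} --- this sidesteps the upper volume bound your ``alternative'' invokes, which is indeed problematic here since Theorem~\ref{Thm_upper_volume_bound} would require the flow to exist on $[-r^2,0]$, not guaranteed under the hypotheses of the lemma.
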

\bigskip

\begin{proof}[Proof of Theorem~\ref{Thm_HK_pointwise_Gaussian} using Lemma~\ref{Lem_Gaussian_estimate}.]
After application of a time-shift and parabolic rescaling, we may assume that $s = 0$ and $t = 1$.

We claim that the bound (\ref{eq_Z_Gaussian_condition_asspt}) holds for some uniform $Q, Z$, which may depend on the geometry of the flow.
To see this, observe that by \cite[Theorem~26.25]{Chow_book_series_Part_III} there is a constant $C^*_1 < \infty$, which may depend on the geometry of the flow, such that 
\begin{equation} \label{eq_Kxty_C_star_1}
 K(x,t; y,0) \leq \frac{C^*_1}{t^{n/2}} \exp \bigg({ - \frac{d^2_0(x,y)}{C^*_1 t} }\bigg). 
\end{equation}
It follows that for some  $C^{*}_2 < \infty$, which may depend on the geometry of the flow,
\begin{equation} \label{eq_Kxty_C_star_2}
 \nu_{x,t;0} (M \setminus B(x,0, A \sqrt{t}) )
\leq \int_{A \sqrt{t}}^\infty C^*_2 r^{n-1} \frac{C^*_1}{t^{n/2}} \exp \bigg({ - \frac{r^2}{C^*_1 t} }\bigg)dr
\leq C^*_1 C^*_2 \int_{A}^\infty r^{n-1}   \exp \bigg({ - \frac{r^2}{C^*_1 } }\bigg) dr. 
\end{equation}
So choosing $A$ large enough, we find using Proposition~\ref{Prop_nu_ball_bound} that there is a constant $C^*_3 < \infty$ such that for any $H_n$-center $(z,0)$ of $(x,t)$
\[ d_0 (x, z) \leq C^*_3 \sqrt{t}. \]
Combining this with (\ref{eq_Kxty_C_star_1}) implies that for some $C^*_4 < \infty$
\begin{equation} \label{eq_Kxty_C_star_4}
 K(x,t; y,0) \leq \frac{C^*_4}{t^{n/2}} \exp \bigg({ - \frac{d^2_0(z,y)}{C^*_4 t} }\bigg). 
\end{equation}
So since $\NN^*_0 \leq 0$, we obtain that (\ref{eq_Z_Gaussian_condition_asspt}) holds for some uniform $Q, Z$.

Since (\ref{eq_Z_Gaussian_condition_asspt}) holds for some $Q, Z$, we may apply Lemma~\ref{Lem_Gaussian_estimate} and conclude that (\ref{eq_Z_Gaussian_condition_asspt}) holds for $Q=8+\eps$ and $Z$ replaced with $Z =Z/2$ as long as $Z \geq \underline{Z} (R_{\min}, \eps, Q)$.
So, in fact, (\ref{eq_Z_Gaussian_condition_asspt}) holds for $Q = 8+\eps$ and some $Z$.
We can now use induction over $Z$, while keeping $Q := 8+\eps$, and conclude that (\ref{eq_Z_Gaussian_condition_1}), (\ref{eq_Z_Gaussian_condition_2}) hold for $Z = \underline{Z} (R_{\min}, \eps, 8+\eps)$.

This implies the first part of the theorem.
It remains to argue that we may replace $\NN_{x,t} (t-s)$ by $\NN_{y,s} (r^2)$ in (\ref{eq_Gaussian_HK_bound_1}), (\ref{eq_Gaussian_HK_bound_2}).
For this purpose, assume that $[-r^2, 1] \subset I$ for some $r^2 \geq \eps$. (We still assume that $s = 0$, $t=1$.)
After replacing $\eps$ with $\eps / 2$ and setting $R_{\min} := - \frac{n}2 \eps^{-1}$, we may assume by Lemma~\ref{Lem_lower_scal} that $R \geq R_{\min}$ on $M \times [-\eps, 1]$.
By Corollary~\ref{Cor_NN_variation}, we have
\begin{align*}
 - \NN_{x,1} (1) &\leq - \NN_{x,1} (1+\eps) 
\leq - \NN_{y,0} (\eps) + C(\eps) d_{W_1}^{g_0} (\nu_{x,1;0} , \delta_y ) + C(\eps) \\
&\leq - \NN_{y,0} (r^2) + C(\eps) d_{W_1}^{g_0} (\nu_{x,1;0} , \delta_z )  + C(\eps)d_{W_1}^{g_0} (\delta_z, \delta_y ) + C(\eps) \\
&\leq - \NN_{y,0} (r^2)  + C(\eps) d_0(z,y) + C(\eps) .
\end{align*}
So by (\ref{eq_Gaussian_HK_bound_1}), with $\eps$ replaced with $\eps/2$
\begin{multline*}
 K(x,1;y,0) 
\leq C(\eps) \exp ( - \NN_{y,0} (r^2)) \exp \bigg({ -\frac{d^2_0(z,y)}{8+\frac12\eps} + C(\eps) d_0(z,y)  }\bigg) \\
\leq C(\eps) \exp ( - \NN_{y,0} (r^2)) \exp \bigg({ -\frac{d^2_0(z,y)}{8+\eps} }\bigg). 
\end{multline*}
Next, by combining (\ref{eq_Gaussian_HK_bound_1}), (\ref{eq_Gaussian_HK_bound_2}), with $\eps$ is replaced with $\eps/2$, we obtain
\begin{align*}
 K(x,1; y,0)& K(x,1; y', 0)
= \big( K(x,1; y,0) K(x,1; y', 0) \big)^{\frac{8+\eps/2}{8+\eps}} K^{\frac{\eps/2}{8+\eps}} (x,1; y,0) K^{\frac{\eps/2}{8+\eps}} (x,1; y',0)  \\
&\leq C(\eps) \exp ( - 2\NN_{x,1} (1)) \exp \bigg({-\frac{d^2_0(y, y')}{8+\eps} - \frac{\eps/2}{(8+\eps)^2} \big( d^2_0(z, y) + d^2_0(z,y') \big) }\bigg)  \displaybreak[1] \\
&\leq C(\eps) \exp ( - 2\NN_{y,0} (r^2)) \exp \bigg({-\frac{d^2_0(y, y')}{8+\eps} }\bigg).
\end{align*}
This finishes the proof of the theorem.
\end{proof}
\bigskip

\begin{proof}[Proof of Lemma~\ref{Lem_Gaussian_estimate}.]
In the following, all generic constants may depend on $n, R_{\min}$, without further specification.
By parabolic rescaling, it suffices to prove (\ref{eq_Z_Gaussian_condition_1}), (\ref{eq_Z_Gaussian_condition_2}) for $t = 1$.

Let $\alpha > 0$ be a constant whose value we will determine later.
We claim that, given the assumption of the lemma, we even have for any $x, y_1, y_2 \in M$
\begin{equation} \label{eq_Gaussian_bound_alpha}
  K(x,1; y_1,0) K(x,1; y_2, 0) \leq \alpha Z \exp ( -2 \NN^*_0 (x,t)) \exp \bigg({ - \frac{d^2_0(y_1, y_2)}{8+\eps/2} }\bigg). 
\end{equation}
If $\alpha \leq 1$, then (\ref{eq_Gaussian_bound_alpha}) implies (\ref{eq_Z_Gaussian_condition_2}).
To see how (\ref{eq_Gaussian_bound_alpha}) implies (\ref{eq_Z_Gaussian_condition_1}), let $(z, 0)$ be an $H_n$-center of $(x,1)$ and write $K(x,1; \cdot, \cdot) = (4\pi \tau)^{-n/2} e^{-f}$.
By combining Propositions~\ref{Prop_nu_ball_bound}, \ref{Prop_NN_more_basic}, we obtain that there is a point $y_2 \in B(z, 0,\sqrt{2 H_n})$ with
\[ \Big( f (y_2) - \NN^*_0 (x,1) - \frac{n}2 \Big)^2 \leq 2 (n- 2R_{\min} \tau). \]
Thus
\[ K(x,1; y_2, 0) \geq c \exp ( - \NN_0^*(x,1) ), \]
which implies in combination with (\ref{eq_Gaussian_bound_alpha}) that
\begin{align*}
   K(x,1; y_1,0)  
&\leq C\alpha Z \exp ( - \NN^*_0 (x,t)) \exp \bigg({ - \frac{d^2_0(y_1, y_2)}{8+\eps/2} }\bigg) \\
&\leq C\alpha Z  \exp ( - \NN^*_0 (x,t)) \exp \bigg({ - \frac{(d_0(y_1, z) - \sqrt{2H_n})^2}{8+\eps/2} }\bigg) \displaybreak[1] \\
&\leq C(\eps) \alpha Z  \exp ( - \NN^*_0 (x,t)) \exp \bigg({ - \frac{d_0^2(y_1, z) }{8+\eps} }\bigg) . 
\end{align*}
This implies (\ref{eq_Z_Gaussian_condition_1}) for $\alpha \leq \ov\alpha (R_{\min}, \eps)$. 

Fix $\alpha$ for the remainder of this proof.
We will also omit the dependence of any generic constant on $\alpha$ from now on.
It remains to show (\ref{eq_Gaussian_bound_alpha}).
To do this, we argue by contradiction and assume that (\ref{eq_Gaussian_bound_alpha}) is violated for some points $x, y_1, y_2 \in M$.

Set $d\nu := K(x,1; \cdot, \cdot) dg$.
Let $\theta \in (0,\frac12]$ be some constant whose value we will determine later and set
\[
d := d_0 (y_1, y_2), \qquad
 a_i := K(x, 1; y_i, 0), \qquad 
V_i := \{ K(\cdot, \theta; y_i, 0) \geq a_i /2 \} \subset M, \qquad
i = 1,2. \]
So by our contradiction assumption we have
\begin{equation} \label{eq_a1a2_contr_asspt}
 a_1 a_2 \geq \alpha Z \exp (- 2 \NN^*_0 (x,1)) \exp \bigg({ - \frac{d^2}{8+\eps/2} }\bigg). 
\end{equation}
Assuming $Z \geq \underline{Z}$, we obtain using Theorem~\ref{Thm_HK_Linfty_bound} that 
\[ d \geq 10 + \sqrt{2H_n}. \]

Let $(z', \theta) \in M \times (0,1]$ be an $H_n$-center of $(x,1)$ and set $B' := B(z', \theta, 10 d)$.
Assume that $\theta \leq \ov\theta (R_{\min})$ such that $- \theta^{-1} \leq R_{\min}$.
Then by Corollary~\ref{Cor_NN_variation} we have
\[ -\NN^*_0 (z', \theta) \leq -\NN^*_0 (x,1) + C  \theta^{-1/2} \]
and by Theorem~\ref{Thm_NN_dependence}
\begin{equation} \label{eq_Gaussian_lin_NN}
 -\NN^*_0 (\cdot, \theta) \leq -\NN^*_0 (x,1) + C  \theta^{-1/2} + C  \theta^{-1/2}  d_\theta (z', \cdot). 
\end{equation}
Using Theorems~\ref{Thm_HK_Linfty_bound}, \ref{Thm_Gaussian_integral_bound}, we can estimate
\begin{align}
 a_i &= \int_M  K(\cdot ,\theta; y_i,0) d\nu_\theta 
 \leq \frac{a_i}{2}  \int_{M \setminus V_i } d\nu_\theta + \frac{C}{\theta^{n/2}} \int_{V_i} \exp(-\NN^*_0 (\cdot, \theta) )d\nu_\theta  \notag \\
&\leq \frac{a_i}{2} + \frac{C \exp(-\NN_0^* (x,1))}{\theta^{n/2}} \int_{V_i } \exp \big(C \theta^{-1/2} (d_\theta (z', \cdot) + 1) \big)d\nu_\theta  \notag  \displaybreak[1] \\
&\leq \frac{a_i}{2} + \frac{C \exp (- \NN^*_0 (x,1) + C \theta^{-1/2} d )}{\theta^{n/2}} \bigg( \nu_\theta (V_i ) + \theta^{-1/2} \int_{10d}^\infty e^{C \theta^{-1/2} r} \nu_\theta (V_i \setminus B(z', \theta, r)) dr \bigg)  \notag  \displaybreak[1] \\
&\leq \frac{a_i}{2} + \frac{C \exp (- \NN^*_0 (x,1)  + C \theta^{-1/2} d)}{\theta^{n/2}} \bigg( \nu_\theta (V_i \cap B') 
+ \exp \bigg({ - \frac{(10d - \sqrt{2H_n})^2}{8(1-\theta)} }\bigg) \notag \\
&\qquad\qquad\qquad\qquad\qquad\qquad\qquad\qquad\qquad
+ \theta^{-1/2} \int_{10d}^\infty e^{C \theta^{-1/2} r} \exp \bigg({ - \frac{ (r - \sqrt{2H_n})^2}{8(1-\theta)} }\bigg) dr \bigg)  \notag \displaybreak[1] \\
&\leq \frac{a_i}{2} + \frac{C \exp (- \NN^*_0 (x,1) + C \theta^{-1/2} d  )}{\theta^{n/2}} \bigg( \nu_\theta (V_i \cap B') + e^{-d^2} + \theta^{-1/2} \int_{10d}^\infty  \exp \bigg({ \frac{C}{\theta} - \frac{ r^2}{10} }\bigg) dr \bigg) \notag \\
&\leq \frac{a_i}{2} + \frac{C \exp (- \NN^*_0 (x,1) + C \theta^{-1/2} d  )}{\theta^{n/2}} \bigg( \nu_\theta (V_i \cap B') + e^{C \theta^{-1} -d^2} \bigg). \label{eq_ai_ai_2_Gaussian}
\end{align}
Note that in the second last step we have used the bound
\[ C \theta^{-1/2} r - \frac{(r-\sqrt{2H_n})^2}{8(1-\theta)}
\leq C \theta^{-1} + \frac{r^2}{100} - \frac{r^2}{9} + C
\leq C \theta^{-1} - \frac{r^2}{10} + C \]
and in the last step, we have used the bound $\theta^{-1/2} \leq C e^{ \theta^{-1}}$.
Subtracting $\frac{a_i}2$ on both sides of (\ref{eq_ai_ai_2_Gaussian}), multiplying the inequality for $i = 1,2$ and applying \cite[Theorem~1.13]{Hein-Naber-14} implies, using $C \theta^{-1/2} d \leq C \theta^{-1} + d^2/2$, that
\begin{align}
 \theta^n a_1 a_2 \exp (2\NN^*_0 (x,1)) 
 &\leq C e^{ C \theta^{-1/2} d}  \big(  \nu_\theta (V_1 \cap B') + e^{C\theta^{-1} -d^2} \big) \big(  \nu_\theta (V_2 \cap B') +  e^{C \theta^{-1} - d^2} \big) \notag \\
&\leq C e^{ C \theta^{-1/2} d} \nu_\theta (V_1 \cap B') \nu_\theta (V_2 \cap B') + Ce^{C \theta^{-1/2} d+C \theta^{-1} - d^2} \notag \\
 &\leq C  e^{ C \theta^{-1/2} d}\exp \bigg({  -\frac{d^2_\theta ( V_1 \cap B', V_2 \cap B')}{8(1-\theta)} }\bigg) +C e^{C \theta^{-1} - d^2/2}. \label{eq_mutliply_a1a2}
 \end{align}
By (\ref{eq_a1a2_contr_asspt}), we have
\[  \theta^n a_1 a_2 \exp (2\NN^*_0 (x,1)) 
\geq \alpha Z \theta^n \exp \bigg({ - \frac{d^2}{8+\eps/2} }\bigg)
\geq  \theta^n \alpha Z  e^{-d^2/2}. \]
So assuming $Z \geq \underline{Z} (\theta)$, we may drop the last term in (\ref{eq_mutliply_a1a2}) and obtain that  
\[  \theta^n a_1 a_2 \exp (2\NN^*_0 (x,1)) \leq C  e^{ C \theta^{-1/2} d}\exp \bigg({  -\frac{d^2_\theta ( V_1 \cap B', V_2 \cap B')}{8(1-\theta)} }\bigg). \]
Choose points $v_i \in V_i \cap B'$ such that $d_\theta (v_1, v_2) = d_\theta ( V_1 \cap B', V_2 \cap B')$.
Then
\begin{equation} \label{eq_a1a2_bound}
\theta^n a_1 a_2 \exp (2\NN^*_0 (x,1)) \leq C \exp \bigg({ C \theta^{-1/2} d  -\frac{d^2_\theta ( v_1, v_2)}{8(1-\theta)} }\bigg). 
\end{equation}

Next, choose $H_n$-centers  $(z_i, 0)$ of $(v_i, \theta)$.
By the assumption of the lemma and (\ref{eq_Gaussian_lin_NN})
\begin{equation} \label{eq_ai_bound}
 \theta^{n/2} a_i 
\leq 4Z \exp ( - \NN^*_0 (v_i, \theta) ) \exp \bigg({ - \frac{ d^2_0 (z_i, y_i)}{Q \theta} }\bigg)
\leq 4Z \exp ( - \NN^*_0 (x, 1) ) \exp \bigg({ - \frac{ d^2_0 (z_i, y_i)}{Q \theta} + C\theta^{-1/2} d}\bigg). 
\end{equation}
Let $\beta \in (0,1)$ be a constant whose value we will determine later.
By combining (\ref{eq_a1a2_contr_asspt}), (\ref{eq_a1a2_bound}), (\ref{eq_ai_bound}) we obtain
\begin{multline} \label{eq_Gaussian_Z_3ds}
\alpha^{1+\beta} \theta^{n (1+\beta)}  Z^{1+\beta}  \exp \bigg({ -  \frac{(1+\beta) d^2}{8+\eps/2} }\bigg)
\leq 
\big( \theta^n a_1 a_2 \exp (2\NN^*_0 (x,1))  \big)^{1+\beta} \\
\leq Z^{2\beta} \exp \bigg({ - \frac{d^2_\theta (v_1, v_2)}{8 (1-\theta)} - \frac{\beta}{\theta Q} \big( d^2_0 (z_1, y_1)+ d^2_0 (z_2, y_2) \big) +  C \theta^{-1/2} d  + C }\bigg).
\end{multline}
Using Corollary~\ref{Cor_HK_monotone_Hn}, we find that
\begin{align*}
d=  d_0(y_1, y_2) 
&\leq d_0 (z_1, y_1) + d^{g_0}_{W_1}  (\delta_{z_1}, \nu_{v_1,\theta;0}) + d^{g_0}_{W_1}  ( \nu_{v_1,\theta;0},  \nu_{v_2,\theta;0}) +  d^{g_0}_{W_1}  ( \nu_{v_2,\theta;0}, \delta_{z_2})   + d_0 (z_2, y_2) \\
&\leq d_0 (z_1, y_1) +  
d_0 (z_2, y_2) + d_\theta (v_1, v_2)  + \sqrt{ {\Var}_0 (\delta_{z_1}, \nu_{v_1, \theta; 0}) }+ \sqrt{ {\Var}_0 ( \nu_{v_1, \theta; 0}, \delta_{z_2}) } \\
&\leq d_0 (z_1, y_1) +  d_0 (z_2, y_2) + d_\theta (v_1, v_2) + 2\sqrt{H_n \theta} .
\end{align*}
So by inequality between arithmetic and quadratic mean
\begin{multline*}
 d^2 \leq (1+\beta) d_\theta ^2(v_1, v_2) + \frac{1+\beta}{\beta} \big( d_0 (z_1, y_1) +  d_0 (z_2, y_2) + 2\sqrt{H_n \theta}  \big)^2 \\
 \leq (1+\beta) d_\theta ^2(v_1, v_2) + 3 \frac{1+\beta}{\beta} \big( d^2_0 (z_1, y_1) +  d^2_0 (z_2, y_2) + 4 H_n \theta  \big),
\end{multline*}
which implies
\begin{equation*}
 d^2 = -\beta d^2 + (1+\beta )d^2
\leq  -\beta d^2 + (1+\beta)^2 d_\theta ^2(v_1, v_2) + 3 \frac{(1+\beta)^2}{\beta} \big( d^2_0 (z_1, y_1) +  d^2_0 (z_2, y_2) + 4 H_n \theta  \big). 
\end{equation*}
Combining this with (\ref{eq_Gaussian_Z_3ds}) gives
\begin{multline} \label{eq_bound_exp_Z_1_beta}
 \alpha^{1+\beta} \theta^{n (1+\beta)} Z^{1- \beta}
\leq \exp \bigg({ - \frac{(1+\beta)\beta}{8+\eps/2} d^2 + C  \theta^{-1/2} d + C
+ \Big( \frac{(1+\beta)^3}{8+\eps/2} - \frac1{8(1-\theta)} \Big) d^2_\theta (v_1, v_2) }\\
{+ \Big( 3\frac{(1+\beta)^3}{\beta (8+\eps/2)} - \frac{\beta}{\theta Q} \Big) \big( d^2_0 (z_1, y_1)+ d^2(z_2, y_2) \big) + 12 H_n \theta  \frac{(1+\beta)^3}{\beta (8+\eps/2)}   }\bigg) .
\end{multline}
Now choose $\beta \leq \ov\beta (\eps)$ and then $\theta \leq \ov\theta (\eps, Q, \beta)$ such that
\[ \frac{(1+\beta)^3}{8+\eps/2} \leq \frac{1}{8},
\qquad 3\frac{(1+\beta)^3}{\beta (8+\eps/2)} \leq \frac{\beta}{\theta Q}. \]
Then (\ref{eq_bound_exp_Z_1_beta}) implies that
\[  \alpha^{1+\beta} \theta^{n (1+\beta)} Z^{1- \beta}
\leq \exp \bigg({ - \frac{(1+\beta)\beta}{8+\eps/2} d^2 + C  \theta^{-1/2} d + C + 12 H_n \theta  \frac{(1+\beta)^3}{\beta (8+\eps/2)}  }\bigg) \leq C(\eps, \beta, \theta). \]
Therefore, we obtain a contradiction by choosing $Z \geq \underline{Z}(\eps, \beta, \theta)$.
\end{proof}
\bigskip

\subsection{Proof of Theorem~\ref{Thm_HK_gradient_bound}}

\begin{proof}[Proof of Theorem~\ref{Thm_HK_gradient_bound}.]
After parabolic rescaling and application of a time-shift, we may assume that $s = 0$ and $t = 2$.
In the following, all generic constants may depend on $n, R_{\min}$, without further specification.

Let 
\[ u := K(\cdot,1;y,0). \]
Write $d\nu := d\nu_{1} = K(x,2; \cdot, 1) dg_{1}$.
Then
\begin{align} 
K(x,2; y, 0) &= \int_M u \, d\nu,  \notag \\ 
|\nabla_x K|(x,2;y,0) &\leq \int_M \frac{|\nabla_x K|(x,2;\cdot, 1)}{K(x,2; \cdot, 1)} u \, d\nu.  \label{eq_nabK_int_u}
\end{align}
Let $(z,1)$ be an $H_n$-center of $(x,2)$.
Then by Corollary~\ref{Cor_NN_variation} we have
\[ - \NN^*_0 (z, 1) \leq - \NN^*_0 (x,2) + C. \]
Thus by Theorem~\ref{Thm_NN_dependence}
\[  - \NN^*_0 (\cdot , 1) \leq - \NN^*_0 (x,2) + C \big( d_1 (z, \cdot) + 1 \big). \]
Therefore, by Theorem~\ref{Thm_HK_Linfty_bound} 
\[ u  \leq C \exp ( - \NN^*_0 (\cdot, 1) )
\leq  C \exp ( - \NN^*_0 (x, 2) ) \exp \big( C d_1 (z, \cdot) \big). \]
So using Theorem~\ref{Thm_Gaussian_integral_bound}, we obtain
\begin{align}
 \int_M u^2 d\nu 
&\leq C \exp ( -2 \NN^*_0 (x, 2) ) \int_M  \exp \big( C d_1 (z, \cdot) \big) d\nu \notag  \\
&\leq C \exp ( -2 \NN^*_0 (x, 2) ) \bigg( \int_0^\infty  e^{Cr} \int_{M \setminus B(z,1,r)}   d\nu dr + 1 \bigg) \notag \displaybreak[1] \\
&\leq C \exp ( -2 \NN^*_0 (x, 2) ) \bigg( \int_0^\infty  e^{Cr} \exp \bigg({- \frac{ (r - \sqrt{2H_n} )_+^2}{8}  }\bigg) dr +1 \bigg) \notag \\
&\leq C \exp ( -2 \NN^*_0 (x, 2) ).  \label{eq_int_u_squared}
\end{align}

By Theorem~\ref{Thm_HK_Linfty_bound} we can fix a constant $C_0 ( R_{\min}) < \infty$ such that
\[ \int_M u \, d\nu = K(x,2;y,0) \leq \frac12 \frac{C_0}{2^{n/2}} \exp ( -\NN^*_0 (x,2) ). \]
Set
\[ a :=\bigg( K(x,2;y,0) \bigg/ \frac{C_0}{2^{n/2}} \exp (-\NN^*_0 (x,2)) \bigg)^2 \leq \frac14 \]
and choose $h \geq 0$ such that we can find a subset $X_h \subset M$ with
\[  \bigg\{ \frac{|\nabla_x K|(x,2;\cdot, 1)}{K(x,2; \cdot, 1)} > h \bigg\} \subset X_h \subset  \bigg\{ \frac{|\nabla_x K|(x,2;\cdot, 1)}{K(x,2; \cdot, 1)} \geq h \bigg\} \]
and
\[ \nu (X_h) = a. \]
Then by Proposition~\ref{Prop_nab_K_bounds}
\[ a h \leq \int_{ X_{h} }  \frac{|\nabla_x K|(x,2;\cdot, 1)}{K(x,2; \cdot, 1)} d\nu
\leq C\nu(X_{h}) (-\log \nu(X_{h}))^{1/2} 
=  C a (-\log a)^{1/2} , \]
and therefore
\begin{equation} \label{eq_h_leq_log_a}
  h \leq C (-\log a)^{1/2}. 
\end{equation}
Using (\ref{eq_nabK_int_u}), (\ref{eq_int_u_squared}), Proposition~\ref{Prop_nab_K_bounds}, (\ref{eq_h_leq_log_a}), we finally obtain
\begin{align*}
|\nabla_x K|(x,2;y,0) 
&\leq \int_M \frac{|\nabla_x K|(x,2;\cdot, 1)}{K(x,2; \cdot, 1)} u \, d\nu \\
&= \int_{X_h} \frac{|\nabla_x K|(x,2;\cdot, 1)}{K(x,2; \cdot, 1)} u \, d\nu + \int_{M \setminus X_h} \frac{|\nabla_x K|(x,2;\cdot, 1)}{K(x,2; \cdot, 1)} u\, d\nu \\
&\leq \bigg( \int_{X_h} \bigg( \frac{|\nabla_x K|(x,2;\cdot, 1)}{K(x,2; \cdot, 1)} \bigg)^{2} d\nu \bigg)^{1/2} \bigg( \int_{X_h} u^2 d\nu \bigg)^{1/2} + h \int_M  u \, d\nu \\
&\leq C\exp(-\NN^*_0 (x,2)) \big( {- a \log a } \big)^{1/2}  + C(-\log a)^{1/2} K(x,2;y,0) \\
&\leq C (- \log a)^{1/2} K(x,2;y,0)  .
\end{align*}
This implies the desired bound.
\end{proof}
\bigskip

\section{Upper volume bounds on distance balls} \label{sec_upper_vol}
\subsection{Statement of the result}
In this section we establish an upper bound on any distance ball, depending only on the pointed Nash entropy at the center.
In \cite{Zhang-noninflating, ChenWang-2013} a similar bound was shown under an additional upper scalar curvature bound and without the pointed Nash entropy term.

Let $(M, (g_t)_{t \in I})$ be a Ricci flow on a compact manifold.

\begin{Theorem} \label{Thm_upper_volume_bound}
If $[t -r^2, t] \subset I$ and $R \geq R_{\min}$ on $M \times [t-r^2, t]$, then for any $1 \leq A < \infty$
\[ |B(x,t,Ar)|_t \leq C(R_{\min} r^2) \exp ( \NN_{x,t}(r^2))  \exp ( C_0  A^2 ) r^n. \]
Here $C_0$ denotes constant a dimensional constant.
\end{Theorem}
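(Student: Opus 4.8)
\emph{Reduction.} The plan is to first reduce, by a parabolic rescaling and a time--shift, to the case $t=0$, $r=1$, so that $[-1,0]\subset I$ and $R\ge R_{\min}$ on $M\times[-1,0]$; I also reduce to proving the bound with \emph{some} finite constant $C^{*}=C^{*}(R_{\min})$ in place of $C(R_{\min}r^{2})$ and then improving it, in the spirit of the proofs of Theorems~\ref{Thm_HK_Linfty_bound} and \ref{Thm_HK_pointwise_Gaussian}. A bound with an a priori flow--dependent finite constant holds since the flow is smooth on the compact slab $M\times[-1,0]$ and $\exp(\NN_{x,0}(1))$ is bounded below on $M$; so it suffices to show that if the bound holds with constant $C^{*}$ for all admissible data, then it holds with $C^{*}/2$ once $C^{*}\ge\underline C^{*}(R_{\min})$. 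The exponent $C_{0}$ will be a fixed dimensional constant chosen at the end.

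\emph{The core entropy estimate.} Write $B:=B(x,0,A)$ and let $v$ solve $\square^{*}v=0$ on $M\times[-\tfrac12,0]$ with $v(\cdot,0)=|B|_{0}^{-1}\chi_{B}$ (smoothing $\chi_{B}$ slightly if desired), so $\int_{M}v(\cdot,s)\,dg_{s}=1$ and $v(\cdot,s)=|B|_{0}^{-1}\int_{B}K(w,0;\cdot,s)\,dg_{0}(w)$ is an average of conjugate heat kernel measures. With $\mathcal E(s):=-\int_{M}v(\cdot,s)\log v(\cdot,s)\,dg_{s}$ one computes, using $\square^{*}v=0$ and integration by parts, that $\frac{d}{ds}\mathcal E(s)=-\int_{M}\frac{|\nabla v|^{2}}{v}\,dg_{s}-\int_{M}Rv\,dg_{s}\le-R_{\min}$ on $[-\tfrac12,0]$; hence $\mathcal E(-\tfrac12)\ge\mathcal E(0)+\tfrac12R_{\min}$, and since $\mathcal E(0)=\log|B|_{0}$ the problem is reduced to
\[
\mathcal E(-\tfrac12)\ \le\ \NN_{x,0}(1)+C_{0}A^{2}+C(R_{\min}).
\]

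\emph{Bounding $\mathcal E(-\tfrac12)$.} Fix an $H_{n}$--center $(z,-\tfrac12)$ of $(x,0)$. By Lemma~\ref{Lem_monotonicity_W1}, Corollary~\ref{Cor_HK_monotone_Hn}, Proposition~\ref{Prop_exist_H-center} and $d_{0}(w,x)\le A$, the $H_{n}$--centers $(z_{w},-\tfrac12)$ of the $(w,0)$ all lie within $\lesssim A$ of $(z,-\tfrac12)$, the measure $\mu_{-1/2}:=v(\cdot,-\tfrac12)\,dg_{-1/2}$ satisfies the (volume--free) concentration bound $\mu_{-1/2}(M\setminus B(z,-\tfrac12,\rho))\le 2\exp(-c(\rho-CA)_{+}^{2})$, and $\Var_{-1/2}(\delta_{z},\mu_{-1/2})\le CA^{2}$. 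Applying the Gaussian upper bound of Theorem~\ref{Thm_HK_pointwise_Gaussian} to each $K(w,0;\cdot,-\tfrac12)$ (in its $r^{2}$--version, legitimate since $[-1,0]\subset I$), together with the Nash--entropy comparisons of Theorem~\ref{Thm_NN_dependence}, Proposition~\ref{Prop_NN_basic_properties} and Corollary~\ref{Cor_NN_variation} (which give $|\NN_{w,0}(\tfrac12)-\NN_{x,0}(1)|\le CA+C(R_{\min})$), one obtains a pointwise envelope $v(\cdot,-\tfrac12)\le D\exp(-(d_{-1/2}(z,\cdot)-R_{A})_{+}^{2}/L)$ with $D=C(R_{\min})\exp(-\NN_{x,0}(1))$, $R_{A}\le CA$ and $L=L(n)$ (completing the square to absorb a linear term coming from the $\NN$--comparison). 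Splitting the entropy integral at $B(z,-\tfrac12,\rho_{0})$ with $\rho_{0}\sim A$: on this ball Jensen's inequality gives a contribution $\le\log|B(z,-\tfrac12,\rho_{0})|_{-1/2}+C$, and on the complement the envelope, the concentration bound and the variance bound control the remainder by $CA^{2}+C$ (splitting further according to $v\gtrless e^{-1}$, the set $\{v>e^{-1}\}$ having $dg_{-1/2}$--measure $\le e$ since $\int v\,dg_{-1/2}=1$). Finally $|B(z,-\tfrac12,\rho_{0})|_{-1/2}$ is estimated by the inductive hypothesis applied at $(z,-\tfrac12)$ at scale $\sqrt{1/2}$ (legitimate since $[-1,-\tfrac12]\subset I$), using $\NN_{z,-1/2}(\tfrac12)\le\NN_{x,0}(1)+C(R_{\min})$ from Corollary~\ref{Cor_NN_variation}. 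Combining everything yields $\mathcal E(-\tfrac12)\le\log C^{*}+\NN_{x,0}(1)+C_{0}A^{2}+C(R_{\min})$, and feeding this through the iteration with $C_{0}$ chosen appropriately completes the proof.

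\emph{Main obstacle.} The delicate point is the self--referential nature of the estimate: controlling the entropy of $\mu_{-1/2}$ requires a volume bound for a ball one time--step earlier at a comparable scale, and the $A^{2}$--dependence must be organised so it does not amplify under the iteration. This is exactly where the sharp Gaussian concentration of conjugate heat kernels (Corollary~\ref{Cor_HK_monotone_Hn}, Theorem~\ref{Thm_HK_pointwise_Gaussian}) and the collapse--insensitivity of the Nash--entropy comparison (Theorem~\ref{Thm_NN_dependence}, Corollary~\ref{Cor_NN_variation}) are needed — the same device that closes the analogous loops in the proofs of Theorems~\ref{Thm_HK_Linfty_bound} and \ref{Thm_HK_pointwise_Gaussian}.
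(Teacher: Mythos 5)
Your reduction to bounding $\mathcal E(-\tfrac12)=-\int_M v\log v\,dg_{-1/2}$ is fine (the computation $\tfrac{d}{ds}\mathcal E\le -R_{\min}$ and $\mathcal E(0)=\log|B|_0$ both check out), but the step where you bound $\mathcal E(-\tfrac12)$ has a gap that I do not see how to repair within your framework. Writing $\mathcal E(-\tfrac12)=\int_M(-\log v)\,d\mu_{-1/2}$, an \emph{upper} bound on this quantity requires a pointwise \emph{lower} bound on $v$ where $\mu_{-1/2}$ lives; your Gaussian envelope $v\le D\exp(-(d-R_A)_+^2/L)$ is an upper bound and therefore controls $\mathcal E$ from the wrong side. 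The envelope only helps on the far tail where $-v\log v$ is small in absolute terms, but there one integrates against $dg_{-1/2}$ rather than $d\mu_{-1/2}$, so one needs volume bounds on the annuli $\{k\le d_{-1/2}(z,\cdot)<k+1\}$ --- precisely the quantity being proved. Your two devices do not close this: the set $\{v>e^{-1}\}$ indeed has measure $\le e$, but the problematic region is $\{v\le e^{-1}\}\cap\{De^{-q}>e^{-1}\}$, a ball of radius $\sim A+\sqrt{-\NN_{x,0}(1)}$ on which the envelope gives no information and $-v\log v$ can be of order $e^{-1}$ pointwise; and the concentration/variance bounds are integrals against $\mu_{-1/2}$, not $dg_{-1/2}$. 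The alternative of absorbing everything into the ``ball part'' and invoking the inductive hypothesis fails for two reasons: the radius of the comparison ball at time $-\tfrac12$ is $\gtrsim A$ while the scale is $\sqrt{1/2}$, so the inductive bound returns $\exp(2C_0A^2)$ and the $A^2$--coefficient doubles at each step; and the term $\log|B(z,-\tfrac12,\rho_0)|_{-1/2}$ enters $\mathcal E(-\tfrac12)$ with coefficient $\mu_{-1/2}(B')\approx 1$, so $\log C^*$ reproduces itself plus a positive additive constant and the iteration cannot improve $C^*$ to $C^*/2$. (Contrast this with the proof of Theorem~\ref{Thm_HK_Linfty_bound}, where the inductive input enters with coefficient $\tfrac1{10}+C\rho<\tfrac12$.)

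The paper avoids entropy of the mixture $v$ altogether and argues in the opposite direction: from $\NN_{x,0}(1)=-\int K(x,0;\cdot,-1)\log K(x,0;\cdot,-1)\,dg_{-1}-\text{const}$ one picks a single point $y$ with $K(x,0;y,-1)\ge c\exp(-\NN_{x,0}(1))$, then uses the gradient bound of Theorem~\ref{Thm_HK_gradient_bound} --- in the form that $w:=\sqrt{\log\bigl(C_1e^{-\NN_{x,0}(1)+C_1A}/u\bigr)}$ with $u:=K(\cdot,0;y,-1)$ satisfies $|\nabla w|\le C$ on $B(x,0,A)$ by Theorem~\ref{Thm_NN_dependence} --- to propagate this into the pointwise lower bound $u\ge c\,e^{-CA^2}\exp(-\NN_{x,0}(1))$ on all of $B(x,0,A)$. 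Integrating $u$ over $M$ at time $0$ (which is $\le C(R_{\min})$ since $\int u\,dg_{-1}=1$) then gives the volume bound with no iteration. If you want to salvage your entropy approach, you would need an independent pointwise lower bound on $v(\cdot,-\tfrac12)$ on most of the mass of $\mu_{-1/2}$, and that lower bound is essentially the content of the paper's argument anyway.
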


We remark that the theorem is also new in the case $A=1$.
Furthermore, note that $ \exp ( \NN_{x,t}(r^2)) \leq 1$, so the pointed Nash entropy term is a good term.

Due to Theorem~\ref{Thm_lower_volume_H_center} this bound is optimal near $H_n$-centers, up to a multiplicative constant.
Note also that Theorem~\ref{Thm_upper_volume_bound} can be used to deduce a lower bound on the pointed Nash entropy.
More specifically, a lower bound on the volume of the form $|B(x,t,r)|_t \geq e^{-Y}$ implies a lower bound of the form $\NN_{x,t} (r^2) \geq - Y - C$.
In \cite{Bamler_HK_RF_partial_regularity} we will moreover see that this volume bound also implies that $(x,t)$ is close to an $H_n$-center.
Therefore, we obtain some sort of reverse statement of Theorem~\ref{Thm_lower_volume_H_center}; in other words, non-collapsedness of a distance ball is equivalent to a lower entropy bound plus proximity to an $H_n$-center.

\subsection{Proof}

\begin{proof}
Part of this proof is similar to \cite[Theorem 1.1]{Zhang-noninflating}.
After parabolic rescaling and application of a time-shift, we may assume that $t=0$ and $r =2$.
Since by Proposition~\ref{Prop_NN_basic_properties} we have $\NN_{x,t} (1) \leq \NN_{x,t} (4) + C(R_{\min})$, it suffices to prove in the following that for some dimensional constant $C < \infty$
\begin{equation} \label{eq_easier_upper_vol_bound}
 |B(x,t,A)|_0 \leq C \exp ( \NN_{x,t}(1))  \exp ( C A^2 ). 
\end{equation}
Due to Lemma~\ref{Lem_lower_scal}, we have $R \geq - \frac{n}2$ on $M \times [-1,0]$.

By Theorem~\ref{Thm_NN_dependence} we have
\begin{equation} \label{eq_NN_bound_x_0_upper_vol}
 - \NN_{\cdot, 0} (1) \leq - \NN_{x,0} (1) + C A \qquad \text{on} \quad B(x,0,A). 
\end{equation}

Next, recall that we have
\[ \NN_{x,0}(1) =  - \int_M \big( \log K(x,0;\cdot, -1) \big) K(x,0; \cdot, -1) dg_{-1} - \frac{n}2 - \frac{n}2 \log(4\pi). \]
We can therefore find a point $y \in M$ with
\[ \log K(x,0;y, -1) \geq - \NN_{x,0}(1) - \frac{n}2 - \frac{n}2 \log(4\pi), \]
which implies that
\begin{equation} \label{eq_lower_HK_bound}
 K(x,0; y, -1) \geq c \exp ( - \NN_{x,0}(1)  ). 
\end{equation}

Set
\[ u := K(\cdot, 0; y;-1). \]
By Theorem~\ref{Thm_HK_gradient_bound} there is a constant $C_0  < \infty$ such that
\[ \frac{|\nabla u|}{u} \leq C \sqrt{ \log \bigg( \frac{C_0 \exp ( - \NN_{\cdot,0}(1))}{u} \bigg)}. \]
So by (\ref{eq_NN_bound_x_0_upper_vol}) there is a constant $C_1 < \infty$ such that on $B(x,0,A)$
\begin{equation} \label{eq_nabu_u_C_sqrt_log}
 \frac{|\nabla u|}{u} \leq C \sqrt{ \log \bigg( \frac{C_1 \exp ( - \NN_{x,0}(1) + C_1 A)}{u} \bigg)}. 
\end{equation}
So if we set
\[ v := \sqrt{ \log \bigg( \frac{C_1 \exp ( - \NN_{x,0}(1) + C_1 A)}{u} \bigg)}, \]
then by (\ref{eq_nabu_u_C_sqrt_log}), (\ref{eq_lower_HK_bound}) we have
\[ |\nabla v| \leq C \quad \text{on} \quad B(x,0,A), \qquad 
v(x) \leq C\sqrt{A}. \]
This implies that $v \leq C\sqrt{A} + C A \leq CA$ on $B(x,0,A)$, so
\begin{equation} \label{eq_u_lower_bound_ball}
 u \geq c  \exp (- C A^2) \exp ( - \NN_{x,0}(1)) \qquad \text{on} \quad B(x,0,A).  
\end{equation}

Lastly, notice that since
\[ \frac{d}{dt} \int_M K(\cdot,t;y, -1) dg_t = - \int_M R \, K(\cdot,t;y, -1) dg_t \leq C \int_M K(\cdot,t;y, -1) dg_t, \]
we have using (\ref{eq_u_lower_bound_ball})
\begin{equation*} 
c \exp (- C A^2) \exp ( - \NN_{x,0}(1)) |B(x,0,A)|_0 \leq \int_M u \, dg_0 \leq C . 
\end{equation*}
This finishes the proof of (\ref{eq_easier_upper_vol_bound}).
\end{proof}

\section{\texorpdfstring{$P^*$}{P*}-Parabolic neighborhoods} \label{sec_variance_parab_nbhd}
\subsection{Statement of the results}
In this section we introduce a new notion of parabolic neighborhoods, called \emph{$P^*$-parabolic neighborhoods,} which is inspired by the monotonicity of the $W_1$-Wasserstein distance (see Lemma~\ref{Lem_monotonicity_W1}).
We will then show that, under certain curvature bounds, $P^*$-parabolic neighborhoods are comparable to conventional parabolic neighborhoods.
In this process, we will also obtain estimates for the location of $H_n$-centers under curvature bounds.
Lastly, we will analyze the geometry of time-slices of $P^*$-parabolic neighborhoods and establish a covering theorem.

Before defining $P^*$-parabolic neighborhoods, let us first recall the definition of a conventional parabolic neighborhood.
In the following, we denote by $(M,(g_t)_{t \in I})$ a super-Ricci flow on a compact manifold.
Let $(x_0, t_0) \in M \times I$ and $A, T^-, T^+ \geq 0$.
Then the conventional parabolic neighborhood is defined as
\begin{equation} \label{eq_def_par_nbhd}
 P(x_0, t_0; A, - T^-, T^+) := B(x_0, t_0, A) \times \big( [t_0 - T^-, t_0 + T^+] \cap I \big), 
\end{equation}
where we may omit $- T^-$ or $T^+$ if it is zero.
Note that we have introduced the separator ``$;$'' in order to avoid confusion if we consider parabolic neighborhoods on a Ricci flow spacetime. 

In the spacetime picture $M \times I$, the definition (\ref{eq_def_par_nbhd}) relies on the concept of worldlines, i.e. if a point $(x,t)$ is contained in $P(x_0, t_0; A, - T^-, T^+)$, then so are all other points of the form $(x, t')$, for $t'$ within the above time-interval.
One important observation from the previous sections is, however, that the concept of worldlines plays a secondary role in the analysis of the flow and that it may instead be better to measure the relation between points in different time-slices using the $W_1$-distance of conjugate heat kernel measures.
The following definition exploits this idea.

\begin{Definition}[$P^*$-Parabolic Neighborhoods]
Suppose that $(x_0, t_0) \in M \times I$, $A, T^+, T^- \geq 0$ and $t_0 - T^- \in I$.
The {\bf $P^*$-parabolic neighborhood} $P^* (x_0, t_0; A, -T^-, T^+) \subset M \times I$ is defined as the set of points $(x,t) \in M \times I$ with $t \in [t_0-T^-, t_0+T^+]$ and 
\[ d^{g_{t_0 - T^-}}_{W_1} (\nu_{x_0, t_0; t_0 - T^-}, \nu_{x,t;  t_0 - T^-}) < A. \]
\end{Definition}

In most situations, in which we only work up to a multiplicative constant, it suffices to consider a simplified notion:

\begin{Definition}[(Forward/Backward) $P^*$-Parabolic Ball]
If $(x_0, t_0) \in M \times I$ and $r > 0$ such that $t_0 -r^2 \in I$, then the {\bf $P^*$-parabolic $r$-ball} is defined as follows
\[ P^* (x_0, t_0; r) := P^* (x_0, t_0;  r, -r^2, r^2 ). \]
Similarly, we define the {\bf forward ($+$) and backward ($-$) $P^*$-parabolic $r$-balls} by
\[ P^{*+} (x_0, t_0; r) := P^* (x_0, t_0;  r, 0, r^2 ),
\qquad P^{*-} (x_0, t_0; r) := P^* (x_0, t_0; r, -r^2, 0 )
. \]
\end{Definition}

The following proposition shows that $P^*$-parabolic neighborhoods satisfy similar containment relationships as standard parabolic balls.

\begin{Proposition} \label{Prop_basic_parab_nbhd}
The following holds for any $(x_1, t_1), (x_2, t_2) \in M \times I$ as long as the corresponding $P^*$-parabolic neighborhoods or balls are defined:
\begin{enumerate}[label=(\alph*)]
\item \label{Prop_basic_parab_nbhd_a} For any $A \geq 0$ we have
\[ P^* (x_1, t_1; A,0,0) = B(x_1, t_1, A) \times \{ t_1 \}. \]
\item \label{Prop_basic_parab_nbhd_b} If $0 \leq A_1 \leq A_2$, $0 \leq T^\pm_1 \leq T^\pm_2$, then 
\[ P^{*} (x_1, t_1; A_1, -T_1^-, T_1^+) \subset P^{*} (x_1, t_1; A_2, -T_2^-, T_2^+). \]
\item \label{Prop_basic_parab_nbhd_bb} If $A, T^\pm \geq 0$, and $(x_1, t_1) \in P^*(x_2, t_2; A, -T^-, T^+)$, then
\[ (x_2, t_2) \in P^*(x_1, t_1; A, - (T^- + T^+), T^-) \]
and
\[  P^*(x_2, t_2; A, -T^-, T^+) \subset P^*(x_1, t_1;2 A, - (T^- + T^+), T^-+T^+). \]
Likewise, if $r > 0$ and $(x_1, t_1) \in P^*(x_2, t_2; r)$, then 
\[ (x_2, t_2) \in P^*(x_1, t_1; \sqrt{2} r) \quad \text{and} \quad P^*(x_2, t_2; r) \subset P^*(x_1, t_1; 2r). \]
\item \label{Prop_basic_parab_nbhd_c} If $A_1, A_2, T_1^\pm, T_2^\pm \geq 0$ and $(x_1, t_1) \in P^* (x_2, t_2; A_2, -T_2^-, T_2^+)$, then
\[ P^*(x_1, t_1; A_1, -T_1^-, T_1^+) \subset P^*(x_2, t_2; A_1  + A_2, - (T_1^- + T_2^-), T_1^+ + T_2^+). \]
Likewise, if $r_1, r_2 > 0$ and $(x_1, t_1) \in P^*(x_2,t_2; r_2)$, then
\[ P^*(x_1,t_1; r_1) \subset P^*(x_2, t_2; r_1+r_2). \]
The same containment relationship also holds for the forward or backward parabolic balls, if $t_1 \geq t_2$ or $t_1 \leq t_2$, respectively.
\item \label{Prop_basic_parab_nbhd_d} If $r_1, r_2 > 0$ and $P^* (x_1, t_1; r_1) \cap P^* (x_2, t_2; r_2) \neq \emptyset$, then $P^* (x_1, t_1; r_1) \subset P^* (x_2, t_2; 2r_1+r_2)$.
Again, the same containment relationship also holds for the forward or backward parabolic balls, if $t_1 \geq t_2$ or $t_1 \leq t_2$, respectively.
\end{enumerate}
\end{Proposition}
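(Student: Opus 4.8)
The plan is to verify the assertions in order (a) through (e), with each part reducing to elementary properties of the $W_1$-distance together with the monotonicity established in Lemma~\ref{Lem_monotonicity_W1}. The key recurring tools are: the triangle inequality for $d_{W_1}^{g_{t'}}$ (valid for any fixed reference time $t'$), and the fact that pushing conjugate heat kernel measures backward in time only decreases their $W_1$-distance. I would set up notation once: for a point $(x,t)$ and a time $s \le t$ in $I$, the measure $\nu_{x,t;s}$ is defined, and for $s_1 \le s_2 \le t$ we have $d_{W_1}^{g_{s_1}}(\nu_{x,t;s_1},\nu_{x',t';s_1}) \le d_{W_1}^{g_{s_2}}(\nu_{x,t;s_2},\nu_{x',t';s_2})$ whenever all four measures are defined; this is exactly Lemma~\ref{Lem_monotonicity_W1} applied to the two conjugate heat flows (using also that $\nu_{x,t;t}=\delta_x$ to get the stated bound $d_{W_1}^{g_s}(\nu_{x,t;s},\nu_{x',t;s}) \le d_s(x,x')$ as the endpoint case).

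For (a), at $t=t_1$ the measure $\nu_{x_1,t_1;t_1}=\delta_{x_1}$ and $\nu_{x,t_1;t_1}=\delta_x$, so $d_{W_1}^{g_{t_1}}(\delta_{x_1},\delta_x)=d_{t_1}(x_1,x)$, and the strict inequality $<A$ matches $B(x_1,t_1,A)$. Part (b) is immediate: shrinking $A$ tightens the $W_1$-inequality and shrinking $T^{\pm}$ shrinks the time-interval, while the reference time $t_1-T_1^-$ lies in $[t_1-T_2^-,\ldots]$ so by the monotonicity the measures based at the earlier time $t_1-T_2^-$ have $W_1$-distance no larger than those at $t_1-T_1^-$ — actually here one must be slightly careful: the two neighborhoods use \emph{different} reference times, so I would note that if $(x,t)\in P^*(x_1,t_1;A_1,-T_1^-,T_1^+)$ then at the time $s:=t_1-T_2^-\le t_1-T_1^-$ we have, by Lemma~\ref{Lem_monotonicity_W1}, $d_{W_1}^{g_s}(\nu_{x_1,t_1;s},\nu_{x,t;s}) \le d_{W_1}^{g_{t_1-T_1^-}}(\nu_{x_1,t_1;t_1-T_1^-},\nu_{x,t;t_1-T_1^-}) < A_1 \le A_2$, and $t\in[t_1-T_1^-,t_1+T_1^+]\subset[t_1-T_2^-,t_1+T_2^+]$. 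Part (c) is the triangle inequality at the common reference time $t_2-(T_1^-+T_2^-)$: given $(x_1,t_1)\in P^*(x_2,t_2;A_2,\ldots)$ and a point $(x,t)\in P^*(x_1,t_1;A_1,\ldots)$, push all three conjugate heat kernels back to that time, use monotonicity to bound each of the two pieces by $A_1$ and $A_2$ respectively, and add; the time-interval bookkeeping is routine. Part (bb) is the special case of (c) combined with the observation that $(x_2,t_2)\in P^*(x_2,t_2;0,0,0)$ and a symmetry argument: the relation "$(x_1,t_1)\in P^*(x_2,t_2;A,-T^-,T^+)$" says $d_{W_1}$ at time $t_2-T^-$ is $<A$; to get $(x_2,t_2)\in P^*(x_1,t_1;A,-(T^-+T^+),T^-)$ one reindexes so that the reference time becomes $t_1-(T^-+T^+)$, which equals $t_2-T^-$ when $t_1=t_2+T^+$, and in general lies at or before $t_2-T^-$ so monotonicity applies. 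Part (d) follows from (c) and (bb): if the two $P^*$-balls intersect in a point $p$, then $p\in P^*(x_2,t_2;r_2)$ gives $(x_2,t_2)\in P^*(p;\sqrt2\,r_2)$-type control, but more directly one applies the already-proven (c)/(bb) chain: $P^*(x_1,t_1;r_1)\subset P^*(p;2r_1+\ldots)\subset P^*(x_2,t_2;2r_1+r_2)$ after the standard constant chase.

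I expect the main obstacle to be purely bookkeeping rather than conceptual: keeping straight which reference time each $d_{W_1}$ is measured at, and confirming in each case that the chosen common reference time is $\le$ the reference times of the neighborhoods being compared, so that Lemma~\ref{Lem_monotonicity_W1} may legitimately be invoked to drop the $W_1$-distance down to that earlier time. One subtle point worth flagging is the handling of strict versus non-strict inequalities (the definition uses "$<A$"), which forces a small amount of care when summing — e.g. in (c) one gets $<A_1+A_2$ from $<A_1$ and $\le A_2$, which is fine, but in (bb)/(d) the factor-$2$ statements need the reference-time shift to produce a non-strict bound that is then absorbed. For the forward/backward ball versions in (c) and (d), the additional hypothesis $t_1\ge t_2$ (resp. $t_1\le t_2$) is exactly what guarantees the time-intervals nest correctly; I would simply remark that the argument is identical with the one-sided time-intervals substituted throughout.
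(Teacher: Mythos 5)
Your proposal is correct and follows essentially the same route as the paper: every assertion reduces to the triangle inequality for $d_{W_1}$ at the earliest relevant reference time combined with the monotonicity of Lemma~\ref{Lem_monotonicity_W1}, and your treatments of (a), (b), (c) and (d) match the paper's proof almost verbatim. The one place to be careful is the second containment in (bb): deducing it by composing the first half of (bb) with (c) only yields the backward extent $-(2T^-+T^+)$ rather than the stated $-(T^-+T^+)$, so one should instead run the triangle inequality directly at the reference time $t_1-(T^-+T^+)\le t_2-T^-$ and check the time-interval inclusions by hand, which is exactly what the paper does.
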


Next, we compare conventional and $P^*$-parabolic neighborhoods, assuming a local two-sided bound on the Ricci curvature.
The following proposition will be the basis of this discussion.

\begin{Proposition} \label{Prop_Var_location_Ric_bound}
Assume that $\alpha > 0$ and $K < \infty$ and consider a point $(x_0, t_0) \in M \times I$ and a scale $r > 0$ with $[t_0 - r^2, t_0] \subset I$.
Suppose that $|{\Ric}| \leq  K r^{-2}$ on $P(x_0, t_0; \alpha r, -r^2)$.
Then
\[ d_{W_1}^{g_{t_0 - r^2}} ( \nu_{x_0, t_0; t_0 - r^2}, \delta_{x_0} ) \leq C (\alpha, K) r. \]
\end{Proposition}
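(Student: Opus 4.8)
The plan is to first reduce to a scale-normalized concentration statement. By the scaling $\tilde g_t := r^{-2} g_{t_0 + r^2 t}$ we may assume $r = 1$, $t_0 = 0$, so $|{\Ric}| \le K$ on $B(x_0,0,\alpha) \times [-1,0]$ and we must bound $d^{g_{-1}}_{W_1}(\nu_{x_0,0;-1},\delta_{x_0})$. Since one endpoint is a Dirac mass, $d^{g_{-1}}_{W_1}(\nu_{x_0,0;-1},\delta_{x_0}) = \int_M d_{-1}(x_0,\cdot)\,d\nu_{x_0,0;-1}$. Pick an $H_n$-center $(z,-1)$ of $(x_0,0)$ (Proposition~\ref{Prop_exist_H-center}); by the triangle inequality $\int_M d_{-1}(x_0,\cdot)\,d\nu_{x_0,0;-1} \le d_{-1}(x_0,z) + \int_M d_{-1}(z,\cdot)\,d\nu_{x_0,0;-1}$, and the layer-cake formula together with the Gaussian tail of Theorem~\ref{Thm_Gaussian_integral_bound} (with $t_0 - t = 1$) bounds the last integral by a purely dimensional constant $C_n$. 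Thus it suffices to bound $d_{-1}(x_0,z)$. Finally, by Proposition~\ref{Prop_nu_ball_bound} the ball $B(z,-1,\sqrt{2H_n})$ carries at least half of $\nu_{x_0,0;-1}$, so if we can show $\nu_{x_0,0;-1}\big(B(x_0,-1,L)\big) > \tfrac12$ for some $L = L(\alpha,K,n)$, then the two balls must intersect and $d_{-1}(x_0,z) \le L + \sqrt{2H_n}$. Everything therefore reduces to this concentration estimate.

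\textbf{Main estimate.} The natural approach to $\nu_{x_0,0;-1}(B(x_0,-1,L)) > \tfrac12$ is a distance-function maximum-principle argument localized to the region where the curvature bound holds. A short distance-distortion bootstrap (using $|\partial_s g_s| \le 2K g_s$ on the controlled region) produces a radius $\rho_0 = \rho_0(\alpha,K) > 0$ with $B(x_0,s,\rho_0) \subseteq B(x_0,0,\alpha)$ for all $s \in [-1,0]$, and such that every minimizing $g_s$-geodesic from $x_0$ to a point of $B(x_0,s,\rho_0)$ stays in the controlled region. Fix a smooth non-decreasing $\zeta \colon [0,\infty) \to [0,\infty)$ with $\zeta(\rho) = \rho^2$ on $[0,\rho_0/2]$, $\zeta$ constant on $[\rho_0,\infty)$, $|\zeta'| \le C\rho_0$, $|\zeta''| \le C$, and set
\[
 F(s) := \int_M \zeta\big(d_s(x_0,\cdot)\big)\,d\nu_{x_0,0;s}, \qquad s \in [-1,0].
\]
Then $F(0) = 0$, and since $\square^* \nu_{x_0,0;\cdot} = 0$ one has $\frac{d}{ds}F(s) = \int_M (\partial_s - \triangle_{g_s})\,\zeta(d_s(x_0,\cdot))\,d\nu_{x_0,0;s}$, understood in the barrier sense exactly as in the proof of Theorem~\ref{Thm_HE_dist}. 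Inside the controlled region one bounds $|\partial_s d_s(x_0,\cdot)| \le C(n,K)\, d_s(x_0,\cdot)$ (Ricci bound along the geodesic) and $\triangle_{g_s} d_s(x_0,\cdot) \le (n-1)\sqrt{K}\,\ctanh(\sqrt{K}\,d_s(x_0,\cdot))$ (Laplacian comparison), while off $B(x_0,s,\rho_0)$ the function $\zeta(d_s(x_0,\cdot))$ is locally constant. This yields a differential inequality of the form $\tfrac{d}{d(-s)}F \le C(n,K)(1 + F) + (\text{error on the annulus } \{\rho_0/2 \le d_s(x_0,\cdot) \le \rho_0\})$; integrating backward from $F(0) = 0$ should give $F(-1) \le C(\alpha,K,n)$, and then Chebyshev's inequality (using $\zeta \ge c\rho_0^2$ off $B(x_0,s,\rho_0/2)$) gives $\nu_{x_0,0;-1}\big(M \setminus B(x_0,-1,L)\big) < \tfrac12$ for suitable $L = L(\alpha,K,n)$.

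\textbf{The main obstacle.} The hard part is making the last step rigorous when $\alpha$ is small: the capped cutoff introduces an error on the transition annulus whose size a priori involves $\nu_{x_0,0;s}$ of that very annulus --- precisely the quantity one is trying to control --- and the crude bound $\int_M d\nu_{x_0,0;s} = 1$ is too lossy at small scales. I would close this loop by combining the differential inequality with the \emph{a priori} variance bound $\Var_s(\nu_{x_0,0;s}) \le H_n|s|$ of Corollary~\ref{Cor_HK_monotone_Hn}, which forces the measure to be concentrated at scale $\sqrt{H_n}$ around its own time-$s$ $H_n$-center, and running a continuity/bootstrap argument over $s \in [-1,0]$ (letting $\sigma := \inf\{s_* : F(s) \le B \text{ on } [s_*,0]\}$ and deriving a contradiction if $\sigma > -1$); when $\alpha$ is too small for a single distance estimate to reach the concentration scale, one instead subdivides $[-1,0]$ into $N(\alpha,K)$ short sub-intervals on each of which the metrics vary by at most a factor $2$, and chains local heat-kernel estimates obtained from standard local parabolic (Moser-type) arguments under a bounded Ricci bound. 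The dependence of the final constant on $\alpha$ (blowing up as $\alpha \to 0$) is genuine and enters through $\rho_0$ and this subdivision. The remaining ingredients --- the reductions in the first paragraph via Lemma~\ref{Lem_Var_triangle_inequ_W_1_vs_Var}, Proposition~\ref{Prop_nu_ball_bound}, Proposition~\ref{Prop_exist_H-center} and Theorem~\ref{Thm_Gaussian_integral_bound} --- are routine.
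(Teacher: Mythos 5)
Your reductions in the first paragraph are fine (and essentially match the paper's endgame: locate an $H_n$-center, use Proposition~\ref{Prop_nu_ball_bound} to force the $H_n$-ball and a ball around $x_0$ to intersect, then conclude by the triangle inequality). One remark: you have reduced to showing $\nu_{x_0,0;-1}(B(x_0,-1,L)) > \tfrac12$, which is stronger than necessary; it suffices to show $\nu_{x_0,0;-1}(B(x_0,-1,L)) \geq c_1(\alpha,K) > 0$ for \emph{some} positive $c_1$, since one can then take $A > 1/c_1$ in Proposition~\ref{Prop_nu_ball_bound} to make the $H_n$-ball carry mass $> 1 - c_1$.

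The genuine gap is in the main estimate, and you have correctly identified where it is but not closed it. The functional $F(s) = \int \zeta(d_s(x_0,\cdot))\,d\nu_s$ with a capped cutoff $\zeta \leq C\rho_0^2$ cannot deliver the concentration you need: the differential inequality $\tfrac{d}{d(-s)}F \leq C(n,K,\rho_0)(1+F)$ only yields $F(-1) \leq C(n,K,\alpha)$, and Chebyshev with $\zeta \geq c\rho_0^2$ off the ball then gives $\nu_{-1}(M\setminus B) \leq F(-1)/(c\rho_0^2)$, which is a constant with no reason to be less than $\tfrac12$ (or less than $1-c_1$). The problem is structural, not technical: a bounded expected capped squared distance is consistent with \emph{all} of the mass sitting at distance $\rho_0$ from $x_0$, so no bootstrap on $F$ alone can recover a definite mass fraction inside the ball. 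Your proposed fixes (continuity argument in $s$, subdivision of $[-1,0]$, "Moser-type local heat-kernel estimates") are not carried out and do not obviously address this; they would amount to a different proof. The paper instead constructs (Lemma~\ref{Lem_subsolution}) an explicit function $u(x,t) = e^{-C_0 t}\phi(d_t(x_0,x))$ with $0\leq u\leq 1$, compactly supported in the controlled parabolic neighborhood, satisfying $\square u \leq 0$ in the barrier sense by Laplacian comparison under the Ricci bound, and with $u(x_0,t_0)\geq c_1$. Since $\tfrac{d}{dt}\int_M u\,d\nu_{x_0,t_0;t} = \int_M \square u\,d\nu \leq 0$, one gets $\nu_{x_0,t_0;t_0-r^2}(B(x_0,t_0-r^2,c_0 r)) \geq \int_M u\,d\nu_{t_0-r^2} \geq u(x_0,t_0) \geq c_1$ directly. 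In other words, the right object to propagate backward in time is a \emph{subsolution} of the forward heat equation concentrated at $x_0$ (whose pairing with $\nu$ is monotone and bounded below by its value at the basepoint), not the expected distance. If you replace your $F$ by $\int_M u\,d\nu_s$ for such a $u$, your argument closes.
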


As a corollary, we obtain the following containment relationships between  conventional and $P^*$-parabolic neighborhoods.

\begin{Corollary} \label{Cor_P_sub_Pvar}
For any $0 < \alpha \leq A < \infty$, $K,  T^\pm \geq 0$ the following holds if $A' \geq \underline{A}' (\alpha, A, K, T^\pm)$.
Consider a point $(x_0, t_0) \in M \times I$ and a scale $r > 0$ with $[t_0 - T^- r^2, t_0] \subset I$.
Then:
\begin{enumerate}[label=(\alph*)]
\item \label{Cor_P_sub_Pvar_a} If $|{\Ric}| \leq K r^{-2}$ on $P(x_0, t_0; A r, - T^- r^2, T^+ r^2)$ or on $P^* (x_0, t_0; A' r, - T^- r^2, T^+ r^2)$, then
\[ P(x_0, t_0; A r, -T^- r^2, T^+ r^2) \subset P^* (x_0, t_0; A' r, - T^- r^2, T^+ r^2). \]
\item \label{Cor_P_sub_Pvar_b} If $|{\Ric}| \leq K r^{-2}$ on $P(x_0, t_0; A' r, - T^- r^2, T^+ r^2)$, then
\[  P^* (x_0, t_0; A r, - T^- r^2, T^+ r^2) \subset P(x_0, t_0; A' r, -T^- r^2, T^+ r^2).  \]
\end{enumerate}
\end{Corollary}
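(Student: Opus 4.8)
The plan is to reduce everything to the case $r=1$ by parabolic rescaling, and then to play three facts off against each other: the monotonicity of $d_{W_1}$ for conjugate heat kernel measures together with the bound $d_{W_1}^{g_t}(\nu_{x_1,t_0;t},\nu_{x_2,t_0;t})\le d_{t_0}(x_1,x_2)$ (Lemma~\ref{Lem_monotonicity_W1}); the localization estimate $d_{W_1}^{g_{t'-\rho^2}}(\nu_{x',t';t'-\rho^2},\delta_{x'})\le C(\alpha,K)\rho$, valid whenever $|{\Ric}|\le K\rho^{-2}$ on $P(x',t';\alpha\rho,-\rho^2)$ (Proposition~\ref{Prop_Var_location_Ric_bound}, to be used at scales $\rho^2\le T^-+T^+$ and at the base points $x'\in\{x_0,x\}$); and the elementary fact that $|{\Ric}|\le K$ on a parabolic region over the time interval $[t_0-T^-,t_0+T^+]$ makes the metrics $g_t$ there mutually bi-Lipschitz with constant $C(K,T^\pm)$, so that $d_s$, $d_t$ and $d_{t_0}$ are comparable on it.

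For~\ref{Cor_P_sub_Pvar_b}, let $(x,t)\in P^*(x_0,t_0;A,-T^-,T^+)$, put $s:=t_0-T^-$, so $t\in[s,t_0+T^+]$ and $d_{W_1}^{g_s}(\nu_{x_0,t_0;s},\nu_{x,t;s})<A$. By the triangle inequality for $d_{W_1}^{g_s}$,
\[
 d_s(x_0,x)\le d_{W_1}^{g_s}(\delta_{x_0},\nu_{x_0,t_0;s})+d_{W_1}^{g_s}(\nu_{x_0,t_0;s},\nu_{x,t;s})+d_{W_1}^{g_s}(\nu_{x,t;s},\delta_x).
\]
The middle term is $<A$; the first is $\le C(\alpha,K,T^-)$ by Proposition~\ref{Prop_Var_location_Ric_bound} applied to $(x_0,t_0)$ at scale $\sqrt{T^-}$, using $|{\Ric}|\le K$ on $P(x_0,t_0;A',-T^-,T^+)\supset P(x_0,t_0;\alpha\sqrt{T^-},-T^-)$ (which holds once $A'\ge\alpha\sqrt{T^-}$); and the third is $\le C(\alpha,K,T^\pm)$ by the same proposition applied to $(x,t)$ at scale $\sqrt{t-s}\le\sqrt{T^-+T^+}$, provided a small parabolic Ricci bound is available around $(x,t)$. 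Metric comparison then turns $d_s(x_0,x)\le C(\alpha,A,K,T^\pm)$ into $d_{t_0}(x_0,x)\le C(\alpha,A,K,T^\pm)$, so $(x,t)\in P(x_0,t_0;A',-T^-,T^+)$ once $A'$ exceeds that constant.

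For~\ref{Cor_P_sub_Pvar_a}, let $(x,t)\in P(x_0,t_0;A,-T^-,T^+)$, so $d_{t_0}(x_0,x)<A$. The slice $t=t_0$ is immediate from Lemma~\ref{Lem_monotonicity_W1}: $d_{W_1}^{g_s}(\nu_{x_0,t_0;s},\nu_{x,t_0;s})\le d_{t_0}(x_0,x)<A\le A'$, with no curvature hypothesis. For $t\ne t_0$ one inserts $\nu_{x,t_0}$: again by Lemma~\ref{Lem_monotonicity_W1},
\[
 d_{W_1}^{g_s}(\nu_{x_0,t_0;s},\nu_{x,t;s})\le d_{W_1}^{g_s}(\nu_{x_0,t_0;s},\nu_{x,t_0;s})+d_{W_1}^{g_s}(\nu_{x,t_0;s},\nu_{x,t;s}),
\]
where the first summand is $<A$ and the second is $\le d_{W_1}^{g_{t_0}}(\delta_x,\nu_{x,t;t_0})$ when $t>t_0$ and $\le d_{W_1}^{g_t}(\nu_{x,t_0;t},\delta_x)$ when $t<t_0$, in either case bounded by $C(\alpha,K,T^\pm)$ via Proposition~\ref{Prop_Var_location_Ric_bound} applied near $x$ at a scale $\le\sqrt{T^-+T^+}$. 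This last step is exactly where the two alternative curvature hypotheses (the bound on $P(x_0,t_0;A,-T^-,T^+)$, or on $P^*(x_0,t_0;A',-T^-,T^+)$) enter: each is used only to supply that small parabolic Ricci bound, once one knows the base point $x$ lies in the controlled region.

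The main obstacle is the circularity inherent in both parts: Proposition~\ref{Prop_Var_location_Ric_bound} must be invoked on small parabolic balls centered at $x$ (or at points near it), yet membership of those balls in the region where $|{\Ric}|$ is controlled is essentially the containment being proved; likewise the metric comparison between $g_s$ and $g_{t_0}$ is legitimate only while the $g_{t'}$-minimizing geodesics it uses stay inside that region. I expect this to be handled by a bootstrap/continuity argument: one shows that the set of $(x,t)$ in the given neighborhood for which the target distance bound holds is relatively open (strict inequalities) and, by the estimates above, relatively closed, and one feeds the crude a priori control coming from the hypotheses into the start of the induction. This is precisely why $\underline A'$ must be allowed to depend on all of $\alpha,A,K,T^-,T^+$ simultaneously, and it is the role of the auxiliary parameter $\alpha\le A$: it fixes the size of the parabolic balls on which Proposition~\ref{Prop_Var_location_Ric_bound} gets applied, hence how much of the neighborhood must already be controlled before the bootstrap can proceed.
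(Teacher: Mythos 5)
Your proposal is correct and matches the paper's proof in all essentials: both arguments rest on Proposition~\ref{Prop_Var_location_Ric_bound}, the triangle inequality and monotonicity for $d_{W_1}$ from Lemma~\ref{Lem_monotonicity_W1}, standard distance distortion, and a continuity/bootstrap argument to break exactly the circularity you identify. The only difference is in how that continuity argument is implemented: for part~\ref{Cor_P_sub_Pvar_a} under the $P^*$-hypothesis the paper takes a maximal scale $r^*\le r$ at which the Ricci bound holds on the conventional neighborhood and derives a contradiction from maximality, and for part~\ref{Cor_P_sub_Pvar_b} it first reduces to $T^-=0$ and then takes a maximal forward time $T^*$ with an a priori containment in $P(x_0,t_0;\tfrac12 A',\ldots)$ that the estimates improve to $\tfrac14 A'$, rather than running an open--closed decomposition of the point set.
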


Next, we discuss the geometry of time-slices of $P^*$-parabolic neighborhoods, which are of the form
\begin{equation} \label{eq_time_slice_of_P_star}
 S_t := P^* (x_0, t_0; A, -T^-, T^+) \cap \big( M \times \{ t \} \big). 
\end{equation}
Note that any such time-slice may have complicated geometry; in particular its diameter may not be bounded in terms of $A, T^\pm$.
However, the following proposition implies that its volume can be bounded in terms of these quantities, the pointed Nash entropy and a global lower scalar curvature bound.

\begin{Theorem} \label{Thm_parab_time_slice_vol}
Let $\alpha > 0$, $A , T^\pm \geq 0$ and consider a point $(x_0, t_0) \in M \times I$ and a scale $r > 0$ with  $[t_0 - (T^- +\alpha ) r^2, t_0] \subset I$.
Then for any $t \in [t_0 - T^- r^2, t_0 + T^+ r^2]$ the time-slice $S_t$ from (\ref{eq_time_slice_of_P_star}) satisfies the following volume bound:
\begin{equation} \label{eq_thm_S_bound}
 | S_t |_t \leq C(  A,  T^-, T^+, \alpha) \exp ( \NN_{x_0, t_0} (T^- r^2) ) r^n. 
\end{equation}
Moreover, for any $A' < \infty$ we have the following bound on the $A' r$-neighborhood of $S_t$
\begin{equation} \label{eq_thm_BS_bound}
 | B(S_t, t, A'r) |_t \leq C( A, A', T^-, T^+, \alpha) \exp ( \NN_{x_0, t_0} (T^- r^2) ) r^n. 
\end{equation}
\end{Theorem}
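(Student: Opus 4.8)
The plan is to reduce the volume estimate at the (possibly geometrically wild) time-slice $S_t$ to a volume estimate for a single metric ball at the fixed earlier time $\tau_- := t_0 - T^- r^2$, where both curvature control and Theorem~\ref{Thm_upper_volume_bound} are available; the transport from time $\tau_-$ to time $t$ is done by a heat-flow test function. After parabolic rescaling we may assume $r=1$. Using $[t_0-(T^-+\alpha),t_0]\subset I$ and Lemma~\ref{Lem_lower_scal}, one checks that $R\ge R_{\min}:=-n/\alpha$ on $M\times\big([t_0-(T^-+\tfrac\alpha2),t_0+T^+]\cap I\big)$; set $\sigma':=t_0-(T^-+\tfrac\alpha2)$ (so $\tau_--\sigma'=\tfrac\alpha2$) and fix an $H_n$-center $(z_0,\tau_-)$ of $(x_0,t_0)$.

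\emph{Step 1: all time-slices sit near a fixed ball.} For $(x,t)\in S_t$ choose an $H_n$-center $(z,\tau_-)$ of $(x,t)$. By Lemma~\ref{Lem_Var_triangle_inequ_W_1_vs_Var}, Corollary~\ref{Cor_HK_monotone_Hn} and the definition of $S_t$,
\[ d_{\tau_-}(z,z_0)\le \sqrt{H_n(t-\tau_-)}+A+\sqrt{H_nT^-}\le D_1:=A+2\sqrt{H_n(T^-+T^+)}. \]
Put $D_2:=D_1+\sqrt{2H_n(T^-+T^+)}$ and $B:=B(z_0,\tau_-,D_2)$. By Proposition~\ref{Prop_nu_ball_bound} (with $A=2$), $\nu_{x,t;\tau_-}\big(B(z,\tau_-,\sqrt{2H_n(t-\tau_-)})\big)\ge\tfrac12$, and $B(z,\tau_-,\sqrt{2H_n(t-\tau_-)})\subset B$, so $\nu_{x,t;\tau_-}(B)\ge\tfrac12$ for every $(x,t)\in S_t$.

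\emph{Step 2: heat-flow test function.} Let $\phi$ solve $\square\phi=0$ on $M\times[\tau_-,t_0+T^+]$ with $\phi(\cdot,\tau_-)=\chi_B$, so $\phi(x,t)=\nu_{x,t;\tau_-}(B)$. By Step~1, $\phi(\cdot,t)\ge\tfrac12$ on $S_t$, hence $|S_t|_t\le 2\int_M\phi(\cdot,t)\,dg_t$. Since $\tfrac{d}{dt}\int_M\phi\,dg_t=-\int_M R\phi\,dg_t\le -R_{\min}\int_M\phi\,dg_t$, we get $\int_M\phi(\cdot,t)\,dg_t\le e^{-R_{\min}(t-\tau_-)}|B|_{\tau_-}\le C(\alpha,T^\pm)\,|B(z_0,\tau_-,D_2)|_{\tau_-}$. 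Then I would apply Theorem~\ref{Thm_upper_volume_bound} to $B(z_0,\tau_-,D_2)$ with scale $\rho:=\sqrt{\alpha/2}$ and $A:=D_2/\rho$ — its hypotheses hold because $[\tau_--\rho^2,\tau_-]=[\sigma',\tau_-]\subset I$ with $R\ge R_{\min}$ there and $R_{\min}\rho^2=-n/2$ — obtaining $|B(z_0,\tau_-,D_2)|_{\tau_-}\le C(A,T^\pm,\alpha)\exp(\NN_{z_0,\tau_-}(\tfrac\alpha2))$. Finally, by Corollary~\ref{Cor_NN_variation} (with $s=\sigma'$, $t^*=\tau_-$, using $d_{W_1}^{g_{\tau_-}}(\delta_{z_0},\nu_{x_0,t_0;\tau_-})\le\sqrt{H_nT^-}$) together with the monotonicity \eqref{eq_NN_basic_3},
\[ \NN_{z_0,\tau_-}(\tfrac\alpha2)=\NN^*_{\sigma'}(z_0,\tau_-)\le \NN^*_{\sigma'}(x_0,t_0)+C(\alpha,T^-)=\NN_{x_0,t_0}(T^-+\tfrac\alpha2)+C(\alpha,T^-)\le \NN_{x_0,t_0}(T^-)+C(\alpha,T^-). \]
Combining these and restoring $r$ gives \eqref{eq_thm_S_bound}. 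For \eqref{eq_thm_BS_bound}, observe that if $d_t(x,x')<A'r$ with $(x',t)\in S_t$, then by Lemma~\ref{Lem_monotonicity_W1} $d_{W_1}^{g_{\tau_-}}(\nu_{x,t;\tau_-},\nu_{x',t;\tau_-})\le d_t(x,x')<A'r$; hence $B(S_t,t,A'r)$ lies in the time-$t$ slice of $P^*(x_0,t_0;(A+A')r,-T^-r^2,T^+r^2)$, and \eqref{eq_thm_BS_bound} follows from \eqref{eq_thm_S_bound} with $A$ replaced by $A+A'$.

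I expect the main obstacle to be conceptual rather than computational: because $S_t$ need not be contained in a metric ball of controlled radius, Theorem~\ref{Thm_upper_volume_bound} cannot be invoked directly at time $t$, and the whole role of the function $\phi$ in Step~2 is to convert "$S_t$ has small width relative to $\nu$-concentration" into a genuine volume bound for the single ball $B$ at the fixed time $\tau_-$. A secondary technical nuisance is ensuring every constant depends only on $n,A,T^\pm,\alpha$; this is the reason for working with the half-buffer time $\sigma'=t_0-(T^-+\tfrac\alpha2)$ and the scale $\sqrt{\alpha/2}$, so that the scalar lower bound enters only through the dimensional quantity $R_{\min}\rho^2=-n/2$.
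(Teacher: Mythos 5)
Your proposal is correct and follows essentially the same route as the paper's proof: fix an $H_n$-center $(z_0,\tau_-)$ of $(x_0,t_0)$, show every $\nu_{x,t;\tau_-}$ with $(x,t)\in S_t$ gives mass $\ge\tfrac12$ to a fixed ball $B$ around $z_0$ of controlled radius, transport this to time $t$ via the heat flow started at $\chi_B$ together with the lower scalar curvature bound, bound $|B|_{\tau_-}$ by Theorem~\ref{Thm_upper_volume_bound} at scale comparable to $\sqrt{\alpha}$, and convert $\NN_{z_0,\tau_-}$ to $\NN_{x_0,t_0}$ via Corollary~\ref{Cor_NN_variation}; the neighborhood bound by enlarging $A$ to $A+A'$ is also the paper's argument. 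The only (harmless) difference is bookkeeping: you route the entropy comparison through $\NN_{x_0,t_0}(T^-+\tfrac\alpha2)$ and plain monotonicity, where the paper compares at scale $\tfrac12\alpha$ and implicitly invokes the doubling property~(\ref{eq_NN_doubling}).
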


An important application of Theorem~\ref{Thm_parab_time_slice_vol} is the following covering result, which we will state for $P^*$-parabolic \emph{balls} for convenience.
Using the same techniques, one can also obtain a covering result for general $P^*$-parabolic neighborhoods.

\begin{Theorem} \label{Thm_covering}
Let $\la_0 > 0$, $A , T^\pm \geq 0$ and consider a point $(x_0, t_0) \in M \times I$ and a scale $r > 0$ with with $[t_0 - (T^- +\la_0^2 ) r^2, t_0] \subset I$.

Then for any subset $X \subset P^* (x_0, t_0; A r, -T^- r^2, T^+ r^2)$ and $\lambda \in (0,\la_0]$ we can find points $(y_1, s_1), \lb \ldots, \lb (y_N, s_N) \in X$ with the property that
\begin{equation} \label{eq_thm_covering}
 X \subset \bigcup_{i=1}^N P^*(y_i, s_i; \lambda r), \qquad N \leq C(  A, T^-, T^+, \la_0 ) \la^{-n-2}. 
\end{equation}
\end{Theorem}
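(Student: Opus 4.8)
The plan is to run a greedy ball-packing (Vitali-type) argument with respect to $P^*$-parabolic balls, using the volume bound of Theorem~\ref{Thm_parab_time_slice_vol} to count how many disjoint small balls can fit. First I would normalize by parabolic rescaling so that $r = 1$. Given $\lambda \in (0, \lambda_0]$, choose a maximal collection of points $(y_1, s_1), \ldots, (y_N, s_N) \in X$ such that the balls $P^*(y_i, s_i; \tfrac{\lambda}{4})$ are pairwise disjoint. Maximality forces every point of $X$ to lie within ``$P^*$-distance'' $\tfrac{\lambda}{2}$ of some $(y_i, s_i)$, and then Proposition~\ref{Prop_basic_parab_nbhd}\ref{Prop_basic_parab_nbhd_c}, \ref{Prop_basic_parab_nbhd_d} upgrade this to the covering $X \subset \bigcup_i P^*(y_i, s_i; \lambda)$: if $P^*(y, s; \tfrac{\lambda}{4})$ meets $P^*(y_i, s_i; \tfrac{\lambda}{4})$ then $P^*(y, s; \tfrac{\lambda}{4}) \subset P^*(y_i, s_i; \tfrac{3\lambda}{4}) \subset P^*(y_i, s_i; \lambda)$, so in particular $(y,s)$ itself is covered. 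Hence the first assertion of (\ref{eq_thm_covering}) holds once $N$ is finite.

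To bound $N$, I would fix a time slice to integrate over. Since each $(y_i, s_i) \in X \subset P^*(x_0, t_0; A, -T^-, T^+)$ and the $P^*(y_i, s_i; \tfrac{\lambda}{4})$ are disjoint, their time-$t$ slices are disjoint for every fixed $t$. The natural choice is $t := t_0 - T^- $ (the earliest time, where all the measures $\nu_{\cdot,\cdot; t_0 - T^-}$ are simultaneously defined and where the $W_1$-monotonicity of Lemma~\ref{Lem_monotonicity_W1} works in our favor). For each $i$, by Proposition~\ref{Prop_basic_parab_nbhd}\ref{Prop_basic_parab_nbhd_bb}, $(x_0, t_0) \in P^*(y_i, s_i; A', -(T^- + T^+), \ast)$ for a controlled $A'$, so $(y_i, s_i)$ sits inside a $P^*$-parabolic neighborhood of $x_0$ of controlled parameters, and one lower-bounds the time-$t$ slice volume $|P^*(y_i, s_i; \tfrac{\lambda}{4}) \cap (M \times \{t\})|_t$ from below by $c \exp(\NN_{x_0,t_0}(T^-)) \lambda^{n}$ — this is where the real work is. Summing the disjoint slices inside $B(S_t, t, C)$ (for the enclosing slice $S_t$ of $P^*(x_0,t_0;A,-T^-,T^+)$, using that all $y_i$-slices lie in a fixed bounded neighborhood of $S_t$ via $W_1 \le \sqrt{\Var}$ and Corollary~\ref{Cor_HK_monotone_Hn}) and applying the upper bound (\ref{eq_thm_BS_bound}) of Theorem~\ref{Thm_parab_time_slice_vol} gives
\[
 N \cdot c \exp(\NN_{x_0,t_0}(T^-)) \lambda^{n} \le C(A, T^-, T^+, \lambda_0) \exp(\NN_{x_0,t_0}(T^-)),
\]
so $N \le C \lambda^{-n}$. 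The slightly worse exponent $\lambda^{-n-2}$ in the statement leaves room for the lower volume bound on the small slice to also lose a factor involving $\lambda$ when the center time $s_i$ is close to $t_0 - T^- $ (so that $s_i - (t_0 - T^-)$ may be much smaller than $\lambda^2$ and Theorem~\ref{Thm_lower_volume_H_center} must be applied at scale $\sqrt{s_i - (t_0-T^-)}$ rather than $\lambda$, then compared back via Proposition~\ref{Prop_NN_basic_properties} and Corollary~\ref{Cor_NN_variation}); carrying the two extra powers of $\lambda$ through this comparison is routine book-keeping.

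The main obstacle is the per-ball lower volume bound: for each small ball $P^*(y_i, s_i; \tfrac{\lambda}{4})$ I need that its time-$(t_0 - T^-)$ slice has $g_{t_0-T^-}$-volume at least $c \exp(\NN_{x_0,t_0}(T^-)) \lambda^{n}$ (up to the extra $\lambda^2$). The right tool is Theorem~\ref{Thm_lower_volume_H_center}: an $H_n$-center $(z_i, t_0 - T^-)$ of $(y_i, s_i)$ has the property that $P^*(y_i, s_i; \tfrac{\lambda}{4})$ contains a genuine metric ball around $z_i$ of radius $\sim \lambda$ at the appropriate scale — because membership in $P^*(y_i,s_i;\tfrac\lambda4)$ is governed by $W_1 \le \tfrac\lambda4$ while Corollary~\ref{Cor_HK_monotone_Hn} says the conjugate heat kernel is concentrated within $W_1$-distance $\sqrt{H_n \cdot (\text{time gap})}$ of $z_i$ — and then the lower volume bound applies with $\NN_{y_i,s_i}$, which is converted to $\NN_{x_0,t_0}$ at the cost of a controlled additive constant (depending on $A, T^\pm$) via Corollary~\ref{Cor_NN_variation} and the definition of $P^*(x_0,t_0;A,-T^-,T^+)$. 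Assembling these conversions carefully — keeping track that $\exp(\NN_{x_0,t_0}(T^-))$ appears on both sides and cancels, so that $N$ is a purely dimensional multiple of $\lambda^{-n-2}$ times a constant depending only on $A, T^-, T^+, \lambda_0$ — is the crux; everything else is an application of Proposition~\ref{Prop_basic_parab_nbhd}.
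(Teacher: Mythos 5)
Your covering step (maximal collection with $P^*(y_i,s_i;\tfrac{\lambda}{4})$ pairwise disjoint, then Proposition~\ref{Prop_basic_parab_nbhd}) is fine and matches the paper's, and your overall counting skeleton --- disjoint small balls, a per-ball lower volume bound from Theorem~\ref{Thm_lower_volume_H_center} at $H_n$-centers, the upper bound of Theorem~\ref{Thm_parab_time_slice_vol}, with the entropy factors cancelling --- is also the right one. But the execution of the count has a genuine gap: you fix the single time slice $t=t_0-T^-$ and claim $|P^*(y_i,s_i;\tfrac{\lambda}{4})\cap (M\times\{t\})|_t\geq c\exp(\NN_{x_0,t_0}(T^-))\lambda^n$. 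This is false for most $i$. By definition $P^*(y_i,s_i;\tfrac{\lambda}{4})=P^*(y_i,s_i;\tfrac{\lambda}{4},-\tfrac{\lambda^2}{16},\tfrac{\lambda^2}{16})$ only contains points with time in $[s_i-\tfrac{\lambda^2}{16},\,s_i+\tfrac{\lambda^2}{16}]$, so its time-$(t_0-T^-)$ slice is \emph{empty} whenever $s_i>t_0-T^-+\tfrac{\lambda^2}{16}$, i.e.\ for all but a thin layer of $X$. Relatedly, your claim that $P^*(y_i,s_i;\tfrac{\lambda}{4})$ contains a metric ball of radius $\sim\lambda$ about an $H_n$-center $(z_i,t_0-T^-)$ cannot hold for those $i$; and even disregarding the time constraint, the concentration scale of $\nu_{y_i,s_i;t_0-T^-}$ about $z_i$ is $\sqrt{H_n(s_i-(t_0-T^-))}$, which may be as large as $\sqrt{H_n(T^-+T^+)}\,$, so the balls you would produce are far too large to be disjoint and the lower volume bound is at the wrong scale.

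The missing idea is a pigeonhole in time, and it is also the true source of the exponent $-n-2$ (your explanation via boundary effects near $t_0-T^-$ is not the right accounting). One partitions $[t_0-T^-,t_0+T^+]$ into windows of length $\beta\lambda^2$ and selects a sub-collection $\mathcal I\subset\{1,\dots,N\}$ with $|\mathcal I|\gtrsim c(T^\pm,\beta)\lambda^2 N$ and a common time $t^*$ with $t^*\in[s_i-2\beta\lambda^2,\,s_i-\beta\lambda^2]$ for all $i\in\mathcal I$. For these indices the $H_n$-centers $(z_i,t^*)$ are concentrated at scale $\sqrt{2H_n\beta}\,\lambda\ll\lambda$, so the $W_1$-separation $\geq\lambda/2$ coming from disjointness of the $P^*$-balls (via Lemma~\ref{Lem_monotonicity_W1}) transfers to $d_{t^*}(z_i,z_j)\geq\tfrac{\lambda}{2}$, the metric balls $B(z_i,t^*,\tfrac{\lambda}{4})$ are genuinely disjoint, Theorem~\ref{Thm_lower_volume_H_center} applies at scale comparable to $\lambda$, and Theorem~\ref{Thm_parab_time_slice_vol} at time $t^*$ closes the argument with $|\mathcal I|\leq C\lambda^{-n}$, hence $N\leq C\lambda^{-n-2}$. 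Without this selection step your sum over disjoint slices at a single time is vacuous.
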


Note that the upper bound on $N$ is independent of the pointed Nash entropy at $(x_0, t_0)$.
We also note that it is unclear whether a similar covering theorem holds for conventional parabolic neighborhoods in the absence of curvature bounds.

\subsection{Proofs}

\begin{proof}[Proof of Proposition~\ref{Prop_basic_parab_nbhd}.]
Assertion~\ref{Prop_basic_parab_nbhd_a} is clear.
For Assertion~\ref{Prop_basic_parab_nbhd_b} observe that for any $(x,t) \in P^* (x_0, t_0; A_1, -T_1^-, T_1^+)$ we have due to Lemma~\ref{Lem_monotonicity_W1}
\begin{equation*}
 d_{W_1}^{g_{t_0 - T_2^-}} (\nu_{x_0, t_0; t_0 - T_2^-}, \nu_{x,t;t_0 - T_2^-} ) 
 \leq d_{W_1}^{g_{t_0 - T_1^-}} (\nu_{x_0, t_0; t_0 - T_1^-}, \nu_{x,t;t_0 - T_1^-} ) 
 < A_1 \leq A_2.
\end{equation*}

For Assertion~\ref{Prop_basic_parab_nbhd_bb}, observe that since $t_1 - (T^- + T^+) \leq t_2 - T^-$, we have by Lemma~\ref{Lem_monotonicity_W1}
\[ d_{W_1}^{g_{t_1 - (T^-+T^+)}} ( \nu_{x_1,t_1; t_1 - (T^-+T^+)}, \nu_{x_2,t_2; t_1 - (T^-+T^+)} )
\leq d_{W_1}^{g_{t_2 - T^-}} ( \nu_{x_1,t_1; t_2 - T^-}, \nu_{x_2,t_2; t_2 - T^-} )
< A. \]
Moreover, if $(x,t) \in P^*(x_2, t_2; A, -T^-, T^+)$, then
\begin{multline*}
 d_{W_1}^{g_{t_1 - (T^- + T^+)}} ( \nu_{x,t; t_1 - (T^- + T^+)}, \nu_{x_1,t_1; t_1 - (T^- + T^+)} )
\leq d_{W_1}^{g_{t_2 - T^-}} ( \nu_{x,t; t_2 - T^-}, \nu_{x_1,t_1; t_2 - T^- } ) \\
\leq d_{W_1}^{g_{t_2 - T^-}} ( \nu_{x,t; t_2 - T^-}, \nu_{x_2,t_2; t_2 - T^- } )
+ d_{W_1}^{g_{t_2 - T^-}} ( \nu_{x_2,t_2; t_2 - T^-}, \nu_{x_1,t_1; t_2 - T^- } )
< 2A. 
\end{multline*}

To see Assertion~\ref{Prop_basic_parab_nbhd_c}, let $(x,t) \in P^*(x_1, t_1; A_1, - T_1^-, T_1^+)$.
Then by Lemma~\ref{Lem_monotonicity_W1}
\begin{multline*}
 d_{W_1}^{g_{t_2 - (T_1^- + T_2^-)}} ( \nu_{x,t; t_2 - (T_1^- + T_2^-)},  \nu_{x_2,t_2; t_2 - (T_1^- + T_2^-)} ) \\
\leq d_{W_1}^{g_{t_2 - (T_1^- + T_2^-)}} ( \nu_{x,t; t_2 - (T_1^- + T_2^-)},  \nu_{x_1,t_1; t_2 - (T_1^- + T_2^-)} ) + d_{W_1}^{g_{t_2 - (T_1^- + T_2^-)}} ( \nu_{x_1,t_1; t_2 - (T_1^- + T_2^-)},  \nu_{x_2,t_2; t_2 - (T_1^- + T_2^-)} ) \\
\leq d_{W_1}^{g_{t_1 - T_1^- }} ( \nu_{x,t; t_1 - T_1^- },  \nu_{x_1,t_1; t_1 - T_1^- } ) + d_{W_1}^{g_{t_2 -  T_2^-}} ( \nu_{x_1,t_1; t_2 -  T_2^-},  \nu_{x_2,t_2; t_2 - T_2^-} ) 
< A_1 + A_2.
\end{multline*}

To see Assertion~\ref{Prop_basic_parab_nbhd_d}, choose $(x,t) \in P^* (x_1, t_1; r_1) \cap P^* (x_2, t_2; r_2)$.
Then by Assertions~\ref{Prop_basic_parab_nbhd_bb}, \ref{Prop_basic_parab_nbhd_c}
\[ P^*(x_1, t_1; r_1) \subset P^*(x,t; 2r_1) \subset P^*(x_2, t_2; 2r_1 + r_2). \qedhere \]
\end{proof}
\bigskip

The following lemma will be needed for the proof of Proposition~\ref{Prop_Var_location_Ric_bound}.

\begin{Lemma} \label{Lem_subsolution}
In the setting of Proposition~\ref{Prop_Var_location_Ric_bound}, we can find a compactly supported function $u \in C^0_c (P(x_0, t_0; \alpha r, -r^2))$ such that the following is true:
\begin{enumerate}[label=(\alph*)]
\item $0 \leq u \leq 1$.
\item $\square u \leq 0$ in the barrier and viscosity sense.
\item There are constants $c_0 (\alpha, K) \geq c_1 (\alpha, K) > 0$ such that
for all $t \in [t_0 - r^2, t_0]$ we have $\supp u(\cdot, t) \subset B(x_0, t, c_0 r) \subset P(x_0, t_0; \alpha r, -r^2)$ and $u \geq c_1$ on $B(x_0, t, c_1 r)$.
\end{enumerate}
\end{Lemma}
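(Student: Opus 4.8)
The plan is to take $u$ of the special form $u(x,t)=\phi\big(d_t(x_0,x)\big)$ for a fixed cutoff $\phi:[0,\infty)\to[0,1]$, so that all of the required structure of $u$ is inherited from the distance to $x_0$ at the relevant time, and the two ingredients we will exploit — Laplacian comparison for $d_t(x_0,\cdot)$ and the bound on $\partial_t d_t(x_0,\cdot)$ — are both available from the two-sided Ricci bound $|\Ric|\le Kr^{-2}$. I would fix radii $0<a<b$ with $a=c'(\alpha,K)\,r$ and $b-a$ small (at most $a$, and at most $r/C$ for the constant $C=C(n,K,\alpha)$ appearing below), small enough that the time-$t$ ball $B(x_0,t,2b)$ remains inside $B(x_0,t_0,\alpha r)$ for every $t\in[t_0-r^2,t_0]$; this containment follows from the distance-distortion estimate $e^{-K}d_t\le d_{t_0}\le e^{K}d_t$ (valid here since $|\Ric|\le Kr^{-2}$ on $P(x_0,t_0;\alpha r,-r^2)$) together with a routine continuity argument in $t$. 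Then I would take $\phi\equiv 1$ on $[0,a]$, $\phi\equiv 0$ on $[b,\infty)$, and on $[a,b]$ let $\phi$ be convex and strictly decreasing with $\phi'(b)=0$; the explicit choice $\phi(s)=\big(\tfrac{b-s}{b-a}\big)^2$ works. With $c_0:=2b/r$ and $c_1:=\min\{a/r,1\}$ (so $0<c_1\le c_0$, both depending only on $\alpha,K$), Assertions (a) and (c) are then immediate: $0\le u\le 1$; for each $t$, $\supp u(\cdot,t)\subset\overline{B(x_0,t,b)}\subset B(x_0,t,c_0 r)\subset P(x_0,t_0;\alpha r,-r^2)$; and $u\equiv 1\ge c_1$ on $B(x_0,t,c_1 r)$.

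The main work is Assertion (b). Wherever $u$ is smooth — i.e.\ where $d_t(x_0,\cdot)$ avoids the cut locus of $x_0$ and $d_t(x_0,\cdot)\notin\{a,b\}$ — one computes, using $|\nabla d_t|\equiv 1$,
\[ \square u=\phi'(d_t)\big(\partial_t d_t-\triangle d_t\big)-\phi''(d_t). \]
This vanishes on $\{d_t<a\}\cup\{d_t>b\}$. On the annulus $\{a<d_t<b\}$ I would feed in the two curvature inputs: Laplacian comparison gives $\triangle d_t(x_0,\cdot)\le\sqrt{(n-1)K}\,r^{-1}\coth\!\big(\sqrt{K/(n-1)}\,d_t/r\big)\le C(n,K,\alpha)\,r^{-1}$ there (using $d_t\ge a\sim r$), while the variation formula $\partial_t d_t(x_0,\cdot)=-\int_\gamma\Ric(\gamma',\gamma')$ along a minimizing geodesic $\gamma$ gives $|\partial_t d_t(x_0,\cdot)|\le Kr^{-2}d_t\le C(n,K,\alpha)\,r^{-1}$; hence $\partial_t d_t-\triangle d_t\ge-C(n,K,\alpha)\,r^{-1}$. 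Since $\phi'\le 0$ and $\phi''\ge 0$ on $[a,b]$, the inequality $\square u\le 0$ is automatic when $\partial_t d_t-\triangle d_t\ge 0$, and when $\partial_t d_t-\triangle d_t<0$ it reduces to $\phi''(s)\ge|\phi'(s)|\,C(n,K,\alpha)\,r^{-1}$, i.e.\ (for the quadratic $\phi$) $\tfrac{1}{b-s}\ge C(n,K,\alpha)\,r^{-1}$ on $[a,b]$ — which holds once $b-a\le r/C(n,K,\alpha)$. This is exactly the smallness condition on $b-a$ referred to above, and it involves only $n,K,\alpha$. Thus $\square u\le 0$ holds classically wherever $u$ is smooth.

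It remains to verify $\square u\le 0$ in the barrier and viscosity sense along the three non-smooth loci: $\{d_t=a\}$, $\{d_t=b\}$, and the cut locus of $x_0$ inside $\{a<d_t<b\}$. Near $\{d_t=a\}$, extending $\phi$ slightly to the left as a smooth convex decreasing function $\phi_{\mathrm{ext}}$ with $\phi_{\mathrm{ext}}>1$ on $[a-\delta,a)$, one has $u=\min\{1,\phi_{\mathrm{ext}}(d_t)\}$, a minimum of the constant subsolution $1$ and the function $\phi_{\mathrm{ext}}(d_t)$, which is itself a classical subsolution there (the scale condition extends to $[a-\delta,b]$); in particular the constant $1$ is an upper barrier at $\{d_t=a\}$. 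Near $\{d_t=b\}$, $u$ is of class $C^{1,1}$ (since $\phi\in C^{1,1}$ and $d_t$ is smooth there) and $\square u\le 0$ a.e., which suffices. Along the cut locus of $x_0$, $u=\phi(d_t(x_0,\cdot))$ can be handled exactly as in the proof of Theorem~\ref{Thm_HE_dist}: replace $d_t(x_0,\cdot)$ near the point by the smooth comparison function coming from a chosen minimizing geodesic (Calabi's trick), apply the smooth computation above to $\phi$ of that comparison function, and pass to the limit. I expect this last step — propagating the subsolution property through the cut locus of $x_0$ — together with the continuity argument guaranteeing that the relevant balls stay inside $P(x_0,t_0;\alpha r,-r^2)$, to be the only genuinely technical points; everything else is a direct consequence of the Laplacian comparison theorem and the evolution of $d_t$ under $|\Ric|\le Kr^{-2}$.
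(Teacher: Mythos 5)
Your overall strategy (a radial cutoff composed with $d_t(x_0,\cdot)$, controlled via Laplacian comparison and the evolution $\partial_t d_t=-\int_\gamma\Ric(\gamma',\gamma')$ under $|\Ric|\le Kr^{-2}$, with Calabi's trick at the cut locus) is the same as the paper's, but there is a genuine gap at the inner interface $\{d_t=a\}$. Your profile is $\phi\equiv 1$ on $[0,a]$ and convex decreasing on $[a,b]$ with $\phi'(a^+)=-2/(b-a)<0$, so $u=\phi(d_t)$ has a \emph{concave} corner along $\{d_t=a\}$: the radial derivative jumps from $0$ to a negative value, hence $\triangle u$ acquires a negative singular measure on that hypersurface and $\square u=\partial_t u-\triangle u$ acquires a \emph{positive} one. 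The claim that "$u=\min\{1,\phi_{\mathrm{ext}}(d_t)\}$ is a minimum of two subsolutions, so the constant $1$ is an upper barrier" does not repair this: the minimum of two subsolutions of $\square u\le 0$ is not a subsolution (it is the maximum that is), and the barrier of the correct type for an upper bound on $\square u$ is a \emph{lower} barrier, which cannot exist at a concave ridge (any smooth $\psi\le u$ touching there would need $\partial_r\psi\ge 0$ from inside and $\partial_r\psi\le\phi'(a^+)<0$ from outside). One can also see the failure is not a mere formality: in the static flat model your $u$ satisfies $u(x_0)=1$ but $\int u\,d\nu<1$ for every positive kernel, so the monotonicity $\frac{d}{dt}\int_M u\,d\nu_t\le 0$ — which is exactly what Lemma~\ref{Lem_subsolution} is used for in Proposition~\ref{Prop_Var_location_Ric_bound} — is violated. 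Smoothing the corner does not help either: a $C^1$ transition from the plateau forces $\phi''<0$ just past $a$ while $\phi'\approx 0$ there, so $\square u\approx-\phi''>0$ near the top of the transition for any $u$ depending on $(x,t)$ only through $d_t(x_0,x)$.

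The missing idea, which is how the paper resolves this, is to give up the plateau value $1$ and insert a decaying time factor: take $u(x,t)=e^{-C_0 t}\phi(d_t(x_0,x))$ with $\phi$ a \emph{smooth} bump (so no corners at all), and choose $\phi$ so that $A\phi'+\phi''\ge -C_0\phi$ for a suitable $A(\alpha,K)$. Then $\square u=e^{-C_0t}\bigl(-C_0\phi+\phi'(\partial_t-\triangle)d_t-\phi''\bigr)$, and the zeroth-order term $-C_0\phi$ absorbs the positive contribution $-\phi''$ in the concave part of the bump where $\phi'$ is small. The price is only that $u(x_0,t_0)=e^{-C_0}=:c_1<1$ rather than $1$, which is all that assertion (c) requires. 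The remainder of your argument (the distance-distortion containment, the comparison estimates on the annulus, the $C^{1,1}$ gluing at the outer radius, and the cut-locus treatment) is sound and matches the paper.
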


\begin{proof}
After parabolic rescaling and application of a time-shift, we may assume that $r =1$ and $t_0 - r^2 = 0$.
Let $A <\infty$ be a constant whose value we will determine later.
Choose a bump function $\phi : [0, \infty) \to [0,1]$ such that $\phi \equiv 1$ on $[0, \frac13 \alpha e^{-K}]$ and $\phi \equiv 0$ on $[\frac23 \alpha e^{-K}, \infty)$ and such that on $[0, \infty)$ we have
\[ \phi' \leq 0, \qquad A \phi' + \phi'' \geq - C_0 (A, \alpha, K) \phi. \]
Define
\[ u(x,t) := e^{-C_0 (A,\alpha,K) t} \phi ( d_t (x_0, x)). \]
By Laplace comparison we have for any $t \in [0,1]$ in the barrier sense
\begin{align*}
 \square u =  \partial_t u - \triangle u
&\leq e^{-C_0 (A,\alpha, K) t} \big( \phi' \, \partial_t d_t (x_0, \cdot) - C_0(A,\alpha, K) \phi  - \phi' \triangle_{g_t} d_t (x_0, \cdot ) - \phi'' \big) \\
&\leq e^{-C_0 (A,\alpha, K) t} \bigg( -K \phi' d_t(x_0, \cdot) -  \phi' \, \frac{(n-1) \sqrt{\frac{K}{n-1}}}{\tanh ( \sqrt{\frac{K}{n-1}} d_t (x_0, \cdot) )} - \phi'' - C_0 (A,\alpha, K) \phi \bigg) \\
&\leq e^{-C_0 (A,\alpha, K) t} \bigg( - \bigg( K \alpha e^{-K} +\frac{(n-1) \sqrt{\frac{K}{n-1}}}{\tanh (\frac13 \alpha e^{-K} \sqrt{\frac{K}{n-1}} )} \bigg)  \phi'  - \phi'' - C_0 (A,\alpha, K) \phi \bigg).
\end{align*}
So if $A = A(\alpha, K)$ is chosen to be larger than the term in parentheses, then $\square u \leq 0$ in the barrier and hence also in the viscosity sense.
\end{proof}
\bigskip

\begin{proof}[Proof of Proposition~\ref{Prop_Var_location_Ric_bound}.]
After parabolic rescaling and application of a time-shift, we may assume that $r =1$ and $t_0 - r^2 = 0$.
Consider the function $u \in C^0_c (P(x_0, 1; \alpha , -1))$ and the constants $c_0(\alpha, K) \geq c_1 (\alpha, K)> 0$ from Lemma~\ref{Lem_subsolution}.
We obtain that
\[ \nu_{x_0, 1; 0} \big( B(x_0,0, c_0) \big) \geq  \int_M u \, d\nu_{x_0, 1}(0) \geq  u(x_0, 1)  \geq c_1 . \]
Combining this with Proposition~\ref{Prop_nu_ball_bound} implies that for any $H_n$-center $(z,0)$ of $(x_0,1)$
\[ d_0 (z, x_0) \leq C(\alpha, K) \]
and therefore,
\[ d^{g_0}_{W_1}( \nu_{x_0, 1;0} , \delta_{x_0} ) 
\leq d^{g_0}_{W_1} ( \nu_{x_0, 1;0} , \delta_{z} ) 
+ d^{g_0}_{W_1} ( \delta_z, \delta_{x_0} ) 
\leq \sqrt{H_n} + C(\alpha, K) 
\leq C(\alpha, K) , \]
as desired.
\end{proof}
\bigskip

\begin{proof}[Proof of Corollary~\ref{Cor_P_sub_Pvar}.]
Assertion~\ref{Cor_P_sub_Pvar_a}, assuming $|{\Ric}| \leq K r^{-2}$ on $P(x_0, t_0; A r, - T^- r^2, T^+ r^2)$, follows directly from Proposition~\ref{Prop_Var_location_Ric_bound} and a standard distance distortion estimate.
Next assume that $|{\Ric}| \leq K r^{-2}$ on $P^* (x_0, t_0; A' r, - T^- r^2, T^+ r^2)$.
Choose $r^* \in (0,r]$ maximal with the property that $|{\Ric}| \leq K r^{* -2}$ on $P(x_0, t_0; A r^*, - T^- r^{*2}, T^+ r^{*2})$.
If $r^* = r$, then we are done by the previous conclusion.
If $r^* < r$, then by the previous conclusion
\[ P(x_0, t_0; A r^*, - T^- r^{*2}, T^+ r^{*2}) \subset P^*(x_0, t_0; A' r^*, - T^- r^{*2}, T^+ r^{*2}). \]
So $|{\Ric}| \leq r^{-2} < r^{*-2}$ on $P(x_0, t_0; A r^*, - T^- r^{*2}, T^+ r^{*2})$, which contradicts the maximal choice of $r^*$.

Let us now prove Assertion~\ref{Cor_P_sub_Pvar_b}.
By parabolic rescaling, we may assume that $r = 1$.
Next, let us argue that we may assume without loss of generality that $T^- = 0$.
To see this note that by Proposition~\ref{Prop_Var_location_Ric_bound} for any $(x,t) \in P^* (x_0, t_0; A , - T^- , T^+  )$ we have
\begin{multline*}
 d_{W_1}^{g_{t_0 - T^-}} (\nu_{x_0, t_0 - T^-;  t_0 - T^-}, \nu_{x,t; t_0 - T^-} )
\leq d_{W_1}^{g_{t_0 - T^-}}(\delta_{x_0}, \nu_{x_0,t_0; t_0 - T^-} ) 
+ d_{W_1}^{g_{t_0 - T^-}} (\nu_{x_0, t_0 ;t_0 - T^-}, \nu_{x,t; t_0 - T^-} ) \\
\leq C( \alpha, K, T^-)+ A. 
\end{multline*}
Therefore
\[ P^* (x_0, t_0; A , - T^- , T^+  )  \subset P^* (x_0, t_0 - T^-; C( \alpha, K, T^-) + A , 0 , T^- + T^+  ). \]
By standard distance-distortion estimates we also have
\[ P(x_0, t_0; A', -T^-, T^+) \supset P(x_0, t_0 - T^-; c( K, T^-) A', 0, T^- + T^+). \]
So after replacing $t_0$ with $t_0 - T^-$, $T^+$ with $T^- + T^+$ and adjusting $A'$ appropriately, we may indeed assume in the following that $T^-  = 0$ and write $T=T^+$.
After application of a time-shift, we may furthermore assume that $t_0 =0$.
So we have reduce the problem to showing that if $A' \geq \underline{A}' ( A, K,  T)$ and $|{\Ric}| \leq K$ on $P(x_0, 0; A',  T)$, then
\begin{equation} \label{eq_Pvar_P_AT}
 P^* (x_0, 0; A,  T) \subset P(x_0, 0; A', T). 
\end{equation}

Fix $\alpha, K, A, T$.
We will determine a lower bound of the form $A' \geq \underline{A} ( A,K, T)$ in the course of the proof.
Choose $T^* \in [0, T]$ maximal with the property that for all $T' \in [0,T^*)$  we even have
\begin{equation} \label{eq_P_var_P_12A_p}
 P^* (x_0, 0; A,  T') \subset P(x_0, 0; \tfrac12 A',  T'). 
\end{equation}
Assuming $A' > 2A$, we must have $T^* > 0$.

Our goal will be to show that we can improve (\ref{eq_P_var_P_12A_p}).
For this purpose, consider some $(x,t) \in P^* (x_0, 0; A,  T')$.
Then by (\ref{eq_P_var_P_12A_p}) and a basic distance distortion estimate we have for  $A' \geq \underline{A} (K, A, T)$
\[ P(x, t; 1,- t) \subset P(x_0, 0;  A',  T'). \]
Thus by Proposition~\ref{Prop_Var_location_Ric_bound} we have for $A' \geq \underline{A} ( K, A, T)$
\begin{equation*}
d_0 (x_0, x)
= d^{g_0}_{W_1} ( \delta_{x_0}, \delta_x)
\leq
 d^{g_0}_{W_1} ( \delta_{x_0}, \nu_{x,t;0} )
 + d^{g_0}_{W_1}  (\nu_{x,t;0} , \delta_x )
 \leq A+ C( K, T) \leq \tfrac14 A',
\end{equation*}
which implies $(x,t) \in P(x_0, 0; \tfrac14 A',  T')$.
So by continuity and the maximal choice of $T^*$, we must have $T^* = T$, which proves (\ref{eq_Pvar_P_AT}).
\end{proof}
\bigskip

\begin{proof}[Proof of Theorem~\ref{Thm_parab_time_slice_vol}.]
By parabolic rescaling and application of a time-shift we may assume that $r=1$ and $t_0 - T^- = 0$.
Let $(z_0,0)$ be an $H_n$-center of $(x_0, t_0)$.
Fix some time $t \in [t_0 - T^- , t_0 + T^+]$, $x \in S_t$ and choose an $H_n$-center $(z, 0)$ of $(x,t)$.
Then
\begin{multline*}
 d_0 (z_0,z) 
  = d_{W_1}^{g_0} (\delta_{z_0}, \delta_{z})
\leq d_{W_1}^{g_0}  (\delta_{z_0}, \nu_{x_0, t_0; 0}) + d_{W_1}^{g_0} (\nu_{x_0, t_0;0}, \nu_{x,t;0}) + d_{W_1}^{g_0} (\nu_{x,t;0}, \delta_{z}) \\
\leq \sqrt{H_n t_0} +A + \sqrt{H_n t} \leq C( A, T^-, T^+) .
\end{multline*}
So by Theorem~\ref{Thm_Gaussian_integral_bound} there is a constant $D( A, T^-, T^+)<\infty$ such that for $B := B(z_0,0, D)$ we have
\begin{equation} \label{eq_nu_B_12}
 \nu_{x,t;0} (B) \geq \nu_{x,t;0} \big( B(z, 0, \sqrt{2H_n t} ) \big) \geq \frac12. 
\end{equation}

Let now $u \in C^0(M \times [0, t]) \cap C^\infty(M \times (0,t])$ be the solution to the heat equation $\square u = 0$ with initial condition $u(\cdot, 0) = \chi_B$.
By (\ref{eq_nu_B_12}) we have $u \geq \frac12$ on $S_t$ and since, by Lemma~\ref{Lem_lower_scal},
\[ \frac{d}{dt} \int_M u \, dg_t = - \int u R \, dg_t \leq  \frac{n}{2\alpha} \int_M u \, dg_t, \]
we obtain
\begin{equation} \label{eq_12S_B_0}
 \frac12 |S|_t \leq \int_M u(\cdot, t) dg_t \leq e^{ \frac{n}{2\alpha} t} |B|_0. 
\end{equation}

Since $d_{W_1}^{g_0} (\delta_{z_0} , \nu_{x_0, t_0} (0)) \leq \sqrt{H_n t_0}$,
we can use Proposition~\ref{Prop_NN_basic_properties} and Corollary~\ref{Cor_NN_variation} to deduce that $\NN_{z_0,0} (\frac12 \alpha) \leq \NN_{x_0,t_0} (\frac12 \alpha) + C( T^-, \alpha)$.
So, using Theorem~\ref{Thm_upper_volume_bound}, we get
\[  |B|_0 \leq C(  A, T^-, T^+, \alpha) \exp ( \NN_{z_0,0} (\tfrac12 \alpha) ) \leq C(  A, T^-, T^+, \alpha) \exp ( \NN_{x_0,t_0} (\tfrac12 \alpha) ).  \]
Combining this with (\ref{eq_12S_B_0}) implies (\ref{eq_thm_S_bound}).

Lastly, we remark that (\ref{eq_thm_BS_bound}) can be reduced to (\ref{eq_thm_S_bound}), because Proposition~\ref{Prop_basic_parab_nbhd} implies
\begin{equation*} 
 B(S_t, t, A' ) \times \{ t \} \subset P^* (x_0, t_0;A+A', - T^-, T^+). \qedhere
\end{equation*}
\end{proof}
\bigskip

\begin{proof}[Proof of Theorem~\ref{Thm_covering}.]
After parabolic rescaling, we may assume that $r = 1$.
Next, we may assume that $\lambda \in (0, \frac{\la_0}{3}]$ and aim to prove the theorem for $\lambda$ replaced with $3 \lambda$.
To do this, choose a maximal collection of points $(y_1, s_1), \ldots, (y_N, s_N) \in X$ with
\[ P^* (y_i, s_i; \lambda) \cap P^* (y_j, s_j; \lambda) = \emptyset \qquad \text{for all} \quad i \neq j. \]
To see that these points satisfy the covering identity in (\ref{eq_thm_covering}) with $\lambda$ replaced with $3 \lambda$, consider a point $(y', s') \in X$.
By maximality we must have $P^*(y', s'; \lambda) \cap P^*(y_i, s_i; \lambda) \neq \emptyset$ for some $i \in \{ 1, \ldots, N \}$, which implies using Proposition~\ref{Prop_basic_parab_nbhd} that $(y', s') \in P^* (y', s'; \lambda) \subset P^* (y_i, s_i; 3 \lambda)$.

It remains to derive an upper bound on $N$.
For this purpose, let $\beta \in (0, \frac12]$ be some constant whose value we will determine later and observe that there is some time $t^* \in [t_0-T^- - 2\beta \la^2, t_0+T^+ - \beta\la^2]$ and a subset $\mathcal{I} \subset \{ 1, \ldots, N \}$ with the property that 
\begin{equation} \label{eq_lower_II}
|\mathcal{I}| \geq \lfloor c(T^-, T^+, \beta) \la^2 N \rfloor
\end{equation}
 and $t^* \in [s_i -2\beta \la^2, s_i - \beta\la^2]$ for all $i \in \mathcal{I}$.
For each $i \in \mathcal{I}$ let $(z_i, t^*)$ be an $H_n$-center of $(y_i, s_i)$.
If $\beta \leq \ov\beta$, then for all $i,j \in \mathcal{I}$, $i \neq j$
 \begin{multline*}
 d_{t^*} (z_i, z_j) 
 \geq d^{g_{t^*}}_{W_1} (\nu_{y_i, s_i;t^*},\nu_{y_j, s_j;t^*}) 
 - d^{g_{t^*}}_{W_1}(\delta_{z_i}, \nu_{y_i, s_i;t^*}) - d^{g_{t^*}}_{W_1} (\delta_{z_j}, \nu_{y_j, s_j; t^*}) \\
\geq d^{g_{s_i -   \la^2}}_{W_1}  (\nu_{y_i, s_i;s_i -  \la^2},\nu_{y_j, s_j;s_i -   \la^2})   - 2 \sqrt{2H_n \beta \la^2} 
 \geq \la  - 2 \sqrt{2H_n \beta \la^2}
 \geq \tfrac12\la.
\end{multline*}
So the balls $B_i := B(z_i, t^*,  \frac14 \la)$, $i \in \mathcal{I}$, are pairwise disjoint.
By Theorem~\ref{Thm_lower_volume_H_center}, Proposition~\ref{Prop_NN_basic_properties}, Corollary~\ref{Cor_NN_variation} and Lemma~\ref{Lem_lower_scal} we have, if $\beta \leq \ov\beta$ is chosen such that $\sqrt{2H_n} \cdot 2\beta  \leq \tfrac14$,
\begin{equation} \label{eq_Bi_lower_vol}
 |B_i|_{t^*} \geq c(\la_0, \beta) \exp ( \NN_{y_i, s_i} (\lambda^2 ) ) \lambda^n \geq c( A,T^-, T^+, \la_0, \beta) \exp \big(  \NN_{x_0, t_0} \big( (\tfrac{\la_0}{3})^2 \big) \big) \lambda^n. 
\end{equation}
On the other hand, Theorem~\ref{Thm_parab_time_slice_vol} implies that for
\[ S_{t^*} := P^* (x_0 ,t_0; A, T^-, T^+) \cap M \times \{ t^* \} \]
we have
\begin{equation} \label{eq_BPDy0_upper}
 |B( S_{t^*}, t^*, \tfrac14 \la_0)|_{t^*} \leq C( A, T^-, T^+, \la_0)  \exp \big(  \NN_{x_0, t_0} \big( (\tfrac{\la_0}3 )^2 \big) \big). 
\end{equation}
Combining (\ref{eq_lower_II}), (\ref{eq_Bi_lower_vol}), (\ref{eq_BPDy0_upper}) implies that
\[ \lfloor c(T^-, T^+, \beta) \lambda^2 N   \rfloor \leq | \mathcal{I} | \leq C( A,T^-, T^+, \la_0, \beta) \lambda^{-n}, \]
which finishes the proof.
\end{proof}
\bigskip

\section{An \texorpdfstring{$\eps$}{epsilon}-regularity theorem} \label{sec_eps_reg}
\subsection{Statement of the results}
In this section we improve the $\eps$-regularity theorem in \cite[Theorem~1.6]{Hein-Naber-14}.
The advantage of the following theorem is that it is purely local, i.e. it does not require any global entropy bounds.

Let in the following $(M, (g_t)_{t \in I})$ be again a Ricci flow on a compact manifold.
We will use the curvature scale radius.\footnote{Note that we are using a slightly different definition of the scale $\rrm$ than in \cite{Hein-Naber-14}. Due to Perelman's pseudolocality theorem \cite{Perelman1}, both scales are comparable, given a non-collapsing bound.}

\begin{Definition}[Curvature Scale Radius]
For any point $(x,t) \in M \times I$ we define the {\bf curvature scale radius} at $(x,t)$ as follows:
\[ \rrm (x,t) := \sup \big\{ r > 0  \;\; : \;\; |{\Rm}| \leq r^{-2} \;\; \text{on} \;\; P(x,t;r)  \big\}. \]
Note that $P(x,t;r)$ denotes the conventional parabolic neighborhood.
\end{Definition}

Our $\eps$-regularity theorem can now be stated as follows.

\begin{Theorem} \label{Thm_simple_eps_regularity}
There is a constant $\eps (n) > 0$ such that the following holds.
Consider a point $(x,t ) \in M \times I$, $r > 0$ with $[t-r^2,t] \subset I$.
If $\NN_{x,t} (r^2) \geq - \eps$, then $\rrm (x,t) \geq \eps r$.
\end{Theorem}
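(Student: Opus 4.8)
The plan is to argue by contradiction and compactness, exploiting the fact that all the quantitative bounds proved so far in the paper (volume bounds, heat-kernel bounds, the behavior of the pointed Nash entropy) are local and depend only on the pointed Nash entropy at the point in question, together with a lower scalar curvature bound that is automatically available via Lemma~\ref{Lem_lower_scal}. After parabolic rescaling we may assume $r=1$, and we wish to find $\eps(n)>0$ so that $\NN_{x,t}(1)\ge-\eps$ forces $|{\Rm}|\le 1$ on $P(x,t;1)$. Suppose not: then there is a sequence of Ricci flows $(M_i,(g^i_t))$ and points $(x_i,t_i)$ with $\NN_{x_i,t_i}(1)\ge -\tfrac1i$ but $\rrm(x_i,t_i)<\tfrac1i$.

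First I would assemble the a priori control. Since $\NN_{x_i,t_i}(1)\ge-\tfrac1i\ge -1$, Proposition~\ref{Prop_NN_basic_properties} (in particular the monotonicity~\eqref{eq_NN_basic_3} and doubling~\eqref{eq_NN_doubling}) and Corollary~\ref{Cor_NN_variation} give a uniform lower bound $\NN_{x,t}(\rho^2)\ge -C(n)$ for the entropy at all nearby spacetime points and comparable scales; the lower scalar curvature bound on $[t_i-1,t_i]$ is $R\ge -\tfrac n2$ by Lemma~\ref{Lem_lower_scal}. Theorem~\ref{Thm_NLC} (Perelman-type no-local-collapsing) then yields a uniform lower volume bound $|B(y,s,\rho)|_s\ge c(n)\exp(\NN_{y,s}(\rho^2))\rho^n\ge c'(n)\rho^n$ at every point $(y,s)$ near $(x_i,t_i)$ at which we have an a priori upper curvature bound at scale $\rho$; Theorem~\ref{Thm_upper_volume_bound} gives the matching upper bound $|B(y,s,\rho)|_s\le C(n)\rho^n$ unconditionally. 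So the flows are uniformly non-collapsed and non-inflated on a fixed-size parabolic neighborhood of $(x_i,t_i)$.

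Next comes the point-selection / blow-up. Using a standard point-picking argument (as in Perelman's pseudolocality proof), I would replace $(x_i,t_i)$ by nearby points $(x_i',t_i')$ within the parabolic neighborhood where the curvature is large and ``locally maximal'' at the relevant scale, rescale so that the curvature at the new center is exactly $1$, and pass to a pointed limit. The uniform two-sided volume bounds plus the local entropy control give, along a subsequence, convergence (in the Cheeger--Gromov--Hamilton sense on a region that is a priori non-collapsed, or more precisely using the $\mathbb{F}$-convergence / weak compactness set up in the companion paper, but one can phrase it just with the tools here) to a limiting ancient or eternal Ricci flow that is non-flat at its basepoint. On the other hand, the entropy hypothesis $\NN_{x_i,t_i}(1)\to 0$, together with the monotonicity formula~\eqref{eq_NN_basic_22} forcing $\int|\Ric+\nabla^2 f-\tfrac1{2\tau}g|^2\to 0$, passes to the limit and shows that the limit flow carries a conjugate heat kernel whose potential is a gradient shrinking soliton potential with entropy $0$; by the rigidity case of Perelman's entropy (entropy $=0$ iff the flow is flat $\mathbb{R}^n$) the limit is flat, contradicting non-flatness at the basepoint.

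The main obstacle is making the blow-up limit rigorous with only the tools available in this excerpt: we have quantitative volume and heat-kernel bounds but no curvature derivative bounds, so a naive Cheeger--Gromov limit is not immediate at scales where curvature is unbounded — that is precisely why point-picking to normalize the curvature is essential, and why one must be careful that the selected points stay inside a region where the entropy and volume control still applies (this uses Corollary~\ref{Cor_NN_variation} to transport the entropy bound along $W_1$-distance). A secondary subtlety is extracting the soliton structure and invoking the sharp rigidity $\NN=0\Rightarrow$ flat in the limit; this is where the precise normalization in Proposition~\ref{Prop_NN_basic_properties}, $\NN_{x_0,t_0}(0)=0$ together with monotonicity, does the work, so that $\NN\to 0$ forces the defect integrals in~\eqref{eq_NN_basic_22} to vanish identically in the limit. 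Once the limit is known to be flat $\mathbb R^n$, the curvature at the basepoint is $0$, contradicting the normalization $|{\Rm}|=1$ there, which completes the contradiction and proves the theorem for some dimensional $\eps(n)>0$.
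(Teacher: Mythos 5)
Your proposal follows essentially the same route as the paper's proof: argue by contradiction, point-pick a nearby spacetime point where the curvature scale is locally controlled, blow up, use Theorem~\ref{Thm_NLC} for non-collapsing and the vanishing of the defect integral in \eqref{eq_NN_basic_22} to identify the limit as a gradient shrinking soliton with bounded curvature (hence flat), and contradict the curvature normalization via pseudolocality. The ``main obstacle'' you correctly flag --- keeping the selected points inside the region where the entropy transport of Corollary~\ref{Cor_NN_variation} applies --- is precisely what the paper's point-picking Claim, phrased in terms of $P^{*-}$-parabolic balls, together with Corollary~\ref{Cor_P_sub_Pvar}, is designed to resolve.
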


Using this result, we will also prove the following stronger theorem.

\begin{Theorem} \label{Thm_eps_regularity}
For any $\eps > 0$ there is a $\delta (\eps) > 0$ such that the following holds.
Consider a point $(x,t ) \in M \times I$, $r > 0$ with $[t-r^2,t] \subset I$.
If $\NN_{x,t} (r^2) \geq - \delta$, then 
\[ |{\Rm}| \leq \eps r^{-2} \qquad \text{on} \quad P(x,t; \eps^{-1} r,- (1-\eps) r^2,   \eps^{-1} r^2). \]
Moreover, we have $\NN^*_{t-r^2} \geq - \eps$ on $P(x,t; \eps^{-1} r, - (1-\eps) r^2)$.
\end{Theorem}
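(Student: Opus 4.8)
The plan is to bootstrap Theorem~\ref{Thm_simple_eps_regularity} into Theorem~\ref{Thm_eps_regularity} by a rescaling and compactness argument. After parabolic rescaling and a time-shift we may assume $r=1$, $t=0$; note also that by the monotonicity in (\ref{eq_NN_basic_3}) the hypothesis $\NN_{x,0}(1)\geq-\delta$ forces $\NN_{x,0}(\tau)\in[-\delta,0]$ for all $\tau\in(0,1]$. The heart of the matter is the following pointwise statement, which I would establish first: \textit{for every $L<\infty$ there is $\delta(L)>0$ such that $\NN_{x,0}(1)\geq-\delta$ implies $\rrm(x,0)\geq L$}, i.e.\ $|{\Rm}|\leq L^{-2}$ on $P(x,0;L)$. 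Granting this, Theorem~\ref{Thm_eps_regularity} follows by taking $L=L(\eps)$ large: since $P(x,0;L)\supset B(x,0,\eps^{-1})\times[-(1-\eps),0]$, the bound $|{\Rm}|\leq L^{-2}\leq\eps$ holds there, and Perelman's pseudolocality theorem \cite[10.1]{Perelman1} (using the non-collapsing from Theorem~\ref{Thm_NLC}) extends it forward onto all of $P(x,0;\eps^{-1},-(1-\eps),\eps^{-1})$; moreover, on $P(x,0;L)$ the flow is $C^k$-close to the static Euclidean flow (Shi estimates plus non-collapsing), and since by Proposition~\ref{Prop_Var_location_Ric_bound} the conjugate heat kernels based at points of $P(x,0;\eps^{-1},-(1-\eps))$ stay concentrated inside the good region down to time $-1$, their potentials are close to the Euclidean ones, so $\NN^*_{-1}\geq-\eps$ there.

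To prove the pointwise claim I would argue by contradiction: fix $\eps_0>0$ and suppose there are Ricci flows, points $(x_i,t_i)$ and scales $r_i$ with $[t_i-r_i^2,t_i]\subset I_i$, $\NN_{x_i,t_i}(r_i^2)\geq-1/i$, but $\rrm(x_i,t_i)<\eps_0^{-1}r_i$; rescale so $r_i=1$, $t_i=0$. By Theorem~\ref{Thm_simple_eps_regularity}, $\rrm(x_i,0)\geq\eps_n$ for $i$ large, so after passing to a subsequence $\rho_i:=\rrm(x_i,0)\to\rho_\infty\in[\eps_n,\eps_0^{-1}]$. Rescale the $i$-th flow by $\rho_i$; then $\rrm(x_i,0)=1$ while, by scale invariance of the pointed Nash entropy, $\NN_{x_i,0}(\tau)\in[-1/i,0]$ for all $\tau\in(0,\rho_i^{-2}]$. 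Since $\rrm(x_i,0)=1$ exactly, a standard point-selection (letting the radius decrease to $1$) produces points $(y_i,s_i)$ with $d_{s_i}(x_i,y_i)\leq1$, $s_i$ in the time-interval of the rescaled flow, and $|{\Rm}|(y_i,s_i)\geq1-o(1)$. The curvature bound $|{\Rm}|\leq1$ near $(x_i,0)$ together with the volume lower bound of Theorem~\ref{Thm_NLC} (applicable since $R$ is now bounded above and $\NN_{x_i,0}$ is nearly $0$ at unit scale) shows that the rescaled flows are non-collapsed near $(x_i,0)$, so by Hamilton's compactness theorem they subconverge smoothly to a pointed limit Ricci flow $(M_\infty,(g^\infty_t),x_\infty)$ defined on a backward parabolic neighborhood of $(x_\infty,0)$ containing $B(x_\infty,0,1)\times(-\tau_1,0]$ with $\tau_1=\min(1,\rho_\infty^{-2})$, together with a point $(y_\infty,s_\infty)$ in this set at which $|{\Rm}|\geq1$.

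The contradiction comes from the rigidity case of the entropy monotonicity. By continuity of the pointed Nash entropy under smooth, non-collapsed convergence — with the conjugate heat kernel on $M_\infty$ controlled away from $x_\infty$ by the concentration and Gaussian bounds of Sections~\ref{sec_variance} and \ref{sec_upper_HK_bounds} — we get $\NN_{x_\infty,0}(\tau)=\lim_i\NN_{x_i,0}(\tau)=0$ for all $\tau\in(0,\tau_1)$. With $\NN_{x_\infty,0}(0)=0$ and (\ref{eq_NN_basic_3}) this gives $\NN_{x_\infty,0}\equiv0$ on $[0,\tau_1)$, hence $\frac{d^2}{d\tau^2}(\tau\NN_{x_\infty,0}(\tau))\equiv0$, so by (\ref{eq_NN_basic_22}) the potential $f$ of the conjugate heat kernel based at $(x_\infty,0)$ satisfies $\Ric_{g^\infty}+\nabla^2f=\frac1{2\tau}g^\infty$ identically on $M_\infty\times(-\tau_1,0)$; combined with $\NN_{x_\infty,0}\equiv0$ this forces the limit to be locally isometric to the Gaussian shrinking soliton on Euclidean space, so $\Rm_{g^\infty}\equiv0$ on $B(x_\infty,0,1)\times(-\tau_1,0]$ — contradicting $|{\Rm}|(y_\infty,s_\infty)\geq1$. (This rigidity is essentially the equality case in \cite{Hein-Naber-14}, and can also be derived from Perelman's analysis of the shrinker entropy.)

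The main obstacle is that the compactness step must yield curvature bounds on an \textit{arbitrarily large} parabolic neighborhood, whereas the only a priori control is the unit-scale bound supplied by Theorem~\ref{Thm_simple_eps_regularity} at the basepoint; naively propagating near-maximality of the entropy via Corollary~\ref{Cor_NN_variation} loses a fixed amount at each step and cannot be iterated over a large region without violating the constraint $[t-r^2,t]\subset I$. The device that resolves this is to rescale at the curvature scale $\rho_i=\rrm(x_i,0)$, which simultaneously normalizes the curvature on a unit neighborhood and pulls the curvature-concentration point $(y_i,s_i)$ within bounded distance of $x_i$, making the smooth limit, and hence the rigidity, accessible. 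A secondary point requiring care is that $M_\infty$ is a priori incomplete, so one must verify both that $\NN_{x_\infty,0}(\tau)$ is well-defined there and that the \textit{local} equality case of the entropy monotonicity already forces \textit{local} flatness.
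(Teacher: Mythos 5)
Your overall strategy (contradiction, compactness, shrinking--soliton rigidity) is the same as the paper's, and you correctly isolate the main obstacle: the entropy near-maximality at $(x,t)$ must somehow be propagated to an arbitrarily large region before any compactness argument can see the alleged curvature concentration. But the device you propose to resolve this --- rescaling at the curvature scale $\rho_i=\rrm(x_i,0)$ --- does not resolve it. After that rescaling you have $|{\Rm}|\leq 1$ only on the unit parabolic ball $P(x_i,0;1)$, and your point-selection places the high-curvature points $(y_i,s_i)$ exactly on the boundary of that region (distance $\to 1$ from $x_i$, time $\to$ the bottom of the controlled interval). Hamilton--Cheeger--Gromov compactness then produces a limit only on the \emph{open} region $B(x_\infty,0,1)\times(-\tau_1,0]$, which need not contain $(y_\infty,s_\infty)$; flatness of the limit in the interior is perfectly compatible with $\rrm(x_i,0)=1$, since the curvature can jump just outside radius $1$ or just below the bottom time, so no contradiction is reached. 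The same incompleteness also undercuts the rigidity step: on an incomplete local limit the conjugate heat kernel may lose mass, $\NN_{x_\infty,0}$ is not defined in the form you use, and the equality case of the monotonicity formula is not available --- you flag this but do not fix it, and it is not a ``secondary point.'' A further (smaller) problem is that your pointwise claim $\rrm(x,0)\geq L$ asserts $|{\Rm}|\leq L^{-2}$ on $B(x,0,L)\times([-L^2,0]\cap I)$, which includes the time-slice $t-r^2$; the theorem deliberately stops at $-(1-\eps)r^2$, and the limit argument (whose limit flow excludes the initial time) cannot reach time $t-r^2$.

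The paper closes exactly this gap with a mechanism absent from your proposal: an iterated heat-equation/strong-maximum-principle argument (the Claim in its proof) showing that $\inf_{B(x_i,t_i,A)\times[\theta,t_i]}\NN^*_{t_i-r_i^2}\to 0$ for \emph{every} $A$ and $\theta>0$. One solves $\square u_i=0$ with initial data $\NN^*_0(\cdot,t^*)$, uses $\square\NN^*_0\leq 0$ to get $\NN^*_0\leq u_i\leq 0$, applies the strong maximum principle to force $u_\infty\equiv 0$ in the limit, and then uses $\square\NN^*_0\geq -\tfrac{n}{2t}$ to push the conclusion slightly forward in time; iterating enlarges $A$ and shrinks $\theta$ arbitrarily. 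Only then does Theorem~\ref{Thm_simple_eps_regularity} apply at every point of a large parabolic region, yielding a \emph{complete} limit with $|{\Rm}|\leq C/t$ to which the soliton rigidity applies, and this Claim is also what proves the ``Moreover'' statement --- which your sketch handles inadequately, since the conjugate heat kernel based at $(y,s)\in P(x,t;\eps^{-1}r,-(1-\eps)r^2)$ must be followed down through the time-slab $[t-r^2,\,t-(1-\eps)r^2]$ where you have no curvature control, and Corollary~\ref{Cor_NN_variation} applied across that slab loses a non-small amount. You should either reproduce this propagation step or supply an equivalent substitute; without it the proof does not close.
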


The following theorem provides some sort of converse to Theorem~\ref{Thm_simple_eps_regularity}.

\begin{Theorem} \label{Thm_eps_regu_converse}
For any $\eps > 0$ and $Y < \infty$ there is a $\delta (\eps, Y) > 0$ such that the following holds.
Consider a point $(x,t ) \in M \times I$, $r > 0$ with $[t-r^2,t] \subset I$.
Suppose that $|{\Rm}| \leq r^{-2}$ on $P(x,t; r,-r^2)$ and $\NN_{x,t} (r^2) \geq - Y$.
Then $\NN_{x,t} ( \delta r^2) \geq - \eps$.
\end{Theorem}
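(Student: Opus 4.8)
The plan is to argue by contradiction, using parabolic rescaling to flatten the flow and Hamilton's compactness theorem to pass to a limit. Suppose the statement fails for some $\eps > 0$, $Y < \infty$. Then there is a sequence of Ricci flows $(M_i, (g_{i,t})_{t \in I_i})$, points $(x_i, t_i) \in M_i \times I_i$ and scales $r_i > 0$ with $[t_i - r_i^2, t_i] \subset I_i$, $|{\Rm}| \leq r_i^{-2}$ on $P(x_i, t_i; r_i, -r_i^2)$, $\NN_{x_i, t_i}(r_i^2) \geq -Y$, but $\NN_{x_i, t_i}(\delta_i r_i^2) < -\eps$ for some $\delta_i \searrow 0$. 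Parabolically rescale and shift time by setting $\tilde g_{i,s} := (\delta_i r_i^2)^{-1} g_{i,\, t_i + \delta_i r_i^2 s}$. Using the scale invariance of the pointed Nash entropy, the rescaled flows satisfy, at the point $(x_i, 0)$: $|{\Rm}| \leq \delta_i$ on $P(x_i, 0; \delta_i^{-1/2}, -\delta_i^{-1})$, $\NN_{x_i, 0}(\delta_i^{-1}) \geq -Y$, and $\NN_{x_i, 0}(1) < -\eps$.

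Next I would extract a limit flow. By Lemma~\ref{Lem_lower_scal} we have $R \geq -n(n-1)\delta_i \geq -n(n-1)$ on $P(x_i, 0; \delta_i^{-1/2}, -\delta_i^{-1})$, so by Proposition~\ref{Prop_NN_basic_properties} (the monotonicity in (\ref{eq_NN_basic_3})) we get $\NN_{x_i, 0}(\tau) \geq -Y$ for all $\tau \in (0, \delta_i^{-1}]$. For any fixed $\rho > 0$ and all large $i$ we have $R \leq \rho^{-2}$ on $B(x_i,0,\rho)$ and $[-\rho^2,0]$ in the rescaled time interval, so Theorem~\ref{Thm_NLC} gives $|B(x_i, 0, \rho)|_{\tilde g_{i,0}} \geq c \exp(\NN_{x_i, 0}(\rho^2)) \rho^n \geq c(Y) \rho^n$. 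In particular $(x_i, 0)$ has a uniform injectivity radius lower bound, so by Hamilton's compactness theorem a subsequence of $(M_i, (\tilde g_{i,s})_{s \leq 0}, x_i)$ converges in $C^\infty_{\mathrm{loc}}$ to a pointed complete ancient Ricci flow $(M_\infty, (\tilde g_{\infty, s})_{s \leq 0}, x_\infty)$, which is flat (as $|{\Rm}| \leq \delta_i \to 0$) and hence static. Passing the volume bounds to the limit yields $|B(x_\infty, 0, \rho)|_{\tilde g_{\infty, 0}} \geq c(Y) \rho^n$ for all $\rho > 0$; a complete flat $n$-manifold with such an all-scale Euclidean volume lower bound must be $\IR^n$ (by the classification of complete flat manifolds, or directly, since a nontrivial quotient has volume growth of order at most $\rho^{n-1}$). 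Therefore $\NN_{x_\infty, 0}(1) = 0$, since the pointed Nash entropy of $\IR^n$ vanishes identically.

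Finally I would prove $\NN_{x_i, 0}(1) \to \NN_{x_\infty, 0}(1)$, which contradicts $\NN_{x_i, 0}(1) < -\eps$. Writing $K_i := K_{\tilde g_i}(x_i, 0; \cdot, -1)$, we have $\NN_{x_i, 0}(1) = -\int_{M_i} K_i \log K_i \, d\tilde g_{i,-1} - \tfrac n2 \log(4\pi) - \tfrac n2$. By Theorem~\ref{Thm_HK_Linfty_bound} the $K_i$ are uniformly bounded, and by Theorem~\ref{Thm_HK_pointwise_Gaussian} they have a uniform Gaussian decay centered at an $H_n$-center $(z_i, -1)$ of $(x_i, 0)$, which by Proposition~\ref{Prop_Var_location_Ric_bound} and Corollary~\ref{Cor_HK_monotone_Hn} lies at bounded $\tilde g_{i,-1}$-distance from $x_i$. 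Since $K_i \to K_\infty := K_{\tilde g_\infty}(x_\infty, 0; \cdot, -1)$ in $C^\infty_{\mathrm{loc}}$, the integrals of $-K_i \log K_i$ over a fixed large ball converge, while the contribution of its complement is uniformly small: there $-K_i \log K_i \leq C K_i^{1/2} \leq C \exp(-c\, d^2_{-1}(z_i, \cdot))$, which integrates to an arbitrarily small quantity as the ball radius grows, by the Bishop--Gromov upper volume bound associated to the uniform lower Ricci bound. Hence $\NN_{x_i, 0}(1) \to \NN_{x_\infty, 0}(1) = 0$, the desired contradiction.

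I expect the last step — the convergence of the entropy integral, i.e.\ the control of the $-K\log K$ tail — to be the main technical obstacle, although it is routine given the heat kernel estimates of Theorems~\ref{Thm_HK_Linfty_bound}, \ref{Thm_HK_pointwise_Gaussian} and \ref{Thm_Gaussian_integral_bound}. A secondary point worth checking carefully is that Hamilton's compactness theorem applies even though the curvature bound is only given on $P(x_i, t_i; r_i, -r_i^2)$: after parabolic rescaling this becomes a bound on parabolic balls of radius and depth tending to infinity, which is exactly what is needed for $C^\infty_{\mathrm{loc}}$-convergence to an ancient limit.
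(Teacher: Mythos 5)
Your proposal is correct and follows essentially the same route as the paper's proof: contradiction, parabolic rescaling so that $\delta_i r_i^2 = 1$, convergence via Theorem~\ref{Thm_NLC} to the static flat flow on $\IR^n$, identification of the limiting conjugate heat kernel as the standard Gaussian, and then convergence $\NN_{x_i,0}(1) \to 0$ by splitting the entropy integral into a large ball (where $C^\infty_{\mathrm{loc}}$-convergence applies) plus a uniformly small tail. The only cosmetic difference is in the tail estimate — you bound $-K\log K \leq C K^{1/2}$ together with the Gaussian decay, whereas the paper bounds $\int_{M\setminus B} f\, d\nu$ from below using $f \geq -C(Y)$ and $\nu(M\setminus B) \leq C/Q^2$ — both of which work.
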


\subsection{Proofs}
\begin{proof}[Proof of Theorem~\ref{Thm_simple_eps_regularity}.]
Without loss of generality, we may assume that $r=1$ and $t = 1$.
We first apply a point-picking procedure.

\begin{Claim}
Let $A > 0$ and assume that $10 A \rrm (x,t) \leq \frac12$.
Then there is a point $(x', t') \in P^{*-} (x,t; 10 A \rrm(x,t))$ such that 
\[ \rrm(x',t') \leq \rrm(x,t), \qquad \text{and} \qquad \rrm \geq \tfrac1{10} \rrm(x',t') \quad \text{on} \quad P^{*-} (x',t'; A \rrm(x',t')). \]
\end{Claim}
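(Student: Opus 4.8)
The plan is to run the standard Perelman-style point-picking argument, but adapted to the $P^{*-}$-parabolic neighborhoods rather than conventional ones. First I would argue by contradiction and iteration: suppose no such point exists. Start with $(x_0, t_0) := (x,t)$ and $A_0 := A$. If the conclusion fails for $(x_0,t_0)$, then since $\rrm(x_0,t_0) \le \rrm(x,t)$ trivially, the failing condition must be that there is some point $(x_1,t_1) \in P^{*-}(x_0,t_0; A \rrm(x_0,t_0))$ with $\rrm(x_1,t_1) < \frac1{10}\rrm(x_0,t_0)$; in particular $\rrm(x_1,t_1) \le \rrm(x,t)$ as well. Continuing, we generate a sequence $(x_k, t_k)$ with $\rrm(x_{k+1},t_{k+1}) < \frac1{10}\rrm(x_k,t_k)$ and $(x_{k+1},t_{k+1}) \in P^{*-}(x_k, t_k; A\rrm(x_k,t_k))$. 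Hence $\rrm(x_k,t_k) < 10^{-k}\rrm(x,t)$, so the curvature scales go to zero.

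Next I would control the total "distance" travelled so that all the $(x_k,t_k)$ stay inside $P^{*-}(x,t; 10A\rrm(x,t))$, yielding the contradiction (the $P^{*-}$-neighborhood $P^{*-}(x,t;10A\rrm(x,t))$, together with the uniform two-sided curvature bound on $P(x,t;\rrm(x,t))$ giving local control, forces the sequence to accumulate, but curvature is unbounded along it — impossible on a smooth compact flow). The key estimate is the $W_1$-superadditivity from Proposition~\ref{Prop_basic_parab_nbhd}\ref{Prop_basic_parab_nbhd_c}: since $(x_{k+1},t_{k+1}) \in P^{*-}(x_k,t_k; A\rrm(x_k,t_k))$ and $\rrm(x_k,t_k) \le 10^{-k}\rrm(x,t)$, chaining these containments gives
\[
(x_m, t_m) \in P^{*-}\Big(x,t;\ A\sum_{k=0}^{m-1} \rrm(x_k,t_k)\Big) \subset P^{*-}\Big(x,t;\ A\,\rrm(x,t)\sum_{k=0}^{\infty} 10^{-k}\Big) = P^{*-}\big(x,t;\ \tfrac{10}{9}A\,\rrm(x,t)\big),
\]
which is contained in $P^{*-}(x,t; 10A\rrm(x,t))$. (One must also check the backward time-extent stays bounded, which follows from the same geometric-series bound on $\sum (\rrm(x_k,t_k))^2$, and that each intermediate neighborhood is defined, which uses $[t-r^2,t]\subset I$ together with $10A\rrm(x,t)\le\frac12$.)

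Finally, to extract the contradiction from a sequence $(x_k,t_k)$ with $\rrm(x_k,t_k)\to 0$ all lying in a fixed $P^{*-}(x,t;10A\rrm(x,t))$: since $M\times[t-\frac14, t]$ is compact (recall $r=1$, $t=1$, and $10A\rrm(x,t)\le\frac12$ bounds the time extent), a subsequence converges to some $(x_\infty,t_\infty)$ in spacetime, but $\rrm$ is positive and lower semicontinuous (indeed it is continuous on a smooth flow, being a sup of conditions that are open in the parameter), so $\rrm(x_\infty,t_\infty)>0$ while $\rrm(x_k,t_k)\to 0$ with $(x_k,t_k)\to(x_\infty,t_\infty)$ — contradiction. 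The main obstacle I expect is purely bookkeeping: verifying that each $P^{*-}(x_k,t_k; A\rrm(x_k,t_k))$ is actually defined (i.e. the required past time-slice lies in $I$) and that the chained containment in Proposition~\ref{Prop_basic_parab_nbhd}\ref{Prop_basic_parab_nbhd_c} applies at each step in the backward direction, together with making the geometric-series constants fit inside the factor $10A$ rather than something larger. None of this is deep, but it requires care with the $T^-$ parameters throughout.
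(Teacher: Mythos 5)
Your proposal is correct and is essentially the paper's argument in contrapositive form: the paper picks a maximal (hence finite, since $\inf_{M\times[0,1]}\rrm>0$) sequence of points with $r'_{i+1}<\tfrac1{10}r'_i$ and takes its last element, chaining the $P^{*-}$-containments via Proposition~\ref{Prop_basic_parab_nbhd} exactly as you do with the geometric series $\sum 10^{-k}$. The only cosmetic difference is that you terminate the sequence by subsequential convergence and semicontinuity of $\rrm$, whereas the paper uses the simpler observation that $\rrm$ is bounded below on the compact spacetime.
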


\begin{proof}
We can iteratively pick a maximal sequence $(x'_0, t'_0) := (x,t), (x'_1, t'_1), \ldots$ such that for $r'_i :=\rrm(x'_i,t'_i)$ we have
\[ (x'_{i+1}, t'_{i+1}) \in P^{*-} (x'_i, t'_i; A r'_i), \qquad r'_{i+1} < \tfrac1{10}  r'_i. \]
Since $\inf_{M \times [0,1]} \rrm > 0$, this sequence must be finite.
We claim that its last element $(x'_k, t'_k) =: (x',t')$ has the desired properties.
For this purpose, we will show by induction that 
\begin{equation} \label{eq_xptp_PDxpitpi}
(x', t') \in P^{*-} (x'_i, t'_i; 10 A r'_i).
\end{equation}
The claim follows for $i = 0$.
(\ref{eq_xptp_PDxpitpi}) is trivially true for $i = k$.
If it is true for $i+1$, then by Proposition~\ref{Prop_basic_parab_nbhd} we have
\[ (x',t') \in P^{*-} (x'_{i+1}, t'_{i+1}; 10 A r'_{i+1}) \subset P^{*-} (x'_i, t'_i; A(  r'_i + 10 r'_{i+1})) \subset  P^{*-} (x'_i, t'_i; 10 A r'_i), \]
which concludes the induction.
\end{proof}

Consider now a sequence of counterexamples $(M_i, (g_{i,t})_{t \in [0, 1]})$, $(x_i, t_i = 1) \in M_i \times [0,1]$ to the theorem for some sequence $\eps_i \to 0$.
Then $r^*_i := \rrm (x_i,t_i) \to 0$.
Choose a sequence $A_i \to \infty$ with $10 A_i r^*_i \to 0$. 
Let $(x'_i, t'_i) \in P^{*-}(x_i, t_i, 10 A_i r^*_i)$ be the point from the Claim and set $r'_i := \rrm (x'_i, t'_i) \to 0$.
Since $10 A_i r^*_i \to 0$ and $\eps_i \to 0$, we obtain from Corollary~\ref{Cor_NN_variation} that for all $\tau \in (0, t'_i - \frac12]$
 \[ 0 \geq \NN_{x'_i, t'_i} (\tau) \geq \NN_{x'_i, t'_i} (t'_i - \tfrac12) = \NN^*_{\frac12} (x'_i, t'_i) \to 0. \]
 Note that here we have used the fact that $R \geq - n$ on $M \times [\frac12, 1]$, which follows from Lemma~\ref{Lem_lower_scal}.
 
By Corollary~\ref{Cor_P_sub_Pvar}, we can find a sequence $A'_i \leq A_i$, $A'_i \to \infty$ such that
\[ P^- (x'_i, t'_i; A'_i r'_i) \subset P^{*-} (x'_i, t'_i; A_i r'_i), \]
which implies that
\[    \rrm \geq \tfrac1{10} r'_i \qquad \text{on} \quad P^- (x'_i, t'_i; A'_i r'_i),. \]

Therefore, after applying a time-shift of $-t'_i$ and parabolic rescaling by $r_i^{\prime -2}$, we obtain a sequence of pointed flows of the form $(M'_i, (g'_{i,t})_{t \in [- (A'_i)^2, 0]}, x'_i)$ with $\rrm \geq \frac1{10}$ on $P^-(x'_i, 0; A'_i)$ and $\NN_{x'_i, 0} (T) \to 0$ for any fixed $T > 0$.
By Theorem~\ref{Thm_NLC}, the injectivity radius at $(x'_i, 0)$ is uniformly bounded from below.
So, after passing to a subsequence, these pointed flows converge to a smooth and complete pointed ancient flow of the form $(M_\infty, \lb (g_{\infty, t})_{t \leq 0}, \lb x_\infty)$ with $\rrm \geq \frac1{10}$ everywhere.
By Proposition~\ref{Prop_Var_location_Ric_bound} and Theorems~\ref{Thm_Gaussian_integral_bound}, \ref{Thm_HK_Linfty_bound} it follows moreover that the conjugate heat kernels $K_i (x'_i, 0; \cdot, \cdot) =: (4\pi\tau)^{-n/2} e^{-f_i}$ on $(M', (g'_t)_{t \in [-( A'_i)^2, 0]})$, based at $(x'_i, 0)$, converge locally uniformly to a positive solution $v_\infty = (4\pi \tau)^{-n/2} e^{-f_\infty} \in C^\infty (M_\infty \times \IR_-)$ of the conjugate heat equation.

Since for any fixed $T > 0$ we have $\NN_{x'_i, 0} (T) \to 0$, we obtain using Proposition~\ref{Prop_NN_basic_properties} that for any fixed $T > 0$
\[ \WW [ g'_{i, -T}, f_{i,-T}, T] =  \frac{d}{d\tau} \big( \tau \NN_{x'_i, 0} (\tau) \big) \bigg|_{\tau = T} \longrightarrow 0, \]
which implies
\[ \int_{-T}^0 \int_M \tau \bigg| \Ric + \nabla^2 f_{i,t} - \frac1{2\tau} g_{i,t} \bigg|^2 (4\pi \tau)^{-n/2} e^{-f_{i,t}} dg_{i,t} dt \longrightarrow 0. \]
Passing this to the limit implies that on $M_\infty \times \IR_-$ we have $\Ric + \nabla^2 f_\infty - \frac1{2\tau} g_\infty = 0$.
So $(M_\infty, (g_{\infty, t})_{t <0})$ must be a gradient shrinking soliton.
Due to the uniform curvature bound, this implies that the limit $(M_\infty, (g_{\infty, t})_{t \leq 0})$ must be flat.
We therefore obtain a contradiction to $\rrm (x'_i, 0) = r'_i$ using Perelman's Pseudolocality Theorem \cite[10.3]{Perelman1}.
\end{proof}
\bigskip

\begin{proof}[Proof of Theorem~\ref{Thm_eps_regularity}.]
Without loss of generality, we may assume that $r=1$ and $t  = 1$.
Fix some $\eps > 0$ and consider a sequence of counterexamples $(M_i, (g_{i,t})_{t \in [0, 1]})$, $(x_i, t_i = 1) \in M_i \times [0,1]$ for some sequence $\delta_i \to 0$.

\begin{Claim}
For any  $A , \theta > 0$ we have for $P_{i,A, \theta} := B(x_i, 1, A) \times [\theta, 1]$
\begin{equation} \label{eq_inf_PAtheta_NN}
 \inf_{P_{i,A, \theta}} \NN^*_0 \xrightarrow{\quad i \to \infty \quad} 0. 
\end{equation}
\end{Claim}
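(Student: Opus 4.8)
The plan is to prove the Claim by a contradiction/compactness argument that is entirely parallel in spirit to the point-picking and blow-up used in the proof of Theorem~\ref{Thm_simple_eps_regularity}, exploiting the hypothesis $\NN_{x_i,1}(1) \geq -\delta_i \to 0$. First I would reduce to the following statement: if along a subsequence there exist points $(y_i,s_i) \in P_{i,A,\theta}$ with $\NN^*_0(y_i,s_i) \leq -\eta$ for some fixed $\eta > 0$, we derive a contradiction. Since $(y_i,s_i) \in B(x_i,1,A)\times[\theta,1]$ and $R \geq -n$ on $M_i \times [\theta,1]$ by Lemma~\ref{Lem_lower_scal}, Corollary~\ref{Cor_NN_variation} lets us compare $\NN^*_0(y_i,s_i)$ with $\NN^*_0(x_i,1) = \NN_{x_i,1}(1) \geq -\delta_i$ up to an error controlled by $d_{W_1}^{g_{t^*}}$ of the respective conjugate heat kernel measures and a $\log$-term; but this alone is not enough because $d_{W_1}$ need not be small for $y_i$ far from $x_i$. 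So instead the main idea is: use monotonicity of the pointed Nash entropy in $\tau$ (Proposition~\ref{Prop_NN_basic_properties}), namely $\NN_{x_i,1}(\tau)$ is non-increasing, together with the fact that $\NN_{x_i,1}(1) \geq -\delta_i$ forces $\NN_{x_i,1}(\tau) \geq -\delta_i$ and $\WW[g_{1-\tau},f_{1-\tau},\tau] \to 0$ (in an integrated sense) for $\tau \in [0,1]$, which via (\ref{eq_NN_basic_22}) makes the conjugate heat kernel based at $(x_i,1)$ asymptotically a gradient shrinking soliton measure.

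The key steps, in order: (1) Normalize by the time-shift and note $R\geq -n$ on $M_i\times[\theta,1]$. (2) From $\NN_{x_i,1}(1)\geq -\delta_i$ and $\NN_{x_i,1}(0)=0$ with $\tau\mapsto\NN_{x_i,1}(\tau)$ non-increasing, deduce that $\int_0^1 \WW[g_{i,1-\tau},f_{i,1-\tau},\tau]\,d\tau = \NN_{x_i,1}(1) \to 0$, hence $\int_0^1\int_{M_i} \tau|\Ric+\nabla^2 f_i - \tfrac1{2\tau}g_i|^2\,d\nu\,dt \to 0$ by integrating (\ref{eq_NN_basic_22}). (3) Establish uniform local non-collapsing and curvature control near $(x_i,1)$: here I would invoke Theorem~\ref{Thm_NLC} or rather the lower volume bound at $H_n$-centers (Theorem~\ref{Thm_lower_volume_H_center}) combined with Theorem~\ref{Thm_HK_Linfty_bound} and the Gaussian integral bound Theorem~\ref{Thm_Gaussian_integral_bound} to get the conjugate heat kernel measures $\nu_{x_i,1;t}$ converging, after passing to a pointed limit (based at an $H_n$-center, since $x_i$ itself might drift), to a self-similar shrinking-soliton conjugate heat kernel on a limit flow. (4) Argue the limit must be flat Gaussian $\mathbb{R}^n$: the asymptotic soliton equation from step (2) forces the limit to be a gradient shrinking soliton, and then one needs the limit to have $\NN \equiv 0$, which characterizes flat $\mathbb{R}^n$ — this uses that $\NN_{x_i,1}(\tau)\to 0$ for all fixed $\tau$ together with rigidity of the $\mu$-functional / Nash entropy (as in Perelman's rigidity, cf. the argument in the proof of Theorem~\ref{Thm_simple_eps_regularity}). (5) Finally, on flat $\mathbb{R}^n$ the pointed Nash entropy based at any point and any scale is exactly $0$; by locally uniform convergence of the $f_i$ (and hence of $\NN^*_0$ viewed through the heat kernel expression derived in the proof of Theorem~\ref{Thm_NN_dependence}), $\NN^*_0(y_i,s_i)\to 0$ uniformly over $P_{i,A,\theta}$, contradicting $\NN^*_0(y_i,s_i)\leq -\eta$.

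A subtlety I would need to handle carefully is that the points $(y_i,s_i)$ range over a parabolic neighborhood of fixed (unrescaled) size, so I am \emph{not} blowing up; the convergence is at unit scale. Thus the compactness input is a smooth pointed-limit statement for a sequence of Ricci flows with the conjugate heat kernel based at $(x_i,1)$ nearly self-similar and with bounded entropy, and one wants $C^\infty_{loc}$ convergence on $B(x_i,1,A)\times[\theta,1]$. The cleanest route is: use Theorem~\ref{Thm_NN_dependence} to get that $\NN^*_0$ is Lipschitz in space (with constant $\lesssim (t)^{-1/2}$) and has $\square\NN^*_0$ two-sided bounded on $[\theta,1]$; combined with $\NN^*_0(x_i,1)\to 0$ and $\NN^*_0 \leq 0$, a Harnack-type / mean-value argument along the heat flow would directly propagate the bound $\NN^*_0 \approx 0$ from $(x_i,1)$ backward and spatially outward over $P_{i,A,\theta}$. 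In fact this last observation may let one bypass most of the soliton analysis: since $\square \NN^*_0 \geq -\tfrac{n}{2t}$, the function $\NN^*_0(\cdot,t) + \tfrac{n}{2}\log t$ is a supersolution, and integrating against the heat kernel from $(x_i,1)$ gives a lower bound for $\int \NN^*_0(\cdot,t)\,d\nu_{x_i,1;t}$ in terms of $\NN^*_0(x_i,1)=\NN_{x_i,1}(1)\to 0$; then Theorem~\ref{Thm_Gaussian_integral_bound} plus the spatial gradient bound upgrades the measure-average lower bound into a genuine pointwise lower bound on $B(x_i,1,A)$, and monotonicity in $\tau$ plus Corollary~\ref{Cor_NN_variation} handles the time range $[\theta,1]$.

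The main obstacle will be step (4)/(5): converting ``$\NN$ is close to $0$ somewhere / on average'' into ``$\NN^*_0$ is uniformly close to $0$ pointwise on the whole parabolic neighborhood $P_{i,A,\theta}$,'' since $\NN^*_0$ a priori could dip down at points far from $x_i$ without that being detected by the conjugate heat kernel measure $\nu_{x_i,1;t}$ (whose mass concentrates near its $H_n$-center, which may be far from $x_i$). Resolving this is precisely where the combination of the spatial Lipschitz bound from Theorem~\ref{Thm_NN_dependence}, the Gaussian concentration of Theorem~\ref{Thm_Gaussian_integral_bound}, and the lower volume bound of Theorem~\ref{Thm_lower_volume_H_center} must be orchestrated so that the average-to-pointwise passage loses only an error tending to $0$; I expect the bookkeeping of these errors, uniformly in $(y,s)\in P_{i,A,\theta}$, to be the technical heart of the argument.
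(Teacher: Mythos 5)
Your proposal assembles many of the right ingredients --- in particular you correctly identify that the two-sided bound $-\tfrac{n}{2t}\le \square \NN^*_0\le 0$ from Theorem~\ref{Thm_NN_dependence} is the engine that should propagate $\NN^*_0\approx 0$ from $(x_i,1)$ backward and outward --- but there is a genuine gap at exactly the point you flag as ``the technical heart,'' and it is not a bookkeeping issue. Both of your routes need control on all of $P_{i,A,\theta}$ \emph{before} the conclusion is reached: the compactness route needs uniform curvature bounds on $B(x_i,1,A)\times[\theta,1]$ to extract a smooth pointed limit there (and such bounds are precisely what Theorem~\ref{Thm_eps_regularity}, which this Claim is serving, is trying to establish --- so invoking them is circular); and the ``bypass'' route founders because the average $\int_M \NN^*_0(\cdot,t)\,d\nu_{x_i,1;t}\ge \NN^*_0(x_i,1)$ only controls $\NN^*_0$ near where $\nu_{x_i,1;t}$ concentrates, i.e.\ near its $H_n$-center, while the points of $B(x_i,1,A)$ (a ball in the \emph{time-$1$} metric) may, absent curvature bounds, be at enormous $d_t$-distance from that center for $t<1$; the spatial Lipschitz bound on $\NN^*_0$ then gives nothing. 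No combination of Theorems~\ref{Thm_Gaussian_integral_bound} and \ref{Thm_lower_volume_H_center} repairs this without a distance-distortion estimate, which again requires curvature bounds.

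The paper closes this gap with a continuity/induction bootstrap in the parameters $(A,\theta)$, which is the idea missing from your write-up. One starts from the trivially true case $(A,\theta)=(0,1)$ and shows: if the Claim holds for $(A,\theta)$, then it holds for $(A',\theta')$ with $A'\le A+c(\theta)$ and $\theta'\ge(1-c)\theta$. The inductive hypothesis gives $\NN^*_0\ge-\eps_n$ on $P_{i,A,\theta}$ for large $i$, so Theorem~\ref{Thm_simple_eps_regularity} yields $\rrm\gtrsim\sqrt{\theta}$ there and hence uniform curvature bounds and smooth subconvergence on the slightly enlarged $P_{i,A'',\theta''}$ --- this is where the compactness you want becomes legitimately available, but only on an incrementally larger region. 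On that region one solves the heat equation with initial data $u_i(\cdot,t^*)=\NN^*_0(\cdot,t^*)$; the bound $\square\NN^*_0\le 0$ gives $\NN^*_0\le u_i\le 0$, the limit $u_\infty\le 0$ attains the value $0$ at $(x_\infty,1)$ and is therefore identically zero by the strong maximum principle, and the bound $\square\NN^*_0\ge-\tfrac{n}{2t}$ gives $\NN^*_0\ge u_i-\tfrac{n}{2\theta''}\beta$ on short time intervals $[t^*,t^*+\beta]$, upgrading $u_i\to 0$ to $\NN^*_0\to 0$ on $P_{i,A',\theta'}$. Note that the paper's argument never needs the soliton/flatness analysis inside the Claim (that comes afterwards, in the remainder of the proof of Theorem~\ref{Thm_eps_regularity}); the strong maximum principle applied to $u_\infty$ replaces it entirely. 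If you restructure your argument around this incremental bootstrap, your sub/supersolution observations slot in correctly.
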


\begin{proof}
Assume that the claim holds for some $A, \theta > 0$ or set $A = 0$,  $\theta = 1$.
Let $A' > A$ and $0 < \theta' < \theta$ be constants whose values we will determine later.
We will show that (\ref{eq_inf_PAtheta_NN}) holds with $A, \theta$ replaced with $A', \theta'$ as long as $A' \leq A + c(\theta)$ and $\theta' \geq (1-c) \theta$.
This will imply that we can choose $A, \theta$ arbitrarily large/small.

Assume that (\ref{eq_inf_PAtheta_NN}) was false after replacing $A, \theta$  with $A', \theta'$.
After passing to a subsequence, we may even assume that (\ref{eq_inf_PAtheta_NN}), with $A, \theta$ replaced with $A', \theta'$, remains false for any subsequence.

By Theorem~\ref{Thm_simple_eps_regularity} we obtain that if $\theta'' \geq (1-c) \theta$, $A'' \leq A + c(\theta)$, then after passing to a subsequence, the flows restricted to $P_{i, A'', \theta''}$ and pointed at $(x_i, 1)$ converge to a smooth flow with bounded curvature on a parabolic neighborhood of the form $P_{\infty, A'', \theta''} := B(x_\infty, 1, A'') \times (\theta'', 1]$.

Next, fix some $t^* \in (\theta'', 1]$ and consider the solutions $u_i \in  C^\infty (M_i \times [t^*, 1]))$ to the heat equation $\square u_i = 0$ with initial condition $u_i (\cdot , t^*) = \NN^*_0(\cdot, t^*)$.
By Theorem~\ref{Thm_NN_dependence} we have that 
\begin{equation} \label{eq_NN_u_i_bound}
 \NN^*_0 \leq u_i \leq 0 \qquad \text{on} \quad M_i \times [t^*, 1] .
\end{equation}
So by Corollaries~\ref{Cor_NN_variation}, \ref{Cor_P_sub_Pvar}, we obtain uniform bounds on $u_i$ over $P_{i, A'', \theta''}$.

It follows that for any subsequence of the given counterexamples, we may find another subsequence for which we have $u_i \to u_\infty \in C^\infty (B(x_\infty, 1, A'') \times (t^*, 1] )$, $\square u_\infty = 0$, $u_\infty \leq 0$.
By $\NN_0^* (x_i, 1) \to 0$ and (\ref{eq_NN_u_i_bound}), we have $u_\infty (x_\infty, 1) =0$.
So by the strong maximum principle, we must have $u_\infty \equiv 0$.
On the other hand, Theorem~\ref{Thm_NN_dependence} implies that for any $\beta > 0$ with $t^* + \beta \leq 1$ we have
\[ \NN^*_0 \geq u_i - \frac{n}{2 t^*} \beta \geq u_i - \frac{n}{2\theta''} \beta \qquad \text{on} \quad M_i \times [t^*, t^*+\beta].  \]
Since the choices of $t^*, \beta$ were arbitrary, this implies that if $A' \in (A, A'')$, $\theta' \in (\theta'', \theta)$, then the claim holds for $A, \theta$ replaced with $A', \theta'$, which finishes the proof.
\end{proof}

By combining the Claim with Theorem~\ref{Thm_simple_eps_regularity}, we obtain that after passing to a subsequence, the pointed flows $(M_i, (g_{i,t})_{t \in [0, 1]}, x_i)$ smoothly converge to a pointed flow $(M_\infty, (g_{\infty, t})_{t \in (0,1]}, x_\infty)$ with complete time-slices and a curvature bound of the form $|{\Rm}| \leq C/t$.
As in the proof of Theorem~\ref{Thm_simple_eps_regularity}, we obtain that this limit must be a gradient shrinking soliton and therefore be flat.
This implies curvature bounds on backwards parabolic neighborhoods around $(x_i, 1)$ of larger and larger radius, which can be extended forward in time by Perelman's Pseudolocality Theorem \cite[10.3]{Perelman1} and another limit argument.
As a result, we obtain a contradiction to the choice of the flows $(M_i, (g_{i,t})_{t \in [0, 1]}, x_i)$ for large $i$.
\end{proof}
\bigskip 

\begin{proof}[Proof of Theorem~\ref{Thm_eps_regu_converse}.]
Suppose that the theorem was false for some fixed $\eps, Y$ and choose a sequence of counterexamples $(M_i, (g_{i, t})_{t \in I_i})$, $(x_i, t_i)$, $r_i$ for some sequence $\delta_i \to 0$.
After parabolic rescaling and application of a time-shift, we may assume that $t_i = 0$ and $\delta_i r_i^2 = 1$.
Then $r_i \to \infty$ and, using Theorem~\ref{Thm_NLC}, we obtain that after passing to a subsequence, the pointed flows $(M_i, (g_{i, t})_{t \in I_i}, x_i)$ converge to the constant flow $(M_\infty, (g_{\infty, t})_{t \leq 0}, x_\infty)$ on a flat manifold with positive asymptotic volume ratio.
Thus $(M_\infty, (g_{\infty, t})_{t \leq 0})$ must be isometric to the constant flow on $\IR^n$.
By Proposition~\ref{Prop_Var_location_Ric_bound} and Theorems~\ref{Thm_Gaussian_integral_bound}, \ref{Thm_HK_Linfty_bound} we also obtain that the conjugate heat kernels $K(x_i, 0; \cdot, \cdot) = (4\pi \tau)^{-n/2} e^{-f_i}$ subsequentially converge to a solution $v_\infty =  (4\pi \tau)^{-n/2} e^{-f_\infty}\in C^\infty ( \IR^n \times \IR_-)$ of the backwards heat equation such that for some $\vec p \in \IR^n$
\[ v_\infty \leq C |t|^{-n/2}, \qquad \Var (\delta_{\vec p} , v_{\infty,t} d\vec x) \leq C |t|. \]
Therefore, $v_\infty$ must the the standard Gaussian in $\vec p$ and we obtain 
\[  \NN_{x_\infty, 0} (1) = \int_{M_\infty} f_{\infty, -1} \, v_{\infty,t} \, dg_{\infty,-1} - \frac{n}2 = 0. \]

To see that $\NN_{x_i, 0} (1) \to 0$, fix some large $Q < \infty$ and observe that for large $i$ we have using Theorem~\ref{Thm_HK_Linfty_bound}, Proposition~ \ref{Prop_nu_ball_bound}
\begin{equation} \label{eq_MisetminusBxiQ}
  \int_{M_i \setminus B(x_i, -1, Q)} f_{i,-1} (4\pi )^{-n/2} e^{-f_{i,-1}} dg_{i, -1}  
\geq -C(Y)\int_{M_i \setminus B(x_i, -1, Q)}  (4\pi )^{-n/2} e^{-f_{i,-1}} dg_{i, -1} 
\geq - \frac{C(Y)}{Q^2}
\end{equation}
and
\begin{equation} \label{eq_BxiQ_NN}
  \int_{ B(x_i, -1, Q)} f_i (4\pi )^{-n/2} e^{-f_{i,-1}} dg_{i, -1} \to
\int_{B(x_\infty, -1, Q)}  f_{\infty,-1} (4\pi )^{-n/2} e^{-f_{\infty,-1}} dg_{i, -1}
\geq \frac{n}2 - \Psi(Q), 
\end{equation}
where $\lim_{Q \to \infty} \Psi(Q)=0$.
Combining (\ref{eq_MisetminusBxiQ}),  (\ref{eq_BxiQ_NN}) implies $\NN_{x_i, 0} (1) \to 0$, which yields the desired contradiction for large $i$.
\end{proof}
\bigskip

\section{Poincar\'e inequalities} \label{sec_Poincare}
\subsection{Statement of the result}
In this section we prove an $L^p$-Poincar\'e inequality for a manifold that is equipped with a conjugate heat kernel measure.
This inequality generalizes an $L^2$-Poincar\'e inequality due to Hein and Naber \cite[Theorem~1.10]{Hein-Naber-14}.

Let in the following $(M, (g_t)_{t \in I})$ be a Ricci flow on a compact manifold.

\begin{Theorem} \label{Thm_Poincare}
Consider the conjugate heat kernel measure $d\nu = (4\pi \tau)^{-n/2} e^{-f} dg$ based at some point $(x_0,t_0) \in M \times I$.
If $[t_0 - \tau , t_0] \subset I$ for some $\tau > 0$, then we have for any $h \in C^1(M)$ and $p \geq 1$
\begin{equation} \label{eq_basic_Poincare}
 \int_M h  \, d\nu_{t_0-\tau} = 0 \qquad \Longrightarrow \qquad \int_M |h|^p  d\nu_{t_0-\tau} \leq C(p) \tau^{p/2} \int_M |\nabla h|^p d\nu_{t_0-\tau} 
\end{equation}
and
\begin{equation} \label{eq_Poincare_no_average}
 \int_M |h|^p d\nu_{t_0-\tau} \leq C(p) \tau^{p/2} \int_M |\nabla h|^p d\nu_{t_0-\tau} + C(p) \bigg(  \int_M h \, d\nu_{t_0-\tau} \bigg)^p. 
\end{equation}
We may choose $C(1) = \sqrt{\pi}$ and $C(2) = 2$.
\end{Theorem}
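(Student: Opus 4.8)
The plan is to reduce everything to a one-dimensional transport estimate along the heat flow, in the same spirit as the proof of Proposition~\ref{Prop_nab_K_bounds}. By a parabolic rescaling and time-shift we may assume $t_0 = 1$, $\tau = 1$, so that $d\nu = d\nu_{x_0,1;0} = K(x_0,1;\cdot,0)\,dg_0$. First I would establish \eqref{eq_basic_Poincare} for $h$ with $\int_M h\,d\nu = 0$, and then deduce \eqref{eq_Poincare_no_average} by applying \eqref{eq_basic_Poincare} to $h - \int_M h\,d\nu$ and using the elementary inequality $|a+b|^p \le 2^{p-1}(|a|^p+|b|^p)$.

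For the core estimate, fix $h \in C^1(M)$ with $\int_M h\,d\nu = 0$, and after an approximation argument assume $h$ is smooth and bounded. Let $(u_t)_{t\in[0,1]}$ solve the heat equation $\square u = 0$ with $u(\cdot,0) = h$; note $u(x_0,1) = \int_M h\,d\nu = 0$, and by Lemma~\ref{Lem_Lipschitz_preserved}, if $|\nabla h|\le L$ then $|\nabla u|\le L$ at all times (more generally one works with the evolving bound coming from Theorem~\ref{Thm_gradient_estimate} applied to an affine rescaling of $u$). The key idea: for a monotone test profile, the distribution function of $u$ under $\nu$ is controlled by the corresponding one-dimensional heat profile, exactly as in \eqref{eq_der_bound_int_form}–\eqref{eq_haa_F_a_1} in the proof of Proposition~\ref{Prop_nab_K_bounds}. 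Concretely, I would introduce the rearrangement $h : (0,1)\to\IR$ of the function $h$ with respect to $\nu$ (its decreasing rearrangement on $[0,1]$), so that $\int_M |h|^p\,d\nu = \int_0^1 |h(a)|^p\,da$ and $\int_0^1 h(a)\,da = 0$. The gradient bound for solutions to the heat equation, run through the level-set / coarea machinery already developed in Section~\ref{sec_gradient_estimate}, yields a differential inequality bounding the rearrangement's derivative in terms of $\left(\int_M |\nabla h|^p\,d\nu\right)^{1/p}$ and the one-dimensional Gaussian profile $\Phi_1$; integrating this gives a pointwise bound $|h(a)| \le C(p)\,\Phi_1^{-1}\!\big(\text{something}\big)$ in terms of the gradient, from which \eqref{eq_basic_Poincare} follows by integrating over $a\in(0,1)$ and comparing with the sharp Gaussian Poincaré constant.

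The sharp constants $C(1) = \sqrt{\pi}$ and $C(2) = 2$ I expect to come out of this reduction precisely because the extremal case is the one-dimensional Gaussian (equivalently, the linear function $f$ on flat $\IR^n$ against the standard Gaussian), where the inequality $\int |h|^p\,d\gamma \le C(p)\int |\nabla h|^p\,d\gamma$ for the standard Gaussian $\gamma$ is an equality for affine $h$ when $p\in\{1,2\}$; for $p=2$ this is the classical Gaussian Poincaré inequality with constant $1$ in the form $\int|h|^2 \le \int|\nabla h|^2$ (after accounting for the factor $2\tau$ in the heat kernel normalization, i.e. $\tau = 1$ here gives $C(2) = 2$), and for $p=1$ the constant $\sqrt{\pi}$ is $\int_{\IR}|x|\,d\gamma(x)$ type computation, $= \sqrt{2/\pi}$ rescaled. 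The main obstacle will be making the rearrangement/coarea argument rigorous on a general evolving manifold without curvature bounds: one must justify that the distribution function of $u_t$ under $\nu_t$ satisfies the desired differential inequality for a.e.\ time and a.e.\ level, handle critical values of $u$, and pass the approximation of $h$ by smooth functions through the $L^p$ norms; this is precisely the same technical content as the Claim inside the proof of Proposition~\ref{Prop_nab_K_bounds}, so I would aim to invoke or closely mirror that argument rather than redo it from scratch. An alternative, softer route for non-sharp constants is to use the hypercontractivity / logarithmic Sobolev inequality behind \cite[Theorem~1.10]{Hein-Naber-14} together with an interpolation between $L^2$ and $L^\infty$ controlled by the gradient bound $|\nabla u|\le |\nabla h|$, but this will not produce the stated sharp values of $C(1)$ and $C(2)$.
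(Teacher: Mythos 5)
Your reduction of \eqref{eq_Poincare_no_average} to \eqref{eq_basic_Poincare}, the rescaling to $\tau=1$, and the citation of \cite[Theorem~1.10]{Hein-Naber-14} for $p=2$ all match the paper. The core of your argument, however, has a genuine gap. You propose to run the level-set/rearrangement machinery of Proposition~\ref{Prop_nab_K_bounds} on an arbitrary mean-zero $h\in C^1(M)$ and assert that ``the coarea machinery yields a differential inequality bounding the rearrangement's derivative in terms of $(\int_M|\nabla h|^p\,d\nu)^{1/p}$ and the one-dimensional Gaussian profile.'' That machinery does not produce such an inequality. The entire content of the Claim inside the proof of Proposition~\ref{Prop_nab_K_bounds} rests on the specific estimate \eqref{eq_der_bound_int_form}, namely $\int_X q\,d\nu\le F(\nu(X))$ for the particular function $q=\partial_vK/K$; this is obtained by starting the heat flow from $\chi_X$, applying Theorem~\ref{Thm_gradient_estimate} to \emph{that} solution, and evaluating its gradient at the basepoint $(x_0,1)$. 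Nothing analogous is available for a general $h$: relating $\int_M|\nabla h|^p\,d\nu$ to the derivative of the decreasing rearrangement of $h$ with respect to $\nu$ would require a Gaussian-type isoperimetric inequality for the conjugate heat kernel measure, which is a much stronger statement than anything established in the paper (it is precisely what a rearrangement proof of the Gaussian Poincar\'e inequality needs). As written, your key differential inequality is asserted, not derived; and the observation that $|\nabla h|\le L$ propagates (Lemma~\ref{Lem_Lipschitz_preserved}) only yields a bound by $\|\nabla h\|_{L^\infty}$, not by the $L^p(\nu)$-norm of the gradient.

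The paper's actual argument for $p=1$ is a duality/semigroup argument in the spirit of Bakry--Barthe and Ledoux, in which the sharp gradient estimate is applied to the \emph{test function} rather than to $h$: fix $u\in C^\infty(M)$ with $|u|<1$, evolve both $h$ and $u$ forward by the heat equation, so that Theorem~\ref{Thm_gradient_estimate} gives $|\nabla u|(\cdot,t)\le(4\pi t)^{-1/2}$, while Kato's inequality gives that $t\mapsto\int_M|\nabla h|\,d\nu_t$ is non-increasing; then one integrates
\[ \frac{d}{dt}\int_M hu\,d\nu_t=-2\int_M\nabla h\cdot\nabla u\,d\nu_t\geq-C\,(4\pi t)^{-1/2}\int_M|\nabla h|\,d\nu_0 \]
over $t\in[0,1]$, uses $h(x_0,1)=\int_Mh\,d\nu_0=0$, and lets $u\to\sign(h)$; the singularity $t^{-1/2}$ is integrable, which is where the constant $\sqrt{\pi}$ comes from. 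The case $p>1$ is then reduced to $p=1$ by choosing a median $a$ of $h$ and applying the $L^1$ inequality to $(h-a)_\pm^p$ (which vanishes on a set of $\nu$-measure at least $\tfrac12$, so its mean controls its $L^1$-norm), followed by H\"older's inequality to absorb the resulting term. If you wish to salvage your outline, this duality step is the missing ingredient: the one-dimensional profile $\Phi_t$ enters through the bounded test function $u$, not through a rearrangement of $h$.
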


The fact that we can choose $C(2) = 2$ is already the content of \cite[Theorem~1.10]{Hein-Naber-14}.
We have repeated this statement for completeness.

\subsection{Proof}
\begin{proof}
After parabolic rescaling and application of a time-shift, we may assume without loss of generality that $\tau = 1$ and $t_0 =1$.

In the case $p =2$, the inequality (\ref{eq_basic_Poincare}) follows from \cite[Theorem~1.10]{Hein-Naber-14}.

Next, let us show the inequality for $p =1$.
The proof is inspired by \cite{BakryBarthe08, Ledoux04}.
Let $u \in C^\infty(M)$ with $|u| < 1$.
View $h, u$ as functions at time $t=0$ and extend both functions onto $M \times [0,1]$ by solving the heat equation $\square h = \square u = 0$.
Then
\[ h(x_0,1) = \int_M h  \, d\nu_{0} = 0. \]
The bound $|u | < 1$ remains preserved by the heat equation and by Theorem~\ref{Thm_gradient_estimate} we know that $u = \Phi_{t} \circ v$ for some $v \in C^\infty(M \times [0,1])$ with $|\nabla v| \leq 1$.
Therefore
\[ |\nabla u| = |\nabla v|  \big( \Phi'_{t} \circ v \big) \leq (4\pi t)^{-1/2}. \]
By Kato's inequality, we have $\square |\nabla h| \leq 0$ in the viscosity sense, which implies that
\[ \frac{d}{dt}  \int_M |\nabla h|  \, d\nu_t \leq 0. \]
Next, observe that $\square (hu) = -2 \nabla h \cdot \nabla u$, so
\begin{equation*}
 \frac{d}{dt} \int_M hu \, d\nu_t = - 2 \int \nabla h \cdot \nabla u \, d\nu_t 
  \geq - \sup_M |\nabla u|(\cdot, t) \int_M |\nabla h| \, d\nu_t 
 \geq - (4\pi t)^{-1/2} \int_M |\nabla h| \,  d\nu_{0}.
\end{equation*}
Integrating this over $t$ from $0$ to $1$ yields
\begin{equation*}
 \int_M hu \, d\nu_{0} 
 \leq (hu)(x_0,1) + \sqrt{\pi}  \int_M |\nabla h|  \, d\nu_{0} \\
 = \sqrt{\pi}  \int_M |\nabla h|  \, d\nu_{0}.
\end{equation*}
The bound (\ref{eq_basic_Poincare}) now follows by taking a limit of the form $u \to \sign (h)$.

Next, we show (\ref{eq_basic_Poincare}) for $p > 1$, using the fact that the inequality is already known in the case $p=1$.
Choose $a \in \IR$ in such a way that $\nu(\{ h - a \leq 0 \} ), \nu(\{ h - a \geq 0 \} ) \geq \frac12$. 
Then
\begin{equation} \label{eq_a_bound_p}
 |a| \leq 2 \int_M |h| \, d\nu \leq C \int_M |\nabla h| \, d\nu \leq C \bigg( \int_M |\nabla h|^p d\nu \bigg)^{1/p}. 
\end{equation}
Set $h' := h - a$ and recall that $h'_+ := \max \{ h', 0 \}$ and $h'_- := \max \{- h', 0 \}$.
By (\ref{eq_a_bound_p}) it suffices to show that 
\begin{equation} \label{eq_h_p_goal}
 \int_M (h'_\pm)^p d\nu_0 \leq C(p) \int_M |\nabla h'_\pm |^p d\nu_{0}. 
\end{equation}
Let $u := (h'_\pm)^p$ and recall that $u \geq 0$ and $\nu ( \{ u = 0 \} ) \geq \frac12$.
Set
\[ b := \int_M u \, d\nu_0 \geq 0. \]
Then using the case $p=1$ we have
\[ \int_M |u - b|\,  d\nu_0 \leq C \int_M |\nabla u| \, d\nu_0.  \]
Since
\[ b/2 \leq \int_{\{ u = 0 \}} |u-b| \, d\nu_0 \leq C \int_M |\nabla u| \, d\nu_0, \]
this implies
\[ \int_M u \, d\nu_0 \leq C \int_M |\nabla u| \, d\nu_0.  \]
Thus we have
\[ \int_M (h'_\pm)^p d\nu_0 
\leq Cp \int_M |\nabla h'_\pm| \, (h'_\pm)^{p-1} d\nu_0
\leq Cp \bigg( \int_M |\nabla h'_\pm|^{p} d\nu_0 \bigg)^{1/p}
\bigg( \int_M (h'_\pm)^{p} d\nu_0 \bigg)^{(p-1)/p}, \]
which implies (\ref{eq_h_p_goal}) and finishes the proof of (\ref{eq_basic_Poincare}) in the case $p > 1$.

Finally, to see (\ref{eq_Poincare_no_average}), we apply (\ref{eq_basic_Poincare}) to $h -a$ with $a := \int_M h \, d\nu_0$.
\end{proof}
\bigskip

\section{Hypercontractivity} \label{sec_hypercontractive}
\subsection{Statement of the result}
In this section we show a hypercontractivity property for the heat equation on a Ricci flow background equipped with a conjugate heat kernel measure.
The result, whose proof is based in a technique by Gross \cite{Gross_log_Sobolev}, was also observed by Hein.
It is recorded here for completeness and since it will be needed in subsequent work.

Let $(M,(g_t)_{t \in I})$ be a Ricci flow on a compact manifold.

\begin{Theorem}
Suppose that $(x_0, t_0) \in M \times I$ and that $0 < \tau_1 < \tau_2$ with $[t_0 - \tau_2, t_0] \subset I$.
Denote by $d\nu = (4\pi \tau)^{-n/2} e^{-f} dg$ the conjugate heat kernel based at $(x_0, t_0)$ and let $u \in C^2 (M \times [t_0 - \tau_2 ,t_0 - \tau_1])$ be a solution to the heat equation $\square u = 0$ or a non-negative subsolution to the heat equation, $\square u \leq 0$.
If $ 1< q \leq p < \infty$ with
\[ \frac{\tau_2}{\tau_1} \geq \frac{p-1}{q-1}, \]
then
\begin{equation} \label{eq_hyp_cont_ineq}
 \bigg( \int_M |u|^p d\nu_{t_0 - \tau_1} \bigg)^{1/p} \leq \bigg( \int_M |u|^q d\nu_{t_0 - \tau_2} \bigg)^{1/q}. 
\end{equation}
\end{Theorem}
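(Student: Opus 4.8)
The plan is to follow Gross's classical argument adapting the log-Sobolev $\Rightarrow$ hypercontractivity equivalence to this setting. First I would reduce to the case $\square u = 0$ with $u > 0$: for a non-negative subsolution one replaces $u$ by $u + \eps$, uses that $\square(u+\eps) \le 0$, and the computations below only need $\square u \le 0$ for the relevant sign; at the end let $\eps \to 0$. Next, introduce the time-dependent exponent
\[
 p(\tau) := 1 + (q-1)\frac{\tau_2 - t_0 + \ldots}{\ldots},
\]
more precisely parametrize by the running time $t \in [t_0 - \tau_2, t_0 - \tau_1]$ and set $p(t)$ to be the affine (in an appropriate variable) function with $p(t_0 - \tau_2) = q$ and $p(t_0-\tau_1) \le p$, chosen so that the differential inequality below closes. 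Actually the cleanest choice is to let $\sigma = t_0 - t$ run over $[\tau_1,\tau_2]$ and set $p(\sigma)$ with $p(\tau_2) = q$, $p(\tau_1) = p$, and one checks $\frac{d}{d\sigma}\log(p(\sigma)-1) = -\frac1\sigma$ is exactly what is forced; the hypothesis $\tau_2/\tau_1 \ge (p-1)/(q-1)$ guarantees such a $p(\sigma)$ with $p(\sigma) \le$ the target exponents exists (one may need $p(\sigma)$ to increase slightly faster, giving slack).

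The core computation is to differentiate the quantity
\[
 \Psi(\sigma) := \bigg( \int_M u^{p(\sigma)} \, d\nu_{t_0 - \sigma} \bigg)^{1/p(\sigma)}
\]
in $\sigma$ and show $\Psi' \le 0$ (with the right sign convention so that decreasing $\sigma$ from $\tau_2$ to $\tau_1$ gives the stated inequality, or equivalently reparametrize so $\Psi$ is monotone the right way). Writing $F(\sigma) = \int_M u^{p} d\nu$ with $p = p(\sigma)$, the derivative $\frac{d}{d\sigma}\log\Psi$ splits into three terms: $(i)$ the term from $p'(\sigma)$ acting on the exponent $1/p$, $(ii)$ the term from $p'(\sigma)$ inside $u^{p}$, producing $\int u^p \log u \, d\nu$, and $(iii)$ the term from $\frac{d}{d\sigma}\int (\cdots) d\nu$, which by the identity $\frac{d}{dt}\int \phi \, d\nu_{x_0,t_0;t} = \int \square\phi \, d\nu$ (recorded in the excerpt) and $\square u \le 0$ yields $-\int \square(u^p) d\nu \le -p(p-1)\int u^{p-2}|\nabla u|^2 d\nu$ (using $\square(u^p) = p u^{p-1}\square u - p(p-1) u^{p-2}|\nabla u|^2 + (\text{curvature terms that cancel — there are none since }\square\text{ has no zeroth order part})$). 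Collecting, $\frac{d}{d\sigma}\log\Psi$ is controlled by
\[
 \frac{p'}{p^2}\Big( \text{Ent}_\nu(u^p) \Big) - \frac{p-1}{p}\, \frac{\int u^{p-2}|\nabla u|^2 d\nu}{\int u^p d\nu},
\]
where $\text{Ent}_\nu(w) := \int w\log w\, d\nu - (\int w\, d\nu)\log(\int w\, d\nu)$.

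Now I would invoke Perelman's logarithmic Sobolev inequality for the conjugate heat kernel measure: for $d\nu = (4\pi\tau)^{-n/2}e^{-f}dg$ with $\tau = \sigma$, one has $\text{Ent}_\nu(w^2) \le 2\sigma \int |\nabla w|^2 d\nu$ up to the additive term involving $\WW$, which by $\frac{d}{d\tau}(\tau\NN) = \WW \le 0$ (Proposition~\ref{Prop_NN_basic_properties}, given a lower scalar curvature bound, which holds here since $\inf I$ is finite or one uses Lemma~\ref{Lem_lower_scal}) is $\le 0$ and hence helps. Applying this with $w = u^{p/2}$ gives $\text{Ent}_\nu(u^p) \le 2\sigma \cdot \frac{p^2}{4}\int u^{p-2}|\nabla u|^2 d\nu$. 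Substituting, $\frac{d}{d\sigma}\log\Psi \le \big( \frac{p'\sigma}{2} - \frac{p-1}{p}\big)\frac{\int u^{p-2}|\nabla u|^2 d\nu}{\int u^p d\nu}$, and the bracket vanishes precisely when $p'(\sigma) = \frac{2(p-1)}{p\sigma}$; choosing $p(\sigma)$ to solve this ODE with $p(\tau_2) = q$ yields $p(\tau_1)$ satisfying $\frac{p(\tau_1)-1}{q-1} = \frac{\tau_2}{\tau_1} \ge \frac{p-1}{q-1}$, so $p(\tau_1) \ge p$, and by Jensen/monotonicity of $L^r$ norms in a probability space we get the claim after comparing exponent $p$ with $p(\tau_1)$. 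The main obstacle is bookkeeping the sign conventions and the $\WW \le 0$ term carefully — in particular ensuring that Perelman's log-Sobolev inequality is available in exactly the form needed (it follows from $\frac{d}{d\tau}(\tau\NN) = \WW[g,f,\tau]$ and the definition of $\WW$, or directly from \cite{Perelman1}), and handling the subsolution case and the $u > 0$ regularization cleanly; the differential-inequality computation itself is routine once the exponent path $p(\sigma)$ is fixed.
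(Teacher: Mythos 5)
This is essentially the paper's proof: Gross's argument, differentiating $\big(\int_M u^{p(t)}\,d\nu_t\big)^{1/p(t)}$ along the exponent path $p(\sigma)-1=(q-1)\tau_2/\sigma$ (with $\sigma=t_0-t$, so $p(\tau_2)=q$ and $p(\tau_1)\ge p$) and closing the differential inequality with the log-Sobolev inequality of \cite[Theorem~1.10]{Hein-Naber-14} for the conjugate heat kernel measure, followed by monotonicity of $L^r$-norms of a probability measure. The one correction needed is the log-Sobolev constant: for $d\nu=(4\pi\tau)^{-n/2}e^{-f}dg$ it reads $\int_M w^2\log w^2\,d\nu\le 4\tau\int_M|\nabla w|^2\,d\nu$ when $\int_M w^2\,d\nu=1$ (not $2\tau$), and with this constant the forced ODE is $\frac{d}{d\sigma}\log(p-1)=-1/\sigma$, exactly as you state at the outset, rather than $p'=2(p-1)/(p\sigma)$.
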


\subsection{Proof}
\begin{proof}
After parabolic rescaling and application of a time-shift, we may assume that $t_0 = 0$ and $\tau_2 = 1$.
If $\square u = 0$, then $\square |u| \leq 0$ in the viscosity sense, so we may assume that $u \geq 0$ is continuous and satisfies $\square u \leq 0$ in the viscosity sense.
Moreover, if $u' \in C^2 (M \times [- 1 , 0])$ is a solution to the heat equation $\square u' = 0$ with initial condition $u' (\cdot, -1) = u(\cdot, -1)$, then $u( \cdot, -\tau_1) \leq u' (\cdot, -\tau_1)$.
This shows that it suffices to consider the case in which $u \geq 0$ and $\square u = 0$.

Set $p(t) := 1 + (q-1) |t|^{-1}$ and observe that $p(-1) = q$.
Using \cite[Theorem~1.10]{Hein-Naber-14}, we obtain
\begin{align*}
 \frac{d}{dt} \int_M u^{p(t)} d\nu_t
&\leq \int_M \big( \dot p u^p \log u - p (p-1) |\nabla u|^2 u^{p-2} \big) d\nu_t \\
&= \frac{\dot p}{p} \int_M u^p \log u^p  d\nu_t - \frac{p-1}{p} \int_M \frac{|\nabla u^p|^2}{u^p} d\nu_t \\
&\leq \frac{\dot p}{p} \bigg( \int_M u^p d\nu_t \bigg) \log \bigg( \int_M u^p d\nu_t \bigg) + \bigg( \frac{\dot p}{p} |t| - \frac{p-1}{p} \bigg)\int_M \frac{|\nabla u^p|^2}{u^p} d\nu_t \\
&= \frac{\dot p}{p} \bigg( \int_M u^p d\nu_t \bigg) \log \bigg( \int_M u^p d\nu_t \bigg). 
\end{align*}
So
\begin{equation} \label{eq_ddt_int_pt}
 \frac{d}{dt} \bigg(  \int_M u^{p(t)} d\nu_t \bigg)^{1/p(t)} \leq 0. 
\end{equation}
Since
\[ p(-\tau_1) = 1 + \frac{q-1}{\tau_1} \geq 1 + (q-1) \frac{p-1}{q-1} = p, \]
we obtain that
\[ \bigg(  \int_M u^{p} d\nu_{-\tau_1} \bigg)^{1/p}
\leq \bigg(  \int_M u^{p(-\tau_1)} d\nu_{-\tau_1} \bigg)^{1/p(-\tau_1)}
\leq \bigg(  \int_M u^{p(-1)} d\nu_{-1} \bigg)^{1/p(-1)}
= \bigg(  \int_M u^{q} d\nu_{-1} \bigg)^{1/q}, \]
which finishes the proof.
\end{proof}

\bibliography{bibliography}{}

\newcommand{\etalchar}[1]{$^{#1}$}
\providecommand{\bysame}{\leavevmode\hbox to3em{\hrulefill}\thinspace}
\providecommand{\MR}{\relax\ifhmode\unskip\space\fi MR }
\providecommand{\MRhref}[2]{%
  \href{http://www.ams.org/mathscinet-getitem?mr=#1}{#2}
}
\providecommand{\href}[2]{#2}
\begin{thebibliography}{CCG{\etalchar{+}}10}

\bibitem[AK07]{Angenent_Knopf_precise_asymptotics}
Sigurd~B. Angenent and Dan Knopf, \emph{Precise asymptotics of the {R}icci flow
  neckpinch}, Comm. Anal. Geom. \textbf{15} (2007), no.~4, 773--844.
  \MR{2395258}

\bibitem[Bam20a]{Bamler_RF_compactness}
Richard~H. Bamler, \emph{{Compactness theory of the space of super Ricci
  flows}}, https://arxiv.org/abs/2008.09298 (2020).

\bibitem[Bam20b]{Bamler_HK_RF_partial_regularity}
\bysame, \emph{{Structure theory of non-collapsed limits of Ricci flows}},
  https://arxiv.org/abs/2009.03243 (2020).

\bibitem[BBCG08]{BakryBarthe08}
Dominique Bakry, Franck Barthe, Patrick Cattiaux, and Arnaud Guillin, \emph{A
  simple proof of the {P}oincar\'{e} inequality for a large class of
  probability measures including the log-concave case}, Electron. Commun.
  Probab. \textbf{13} (2008), 60--66. \MR{2386063}

\bibitem[BK17]{bamler_kleiner_uniqueness_stability}
Richard~H. Bamler and Bruce Kleiner, \emph{{Uniqueness and stability of Ricci
  flow through singularities}}, http://arxiv.org/abs/1709.04122 (2017).

\bibitem[Bry05]{Bryant2005}
R.~Bryant, \emph{{Ricci flow solitons in dimension three with
  SO(3)-symmetries}},
  http://www.math.duke.edu/$\sim$bryant/3DRotSymRicciSolitons.pdf. (2005).

\bibitem[BY20]{Buzano_Yudowitz_2020}
Reto {Buzano} and Louis {Yudowitz}, \emph{{Gaussian upper bounds for the heat
  kernel on evolving manifolds}}, https://arxiv.org/abs/2007.07112 (2020),
  arXiv:2007.07112.

\bibitem[BZ17]{Bamler-Zhang-1}
Richard~H. Bamler and Qi~S. Zhang, \emph{Heat kernel and curvature bounds in
  {R}icci flows with bounded scalar curvature}, Adv. Math. \textbf{319} (2017),
  396--450. \MR{3695879}

\bibitem[BZ19]{Bamler-Zhang-2}
\bysame, \emph{Heat kernel and curvature bounds in {R}icci flows with bounded
  scalar curvature---{P}art {II}}, Calc. Var. Partial Differential Equations
  \textbf{58} (2019), no.~2, Art. 49, 14. \MR{3911147}

\bibitem[CCG{\etalchar{+}}10]{Chow_book_series_Part_III}
Bennett Chow, Sun-Chin Chu, David Glickenstein, Christine Guenther, James
  Isenberg, Tom Ivey, Dan Knopf, Peng Lu, Feng Luo, and Lei Ni, \emph{The
  {R}icci flow: techniques and applications. {P}art {III}. {G}eometric-analytic
  aspects}, Mathematical Surveys and Monographs, vol. 163, American
  Mathematical Society, Providence, RI, 2010. \MR{2604955}

\bibitem[CH09a]{Cao_Hamilton_2009}
Xiaodong Cao and Richard~S. Hamilton, \emph{Differential {H}arnack estimates
  for time-dependent heat equations with potentials}, Geom. Funct. Anal.
  \textbf{19} (2009), no.~4, 989--1000. \MR{2570311}

\bibitem[CH09b]{Cao-Hamilton-2009}
\bysame, \emph{Differential {H}arnack estimates for time-dependent heat
  equations with potentials}, Geom. Funct. Anal. \textbf{19} (2009), no.~4,
  989--1000. \MR{2570311}

\bibitem[CRT12]{CR_Topping_2012}
Esther Cabezas-Rivas and Peter~M. Topping, \emph{The canonical shrinking
  soliton associated to a {R}icci flow}, Calc. Var. Partial Differential
  Equations \textbf{43} (2012), no.~1-2, 173--184. \MR{2886114}

\bibitem[CTY11]{Chau_Tam_Yu_2011}
Albert Chau, Luen-Fai Tam, and Chengjie Yu, \emph{Pseudolocality for the
  {R}icci flow and applications}, Canad. J. Math. \textbf{63} (2011), no.~1,
  55--85. \MR{2779131}

\bibitem[CW13]{ChenWang-2013}
Xiuxiong Chen and Bing Wang, \emph{On the conditions to extend {R}icci
  flow({III})}, Int. Math. Res. Not. IMRN (2013), no.~10, 2349--2367.
  \MR{3061942}

\bibitem[CZ11a]{Cao_Zhang_2011}
Xiaodong Cao and Qi~S. Zhang, \emph{The conjugate heat equation and ancient
  solutions of the {R}icci flow}, Adv. Math. \textbf{228} (2011), no.~5,
  2891--2919. \MR{2838064}

\bibitem[CZ11b]{Cao_Zhang_conj_h_e}
\bysame, \emph{The conjugate heat equation and ancient solutions of the {R}icci
  flow}, Adv. Math. \textbf{228} (2011), no.~5, 2891--2919. \MR{2838064}

\bibitem[Gro75a]{Gross_1975}
Leonard Gross, \emph{Logarithmic {S}obolev inequalities}, Amer. J. Math.
  \textbf{97} (1975), no.~4, 1061--1083. \MR{420249}

\bibitem[Gro75b]{Gross_log_Sobolev}
\bysame, \emph{Logarithmic {S}obolev inequalities}, Amer. J. Math. \textbf{97}
  (1975), no.~4, 1061--1083. \MR{420249}

\bibitem[{Hal}20]{Hallgren_2020}
Max {Hallgren}, \emph{{The Entropy of Ricci Flows with Type-I Scalar Curvature
  Bounds}}, https://arxiv.org/abs/2007.10376 (2020).

\bibitem[Ham82]{Hamilton_3_manifolds}
Richard~S. Hamilton, \emph{Three-manifolds with positive {R}icci curvature}, J.
  Differential Geometry \textbf{17} (1982), no.~2, 255--306. \MR{664497}

\bibitem[HN14]{Hein-Naber-14}
Hans-Joachim Hein and Aaron Naber, \emph{New logarithmic {S}obolev inequalities
  and an {$\epsilon$}-regularity theorem for the {R}icci flow}, Comm. Pure
  Appl. Math. \textbf{67} (2014), no.~9, 1543--1561. \MR{3245102}

\bibitem[KL08]{Kleiner_Lott_notes}
Bruce Kleiner and John Lott, \emph{Notes on {P}erelman's papers}, Geom. Topol.
  \textbf{12} (2008), no.~5, 2587--2855. \MR{2460872}

\bibitem[KL17]{Kleiner_Lott_singular}
\bysame, \emph{{Singular Ricci flows I}}, Acta Mathematica \textbf{219} (2017),
  no.~1, 65--134.

\bibitem[Led04]{Ledoux04}
Michel Ledoux, \emph{Spectral gap, logarithmic {S}obolev constant, and
  geometric bounds}, Surveys in differential geometry. {V}ol. {IX}, Surv.
  Differ. Geom., vol.~9, Int. Press, Somerville, MA, 2004, pp.~219--240.
  \MR{2195409}

\bibitem[LY86]{Li_Yau_parabolic_kernel}
Peter Li and Shing-Tung Yau, \emph{On the parabolic kernel of the
  {S}chr\"{o}dinger operator}, Acta Math. \textbf{156} (1986), no.~3-4,
  153--201. \MR{834612}

\bibitem[MM15]{Mantegazza_Mueller_2015}
Carlo Mantegazza and Reto M\"{u}ller, \emph{Perelman's entropy functional at
  {T}ype {I} singularities of the {R}icci flow}, J. Reine Angew. Math.
  \textbf{703} (2015), 173--199. \MR{3353546}

\bibitem[MT10]{Topping-McCann}
Robert~J. McCann and Peter~M. Topping, \emph{Ricci flow, entropy and optimal
  transportation}, Amer. J. Math. \textbf{132} (2010), no.~3, 711--730.
  \MR{2666905}

\bibitem[Per02]{Perelman1}
G.~Perelman, \emph{{The entropy formula for the Ricci flow and its geometric
  applications}}, http://arxiv.org/abs/math/0211159 (2002).

\bibitem[Top06]{Topping-book}
Peter Topping, \emph{Lectures on the {R}icci flow}, London Mathematical Society
  Lecture Note Series, vol. 325, Cambridge University Press, Cambridge, 2006.
  \MR{2265040}

\bibitem[Vil03]{Villani_topics_in_OT}
C\'{e}dric Villani, \emph{Topics in optimal transportation}, Graduate Studies
  in Mathematics, vol.~58, American Mathematical Society, Providence, RI, 2003.
  \MR{1964483}

\bibitem[Wan18]{BWang_local_entropy}
Bing Wang, \emph{The local entropy along {R}icci flow {P}art {A}: the
  no-local-collapsing theorems}, Camb. J. Math. \textbf{6} (2018), no.~3,
  267--346. \MR{3855081}

\bibitem[{Wu}20]{Wu_Gaussian_2020}
Jia-Yong {Wu}, \emph{{Sharp Gaussian upper bounds for Schr{\"o}dinger heat
  kernel on gradient shrinking Ricci solitons}},
  https://arxiv.org/abs/2006.13475 (2020).

\bibitem[Zha06]{Zhang-gradient}
Qi~S. Zhang, \emph{Some gradient estimates for the heat equation on domains and
  for an equation by {P}erelman}, Int. Math. Res. Not. (2006), Art. ID 92314,
  39. \MR{2250008}

\bibitem[Zha12]{Zhang-noninflating}
\bysame, \emph{Bounds on volume growth of geodesic balls under {R}icci flow},
  Math. Res. Lett. \textbf{19} (2012), no.~1, 245--253. \MR{2923189}

\bibitem[Zha20]{Zhang_Yoongjia_2020}
Yongjia Zhang, \emph{On the equivalence between noncollapsing and bounded
  entropy for ancient solutions to the {R}icci flow}, J. Reine Angew. Math.
  \textbf{762} (2020), 35--51. \MR{4092925}

\bibitem[Zhu16]{Zhu_Meng_2016}
Meng Zhu, \emph{Davies type estimate and the heat kernel bound under the
  {R}icci flow}, Trans. Amer. Math. Soc. \textbf{368} (2016), no.~3,
  1663--1680. \MR{3449222}

\end{thebibliography}
\bibliographystyle{amsalpha}

\end{document}